\newtheorem{theorem}{Theorem}[section]
\newtheorem*{theorema}{Main Theorem Restated}
\newtheorem*{maintheorem}{Main Theorem}
\newtheorem{proposition}[theorem]{Proposition}
\newtheorem{corollary}[theorem]{Corollary}
\newtheorem{lemma}[theorem]{Lemma}
\newtheorem{definition}[theorem]{Definition}
\newtheorem{remark}[theorem]{Remark}
\newtheorem{claim}{Claim}
\newcommand{\T}{\mathbb{T}}
\newcommand{\field}[1]{\mathbb{#1}}
\newcommand{\Tor}{\field{T}}
\newcommand{\comment}[1]{}
\newcommand{\Z}{\mathbb{Z}}
\newcommand{\N}{\mathbb{N}}
\newcommand{\R}{\mathbb{R}}
\newcommand{\C}{\mathscr{C}}
\newcommand{\mrworld}{\mathrm{Diff}^r_{Leb}(M)}
\newcommand{\dworld}{\mathrm{Diff}^2_{Leb}(\mathbb{T}^4)}
\newcommand{\address}{{
  \bigskip
  \footnotesize

  \textsc{CNRS-Laboratoire de Math\'ematiques d'Orsay, UMR 8628, Universit\'e Paris-Sud 11, Orsay Cedex 91405, France } \par\nopagebreak
  \textsc{Instituto de Matem\'atica, Universidade Federal do Rio de Janeiro, P.O. Box 68530, 21945-970, Rio de Janeiro Brazil}\par\nopagebreak
  \textit{E-mail:} \texttt{davi.obata@math.u-psud.fr}
}}
\title{On the Stable Ergodicity of Berger-Carrasco's example}
\author{Davi Obata \footnote{D.O. was supported by the ERC project 692925 NUHGD.}}
\date{\today}
\begin{document}

\maketitle
\begin{abstract}
We prove the stable ergodicity of an example of a volume-preserving, partially hyperbolic diffeomorphism introduced by Berger and Carrasco in \cite{bergercarrasco2014}. This example is robustly non-uniformly hyperbolic, with two dimensional center, almost every point has both positive and negative Lyapunov exponents along the center direction and does not admit a dominated splitting of the center direction. The main novelty of our proof is that we do not use accessibility.
\end{abstract}
\setcounter{tocdepth}{1}
\tableofcontents

\section{Introduction}
\label{introduction}

Let $M$ be a smooth compact riemannian manifold and let $\nu$ be a Borel probability measure on $M$. Given a measurable transformation $f:M \to M$ that preserves $\nu$, we say that $f$ is ergodic with respect to $\nu$ if every invariant measurable set has either zero or full measure. Ergodicity means that from the probabilistic point of view the system cannot be decomposed into invariant smaller parts. In our scenario, $f$ is ergodic if and only if for every continuous function $\varphi:M \to M$, for $\nu$-almost every point $p\in M$ it is verified  
\[
\displaystyle \lim_{n\to +\infty} \frac{1}{n} \sum_{j=0}^{n-1} \varphi \circ f^j(p)= \int_M \varphi d\nu.
\]

In 1939, Hopf introduced in \cite{hopf39} an argument to prove that the geodesic flow on compact surfaces with constant negative curvature is ergodic with respect to the Liouville measure. Many years later, Anosov \cite{anosov67}, Anosov and Sinai \cite{anosovsinai67} used the Hopf argument to prove ergodicity of hyperbolic systems that preserve a smooth measure. A diffeomorphism is hyperbolic, or Anosov, if its tangent bundle decomposes into two invariant subbundles, one is contracted and the other one is expanded exponentially fast by the action of the derivative. Hyperbolicity was the key property that allowed them to use the Hopf argument in these settings.

Since then several works extended the Hopf argument to more general settings, namely non-uniformly hyperbolic and partially hyperbolic systems.

For a $C^1$-diffeomorphism $f$ and an invariant measure $\nu$, Kingman's ergodic theorem implies that for $\nu$-almost every point $p\in M$ and for every $v\in T_pM-\{0\}$ the following limit exists
\begin{equation}
\label{lyapunovdefinition}
\displaystyle \lambda(p,v)=\lim_{n \to \pm \infty} \frac{1}{n} \log \|Df^n(p).v\| .
\end{equation}

Oseledets' theorem states that $\lambda(p,.)$ can take at most $\mathrm{dim}(M)$ different values. Such numbers are called Lyapunov exponents. A $f$-invariant measure $\nu$ is non-uniformly hyperbolic for $f$ if for $\nu$-almost every point, every Lyapunov exponent is non zero.  

In \cite{pesin77}, Pesin uses the Hopf argument to prove that if $\nu$ is a smooth, non-uniformly hyperbolic measure and $f$ is a $C^{1+\alpha}$-diffeomorphism then $\nu$ has at most countably many ergodic components.

A diffeomorphism $f$ is partially hyperbolic if there is a $Df$-invariant decomposition $TM = E^{ss} \oplus E^c \oplus E^{uu}$, such that $Df|_{E^{ss}}$ contracts, $Df|_{E^{uu}}$ expands and the behaviour of $Df|_{E^c}$ is bounded by the contraction of $E^{ss}$ and the expansion of $E^{uu}$. See section \ref{preliminary} for a precise definition.

A key property for discussing the ergodicity of partially hyperbolic systems is the accessibility. A partially hyperbolic system is accessible if any two points can be joined by a curve which is a concatenation of finitely many curves, each of them being contained in a stable or an unstable leaf. 

There are several works that use accessibility to extend the Hopf argument and prove ergodicity, see for instance \cite{brinpesin74}, \cite{gps94}, \cite{pughshub00}, \cite{bdp02}, \cite{bw10} and \cite{hhtu11}. Most proofs of the ergodicity for partially hyperbolic systems uses accessibility. Several of the extensions of the Hopf argument for accessible partially hyperbolic diffeormorphisms allow vanishing Lyapunov exponents along the center direction.

Berger and Carrasco introduced in \cite{bergercarrasco2014} an example of a volume-preserving, partially hyperbolic diffeomorphism which is non-uniformly hyperbolic. This example has a two dimensional center bundle and Lebesgue almost every point has both positive and negative Lyapunov exponent in the center direction. Furthermore, the properties of this example are $C^2$-robust. It is not known if this example is accessible or not.

\begin{definition}
\label{stableergodicity}
A volume-preserving diffeomorphism $f$ is {\bf $C^2$-stably ergodic} if it admits a $C^2$-neighborhood such that any volume-preserving diffeomorphism inside this neighborhood is ergodic.  
\end{definition}

In this paper we prove the following theorem.

\begin{maintheorem}
The Berger-Carrasco's example is $C^2$-stably ergodic.
\end{maintheorem}

We stress two features of our work that distinguishes it from the rest of the previous works about ergodicity of partially hyperbolic diffeomorphisms:
\begin{itemize}
\item The stable ergodicity with mixed behaviour along the center direction and that does not admit a dominated splitting of the center direction (as a strengthening of \cite{bergercarrasco2014});
\item A proof of stable ergodicity that does not uses accessibility.
\end{itemize}

We explain a couple points on why on definition \ref{stableergodicity} we use a $C^2$-neighborhood instead of a $C^1$-neighborhood, which is the one usually used to define stable ergodicity, see for instance \cite{hhtu11}. First, the techniques we use depend on the uniform control of $C^2$-norms in a neighborhood. Second, it is not possible to have the mixed behaviour along the center for every volume-preserving, $C^2$-diffeomorphism in a $C^1$-neighborhood of Berger-Carrasco's example. This is due to theorem A' in \cite{avilacrovisierwilkinson2017}, which implies that arbitrarily $C^1$-close to Berger-Carrasco's example there is a volume-preserving, $C^2$-diffeomorphism which is stably ergodic and whose Lyapunov exponents along the center have the same sign.    

From now on we will denote the normalized Lebesgue measure of a manifold by $Leb$ and by $\mrworld$ the set of $C^r$-diffeomorphisms that preserve the Lebesgue measure.
 
\subsection*{Berger-Carrasco's example and the precise statement of the main theorem}

For $N \in \R$ we denote by $s_N(x,y) = (2x-y + N\sin(x), x)$ the standard map on $\T^2 = \R^2/ 2\pi \Z^2$. For every $N$ the map $s_N$ preserves the Lebesgue measure induced by the usual metric of $\T^2$. 

This map is related to several physical problems, see for instance \cite{chirikovstandard}, \cite{i80chaos} and \cite{s95chaos}. 

It is conjectured that for $N \neq 0$ the map $s_N$ has positive entropy for the Lebesgue measure, see \cite{sinaiergodictheory} page $144$. By Pesin's entropy formula, see \cite{pesin77} Theorem $5.1$, this is equivalent to the existence of a set of positive Lebesgue measure and whose points have a positive Lyapunov exponent. The existence of those sets is not known for any value of $N$. See \cite{young17standard}, \cite{duarte94standard} and \cite{goro12standard} for some results related to this conjecture.

Let $A\in SL(2,\Z)$ be a hyperbolic matrix which defines an Anosov diffeomorphism on $\Tor^2$, let $P_x: \T^2 \to \T^2$ be the projection on the first coordinate of $\T^2$, this projection is induced by the linear map of $\R^2$, which we will also denote by $P_x$, given by $P_x(a,b) = (a,0)$. In a similar way define $P_y:\Tor^2 \to \T^2$ the projection on the second coordinate of the torus.

Consider the torus $\T^4= \T^2 \times \T^2$ and represent it using the coordinates $(x,y,z,w)$, where $x,y,z,w \in [0,2\pi)$. We may naturally identify a point $(z,w)$ on the second torus with a point $(x,y)$ on the first torus by taking $x=z$ and $y=w$. For each $N \geq 0$ define
$$
\begin{array}{rcccc}
f_N &:& \T^2\times \T^2 & \longrightarrow & \T^2\times\T^2\\
&&(x,y,z,w)& \mapsto & (s_N(x,y) + P_x\circ A^N(z,w),A^{2N}(z,w)),
\end{array}
$$
where the point $ A^N(z,w)$ on the second torus is being identified with the same point in the first torus as described previously.  

This diffeomorphism preserves the Lebesgue measure. For $N$ large enough it is a partially hyperbolic diffeomorphism, with two dimensional center direction given by $E^c = \R^2 \times \{0\}$. This type of system was considered by Berger and Carrasco in \cite{bergercarrasco2014}, where they proved the following theorem.

\begin{theorem}[\cite{bergercarrasco2014}, Theorem 1]
\label{bcexm}
There exist $N_0>0$ and $c>0$ such that for every $N\geq N_0$, for Lebesgue almost every point $m$ and for every $v\in \R^4$
$$
\displaystyle \lim_{n\to\infty} \left|\frac{1}{n}\log\|Df_N^n(m).v\|\right| > c\log N.
$$

Moreover, the same holds for any volume-preserving diffeomorphism in a $C^2$-neighborhood of $f_N$.

\end{theorem}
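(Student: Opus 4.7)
The plan is to exploit the block-triangular form
\[
Df_N = \begin{pmatrix} Ds_N(x,y) & P_x\circ A^N \\ 0 & A^{2N} \end{pmatrix}.
\]
First I would establish partial hyperbolicity for $N\ge N_0$. The bundle $E^c := \R^2\times\{0\}$ is $Df_N$-invariant and $Df_N|_{E^c}=Ds_N$ has norm only linear in $N$, while $A^{2N}$ expands and contracts at rate $\lambda^{2N}$ for the expansion constant $\lambda>1$ of $A$. A standard graph-transform argument then produces an invariant splitting $T\T^4 = E^{ss}\oplus E^c\oplus E^{uu}$, with $E^{ss/uu}$ graphs over the $A^{2N}$-stable/unstable subspaces of norm $O(N/\lambda^{2N})$. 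The Lyapunov exponents along $E^{uu}$ and $E^{ss}$ are of order $2N\log\lambda$, far larger than any $c\log N$, so the theorem reduces to a lower bound of order $\log N$ on the absolute values of the two center Lyapunov exponents.

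Because $E^c$ is invariant, these center exponents are those of the unimodular cocycle $n\mapsto \prod_{k=0}^{n-1} Ds_N(x_k,y_k)$ driven by the $f_N$-orbit. The matrix $Ds_N$ has trace $2+N\cos x$, hence is strongly hyperbolic whenever $|\cos x|$ is not too small and degenerates only on the critical strip $K:=\{|\cos x|\le N^{-1/2}\}$. I would set up an invariant cone field tight around the approximate unstable eigendirection $(1,1/(N\cos x))$, designed so that a unit vector inside the cone grows by a factor $\asymp N|\cos x|$ per iterate outside $K$ and by at worst $O(1)$ inside $K$. This yields, along any orbit and any $v^c\in E^c$ in the cone, the lower bound
\[
\frac{1}{n}\log\|Df_N^n(p)v^c\| \;\ge\; \frac{1}{n}\sum_{k=0}^{n-1}\log\bigl(N|\cos x_k|\bigr) \;-\; C\cdot\frac{\#\{k<n:(x_k,y_k)\in K\}}{n}.
\]
Controlling the two Birkhoff averages on the right for Lebesgue-a.e.\ orbit is the crux of the argument.

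Here the coupling is essential, and this step constitutes the main obstacle. For the uncoupled standard map nothing is known about the density of orbit visits to $K$, but the additive term $P_x\circ A^N(z_k,w_k)$ in the update of $x_k$ injects genuine randomness from the strongly mixing Anosov factor $A^{2N}$. Using Fubini over the $(z,w)$-fibers together with an exponential decay of correlations estimate for $A^{2N}$, one shows that for Lebesgue-a.e.\ starting point the empirical density of $k$ with $(x_k,y_k)\in K$ is $O(N^{-1/2})$ and the Birkhoff average of $\log|\cos x_k|$ is $O(1)$ uniformly in $N$; the displayed inequality and a Pliss-type selection of good return times then produce a center exponent $\ge c\log N$ on a full Lebesgue-measure set. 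Running the identical argument for $f_N^{-1}$ (whose derivative is structurally analogous) yields a second center exponent of opposite sign. For the $C^2$-robustness, partial hyperbolicity and the cone-field estimate are $C^1$-open, the decay of correlations for the hyperbolic factor $A^{2N}$ is stable under small $C^2$-perturbations, and the quantitative center bound depends only on first and second derivatives of the defining maps; hence all estimates go through uniformly in a $C^2$-neighborhood of $f_N$.
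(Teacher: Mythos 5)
There is a genuine gap at exactly the step you flag as the crux, and it occurs twice. First, the pointwise inequality you display is not justified: the cone around the expanding direction of $Ds_N$ is not invariant on the critical strip, and a single passage through $K$ can rotate a center vector onto (or close to) the contracting direction of the following hyperbolic stretch, producing contraction for many iterates \emph{outside} $K$. Hence knowing that an orbit spends a fraction $O(N^{-1/2})$ of its time in $K$ does not by itself give a lower bound of order $\log N$ on the exponent; this cancellation phenomenon is precisely why positivity of exponents for the standard map itself remains open even though its critical region is small. Second, the statistical input you invoke --- Fubini over the $(z,w)$-fibers plus exponential decay of correlations for $A^{2N}$ --- does not yield the claimed control: the event $(x_k,y_k)\in K$ is not an observable of the $(z,w)$-factor, since $x_k$ depends on the entire past through the nonlinear standard-map part of the dynamics, so decay of correlations of the toral automorphism says nothing directly about the empirical distribution of $x_k$ or about Birkhoff averages of $\log|\cos x_k|$. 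What would be needed is a quantitative equidistribution statement for the skew product itself, which is essentially the content of the theorem being proved.

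The proof the paper relies on (Berger--Carrasco \cite{bergercarrasco2014}, adapted in section \ref{exponents} to obtain the sharper bound of proposition \ref{estimateprop}) circumvents both problems by averaging over unstable curves instead of individual orbits: one integrates $\log\|Df\cdot X\|$ over a $u$-curve carrying an adapted center vector field, uses that $f^k\circ\gamma$ decomposes into roughly $\mu^{2Nk}$ $u$-curves each crossing the torus in the $x$-direction (so only a proportion $O(N^{-\tilde{\delta}})$ of each image meets the critical strip), and runs a combinatorial induction on ``good'' versus ``bad'' adapted fields in which bad fields are regenerated into good ones on a definite fraction of the pieces after one iterate (lemmas \ref{lemmabom} and \ref{lemmaruim}, lemma \ref{goodandbad}). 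The criterion of proposition \ref{exp1}, via absolute continuity of the unstable foliation and a density-point argument, then converts the curve-averaged growth into exponents for Lebesgue-almost every point; it is this averaging along $u$-curves that controls the cancellations your orbitwise cone estimate cannot. The robustness part likewise does not go through ``stability of decay of correlations of the factor'' (the skew-product structure is destroyed by perturbation); it repeats the $u$-curve scheme with unstable Jacobians and bounded distortion (lemmas \ref{boundeddist}--\ref{someestimate}). Without the curve-averaging mechanism or a genuinely new probabilistic substitute for it, your argument does not close.
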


This theorem says that for $N$ large enough the system $f_N$ is non-uniformly hyperbolic. Indeed, along the center direction there is one positive and one negative Lyapunov exponent for Lebesgue almost every point.

We remark that Viana constructed in theorem B of \cite{vianamaps}, an example of a non-conservative partially hyperbolic diffeomorphism with similar properties as in Berger and Carrasco's example, meaning Lebesgue almost every point has a positive and a negative exponent in the center direction and there is no dominated splitting of the center, but in the dissipative case. The approach used by Berger and Carrasco has some similarities with Viana's approach, which is to consider ``unstable" curves and use combinatorial arguments to estimate the exponents over such a curve. 

\begin{definition}
\label{bernoulli}
Let $\nu$ be an invariant probability measure for $f$. We say that $(f,\nu)$ is Bernoulli if it is measurably conjugated to a Bernoulli shift. For volume-preserving diffeomorphisms, we say that $f$ is Bernoulli if $(f, Leb)$ is Bernoulli.
\end{definition}

The Bernoulli property is stronger than ergodicity. We can now give the precise statement of the main theorem.

\begin{theorema}
\label{maintheorem}
 For $N$ large enough $f_N$ is $C^2$-stably ergodic. Moreover, any volume-preserving diffeomorphism in a $C^2$-neighborhood of $f_N$ is Bernoulli. 
\end{theorema}

In order to prove this theorem we will need to obtain precise estimates on the size of the invariant manifolds in the center direction for certain points. For that we will need a better estimate of the center exponents, given by the following proposition.

\begin{proposition}
\label{estimateprop}
For every $\delta\in(0,1)$, there exists $N_0=N_0(\delta)$ such that for every $N\geq N_0$ there is a $C^2$-neighborhood $\mathcal{U}_N$ of $f_N$ in $\dworld$ with the following property. If $g\in \mathcal{U}_N$, then Lebesgue almost every point has a positive and a negative Lyapunov exponent in the center direction whose absolute value are greater than $(1-\delta)\log N$.
\end{proposition}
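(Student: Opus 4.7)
The plan is to sharpen the asymptotic constant in Berger--Carrasco's Theorem \ref{bcexm}. The derivative of $f_N$ restricted to the center direction $E^c=\R^2\times\{0\}$ at a point $(x,y,z,w)$ is exactly $Ds_N(x,y)$, whose largest singular value is comparable to $|2+N\cos x|$ when the latter is large. Heuristically, the top central Lyapunov exponent of a typical orbit $(x_k,y_k,z_k,w_k)$ should therefore be close to the Birkhoff average $\lim_n\tfrac{1}{n}\sum_{k=0}^{n-1}\log|2+N\cos x_k|$; assuming equidistribution of the $x_k$ on $\T$, this average equals the spatial integral
$$
\frac{1}{2\pi}\int_0^{2\pi}\log|2+N\cos x|\,dx\ =\ \log(N/2)\qquad (N>2),
$$
by the classical identity $\frac{1}{2\pi}\int_0^{2\pi}\log|a+\cos\theta|\,d\theta = 0$ for $|a|<1$. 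Since $\log(N/2)>(1-\delta)\log N$ whenever $N>2^{1/\delta}$, this would yield the desired lower bound on the positive exponent.

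To make this rigorous I would first invoke the admissible (center-unstable) cone field and family of $cu$-curves introduced in \cite{bergercarrasco2014}, whose existence and $C^2$-robustness are already established there. The key technical step is to show that for $v$ tangent to an admissible curve,
$$
\log\|Df_N^n(p)\cdot v\|\ \ge\ \sum_{k=0}^{n-1}\log\bigl|2+N\cos x_k\bigr|\ -\ C_0,
$$
with $C_0$ independent of $n$, of $N$, and of a small $C^2$-perturbation $g$. This amounts to tracking how the angle between $Df_N^k\cdot v$ and the top singular direction of $Ds_N(x_k,y_k)$ evolves; large values of $|2+N\cos x_k|$ cause rapid realignment with the horizontal, making the angular shortfall telescopic rather than cumulative.

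Next I would prove that, for Lebesgue-a.e.\ $p$, these Birkhoff averages converge to $\log(N/2)$ with an error $o(\log N)$ that is uniform in a $C^2$-neighborhood of $f_N$. The mixing input driving equidistribution comes from the Anosov factor: the shift $P_x\circ A^N(z_k,w_k)$ added into the $x$-coordinate decorrelates exponentially fast in $N$, forcing the orbits $(x_k)$ to spread rapidly on $\T$. The main obstacle is controlling the contribution to the Birkhoff sum coming from the critical set $\{|\cos x|<N^{-\alpha}\}$: although this set has small measure, $\log|2+N\cos x|$ blows up on it, and one must use a large-deviation-type bound --- in the same combinatorial spirit as \cite{bergercarrasco2014}, but now tracking the actual constant instead of a loose $c\log N$ --- to show that the fraction of iterates an orbit spends near the critical set does not exceed its spatial measure by more than $o(1)$, so that neither truncation nor concentration costs more than $o(\log N)$ in the average.

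Finally, the negative central exponent follows by applying the same argument to $f_N^{-1}$; equivalently, since $\det Ds_N\equiv 1$ and $\dim E^c=2$, volume preservation on $E^c$ forces the two central exponents to be opposite, so the symmetric bound is automatic. Full $C^2$-robustness reduces to the $C^2$-continuity of the cone field, the distortion estimates, and the mixing input from $A^{2N}$, each of which is already present in the framework of \cite{bergercarrasco2014}; thus the sharpened bound $(1-\delta)\log N$ persists on a suitable neighborhood $\mathcal{U}_N$.
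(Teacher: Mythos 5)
There is a genuine gap, and it sits at the two places where your outline departs from what can actually be proved pointwise. First, the claimed telescoping inequality $\log\|Df_N^n(p)\cdot v\|\ \ge\ \sum_{k=0}^{n-1}\log|2+N\cos x_k|-C_0$ with $C_0$ independent of $n$ is not justified and is essentially false: each time the orbit crosses the critical strips near $x=\pi/2,\,3\pi/2$ the iterated vector can be thrown close to the vertical direction, and for the next iterates $Ds_N$ applied to a nearly vertical vector gains essentially nothing even where $|\Omega|\sim N$; this shortfall of order $\log N$ recurs at every passage through the critical region, so the angular loss accumulates linearly in $n$ rather than staying bounded. This is exactly why the paper (following Berger--Carrasco) works with $\tilde\delta$-good versus $\tilde\delta$-bad adapted fields along $u$-curves, a recovery statement for bad fields (lemma \ref{lemmaruim}), and the combinatorial induction comparing $\#G_k$ and $\#B_k$ (lemma \ref{goodandbad}, and its Jacobian-weighted version lemma \ref{lemmabolado} for perturbations), and why it settles for the bound $(1-\delta)\log N$ instead of $\log(N/2)+o(\log N)$.

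Second, the equidistribution input you need is not available: for Lebesgue-a.e.\ individual orbit there is no a priori control of the frequency with which $x_k$ visits a set of measure $O(N^{-\alpha})$, and ``exponential decorrelation coming from the Anosov factor'' is only a heuristic --- making it rigorous for a.e.\ point would already amount to ergodic-theoretic information about $f_N$ of the kind the whole argument is trying to establish (for the standard map alone such control is the open positive-entropy conjecture). The paper never proves pointwise equidistribution; instead all averages are taken over $u$-curves, whose $x$-projections sweep the circle at essentially unit speed, so the fraction of curve length inside the critical strip is bounded purely geometrically (proposition \ref{sogood}), and the passage from curve averages to almost every point is done by the density-point/absolute-continuity criterion (propositions \ref{exp1} and \ref{estimaterob}, with bounded distortion, lemma \ref{boundeddist}, in the perturbative case). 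A minor further point: the ``automatic'' negative exponent via $\det Ds_N\equiv 1$ is exact only for $f_N$; for $g\in\mathcal{U}_N$ the center Jacobian is only $e^{\pm\beta}$-close to $1$ and $E^c_g$ moves, so one must either track this error through the constants or, as the paper does, run the whole argument for $g^{-1}$ using the symmetry of lemma \ref{symmetrylemma}.
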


We remark that one can show that $f_N$ is $C^2$-approximated by stably ergodic diffeomorphisms with another approach. This approach uses accessibility, which can be obtained using the results in \cite{horitasambarino2017}, and the criteria of ergodicity in \cite{bw10}. Such approach does not use the non-uniform hyperbolicity of the system. 

\subsection*{Strategy of the proof}
The strategy of the proof has two parts. The first part is the construction of stable and unstable manifolds inside center leaves with precise estimates on its length and ``geometry". The second part is the global strategy to obtain the ergodicity.

For the first part, the main tool is to use the construction of stable manifolds for surface diffeomorphisms, given by Crovisier and Pujals in theorem $5$ of \cite{crovisierpujalsstronglydissipative}. In order to do that two ingredients are needed. The first is a good control of the Lyapunov exponents along the center direction so it verifies some inequality, see the beginning of section \ref{lowerbound} for a discussion. The second is to find sets with positive measure of points with good contraction and expansion for the Oseledecs splitting, for any ergodic component.

Proposition \ref{estimateprop} gives the control needed of the Lyapunov exponents. To prove proposition \ref{estimateprop}, we follow the proof of theorem \ref{bcexm}, given by Berger and Carrasco in \cite{bergercarrasco2014}, with the necessary adaptations to obtain a precise estimate of the Lyapunov exponents along the center. For the second ingredient, we use a version of the Pliss lemma, lemma \ref{pliss}. Following the construction of Crovisier and Pujals in \cite{crovisierpujalsstronglydissipative}, we obtain precise estimates of the length and the ``geometry" of stable and unstable curves inside center leaves, given by propositions \ref{sizemnfld} and \ref{stablesizenotfibered}. So far what is obtained with this construction is that any ergodic component of the Lebesgue measure has a set of points with positive measure having stable and unstable curves in the center leaves of uniform size and controlled ``geometry". That alone guarantees that there are at most finitely many ergodic components. 

For the global strategy there are also two ingredients, the estimate on the measure of points with good expansion and contraction, given by Pliss lemma, and the density of the orbit of almost every center leaf among the center leaves.

The estimate on the measure given by Pliss lemma is used to obtain points that spend a long time inside a region with good hyperbolicity. This together with the control on the length and ``geometry" of the stable and unstable curves inside the center leaves allows us to obtain points whose such curves are very large inside the center direction. The density of the orbit of almost every center leaf together with these large stable and unstable manifolds is then used to apply the Hopf argument and conclude the ergodicity. 

We remark that in this proof we use the Hopf argument for non-uniformly hyperbolic systems and not the version usually used for partially hyperbolic diffeomorphisms, see for instance \cite{bw10}.

\subsection*{Organization of the paper}

In section \ref{preliminary} we will introduce several tools that will be used in the proof. We will assume that proposition \ref{estimateprop} holds throughout sections \ref{intmnfld}, \ref{ergodicfiber}, \ref{stableergodicity} and \ref{bernoullisection}, which are dedicated to prove the {\it main theorem}. The proof of proposition \ref{estimateprop} is then given in Section \ref{exponents}. 

\subsection*{Acknowledgments}
The author would like to thank Sylvain Crovisier for all his patience and guidance with this project. The author thanks Alexander Arbieto for useful conversations specially regarding section \ref{bernoullisection}. The author also benefited from conversations with Martin Leguil, Frank Trujillo, Welington Cordeiro and Bruno Santiago.  

\section{Preliminaries}
\label{preliminary}

\subsection{General theory and results}

\subsubsection*{Partial hyperbolicity and foliations}

A $C^r$-diffeomorphism $f$, with $r\geq 1$, is {\bf partially hyperbolic} if the tangent bundle has a decomposition $TM = E^{ss} \oplus E^c \oplus E^{uu} $, there is a riemannian metric on $M$ and continuous functions $ \chi^*_-, \chi^*_+:M\to \R$, for $*=ss,c,uu$, with such that for any $m\in M$ 
\[
\chi^{ss}_+(m)<1< \chi^{uu}_-(m) \textrm{ and } \chi^{ss}_+(m) < \chi^c_-(m) \leq \chi^c_+(m) < \chi^{uu}_-(m),
\]
it also holds
\[\arraycolsep=1.2pt\def\arraystretch{2}
\begin{array}{rclclcl}
\chi^{ss}_-(m) &\leq & m(Df(m)|_{E^{ss}_m}) &\leq & \|Df(m)|_{E^{ss}_m}\|& \leq &  \chi^{ss}_+(m);\\
\chi^c_-(m) &\leq & m(Df(m)|_{E^c_m}) &\leq & \|Df(m)|_{E^c_m}\|& \leq &  \chi^c_+(m);\\
\chi^{uu}_-(m) &\leq & m(Df(m)|_{E^{uu}_m}) &\leq & \|Df(m)|_{E^{uu}_m}\|& \leq &  \chi^{uu}_+(m),
\end{array}
\]
where $m(Df(m)_{E^*_m}) = \|(Df(m)|_{E^*_m})^{-1}\|^{-1}$ is the co-norm of $Df(m)|_{E^*_m}$, for $* = ss,c,uu$. 	
If the functions in the definition of partial hyperbolicity can be taken constant, we say that $f$ is {\it absolutely partially hyperbolic}.	
 	
It is well known that the distributions $E^{ss}$ and $E^{uu}$ are uniquely integrable, that is, there are two unique foliations $\mathcal{F}^{ss}$ and $\mathcal{F}^{uu}$, with $C^r$-leaves, that are tangent to $E^{ss}$ and $E^{uu}$ respectively. For a point $p\in M$ we will denote by $W^{ss}(p)$ a leaf of the foliation $\mathcal{F}^{ss}$, we will call such leaf the strong stable manifold of $p$. Similarly we define the strong unstable manifold of $p$ and denote it by $W^{uu}(p)$.

\begin{definition}
\label{centerbunched}
A partially hyperbolic diffeomorphism is {\bf center bunched} if   
\[
\chi^{ss}_+(m) < \frac{\chi^c_-(m)}{\chi^c_+(m)} \textrm{ and }\frac{\chi^c_+(m)}{\chi^c_-(m)} < \chi^{uu}_-(m), \textrm{ for every $m\in M$}.
\]
\end{definition}

We denote $E^{cs} = E^s \oplus E^c$ and $E^{cu} = E^c \oplus E^u $.

\begin{definition}
\label{dynamicalcoherence}
A partially hyperbolic diffeomorphism $f$ is {\bf dynamically coherent} if there are two invariant foliations $\mathcal{F}^{cs}$ and $\mathcal{F}^{cu}$, with $C^1$-leaves, tangent to $E^{cs}$ and $E^{cu}$ respectively. From those two foliations one obtains another invariant foliation $\mathcal{F}^c = \mathcal{F}^{cs} \cap \mathcal{F}^{cu}$ that is tangent to $E^c$. We call those foliations the center-stable, center-unstable and center foliation.

\end{definition}

For any $R>0$ we write $W^*_R(p)$ to be the disc of size $R$ centered on $p$, for the Riemannian metric induced by the metric on $M$, contained in the leaf $W^*(p)$, for $*=ss,c,uu$. 

The definition below allows one to obtain higher regularity of the leaves of such foliations.

\begin{definition}
\label{rnormalhyperbolicity}
We say that a partially hyperbolic diffeomorphism $f$ is  {\bf $r$-normally hyperbolic} if for any $m\in M$
\[
\chi^s_+(m)< (\chi^c_-(m))^r \textrm{ and } (\chi^c_+(m))^r< \chi^u_-(m).
\] 
\end{definition}

\begin{definition}
Let $f$ and $g$ be partially hyperbolic diffeomorphisms of $M$ that are dynamically coherent, denote by $\mathcal{F}^c_f$ and $\mathcal{F}^c_g$ the center foliations. We say that $f$ and $g$ are {\bf leaf conjugated} if there is a homeomorphism $h:M\to M$ that takes leaves of $\mathcal{F}^c_f$ to leaves of $\mathcal{F}^c_g$ and such that for any $L \in \mathcal{F}^c_f$ it is verified
\[
h(f(L)) = g(h(L)).
\]

\end{definition}

One may study the stability of partially hyperbolic systems up to leaf conjugacy. Related to this there is a technical notion called {\bf plaque expansivity} which we will not define here, see chapter 7 of \cite{hps} for the definition. The next theorem is important for the theory of stability of partially hyperbolic systems.

\begin{theorem}[\cite{hps}, Theorem $7.4$]
\label{leafconjugacy}
Let $f: M \to M$ be a $C^r$-partially hyperbolic and dynamically coherent diffeomorphism. If $f$ is $r$-normally hyperbolic and plaque expansive then any $g:M \to M$ in a $C^r$-neighborhood of $f$ is partially hyperbolic and dynamically coherent. Moreover, $g$ is leaf conjugated to $f$ and the center leaves of $g$ are $C^r$-immersed manifolds. 
\end{theorem}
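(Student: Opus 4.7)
The plan is to establish the four assertions in order: (i) $g$ is partially hyperbolic, (ii) $g$ is dynamically coherent, (iii) its center leaves are $C^r$-immersed, and (iv) $g$ is leaf conjugate to $f$. For (i), the splitting $TM=E^{ss}\oplus E^c\oplus E^{uu}$ is a dominated splitting for the cocycle $Df$ with definite rate gaps given by the functions $\chi^*_{\pm}$. Dominated splittings are open in the $C^1$-topology and their subbundles depend continuously on the diffeomorphism, so for $g$ sufficiently $C^1$-close to $f$ there is a $Dg$-invariant splitting $E^{ss}_g\oplus E^c_g\oplus E^{uu}_g$ close to the one of $f$ and still verifying the contraction/expansion inequalities required by partial hyperbolicity.

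For (ii) and (iii) I would invoke the plaque family theorem of \cite{hps} (Chapter 5): any continuous $Df$-invariant subbundle with a dominated behaviour admits a locally invariant family of $C^1$ plaques tangent to it. Applying this to $f$ and its perturbation $g$ yields $Dg$-invariant plaque families tangent to $E^{cs}_g$, $E^{cu}_g$ and $E^c_g=E^{cs}_g\cap E^{cu}_g$, each plaque varying continuously with its basepoint. The role of $r$-normal hyperbolicity $\chi^s_+<(\chi^c_-)^r$ and $(\chi^c_+)^r<\chi^u_-$ is to boost the regularity of these plaques to $C^r$: one lifts the center dynamics to the $r$-jet bundle over the plaques and observes that the normal hyperbolicity gap dominates the $r$-th order jet dynamics, so the graph transform on this jet bundle is a contraction and produces a continuous invariant $r$-jet section, i.e.\ $C^r$ plaques.

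Once plaque families are in place, plaque expansivity of $f$ is the decisive hypothesis that upgrades the plaque families of $g$ to genuine foliations. Since plaque expansivity is open under $C^1$-perturbations, $g$ is also plaque expansive; this allows one to show that two plaques whose centers are in the same pseudo-orbit must lie in the same global leaf, eliminating the possibility that plaques wander between several would-be leaves and thereby producing the foliations $\mathcal{F}^{cs}_g$, $\mathcal{F}^{cu}_g$, $\mathcal{F}^c_g$. This gives (ii). Regularity of individual center leaves as $C^r$-immersed manifolds follows by assembling the $C^r$ plaques along orbit pieces.

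For (iv), the leaf conjugacy $h$ is built by a graph transform on the space of continuous maps $M\to M$ that send each $f$-center leaf to a nearby $g$-center plaque and that are sections of a tubular neighborhood of the diagonal in the $E^{ss}\oplus E^{uu}$ direction. One defines an operator $T$ producing from such a candidate $h$ the map obtained by ``pushing'' $h$ one iterate so that $g\circ h= T(h)\circ f$ plaque-by-plaque; the strong stable and unstable rates strictly dominate the center rates by partial hyperbolicity, so $T$ is a contraction in a suitable $C^0$ metric and has a unique fixed point. Plaque expansivity of $f$ is then used again, this time to promote this fixed point from a map defined on plaques to a globally well-defined homeomorphism between the center foliations satisfying $h\circ f(L)=g\circ h(L)$. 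The main technical obstacle is precisely this passage from the plaque-level construction to a global, injective, continuous leaf conjugacy: without plaque expansivity, the fixed-point argument only controls the conjugacy up to indeterminacies within pseudo-orbit classes, and it is this hypothesis that removes the ambiguity.
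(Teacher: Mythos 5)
This statement is not proved in the paper at all: it is quoted verbatim as Theorem 7.4 of Hirsch--Pugh--Shub \cite{hps}, and the author only uses it (via Proposition \ref{coherence} and Remark \ref{continuitycoherent}). So there is no in-paper argument to compare with; the only meaningful comparison is with the standard proof in \cite{hps}, Chapter 7. Measured against that, your outline has the right overall shape -- robustness of the dominated splitting for (i), plaque families and the $C^r$ section/jet argument for regularity, a graph transform for the conjugacy, and plaque expansivity as the uniqueness mechanism -- but it is a sketch in which every substantive step (the plaque family theorem, the contraction estimates on the $r$-jet bundle, the convergence of the graph transform on leaf immersions, and the passage from plaque-level data to a global homeomorphism) is invoked rather than carried out, so it cannot be regarded as a proof of the theorem.

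Two points are genuinely problematic rather than merely compressed. First, you take as input that ``plaque expansivity is open under $C^1$-perturbations'' and then use plaque expansivity of $g$ to produce the foliations $\mathcal{F}^{cs}_g,\mathcal{F}^{cu}_g,\mathcal{F}^c_g$. In \cite{hps} the logic runs the other way: the invariant lamination $\mathcal{F}^c_g$ is constructed directly from $\mathcal{F}^c_f$ (graph transform over the leaf immersions, or shadowing of $g$-pseudo-orbits which respect $f$-plaques), and it is plaque expansivity of $f$ that yields uniqueness of the shadowing leaf and hence the well-defined leaf conjugacy; plaque expansivity of $g$ at the new foliation is a \emph{conclusion} of the theorem, not an a priori fact one may invoke. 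Second, dynamical coherence of $g$ does not follow just from having locally invariant plaque families tangent to $E^{cs}_g$ and $E^{cu}_g$ together with an expansivity property of the center: locally invariant plaques need not assemble into foliations, and the integrability of $E^{cs}_g$ and $E^{cu}_g$ is exactly the hard point. In the HPS framework it is obtained by applying the structural-stability machinery to the invariant foliations $\mathcal{F}^{cs}_f$ and $\mathcal{F}^{cu}_f$ themselves (which are normally contracted, resp.\ expanded), or by an additional saturation argument using the unique integrability of $E^{ss}_g$ and $E^{uu}_g$; your sketch leaves this step unsupported.
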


\begin{remark}
\label{continuitycoherent}
In the proof of the previous theorem, it is obtained for a fixed $R>0$, if $f$ satisfies the hypothesis of the theorem, then for $g$ sufficiently $C^r$-close to $f$, for any $m\in M$, $W^c_{f,R}(m)$ is $C^r$-close to $W^c_{g,R}(m)$. In particular, if the center foliation is uniformly compact then for every $g$ sufficiently $C^r$-close to $f$, for any $m\in M$, $W^c_f(m)$ is $C^r$-close to $W^c_g(m)$.
\end{remark}

It might be hard to check the condition of plaque expansiviness, but this is the case when the center foliation of a dynamically coherent, partially hyperbolic diffeomorphism is at least $C^1$, see Theorem $7.4$ of \cite{hps}. Usually the invariant foliations that appear in dynamics are only H\"older.

We can also obtain a better regularity for the center direction given by the following theorem, see section $4$ of \cite{pughshubwilkinson12} for a discussion on this topic.

\begin{theorem}
\label{regularitycenter}
Let $f$ be a $C^2$-partially hyperbolic diffeomorphism and let $\alpha>0$ be a number such that for every $m\in M$ it is verified
\[
\chi^s_+(m)< \chi^c_-(m) (\chi^s_-(m))^{\alpha} \textrm{ and } \chi^c_+(m) (\chi^u_+(m))^{\alpha} < \chi^u_-(m),
\]
then $E^c$ is $\alpha$-H\"older.
\end{theorem}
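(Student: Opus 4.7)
My plan is to prove this via the classical invariant-section / graph-transform argument of Hirsch--Pugh--Shub, where the bunching conditions in the hypothesis are tuned exactly to produce $\alpha$-H\"older fixed points. Fix a smooth auxiliary splitting $TM = F^s \oplus F^c \oplus F^u$ that is $C^0$-close to the true hyperbolic splitting at every point, and encode $E^c$ as the image of a continuous section $\sigma : M \to \mathrm{Hom}(F^c, F^s \oplus F^u)$ via $E^c_m = \{v + \sigma(m)v : v \in F^c_m\}$. Invariance of $E^c$ under $Df$ then becomes a fixed-point equation $\mathcal T \sigma = \sigma$ for an appropriate graph transform operator $\mathcal T$, and the standard theory already guarantees a unique continuous fixed point.

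The next step is to upgrade this continuous invariant section to an $\alpha$-H\"older one by showing $\mathcal T$ is a contraction on the Banach space of $\alpha$-H\"older sections. Since $f$ is $C^2$, the fiberwise graph transform is Lipschitz in the base point, so $\mathcal T$ is well defined on H\"older sections. When comparing $\sigma(f(m))$ and $\sigma(f(m'))$ for nearby $m, m'$: on the unstable side, the transverse contraction factor inherent in the graph transform is bounded by $\chi^c_+(m)/\chi^u_-(m)$, while the base distance $d(f(m), f(m'))$ that enters the $\alpha$-H\"older comparison grows by at most $\chi^u_+(m)$. The net multiplier on the $\alpha$-H\"older seminorm is then of order $(\chi^c_+(m)/\chi^u_-(m)) \cdot \chi^u_+(m)^\alpha$, which is strictly less than $1$ by the second inequality of the hypothesis. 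Applying the symmetric argument to $f^{-1}$ produces, on the stable side, a multiplier of order $(\chi^s_+(m)/\chi^c_-(m)) \cdot \chi^s_-(m)^\alpha < 1$, which is precisely the first inequality.

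Combining both sides by a standard fiber-contraction principle, $\mathcal T$ is a contraction on the space of $\alpha$-H\"older sections. Its unique fixed point there must agree with the continuous invariant section $\sigma$ that encodes $E^c$, so $E^c$ is $\alpha$-H\"older.

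The main obstacle is the careful bookkeeping required to derive these contraction constants: the bunching inequalities are pointwise rather than uniform, so one has to work either with adapted Riemannian metrics that make the bounds uniform across $M$, or directly with cocycle products of $\chi^*_{\pm}$ along orbits and conclude via a subadditive-type argument. The hypothesis that $f$ is $C^2$ enters precisely here, since it is what guarantees that the action of $Df$ on the relevant Grassmann bundle is Lipschitz in the base, which is the minimal regularity needed for the H\"older-norm contraction estimate to make sense.
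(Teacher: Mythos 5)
Your strategy is not comparable to an internal argument of the paper, because the paper does not prove Theorem \ref{regularitycenter}: it quotes it and refers to section 4 of Pugh--Shub--Wilkinson, ``H\"older foliations, revisited''. That said, the cited source obtains exactly this kind of statement by the method you propose -- a graph-transform / invariant-section argument run on a Banach space of $\alpha$-H\"older sections, with the two bunching inequalities appearing as the contraction constants -- so your route is the standard one rather than a genuinely different one. Two points, however, need repair before your sketch is a proof.

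First, $E^c$ is not an attracting fixed point of a single graph transform on sections of $\mathrm{Hom}(F^c,F^s\oplus F^u)$: under the forward transform the component of the graph in the unstable direction is \emph{expanded} (by roughly $\chi^u_+/\chi^c_-$), and under the backward transform the stable component is expanded, so ``the standard theory already guarantees a unique continuous fixed point'' is not available for one operator $\mathcal{T}$, and the contraction factor $\chi^c_+/\chi^u_-$ you quote on the unstable side is the factor of the \emph{backward} transform, not of the forward one you set up. The standard fix, implicit in your ``apply the symmetric argument to $f^{-1}$'', is to run two honest contractions: the forward graph transform on sections of $\mathrm{Hom}(F^{cu},F^s)$, whose fixed point is $E^{cu}$, and the backward graph transform on sections of $\mathrm{Hom}(F^{cs},F^u)$, whose fixed point is $E^{cs}$; then $E^c=E^{cs}\cap E^{cu}$, and a transverse intersection of two $\alpha$-H\"older bundles whose angle is bounded away from zero is again $\alpha$-H\"older. (A genuinely two-sided operator mixing $f$ and $f^{-1}$ on the two components can be made to work, but then the off-diagonal coupling caused by $F^s\oplus F^u$ being only approximately invariant must be controlled, and one still has to argue its H\"older fixed point is $E^c$; splitting into $E^{cs}$ and $E^{cu}$ avoids this.)

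Second, the stable-side exponent has the wrong sign. In the forward comparison the base distance \emph{contracts}, $d(m,m')\le(\chi^s_-)^{-1}\,d(f(m),f(m'))$, so the multiplier on the $\alpha$-H\"older seminorm is $(\chi^s_+/\chi^c_-)\,(\chi^s_-)^{-\alpha}$, and requiring this to be smaller than $1$ is exactly the first hypothesis $\chi^s_+<\chi^c_-(\chi^s_-)^{\alpha}$. The expression you wrote, $(\chi^s_+/\chi^c_-)\,(\chi^s_-)^{\alpha}<1$, already follows from domination alone for every $\alpha>0$, so it cannot be ``precisely the first inequality'' and would impose no condition tying $\alpha$ to the rates. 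Your unstable-side multiplier $(\chi^c_+/\chi^u_-)\,(\chi^u_+)^{\alpha}$ is correct. With these two corrections your argument coincides with the standard proof in the reference the paper cites.
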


\subsubsection*{Pesin's theory}

Let $f$ be a $C^1$-diffeomorphism, for a number $\lambda\in \R$ define $E^{\lambda}_p$ to be the subspace of the vector zero united with all vectors $v\in T_pM-\{0\}$ such that the number $\lambda(p,v)=\lambda$, where $\lambda(p,v)$ is the number defined in (\ref{lyapunovdefinition}).

We say that a set $R$ has full probability if for any $f$-invariant probability measure $\nu$ it is verified that $\nu(R)=1$.  The following theorem is known as the Oseledets theorem.

\begin{theorem}[\cite{barreirapesinbook}, Theorems $2.1.1$ and $2.1.2$]
\label{oseledets}
For any $C^1$-diffeomorphism $f$, there is a set $\mathcal{R}$ of full probability, such that for every $\varepsilon>0$ it exists a measurable function $C_{\varepsilon}: \mathcal{R} \to (1, +\infty)$ with the following properties:
\begin{enumerate}
\item for any $p\in \mathcal{R}$ there are numbers $s(p)\in \N$, $\lambda_1(p) < \cdots < \lambda_{s(p)}(p)$ and a decomposition $T_pM = E^{1}_p \oplus \cdots \oplus E^{s(p)}_p$;\\
\item $s(f(p)) = s(p)$, $\lambda_i(f(p)) = \lambda_i(p)$ and $Df(p).E^{i}_p= E^{i}_{f(p)}$, for every $i= 1, \cdots, s(p)$;
\item for every $v\in E^{i}_p- \{0\}$ and $n\in \Z$ 
\[
C_{\varepsilon}(p)^{-1} e^{n.(\lambda_i(p)-\varepsilon)} \leq \frac{\|Df^n(p).v\|}{\|v\|} \leq C_{\varepsilon}(p) e^{n.(\lambda_i(p) + \varepsilon)} \textrm{ and } \lambda(p,v) = \lambda_i(p);
\]
\item the angle between $E^{i}_p$ and $ E^{j}_p$ is greater than $C_{\varepsilon}(p)^{-1}$, if $i\neq j$;
\item $C_{\varepsilon}(f(p)) \leq e^{\varepsilon} C_{\varepsilon}(p)$.
\end{enumerate} 
\end{theorem}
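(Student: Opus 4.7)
The plan is to establish the statement one invariant probability measure at a time. Fix an $f$-invariant probability measure $\nu$; since the ``full probability'' conclusion is closed under countable intersections and the ergodic decomposition is measurable, it suffices to treat the ergodic case and then integrate over ergodic components.

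The analytic engine is Kingman's subadditive ergodic theorem. For each $k$ with $1\leq k\leq \dim M$, the sequence $a_n^{(k)}(p):=\log\|\wedge^k Df^n(p)\|$ is subadditive in $n$, so Kingman gives $\nu$-a.e.\ convergence of $n^{-1} a_n^{(k)}(p)$ to a constant $\ell_k$. Differencing, $\ell_k-\ell_{k-1}$ is the sum of the top $k$ Lyapunov exponents with multiplicity, from which the distinct values $\lambda_1(p)<\cdots<\lambda_{s(p)}(p)$ and their multiplicities are read off. The $f$-invariance of $s(\cdot)$ and $\lambda_i(\cdot)$ in (2) is then immediate from the cocycle identity.

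To obtain the filtration, I would set
\[
V^i_p=\bigl\{v\in T_pM:\limsup_{n\to+\infty}\tfrac{1}{n}\log\|Df^n(p).v\|\leq \lambda_i(p)\bigr\}
\]
and check that these form a $Df$-invariant decreasing filtration whose successive quotients have the dimensions prescribed by the multiplicities above. Repeating the construction for $f^{-1}$ produces a second filtration $\tilde V^j_p$, and the Oseledets subspace is defined by $E^i_p=V^i_p\cap \tilde V^{s(p)-i+1}_p$. The sharp two-sided growth rate of $\|Df^n(p).v\|$ on $E^i_p\setminus\{0\}$ asserted in (3) then follows by combining the forward and backward one-sided estimates.

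The existence of the tempered constants $C_\varepsilon$ in (3)-(5) reduces to the following tempering lemma: for any measurable $\psi\colon M\to(0,\infty)$ with $\log\psi$ suitably integrable, the function $\Psi(p)=\sup_{n\in\Z}e^{-\varepsilon|n|}\psi(f^n(p))$ is $\nu$-a.e.\ finite and satisfies $\Psi(f(p))\leq e^\varepsilon\Psi(p)$, essentially by Borel--Cantelli combined with Poincar\'e recurrence. Applying this both to the defect $\|Df^n|_{E^i_p}\|/e^{n\lambda_i(p)}$ and to the reciprocals of the angles between distinct $E^i_p$ yields $C_\varepsilon$ verifying (4) and (5). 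The main obstacle I anticipate is the refinement from filtration to splitting: showing that $V^i_p\cap\tilde V^{s(p)-i+1}_p$ has the correct dimension requires quantitative control of the angle between the forward- and backward-fast subspaces, and this is where the classical argument either invokes Ruelle's polar decomposition of $Df^n(p)^*Df^n(p)$ (taking $2n$-th roots and passing to the limit of its eigenprojections) or proceeds by a delicate induction on $s(p)$.
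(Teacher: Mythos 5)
The paper does not prove this statement at all: Theorem \ref{oseledets} is quoted verbatim from Barreira--Pesin (Theorems 2.1.1 and 2.1.2) and used as a black box, so there is no internal proof to compare against. Your outline is the standard route taken in that reference and in Ruelle's work -- Kingman's theorem applied to $\log\|\wedge^k Df^n\|$ to extract the exponents, forward and backward filtrations intersected to produce the splitting, and a tempering lemma to manufacture $C_\varepsilon$ with $C_\varepsilon(f(p))\leq e^{\varepsilon}C_\varepsilon(p)$ -- so in spirit you are reproducing the cited proof rather than inventing a new one.

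That said, as a proof the proposal stops short of the actual content of the theorem at exactly the points you flag. The claim that $E^i_p=V^i_p\cap\tilde V^{s(p)-i+1}_p$ has the correct dimension, is $Df$-invariant, and carries the sharp two-sided estimate in item (3) is the heart of the matter; invoking ``Ruelle's polar decomposition or a delicate induction'' names the tool but does not carry out the argument (the convergence of $\bigl(Df^n(p)^{*}Df^n(p)\bigr)^{1/2n}$ and of its eigenprojections is precisely what has to be proved). Similarly, item (4) cannot be obtained by ``applying the tempering lemma to the reciprocals of the angles'' until you have first shown that the angles between the Oseledets subspaces decay subexponentially along a.e.\ orbit (equivalently, that regular points are Lyapunov--Perron regular, via the volume/determinant comparison); integrability of the logarithm of the angle is not given a priori in the $C^1$ setting. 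Finally, for the ``full probability'' formulation, reducing to one invariant measure at a time is fine only because the set $\mathcal{R}$ is defined by pointwise convergence conditions and is therefore a single Borel set independent of the measure; one cannot literally intersect over the (uncountably many) invariant measures, so this should be said explicitly rather than attributed to closure under countable intersections.
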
 

We call the set $\mathcal{R}$ the set of regular points. For a fixed $\varepsilon>0$ and each $l\in \N$ we define the {\bf Pesin block} 
\begin{equation}
\label{pesinblock}
\mathcal{R}_{\varepsilon,l} = \{p\in \mathcal{R}: C_{\varepsilon}(p) \leq l\}.
\end{equation} 
We have the following decomposition
\begin{equation}
\label{unionpesinblocks}
\mathcal{R} = \displaystyle \bigcup_{l\in \N} \mathcal{R}_{\varepsilon,l}.
\end{equation}

A point $p\in \mathcal{R}$ has $k$ negative Lyapunov exponents if 
\[
\displaystyle \sum_{i: \lambda_i(p) <0} dim (E^i_p) = k.
\] 
Similarly for positive or zero Lyapunov exponents. From now on, we assume that $\nu$ is a $f$-invariant measure, not necessarily ergodic, and there are numbers $k$ and $l$ such that $\nu$-almost every point $p\in \mathcal{R}$ has $k$ negative and $l$ positive Lyapunov exponents. 

For a regular point we write 
\begin{equation}
\label{oseledecsdirection}
E^s_p = \displaystyle \bigoplus_{i: \lambda_i(p)<0} E^i_p \textrm{ and } E^u_p = \bigoplus_{i: \lambda_i(p)>0} E^i_p.
\end{equation}

\begin{definition}
\label{pesinmanifolddef}
For $f$ a $C^{2}$ diffeomorphism the {\bf stable Pesin manifold} of the point $p\in \mathcal{R}$ is
\[
W^s(p) =\{ q\in M: \displaystyle \limsup_{n\to +\infty} \frac{1}{n} \log d(f^n(p), f^n(q)) <0 \}.
\]
Similarly one defines the {\bf unstable Pesin manifold} as
\[
W^u(p) = \{ q\in M: \displaystyle \limsup_{n\to +\infty} \frac{1}{n} \log d(f^{-n}(p), f^{-n}(q)) <0 \}.
\]
\end{definition}

\begin{remark}
\label{osedetspesin}
If $f$ is also partially hyperbolic, with $TM = E^{ss} \oplus E^c \oplus E^{uu}$ then the Oseledets splitting refines the partial hyperbolic splitting. This means that for a regular point $p\in \mathcal{R}$, there are numbers $1\leq l_1< l_2 < s(p)$ such that 
\[
E^{ss}_p = \displaystyle \bigoplus_{i=1}^{l_1} E^i_p, \textrm{ } E^c_p = \bigoplus_{i=l_1+1}^{l_2} E^i_p \textrm{ and } E^{uu}_p = \bigoplus_{i=l_2+1}^{s(p)} E^i_p.
\]

This follows from a standard argument similar to the proof of the unicity of dominated splittings, see section $B.1.2$ from \cite{bonattidiazvianabook}. It also holds that for any regular point $p$, $E^{ss}_p \subset E^s_p$ and $E^{uu}_p \subset E^u_p$.
\end{remark}

Pesin's manifolds are immersed submanifolds, see section $4$ of \cite{pesin77}. A difficulty that appears is that such submanifolds in general do not vary continuously with the point, but they vary continuously on Pesin blocks. Let us make this more precise. Define $W_{loc}^s(p)$ to be the connected component $D^s(p)$ of $W^s(p) \cap B(p,r)$ containing $p$, such that $\partial D^s(p) \subset \partial B(p,r)$ and $r>0$ is a small fixed number depending only on $\varepsilon>0$ and $l\in \N$. 

\begin{theorem}[\cite{pesin77}, Theorems $4.1$ and $4.2$]
\label{pesincontinuity}
Let $f:M\to M$ be a $C^2$-diffeomorphism preserving a smooth measure $\nu$ and suppose that $\nu$-almost every regular point $p$ has the same number of negative and positive Lyapunov exponents. For each $l>1$, $\varepsilon>0$ small and $p\in \mathcal{R}_{\varepsilon,l}$, it is verified:
\begin{enumerate}
\item $W^s_{loc}(p)$ contains a disc centered at $p$ and tangent to $\displaystyle E^s_p$;\\
\item $p \mapsto W^s_{loc}(p)$ varies continuously in the $C^1$-topology over $\mathcal{R}_{\varepsilon,l}$. 
\end{enumerate}
\end{theorem}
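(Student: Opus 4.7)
The statement is a classical result of Pesin, so any plan amounts to reconstructing the Hadamard--Perron graph transform argument and keeping track of how the various constants depend on the Pesin block index $l$. My plan is to build adapted \emph{Lyapunov charts} on $\mathcal{R}_{\varepsilon,l}$, realise the local stable manifold $W^s_{\mathrm{loc}}(p)$ as the fixed point of a contracting graph transform on a uniform ball of these charts, and then deduce the $C^1$-continuous dependence on $p\in\mathcal R_{\varepsilon,l}$ from the uniformity of the constants on the block.

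First I would use item~(3) of Theorem~\ref{oseledets} to build, at each regular $p$, a new inner product $\langle\cdot,\cdot\rangle'_p$ on $T_pM$ (a Lyapunov metric) with respect to which $Df(p)|_{E^s_p}$ has norm at most $e^{\lambda^s(p)+\varepsilon}<1$ and $Df(p)|_{E^u_p}$ has co-norm at least $e^{\lambda^u(p)-\varepsilon}>1$, and with the angle between $E^s_p$ and $E^u_p$ bounded below on $\mathcal{R}_{\varepsilon,l}$ by a constant $\alpha(l)>0$ depending only on $l$ (by item~(4)). Composing with a linear trivialisation, this gives a chart $\Phi_p:B(0,r(l))\subset E^s_p\oplus E^u_p\to M$ in which $f$ reads
\[
\tilde f_p(u,v)=\bigl(A_p(u)+R^s_p(u,v),\, B_p(v)+R^u_p(u,v)\bigr),
\]
with $\|A_p\|<e^{\lambda^s(p)+\varepsilon}$, $m(B_p)>e^{\lambda^u(p)-\varepsilon}$, and nonlinear remainders $R^{s,u}_p$ of class $C^1$ whose derivative vanishes at $0$. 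The key point, using item~(5), is that because $C_\varepsilon$ grows at most by a factor $e^\varepsilon$ per iterate, the radius $r(l)$ can be chosen uniformly in $l$ so that the $C^1$-norm of the remainders on $B(0,r(l))$ is smaller than the hyperbolicity gap $e^{\lambda^u-\varepsilon}-e^{\lambda^s+\varepsilon}$; the $C^2$ assumption on $f$ is what guarantees a quantitative bound on this nonlinearity.

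With this setup I would run the standard graph transform on the space $\mathcal{G}_p$ of Lipschitz sections $\sigma:B_{E^s_p}(0,r(l))\to E^u_p$ with $\sigma(0)=0$ and $\mathrm{Lip}(\sigma)\le 1$. The contracting Lyapunov chart makes $\Gamma_p:\mathcal{G}_{f(p)}\to \mathcal{G}_p$ well-defined by pulling graphs back under $\tilde f_p^{-1}$; the hyperbolicity gap plus the smallness of the remainders shows $\Gamma_p$ is a uniform contraction in a suitable sup-norm. Iterating over the orbit yields a unique fixed family of graphs $\Sigma_p$; the local stable manifold $W^s_{\mathrm{loc}}(p)=\Phi_p(\mathrm{graph}\,\Sigma_p)$ is then an embedded disc centred at $p$, tangent to $E^s_p$ at $0$ by the differentiability of $\Sigma_p$ at the origin. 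This gives item~(1).

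For item~(2), continuity on the block follows from a \emph{uniform} version of the above: on $\mathcal{R}_{\varepsilon,l}$ all constants—chart radius $r(l)$, hyperbolicity rates, angle lower bound, and nonlinearity estimates—depend only on $l$, so the family of graph-transform operators $\{\Gamma_p\}_{p\in\mathcal R_{\varepsilon,l}}$ is uniformly contracting in a common Banach space (after trivialising $E^s_p$, $E^u_p$ continuously over the block by a measurable selection, which is possible on a compact subset of the Pesin block up to restricting to a further compact set of almost full measure). Since the Oseledets splitting $p\mapsto E^{s}_p\oplus E^u_p$ is continuous on $\mathcal R_{\varepsilon,l}$ (Theorem~\ref{oseledets}~(4)--(5) combined with standard dominated splitting arguments), and a uniform family of contractions has fixed points depending continuously on the parameter, one obtains $C^0$-continuity of $p\mapsto W^s_{\mathrm{loc}}(p)$ as a graph; the $C^1$-continuity comes from the same argument applied to the derivative cocycle on the graph tangents, which is again a uniform contraction. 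The main obstacle I would worry about is precisely this uniformisation step: the Oseledets splitting is only measurable a priori, so one must argue that its restriction to $\mathcal R_{\varepsilon,l}$ is uniformly continuous, which uses items~(4) and~(5) of Theorem~\ref{oseledets} to prevent degeneration of the angle and the hyperbolicity along orbits on the block.
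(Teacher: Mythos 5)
The paper does not prove this statement at all: it is quoted verbatim from Pesin's work (Theorems 4.1 and 4.2 of \cite{pesin77}), so there is no internal proof to compare your argument against. Judged on its own terms, your outline is the standard modern route to Pesin's local stable manifold theorem --- Lyapunov charts built from item (3) of Theorem \ref{oseledets}, a graph transform that is a uniform contraction because the nonlinear remainders are controlled by the $C^2$ (in fact $C^{1+\alpha}$) hypothesis, and continuity on the block from the uniformity of all constants over $\mathcal{R}_{\varepsilon,l}$ together with continuity of the Oseledets data there --- and this is essentially Pesin's argument as presented, e.g., in \cite{barreirapesinbook}.

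Two points deserve care if you were to write this out. First, under the stated hypotheses the splitting $E^s_p\oplus E^u_p$ need not span $T_pM$: zero exponents are not excluded, so the graph transform must be run against the full Oseledets complement of $E^s_p$ (the center--unstable part), whose co-norm in the Lyapunov metric is only bounded below by $e^{-\varepsilon}$ rather than by $e^{\lambda^u-\varepsilon}>1$; the contraction then comes from the gap $e^{\lambda^s+\varepsilon}<e^{-\varepsilon}$, which is exactly why the theorem requires $\varepsilon$ small (relative to the smallest modulus of a negative exponent on the block). Your sketch as written silently assumes a hyperbolic splitting, which suffices for the application in this paper but not for the theorem as stated. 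Second, the continuity in item (2) is claimed over $\mathcal{R}_{\varepsilon,l}$ itself, not over a further compact subset of almost full measure; the standard resolution is that the defining inequalities of the block, together with items (4) and (5) of Theorem \ref{oseledets}, force the splitting $p\mapsto E^s_p$ to be uniformly continuous on $\mathcal{R}_{\varepsilon,l}$, so the extra ``measurable selection / restrict to a compact set'' step you hedge with is not needed and would actually weaken the conclusion. Neither point is a fatal gap, but both need to be addressed to recover the theorem in the generality in which it is stated.
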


A partition $\xi$ of $M$ is measurable with respect to a probability measure $\nu$, if up to a set of $\nu$-zero measure, the quotient $M/\xi$ is separated by a countable number of measurable sets. Denote by $\hat{\nu}$ the quotient measure in $M/\xi$.

By Rokhlin's desintegration theorem \cite{rokhlin1}, for a measurable partition $\xi$, there is set of conditional measures $\{\nu_D^{\xi}: D\in \xi\}$ such that for $\hat{\nu}$-almost every $D\in \xi$ the measure $\nu_D^{\xi}$ is a probability measure supported on $X$, for each measurable set $B\subset M$ the application $D \mapsto \nu^{\xi}_D(B)$ is measurable and it holds
\begin{equation}
\label{desintegration}
\nu(B) = \displaystyle \int_{M/\xi} \nu_D^{\xi}(B) d\hat{\nu}(D).
\end{equation}

Fix $\mathcal{R}_{\varepsilon,l}$ a Pesin block. For $p\in \mathcal{R}_{\varepsilon,l}$ and for $\rho>0$ small, define $B_s(p,\rho)$ as the union of the local stable pesin manifolds of the points $y\in B(p,\rho) \cap \mathcal{R}_{\varepsilon,l}$. Consider the measure $\nu_{p,\rho}= \nu|_{B_s(p,\rho)}$ and the measurable partition $\xi_s$ given by the partition of $B_s(p,\rho)$ by local stable Pesin manifolds. For such a partition let $\{\nu_{p,\rho,D}^{\xi_s}: D\in \xi_s\}$ be the set of conditional measures of the desintegration of $\nu_{p,\rho}$ with respect to $\xi_s$.

\begin{definition}
\label{absolutecontinuous1}
The measure $\nu$ has {\bf absolute continuous conditional measures on stable manifolds} if for every Pesin block $\mathcal{R}_{\varepsilon,l}$, every $\rho>0$ small enough, for $\hat{\nu}_{p,\rho}$-almost every $X\in \xi_s$, the measure $\nu_{p,\rho,D}^{\xi_s}$ is equivalent to the Lebesgue measure of a local stable Pesin manifold.
\end{definition}

Take $p\in \mathcal{R}$ and let $T_1$ and $T_2$ be two disks transverse to $W^s(p)$ close to $p$. We can define the holonomy map related to these disks as the map $H$ defined on a subset of $T_1\cap \mathcal{R}$, consisting of the points $q$ such that $W^s_{loc}(q)$ intersects transversely $T_2$. Recall that we are assuming that the number of negative and positive Lyapunov exponents are the same $\nu$-almost everywhere.

\begin{definition}
\label{abscontinuitystablepartition}
We say that the stable partition is {\bf absolutely continuous} if all holonomy maps are measurable and take sets with zero Lebesgue measure of $T_1$ to into sets of zero Lebesgue measure of $T_2$.
\end{definition}

Analogously we define all the above for the unstable partition.

\begin{theorem}[\cite{pesin77}, Theorem $4.4$]
\label{absolutecontinuouspesin}
Let $f$ be a $C^2$-diffeomorphism preserving a smooth measure $\nu$, then the stable and unstable partitions are absolutely continuous.
\end{theorem}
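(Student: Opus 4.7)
The plan is to prove absolute continuity of the stable holonomy via the classical Anosov--Sinai--Pesin scheme; the unstable case follows by applying the same argument to $f^{-1}$. The broad idea is to restrict to a Pesin block on which stable manifolds vary continuously in $C^1$, iterate forward to exploit the uniform contraction along $E^s$ there, and then pull back via a change of variables to express the Jacobian of the holonomy $H$ as a convergent infinite product.

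First I would fix a Pesin block $\mathcal{R}_{\varepsilon,l}$ with $\varepsilon$ much smaller than the spectral gap between the negative Lyapunov exponents and the rest of the spectrum. On such a block, Theorem \ref{pesincontinuity} gives uniformly sized local stable discs $W^s_{loc}(q)$ tangent to $E^s_q$ that vary $C^1$-continuously in $q$. Fixing two smooth transversals $T_1,T_2$ to $W^s(p)$ near a density point $p\in\mathcal{R}_{\varepsilon,l}$, chosen close enough to be nearly parallel, one checks that $H$ is well-defined, measurable, and continuous on the relatively open set of points $q\in T_1\cap\mathcal{R}_{\varepsilon,l}$ whose local stable manifold meets $T_2$.

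The crux is to obtain a formula of the shape
$$
J(H)(q) \;=\; \prod_{k=0}^{\infty}\frac{\det\bigl(Df(f^k(q))|_{T_{f^k(q)} f^k(T_1)}\bigr)}{\det\bigl(Df(f^k(H(q)))|_{T_{f^k(H(q))} f^k(T_2)}\bigr)}
$$
and to show that this product converges to a positive, finite value. The heuristic is that, as $k\to\infty$, the tangent planes $T_{f^k(q)}(f^kT_1)$ and $T_{f^k(H(q))}(f^kT_2)$ rotate exponentially fast towards $E^s_{f^k(q)}$ and $E^s_{f^k(H(q))}$ respectively (a Hadamard--Perron graph transform argument), while the orbits of $q$ and $H(q)$ stay exponentially close because they lie on the same stable leaf. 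Combined with H\"older continuity of $E^s$ and of $Df$ along stable leaves over the Pesin block, the logarithms of successive ratios become summable, the product converges, and the resulting $J(H)$ is positive and finite. This gives absolute continuity of $H$, and the statement on conditional measures follows by a Fubini-type decomposition using the foliation of a neighborhood of $p$ by local stable plaques.

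The principal obstacle is precisely the summability estimate: one must control quantitatively how close $T(f^kT_i)$ sits to $E^s$ after $k$ iterates, against the subexponential growth $e^{\varepsilon|k|}$ of the Oseledets constant $C_\varepsilon$ along the orbit (item (5) of Theorem \ref{oseledets}). This requires adopting the Lyapunov norm adapted to $\varepsilon$, using the cone-field / graph transform on the block to force the exponential alignment, and comparing the estimates at the two points $q$ and $H(q)$ on a common stable leaf. The H\"older regularity of $E^s$ and of $Df$ restricted to stable plaques is essential here, and this is where the $C^2$ hypothesis on $f$ enters in an indispensable way.
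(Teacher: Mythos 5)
This statement is not proved in the paper at all: it is imported verbatim from Pesin's work (\cite{pesin77}, Theorem $4.4$) and used as a black box, so there is no internal argument to compare yours against. Your sketch reproduces the classical Anosov--Sinai--Pesin scheme that underlies the cited proof: fix a Pesin block $\mathcal{R}_{\varepsilon,l}$, push the two transversals forward so that their tangent planes align exponentially fast with $E^s$, write the Jacobian of the holonomy as an infinite product of determinant ratios, and get summability from Lyapunov (adapted) norms, the exponential closeness of orbits on a common stable leaf, and the H\"older regularity of $E^s$ and $Df$ (this is where $C^2$, or $C^{1+\alpha}$, enters). The only point a complete write-up would still have to address, and which your outline passes over, is the step from the convergent Jacobian product to the actual measure-theoretic statement: $H$ is defined only on a measurable (in general non-open) subset of $T_1$, so one must run the standard covering/bounded-distortion comparison of Lebesgue measures of images of small transversal pieces under $f^n$ (rather than a pointwise change-of-variables) to conclude that null sets go to null sets; with that ingredient added, your plan is the standard proof of the cited theorem.
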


\begin{remark}
\label{absoluteremark}
This theorem implies that $\nu$ has absolute continuous conditional measures with respect to the stable, or unstable, manifolds, see theorem $5.11$ in \cite{barreirapesinbook}. In particular, a Fubini-like formula (\ref{desintegration}) holds locally.
\end{remark}

The notion of absolute continuity also makes sense for foliations, but for the holonomy maps of the foliation. The strong stable foliation $\mathcal{F}^{ss}$ of a $C^2$-partially hyperbolic diffeomorphism is absolutely continuous, see \cite{anosov67}. 

Usually the partition by strong stable leaves, given by the foliation $\mathcal{F}^{ss}$, is not measurable. In a foliated chart $U$, one may consider the restricted foliation $\mathcal{F}^{ss}|_U$ and the partition by strong stable leaves forms a measurable partition of $U$. Thus one can disintegrate a smooth measure locally along such foliation. The absolute continuity of the strong stable foliation implies that the conditional measures of this disintegration are equivalent to the Lebesgue measure of these manifolds, in particular a Fubini-like formula also holds, see \cite{pughvianawilkinson2007} for a discussion.

Recall that a $f$-invariant measure $\nu$ is non-uniformly hyperbolic if for $\nu$-almost every point all Lyapunov exponents are non-zero.

\begin{theorem}[\cite{pesin77}, Theorems $7.2$ and $8.1$]
\label{ergodiccomponentspesin}
Let $f$ be a $C^2$-diffeomorphism preserving a smooth measure $\nu$. If $\nu$ is non-uniformly hyperbolic then there are at most countably many ergodic components of $\nu$, that is, 
\[
\nu = \displaystyle \sum_{i\in \N} c_i \nu_i,
\]
 where $c_i \geq 0$, $\displaystyle \sum_{i\in \N} c_i = 1$, each $\nu_i$ is a $f$-invariant ergodic probability measure and if $i\neq j$ then $\nu_i \neq \nu_j$. Moreover, for each $i\in \N$, there is $k_i\in \N$ such that 
\[
\nu_i = \frac{1}{k_i} \displaystyle \sum_{j=1}^{k_i} \nu_{i,j},
\]
where each $\nu_{i,j}$ is a $f^{k_i}$-invariant probability measure, the system $(f^{k_i},\nu_{i,j})$ is Bernoulli and $\nu_{i,j} \neq \nu_{i,j}$ if $j \neq l$. Furthermore, $f$ permutes the measures $\nu_{i,j}$, that is, $f_*(\nu_{i,j}) = \nu_{i,j+1}$ for $j=1, \cdots, k_i-1$ and $f_*(\nu_{i,k_i}) = \nu_{i,1}$, where $f_*(\nu)$ denotes the pushforward of a measure $\nu$ by $f$. 
\end{theorem}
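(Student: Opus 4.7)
The plan is to establish the ergodic decomposition in three stages: countability of the ergodic components, their cyclic structure under $f$, and the Bernoulli property for the iterate $f^{k_i}$.

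\textbf{Countability of ergodic components.} I would apply the Hopf argument adapted to the non-uniformly hyperbolic setting. For any continuous $\varphi: M \to \mathbb{R}$, Birkhoff's theorem yields forward and backward averages $\varphi^{\pm}$ coinciding $\nu$-almost everywhere with the conditional expectation $\mathbb{E}(\varphi \mid \mathcal{I})$, where $\mathcal{I}$ is the $\sigma$-algebra of $f$-invariant sets. By the very definition of $W^s(p)$, the forward average $\varphi^+$ is constant along stable Pesin manifolds, and symmetrically $\varphi^-$ is constant along unstable ones. Theorem \ref{absolutecontinuouspesin} then transfers this information across leaves: on each Pesin block $\mathcal{R}_{\varepsilon,l}$, the local product structure supplied by absolute continuity of both the stable and unstable holonomies, combined with the $C^1$-continuity of the local Pesin manifolds on blocks from Theorem \ref{pesincontinuity}, shows that the saturation of any neighborhood of a density point of $\mathcal{R}_{\varepsilon,l}$ by local stable and unstable Pesin manifolds has positive $\nu$-measure and lies, modulo a null set, inside a single ergodic component. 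Since $\mathcal{R} = \bigcup_l \mathcal{R}_{\varepsilon,l}$, each ergodic component must meet some $\mathcal{R}_{\varepsilon,l}$ with positive measure, and since in any such block only finitely many pairwise disjoint positive-measure hyperbolic classes can fit, one obtains at most countably many ergodic components $\nu_i$ with $\nu = \sum c_i \nu_i$.

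\textbf{Cyclic decomposition and Kolmogorov property.} Fix an ergodic component $\nu_i$. Although $(f,\nu_i)$ is ergodic it need not be mixing, so I would extract a period $k_i \geq 1$ from the spectral structure: it is the smallest integer for which the ergodic decomposition of $(f^{k_i},\nu_i)$ consists of components which are mixing under $f^{k_i}$. Concretely, using the measurable partition into local unstable Pesin manifolds and the absolute continuity of the stable holonomy, one shows that the tail $\sigma$-algebra $\bigcap_n f^{-n}(\xi^u)$ reduces, modulo sets of measure zero, to the $\sigma$-algebra of $f^{k_i}$-invariant sets; this produces $k_i$ probabilities $\nu_{i,1},\dots,\nu_{i,k_i}$ cyclically permuted by $f$ with $f_*\nu_{i,j}=\nu_{i,j+1}$. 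The Kolmogorov property of each $(f^{k_i},\nu_{i,j})$ is then obtained by identifying the Pinsker $\sigma$-algebra as trivial on each piece, via the Rokhlin entropy formula combined with the strict positivity of the sum of positive Lyapunov exponents on the ergodic component (a consequence of non-uniform hyperbolicity) and the absolute continuity of unstable conditionals of $\nu$.

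\textbf{Bernoulli property.} To upgrade Kolmogorov to Bernoulli for $(f^{k_i},\nu_{i,j})$ I would invoke Ornstein's characterization: it suffices to verify the very-weakly-Bernoulli (VWB) condition for a sufficiently refined generating partition. The geometric inputs are the exponential contraction of distances along stable Pesin manifolds together with the absolute continuity of the stable holonomy (Theorem \ref{absolutecontinuouspesin}); together they let one match forward $f^{k_i}$-orbits of typical points lying on a common unstable Pesin manifold in the $\bar d$-sense, because two points on the same stable leaf become exponentially close under iteration while the holonomy between nearby unstable leaves preserves measure up to bounded Jacobian. The main obstacle is uniform control of these matching estimates: one must do all the analysis on a Pesin block $\mathcal{R}_{\varepsilon,l}$ of $\nu_{i,j}$-measure arbitrarily close to $1$, where $C_{\varepsilon}(\cdot)$ is bounded by $l$ and the families $p \mapsto W^s_{\mathrm{loc}}(p)$, $p \mapsto W^u_{\mathrm{loc}}(p)$ vary continuously in the $C^1$ topology by Theorem \ref{pesincontinuity}, so the matching constants can be made uniform.
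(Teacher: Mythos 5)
The paper does not prove this statement at all: it is imported as background, with attribution to Pesin's 1977 paper (Theorems 7.2 and 8.1), so there is no internal proof to compare yours against. Judged on its own terms, your sketch follows the standard classical route — the Hopf argument on Pesin blocks with absolute continuity of the stable/unstable laminations for countability, extraction of a finite cyclic factor, and Ornstein's very-weak-Bernoulli criterion for the Bernoulli property — which is indeed how the cited result is established in the literature.

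Two steps are looser than they should be. First, you define $k_i$ as the smallest integer for which the ergodic decomposition of $(f^{k_i},\nu_i)$ consists of mixing components; the existence of such an integer is part of what must be proved, and in the classical argument it comes from showing that the Pinsker $\sigma$-algebra of $(f,\nu_i)$ is atomic with finitely many atoms cyclically permuted by $f$ — a finiteness statement that again rests on the local product structure of Pesin manifolds and absolute continuity, not on spectral considerations alone. Second, the Kolmogorov property does not follow from the Pesin--Rokhlin entropy formula together with positivity of the exponents: positive entropy does not imply triviality of the Pinsker algebra. The actual mechanism is that every Pinsker-measurable set is, modulo null sets, a union of entire stable manifolds and also of entire unstable manifolds, and the same Hopf-type saturation argument used for countability then forces such a set to have measure $0$ or $1$ in each atom of the cyclic factor. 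Your remark about the tail $\sigma$-algebra of the unstable partition points in exactly this direction, so the defect is one of arrangement rather than a missing idea; but as written, the entropy-formula step would not close the argument.
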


All the results for Pesin's theory were stated for $C^2$-diffeomorphisms, but they hold for $C^{1+\alpha}$-diffeomorphisms.

\subsection{The strong stable and strong unstable holonomies}

Let $f$ be a partially hyperbolic, dynamically coherent diffeomorphism. Each leaf of the foliation $\mathcal{F}^{cs}$ is foliated by strong stable manifolds. For a point $p\in M$ and $q\in W^{ss}_{1}(p)$, where $W^{ss}_{1}(p)$ is the strong stable manifold of size $1$, we can define the stable holonomy map restricted to the center-stable manifold, between center manifolds. Let us be more precise. We can choose two small numbers $R_1,R_2>0$, with the property that for any, $q\in W^{ss}_1(p)$, for any $z\in W^c_{R_1}(p)$, there is only one point in the intersection $W^{ss}_{2}(z) \cap W^c_{R_2}(q)$. We define $H^s_{p,q}(z) =W^{ss}_{2}(z) \cap W^c_{R_2}(q)$. With this construction we obtain a map $H^s_{p,q}: W^c_{R_1}(p) \to W^c_{R_2}(q)$. By the compactness of $M$ we can take the numbers $R_1$ and $R_2$ to be constants, independent of $p$ and $q$. 

We can define analogously the unstable holonomy map, for $p\in M$ and $q\in W^{uu}_1(p)$, which we will denote by $H^u_{p,q}: W^c_{R_1}(p) \to W^c_{R_2}(q)$.

In \cite{pughshubwilkinson97} and \cite{pughshubwilkinsoncorrection}, the authors prove that the map $H^s_{p,q}$ is $C^1$ if $f$ is a partially hyperbolic, center bunched and dynamically coherent $C^2$-diffeomorphism. Indeed, the authors prove that the strong stable foliation is $C^1$ when restricted to a center-stable leaf. Consider the family of $C^1$-maps $\displaystyle \{H^s_{p,q}\}_{p\in M, q\in W^{ss}_1(p)}$.
\begin{theorem}
\label{holonomies}
Let $f$ be an absolutely partially hyperbolic, dynamically coherent, $2$-normally hyperbolic and center bunched $C^2$-diffeomorphism. Suppose also that $\chi^c_-<1$ and $\chi^c_+>1$. Then the family $\{H^s_{p,q}\}_{p\in M, q\in W^{ss}_1(p)}$ is a family of $C^1$-maps depending continuously in the $C^1$-topology with the choices of the points $p$ and $q$. 
\end{theorem}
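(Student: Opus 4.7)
The $C^1$-smoothness of each individual map $H^s_{p,q}$ is exactly the content of the Pugh-Shub-Wilkinson results cited immediately before the statement. The plan is to extract the continuous dependence on $(p,q)$ from the same mechanism that delivers smoothness, namely by identifying $DH^s_{p,q}$ as the unique fixed section of a fiber contraction whose data depend continuously on the parameters $(p,q)$.

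The first step is $C^0$-continuity. The strong stable foliation $\mathcal{F}^{ss}$ has $C^1$-leaves depending continuously on the basepoint in the $C^1$-topology (classical stable manifold theorem), and by Theorem \ref{leafconjugacy} together with $2$-normal hyperbolicity, the center foliation has $C^2$-leaves depending continuously on the basepoint in the $C^2$-topology. Since $H^s_{p,q}(z) = W^{ss}_2(z)\cap W^c_{R_2}(q)$ is a transverse intersection with an angle bounded away from zero on the compact manifold, the implicit function theorem yields the continuity of $(p,q,z)\mapsto H^s_{p,q}(z)$ together with its tangent space at the intersection point.

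The second step is the $C^1$-continuity via an invariant section. Using the $f$-invariance of the strong stable and center foliations one has $f \circ H^s_{p,q} = H^s_{f(p),f(q)} \circ f|_{W^c_{R_1}(p)}$. Differentiating along the center yields
\[
DH^s_{p,q}(z) = Df^{-1}\bigl(H^s_{f(p),f(q)}(f(z))\bigr)\circ DH^s_{f(p),f(q)}(f(z))\circ Df(z)|_{E^c_z},
\]
which is precisely the fixed-point equation for a graph transform $\mathcal{T}_{p,q}$ acting on continuous linear sections $L:W^c_{R_1}(p)\to \mathrm{Hom}(E^c,E^c)$ covering $H^s_{p,q}$. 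The center bunching assumption $\chi^{ss}_+ < \chi^c_-/\chi^c_+$ is exactly what makes $\mathcal{T}_{p,q}$ a uniform fiber contraction, whose unique fixed section is $DH^s_{p,q}$. The data of $\mathcal{T}_{p,q}$ (the cocycle $Df|_{E^c}$ along nearby orbits, and the maps $H^s_{f(p),f(q)}$) depend continuously on $(p,q)$ by the first step, and the contraction rate is uniform in $(p,q)$ because $f$ is absolutely partially hyperbolic. The standard continuous-dependence theorem for uniform contractions with parameters then yields continuity of the fixed section in the sup-norm, which combined with the first step is continuity of $H^s_{p,q}$ in the $C^1$-topology.

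The main obstacle is the bookkeeping needed to express the maps $\mathcal{T}_{p,q}$ as contractions on a single parameter-independent Banach space, since both the domain plaque $W^c_{R_1}(p)$ and the target plaque $W^c_{R_2}(q)$ depend on $(p,q)$. This is handled by locally trivializing $E^c$ over a neighborhood using the continuous dependence of the center foliation in the $C^2$-topology, and then identifying nearby plaques via the $C^1$-close parametrizations provided by Theorem \ref{leafconjugacy}; one must verify that the contraction constants are preserved, up to arbitrarily small loss, under these identifications. The assumptions $\chi^c_-<1$ and $\chi^c_+>1$ enter here to ensure that the normal-hyperbolicity estimates used for the parametrizations are not degenerate.
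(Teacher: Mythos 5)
There is a genuine gap in the second step. The naturality identity $f\circ H^s_{p,q}=H^s_{f(p),f(q)}\circ f$ and the induced equation for the derivatives is correct, but the operator $\mathcal{T}$ you build from it, $(\mathcal{T}L)_{p,q}(z)=Df^{-1}\circ L_{f(p),f(q)}(f(z))\circ Df(z)|_{E^c}$, is \emph{not} a fiber contraction, and center bunching cannot make it one: its fiberwise Lipschitz constant in the sup-norm is bounded only by $\sup\|Df^{-1}|_{E^c}\|\cdot\|Df|_{E^c}\|\le \chi^c_+/\chi^c_-$, a quantity in which $\chi^{ss}_+$ never appears and which, under the standing hypotheses $\chi^c_-<1<\chi^c_+$, exceeds $1$. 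Worse, the fixed-point equation does not characterize $DH^s_{p,q}$: the zero family and every scalar multiple $\lambda\, DH^s_{p,q}$ (more generally, $DH^s$ composed with any continuous $Df|_{E^c}$-equivariant automorphism of $E^c$) solve the same equation, so no choice of norm can turn $\mathcal{T}$ into a contraction with a unique fixed section. The actual mechanism by which bunching enters is different: one must exploit that $z$ and $H^s_{p,q}(z)$ lie on the same strong stable leaf, so $d(f^nz,f^nH^s_{p,q}(z))\lesssim(\chi^{ss}_+)^n$, and combine this with H\"older/Lipschitz control of $x\mapsto Df(x)|_{E^c}$ (this is where the $C^2$ hypothesis is indispensable) to show that the errors in comparing the two cocycles decay like $\bigl((\chi^{ss}_+)^{\theta}\chi^c_+/\chi^c_-\bigr)^n$; reconciling the admissible H\"older exponent $\theta$ with the plain bunching condition is exactly the role of the hypotheses $\chi^c_-<1<\chi^c_+$ in the paper (lemma \ref{auxiliarylemma}), not the vague ``non-degeneracy of normal-hyperbolicity estimates'' you invoke.

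For comparison, the paper does not use an invariant-section argument at all: following Brown, it introduces smooth ``fake'' holonomies $\pi^s_{p,q}$ coming from a smooth transverse subbundle, sets $H^s_{p,q,n}=f^{-n}\circ\pi^s_{p_n,q_n}\circ f^n$, and proves via lemma \ref{auxiliarylemma} that this sequence is Cauchy in the $C^1$-topology with a rate depending only on $\chi_s,\chi_c,\widehat{\chi}_c$; since each approximant depends continuously on $(p,q)$ and the convergence is uniform in $(p,q)$, both the $C^1$-smoothness and the continuous dependence follow at once. Your first step ($C^0$-continuity from transversality) is fine, but the heart of the theorem is the $C^1$ part, and as written your contraction scheme fails there; to repair it you would have to rebuild it around the comparison of the cocycle along the two exponentially converging orbits, at which point you are essentially reconstructing the paper's (Brown's) argument.
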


\begin{proof}
We follow the approach found in \cite{brownholonomy}, which is an approximation of the strong stable holonomies argument. In \cite{brownholonomy}, the author proves that such holonomies between center manifolds is $C^1$ if $f$ is $C^{1+\textrm{H\"older}}$ and verifies some stronger bunching condition, see section $2$ of \cite{brownholonomy} for precise statements. For a detailed proof in our setting we refer the reader to \cite{obataholonomies}.

Let $\pi^s_{.,.}$ be an approximation of the holonomy $H^s_{.,.}$. This means that there is a constant $C>0$, such that for any $p\in M$ and $q\in W^{ss}_1(p)$, there is a $C^2$-map, which is a diffeomorphism onto its image, $\pi^s_{p,q}: W^c_{R_1}(p) \to W^c(q)$ that verifies
\begin{enumerate}
\item $d(\pi^s_{p,q}(p),q) \leq C d(p,q)$;
\item $d(D\pi^s_{p,q}(p).v,v) \leq C d(p,q)$, where $v\in SE^c_p$ and $SE^c_p$ is the unit sphere on $E^c_p$;
\item if $p' \in W^c_{loc}(p)$ and $q' \in W^{ss}_1(p')  \cap W^c_{loc}(q)$, then $\pi_{p,q}^s$ coincides with $\pi^s_{p',q'}$ on $W^c_{loc}(p) \cap W^c_{loc}(p')$.
\end{enumerate}

This can be done in the following way: Consider a smooth subbundle $\widetilde{E}$ which is uniformly transverse to the subbundle $E^c$. Observe that the restriction of $\widetilde{E}$ to any center manifold is a $C^2$-bundle, since the center manifolds are $C^2$ by the $2$-normal hyperbolicity. For each point $q\in M$ and $\rho>0$, consider $L_{q,\rho}:=\exp_q(\widetilde{E}(q,\rho))$ to be the projection of the ball of radius $\rho$ by the exponential map over $q$. By the uniform transversality and the compactness of $M$, there exists a constant $\rho_0$ such that for any center leaf $W^c_{R_1}(p)$, the set $\{L_{q,\rho}\}_{q\in W^c_{R_1}(p)}$ forms an uniform foliated neighborhood of $W^c_{R_1}(p)$. Let $\pi^s_{p,q}$ be the holonomy defined by this local foliation, up to rescaling of the metric we may assume that it is well defined for $p\in M$ and $q\in W^{ss}_1(p)$. By the compactness of $M$ we obtain the constant $C>0$ above. Observe also that since the center leaves vary continuously in the $C^2$-topology, we obtain that the map $\pi^s_{p,q}$ varies continuously in the $C^2$-topology with the points $p$ and $q$.

For any $p,q\in M$ and each $n\in \N$, write $p_n = f^n(p)$ and $q_n = f^n(q)$. We define 
\[
H^s_{p,q,n} = f^{-n} \circ \pi^s_{p_n,q_n} \circ f^n.
\]

Since we are assuming that $f$ is absolutely partially hyperbolic, only for this proof, write its partially hyperbolic constants as $\chi_s = \chi^{ss}_+(p)$, $\chi_c= \chi^c_-(p)$ and $\widehat{\chi}_c = (\chi^c_+(p))^{-1}$. Also only for this proof, for a diffeomorphism $g:N_1 \to N_2$, between manifolds $N_1$ and $N_2$, we will write $g_*: SN_1 \to SN_2$, the action induced by the derivative on the unitary bundles of $N_1$ and $N_2$.

Observe that the Lipschitz norm of $f^{-1}_*$ restricted to a fiber $S_xE^c$ is $(\chi_c \widehat{\chi}_c)^{-1}$. Also since $f$ is a $C^2$-diffeomorphism, then $f^{-1}_*$ is a $C^1$-diffeomorphism of $SM$, let $C_1>0$ be the $C^1$-norm of $f^{-1}$ on $M$ and $C_2$ to be the $C^1$-norm of $f^{-1}_*$ on $SM$. For $\xi =(x,v) \in S_xM$, write $\xi_k =f^k_*(x,v)= (x_k,v_k)$, with $k\in \Z$.

In the setting that $f$ is $C^{1+ \textrm{H\"older}}$ and verifies a stronger bunching condition, Brown proves in \cite{brownholonomy} that $(H^s_{p,q,n})_{n\in \N}$ is a Cauchy sequence in the $C^1$-topology. Furthermore, this sequence converges exponentially fast to $H^s_{p,q}$. 

The stronger bunching condition is used to prove lemma $3.1$ in \cite{brownholonomy}. In our $C^2$ scenario, we can obtain a similar lemma, using that $\chi_c<1$ and $\widehat{\chi}_c <1$.

\begin{lemma}
\label{auxiliarylemma}
There are constants $\delta,\alpha\in (0,1)$ and $\theta \in (0,1)$, that verify the following: If $\xi=(x,v)$, $\zeta=(y,u) \in SW^c(p)$, $K>0$ and $n\geq 0$ verify $d(x_{n}, y_{n})< K \chi_s^n$, $d(\xi_n, \zeta_n) \leq K \chi_s^{n\theta}$ and for every $0\leq k \leq n$,
\[
d(x_k, y_k) \leq \delta. 
\]
Then, for all $0\leq k \leq n$,
\[
d(x_k,y_k) \leq K \chi_s^n.\chi_c^{-(n-k)} \textrm{ and } d(\xi_k, \zeta_k) \leq K \chi_s^{n\theta}.(\chi_c\widehat{\chi}_c)^{-(n-k)(1+\alpha)}.
\]
In particular, 
\[
d(\xi,\zeta) \leq K \chi_s^{n\theta}.(\chi_c \widehat{\chi}_c)^{-n(1+\alpha)}.
\]
Furthermore, $\theta$ and $\alpha$ can be chosen such that 
\[
\chi_s^{\theta}.(\widehat{\chi}_c\chi_c)^{-(1+\alpha)}<1.
\]
\end{lemma}

\begin{proof}
The proof is by backward induction in $k$. We will first denote by $\beta$, $\theta$, $\alpha$ and $\delta$ quantities that will be fixed later. Suppose that what we want holds for some $k \in \{1, \cdots n\}$, we will prove that it holds for $k-1$. Since $x_k$ and $y_k$ belongs to the same center manifold, it is easy to see that
\[
d(x_{k-1}, y_{k-1}) \leq \chi_c^{-1} d(x_k,y_k) \leq K\chi_s^n . \chi_c^{-n+k+1}.
\]

We have,

\[
\arraycolsep=1.2pt\def\arraystretch{2}
\begin{array}{rcl}
d(f^{-1}_*(x_k,v_k), f^{-1}_*(y_k,u_k)) & \leq & d(f^{-1}_*(x_k,v_k), f^{-1}_*(x_k,u_k)) + d(f^{-1}_*(x_k,u_k), f^{-1}_*(y_k,u_k))\\
& \leq & (\chi_c \widehat{\chi}_c)^{-1} d(v_k,u_k) + C_2 d(x_k,y_k).\\
& \leq & (\chi_c \widehat{\chi}_c)^{-1}[1 + C_2 .( \chi_c \widehat{\chi}_c) d(x_k,y_k)^{1-\beta}] . \max \{ d(x_k,y_k)^{\beta},d(v_k,u_k) \}\\
&\leq & (\chi_c \widehat{\chi}_c)^{-1}[1 + C_2 .( \chi_c \widehat{\chi}_c) \delta^{1-\beta}] \\
&& .K \max \{ \chi_s^{n\beta}.\chi_c^{-(n-k)\beta},  \chi_s^{n\theta}.(\chi_c\widehat{\chi}_c)^{-(n-k)(1+\alpha)}\},
\end{array}
\]
where the last inequality follows from our induction hypothesis.

We claim that we can choose $\alpha$, $\beta$ and $\theta$ such that for any $n\in \N$ and $0\leq k \leq n$ it holds
\[
\chi_s^{n\beta}.\chi_c^{-(n-k)\beta} \leq \chi_s^{n\theta}.(\chi_c\widehat{\chi}_c)^{-(n-k)(1+\alpha)}.
\]

This inequality is equivalent to 
\begin{equation}
\label{eq1}
1 \leq \chi_s^{n(\theta-\beta)}.(\chi^{(\beta-1-\alpha)}_c\widehat{\chi}^{-(1+\alpha)}_c)^{(n-k)}.
\end{equation}

Since $\widehat{\chi}^{-1}_c>1$, we can fix $\beta$ arbitrarily close to $1$ and $\alpha$ arbitrarily small such that
$1<\chi^{(\beta-1-\alpha)}_c\widehat{\chi}^{-(1+\alpha)}_c$. For the inequality above to hold we can just take any $\theta\in (0,\beta)$, so $\theta -\beta$ is negative. 

We also want that 
\begin{equation}
\label{eq2}
\chi_s^{\theta}.(\widehat{\chi}_c\chi_c)^{-(1+\alpha)}<1.
\end{equation}
By the center bunching condition, this holds if $\theta$ is close enough to $1$ and $\alpha$ is close enough to $0$. Fix $\beta\in (0,1)$ close to $1$, $\theta\in (0,\beta)$ close to $\beta$ and $\alpha>0$ small such that inequalities (\ref{eq1}) and (\ref{eq2}) hold.

Now take $\delta>0$ small enough such that
\[
[1 + C_2 .( \chi_c \widehat{\chi}_c) \delta^{1-\beta}]\leq (\chi_c \widehat{\chi}_c)^{-\alpha}.
\]

We conclude,
\[
\arraycolsep=1.2pt\def\arraystretch{2}
\begin{array}{rcl}
d(f^{-1}_*(\xi^k), f^{-1}_*(\zeta^k) &\leq &(\chi_c \widehat{\chi}_c)^{-(1+\alpha)}.K  \chi_s^{n\theta}.(\chi_c\widehat{\chi}_c)^{-(n-k)(1+\alpha)}\\
&=& K \chi_s^{n\theta}.(\chi_c\widehat{\chi}_c)^{-(n-k-1)(1+\alpha)}.
\end{array}
\]
\end{proof}

This lemma is specifically used to prove that the sequence $((H^s_{p,q,n})_*)_{n\in \N}$ is Cauchy. We can follow similar calculations as in \cite{brownholonomy} to conclude that for every $p\in M$ and $q\in W^{ss}_1(p)$ the sequence $(H^s_{p,q,n})_{n\in \N}$ is a Cauchy sequence that converges exponentially fast in the $C^1$-topology to $H^s_{p,q}$. The rate of convergence depends only on $\chi_s$, $\chi_c$ and $\widehat{\chi}_c$. In particular, it is independent on the choices of the points $p$ and $q$.

The family $\{\pi^s_{p,q}\}_{p\in M, q\in W^{ss}_1(p)}$ is a family of $C^2$-maps depending continuously in the $C^2$-topology with the choices of points $p$ and $q$. For each $n\in \N$, consider the family $\{f^{-n} \circ \pi^s_{p_n,q_n} \circ f^n\}_{p\in M, q\in W^{ss}_1(p)}$ and observe that, since $f$ is $C^2$, this is a family of $C^2$-maps depending continuously in the $C^2$-topology with the choices of the points $p$ and $q$.

Since the rate of convergence does not depend on the choices of the points $p$ and $q$, we conclude that the sequence of families $\left(\{f^{-n} \circ \pi^s_{p_n,q_n} \circ f^n\}_{p\in M, q\in W^{ss}_1(p)}\right)_{n\in \N}$ converges uniformly in the $C^1$-topology to the family $\{H^s_{p,q}\}_{p\in M, q\in W^{ss}_1(p)}$. Thus, the family $\{H^s_{p,q}\}_{p\in M, q\in W^{ss}_1(p)}$ is a family of $C^1$-maps depending continuously in the $C^1$-topology with the choices of $p$ and $q$. 
\end{proof}

\subsection{Berger-Carrasco's example}

Recall that for each $N\geq 0$ and $m=(x,y,z,w) \in \T^4$ we defined in section \ref{introduction} the diffeomorphism
\[
f_N(m) = (s_N(x,y) + P_x \circ A^N(z,w), A^{2N}(z,w)).
\] 

Observe that 
\[
Df_N(m) =  
\begin{pmatrix}
Ds_N(x,y)& P_x \circ A^N\\
0& A^{2N}
\end{pmatrix}.
\]
 
 It is useful to introduce $\Omega(x,y) = N \cos x + 2 $, so that 
 \[
 Ds_N(x,y) =
 \begin{pmatrix}
 \Omega(x,y) & -1\\
 1&0
 \end{pmatrix}.
 \]
For a point $m=(x,y,z,w)\in \T^4$, we will write $\Omega(m) = \Omega(x,y)$ and $Ds_N(m) = Ds_N(x,y)$. Observe that 
\begin{equation}
\label{basicineqfibered}
\frac{1}{2N} \leq \|Ds_N\| \leq 2N \textrm{ and } \|D^2s_N\| \leq N.
\end{equation}

 Let $A\in SL(2,\Z)$ be the linear Anosov matrix considered in the definition of the map $f_N$. Denote by $0<\lambda <1< \mu=\lambda^{-1}$ the eigenvalues of $A$. Let $e^s$ and $e^u$ be unit eigenvectors of $A$ for $\lambda$ and $\mu$, respectively.
 
Consider the involution $I(x,y,z,w) = (y,x,z,w)$ for $(x,y,z,w) \in \T^2$. An important feature of the map $f_N$ is given by the following lemma.
\begin{lemma}[\cite{bergercarrasco2014}, Lemma $1$]
\label{symmetrylemma}
The map $f_N^{-1}$ is conjugated to the map 
\[
(x,y,z,w) \mapsto (s_N(x,y) + P_x \circ A^{-N} (z,w), A^{-2N}(z,w)),
\]
by the involution $I$.
\end{lemma}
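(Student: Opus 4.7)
The proof is by direct computation. The key observation is that the $2\times 2$ standard map $s_N$ has an easy explicit inverse and satisfies a reversibility-type relation with the swap involution $I_2(x,y)=(y,x)$.

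First I would compute $s_N^{-1}$. Writing $(x',y')=s_N(x,y)=(2x-y+N\sin x,x)$, one immediately solves $x=y'$ and $y=2y'-x'+N\sin y'$, so
\[
s_N^{-1}(x',y') = (y',\,2y'-x'+N\sin y').
\]
Comparing this with $s_N\circ I_2(x,y)=(2y-x+N\sin y,\,y)$, one sees $I_2\circ s_N^{-1}=s_N\circ I_2$, i.e.\ $s_N^{-1}=I_2\circ s_N\circ I_2$. This is the base-case reversibility of the standard map.

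Next I would explicitly invert $f_N$. Setting $(x',y',z',w')=f_N(x,y,z,w)$, the hyperbolic second factor gives $(z,w)=A^{-2N}(z',w')$, and the first factor gives
\[
(x,y)=s_N^{-1}\bigl((x',y')-P_x\circ A^{-N}(z',w')\bigr),
\]
using $A^N(z,w)=A^{-N}(z',w')$. Denote the candidate right-hand side of the lemma by
\[
g_N(x,y,z,w)=(s_N(x,y)+P_x\circ A^{-N}(z,w),\,A^{-2N}(z,w)).
\]
Since $I$ acts as the identity on the $(z,w)$-factor and swaps on the $(x,y)$-factor, the conjugation reduces, on the last two coordinates, to the tautology $A^{-2N}=A^{-2N}$, and on the first two to checking
\[
I_2\circ s_N^{-1}\bigl(I_2(x,y)-P_x\circ A^{-N}(z,w)\bigr)=s_N(x,y)+P_x\circ A^{-N}(z,w).
\]
Writing $P_x\circ A^{-N}(z,w)=(a,0)$ (the only place the special form of $P_x$ is used), the left-hand side becomes $I_2\circ s_N^{-1}(y-a,x)=I_2(x,2x-(y-a)+N\sin x)=(2x-y+a+N\sin x,x)$, which matches the right-hand side $(2x-y+N\sin x,x)+(a,0)$. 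Hence $I\circ f_N^{-1}\circ I=g_N$, proving the lemma.

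No real obstacle is expected: this is a one-line structural identity once one notices that $s_N$ itself is $I_2$-reversible and that $P_x$ maps into the first-coordinate axis so that the perturbation term passes unchanged through the swap when combined with $s_N^{-1}$. The only bookkeeping point to be careful about is keeping straight which torus the involution acts on (only the first $\T^2$-factor) and the sign of the exponent on $A$ when inverting.
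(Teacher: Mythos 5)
Your computation is correct: inverting the skew product explicitly, using $(z',w')=A^{2N}(z,w)$ to rewrite the coupling term, and exploiting the reversibility $s_N^{-1}=I_2\circ s_N\circ I_2$ is exactly the direct verification behind this statement, which the paper itself does not reprove but simply imports as Lemma $1$ of \cite{bergercarrasco2014}. The one bookkeeping point you flag (that $I$ acts only on the $(x,y)$-factor and that $P_x\circ A^{-N}(z,w)$ lies on the first-coordinate axis) is indeed the only thing to watch, and you handle it correctly.
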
 
 
 This lemma allows us to prove certain properties for $f_N$ and $f_N^{-1}$ only by considering the map $f_N$, since the involution tell us that $f_N$ and $f_N^{-1}$ behave in the same way up to exchange the $x$ and $y$ coordinates. This will be used several times throughout paper.
 
Recall that $E^c = \R^2 \times \{0\}$ and that the system $f_N$ is dynamically coherent.

\begin{proposition}
\label{coherence}
Fix $\varepsilon>0$ small, for $N$ large enough there is a $C^2$-neighborhood $\mathcal{U}_N$ of $f_N$, such that if $g\in \mathcal{U}_N$ then $g$ is dynamically coherent, its center leaves are $C^2$-submanifolds, $g$ is leaf conjugated to $f_N$ and for every $m\in \T^4$ the $C^2$-distance between $W^c_g(m)$ and $W^c_f(m)$ is smaller than $\varepsilon$.
\end{proposition}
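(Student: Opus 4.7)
The plan is to invoke the Hirsch-Pugh-Shub leaf-conjugacy Theorem \ref{leafconjugacy} at regularity $r=2$ and then extract the $C^2$-proximity of center leaves from Remark \ref{continuitycoherent}. Accordingly, one needs to check that $f_N$ is partially hyperbolic, dynamically coherent, $2$-normally hyperbolic, and plaque expansive, for $N$ large.

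Partial hyperbolicity of $f_N$ for large $N$ is already built into the construction. Dynamical coherence is essentially tautological: the center bundle $E^c = \R^2\times\{0\}$ is smooth and integrates uniquely into the foliation whose leaves are the affine tori $\T^2\times\{(z,w)\}$, which is uniformly compact and $C^\infty$. Since this center foliation is $C^\infty$, plaque expansivity follows from the discussion after Theorem \ref{leafconjugacy}. To check $2$-normal hyperbolicity, I would read the partially hyperbolic rates off the block form of $Df_N$: along $E^c$ the derivative equals $Ds_N$, so by (\ref{basicineqfibered}) one may take $\chi^c_+ \le 2N$ and $\chi^c_- \ge 1/(2N)$, while the contraction and expansion rates along $E^{ss}$ and $E^{uu}$ are of order $\lambda^{2N}$ and $\mu^{2N}$ respectively (the strong subbundles are small perturbations of $\{0\}\times\R e^s$ and $\{0\}\times\R e^u$, the discrepancy coming from the off-diagonal coupling $P_x\circ A^N$). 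The required inequalities $\chi^{ss}_+<(\chi^c_-)^2$ and $(\chi^c_+)^2<\chi^{uu}_-$ then reduce to $\lambda^{2N}<1/(4N^2)$ and $4N^2<\mu^{2N}$, both of which hold for all sufficiently large $N$ since $\mu>1$.

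Having verified all hypotheses, Theorem \ref{leafconjugacy} with $r=2$ produces a $C^2$-neighborhood $\mathcal{U}_N$ of $f_N$ inside which every $g$ is partially hyperbolic, dynamically coherent, leaf conjugated to $f_N$, and has $C^2$-immersed center leaves. The $C^2$-closeness of $W^c_g(m)$ to $W^c_{f_N}(m)$ then follows from Remark \ref{continuitycoherent}, which provides such closeness on the compact pieces $W^c_{\cdot,R}(m)$ for any fixed $R>0$; since the center foliation of $f_N$ is uniformly compact, each leaf being a $\T^2$ of bounded diameter, one picks $R$ large enough to cover full leaves and shrinks $\mathcal{U}_N$ accordingly so that the discrepancy is less than $\varepsilon$. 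I do not expect an essential obstacle here; the one step that demands attention is calibrating $N$ so that the polynomial-in-$N$ rates on the center are dominated by the exponential-in-$N$ rates on the strong directions, as required by $2$-normal hyperbolicity.
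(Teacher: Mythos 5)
Your proposal is correct and takes essentially the same route as the paper: verify $2$-normal hyperbolicity via an inequality of the form $\lambda^{2N}<\mathrm{poly}(N)^{-1}$ (the paper uses $\lambda^{2N}<(2N)^{-4}$), deduce plaque expansivity from the smoothness of the center foliation, apply Theorem \ref{leafconjugacy} with $r=2$, and obtain the $C^2$-closeness of center leaves from Remark \ref{continuitycoherent} together with the uniform compactness of the center foliation.
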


\begin{proof}
Take $N$ large enough such that
\[
\lambda^{2N} < (2N)^{-4}.
\]
This inequality implies that $f_N$ is $2$-normally hyperbolic. Since its center foliation is smooth, by theorem $7.4$ of \cite{hps}, $f_N$ is plaque expansive. By theorem \ref{dynamicalcoherence}, for every $g$ sufficiently $C^2$-close to $f_N$, $g$ is dynamically coherent, leaf conjugated to $f_N$ and its center leaves are $C^2$-submanifolds.
Since the center foliation of $f_N$ is uniformly compact, from remark \ref{continuitycoherent}, if $\mathcal{U}_N$ is small enough then for every $g\in \mathcal{U}_N$ and $m\in \T^4$ the center leaves $W^c_g(m)$ and $W^c_f(m)$ are $\varepsilon$-close in the $C^2$-topology.  
\end{proof}

 Define $\pi_1(x,y,z,w)= (x,y) \in \T^2$ and $\pi_2(x,y,z,w) = (z,w)\in \T^2$. For convenience, a vector $(u,v) \in \R^2$ will be often identified with $(u,v,0,0) \in \R^4$, so that $Df_N(m).(u,v)= Df_N(m). (u,v,0,0)$.
For a vector $v\in T_m\T^4$ we will write $v_1 = D\pi_1(m).v$.

\section{ The size of the invariant manifolds and cone estimates}
\label{intmnfld}
In this section we obtain the main estimates to prove the ergodicity of $f_N$. Assuming proposition \ref{estimateprop} and fixing a small $\delta>0$, we prove:
\begin{proposition}
For $N$ large enough, for each ergodic component of the volume, for $f_N$, there exists a set with measure larger than $\frac{1-7\delta}{1+7\delta}$, such that:

For any $x$ in that set, there exist a stable and an unstable curves inside $W^c(x)$, with length bounded from below by $N^{-7}$. Moreover, the stable curve is transverse, inside $W^c(x)$, to the horizontal direction and the unstable curve is transverse to the vertical direction. 
\end{proposition}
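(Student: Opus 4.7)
Fix an ergodic component $\nu$ of Lebesgue for $f_N$. By Proposition \ref{estimateprop}, for $\nu$-a.e.\ $x$ the center bundle admits an Oseledets splitting $E^c_x = E^s_c(x) \oplus E^u_c(x)$ with exponents satisfying $\lambda^c_-(x) < -(1-\delta)\log N$ and $\lambda^c_+(x) > (1-\delta)\log N$. The plan is: (i) use Pliss to produce a large-measure set of points with uniform finite-time hyperbolicity inside the center; (ii) on that set, invoke the surface construction of stable manifolds of Crovisier--Pujals inside the (two-dimensional, $C^2$) center leaves to get invariant curves of length $\geq N^{-7}$; (iii) use cone estimates coming from the dominant block structure of $Ds_N$ to get the transversality to the horizontal/vertical directions.

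\emph{Step 1 (Pliss density).} Apply the Pliss lemma (Lemma \ref{pliss}) to the cocycle $\log\|Df_N|_{E^s_c(f_N^j(x))}\|$, whose Birkhoff average is at most $-(1-\delta)\log N$ and which is bounded above in absolute value by $\log N + O(1)$ using \eqref{basicineqfibered}. For a target rate $-(1-3\delta)\log N$, the quantitative Pliss lemma gives a set $B_s \subset \mathrm{supp}(\nu)$ of $\nu$-measure at least $\frac{\delta}{1-\delta+\delta}$, after relative-density bookkeeping, comparable to $\frac{1-3\delta}{1+3\delta}$, on which $0$ is a Pliss time: $\prod_{j=0}^{n-1}\|Df_N|_{E^s_c(f_N^j(x))}\| \leq N^{-(1-3\delta)n}$ for every $n \geq 0$. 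Using Lemma \ref{symmetrylemma}, the involution $I$ conjugates $f_N^{-1}$ to a map of the same type, so an identical argument run on $f_N^{-1}$ yields a set $B_u$ of comparable measure. Intersecting and choosing constants carefully, $B := B_s \cap B_u$ has $\nu$-measure at least $\frac{1-7\delta}{1+7\delta}$.

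\emph{Step 2 (Invariant manifolds of definite size).} By Proposition \ref{coherence}, the center leaves are $C^2$, and the restricted dynamics on each center leaf is a surface diffeomorphism whose $C^2$-norm is bounded by $CN$ (by \eqref{basicineqfibered}). On $B_s$ the Pliss estimate of Step 1 provides exactly the hypothesis needed for Theorem 5 of \cite{crovisierpujalsstronglydissipative}, applied fiberwise to the two-dimensional restricted dynamics: one obtains for every $x \in B_s$ a $C^1$ stable curve inside $W^c(x)$, tangent at $x$ to $E^s_c(x)$, of length at least $C\,\|f_N\|_{C^2}^{-a}\,(N^{-(1-3\delta)})^{b}$ for explicit $a,b$ coming from the construction. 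Substituting $\|f_N\|_{C^2} \leq CN$ and taking $\delta$ small enough, this lower bound exceeds $N^{-7}$. The symmetric argument on $f_N^{-1}$ (via Lemma \ref{symmetrylemma}) gives an unstable curve inside $W^c(x)$ of length at least $N^{-7}$ for every $x \in B_u$.

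\emph{Step 3 (Transversality).} The matrix $Ds_N(x,y) = \left(\begin{smallmatrix} \Omega & -1\\ 1 & 0\end{smallmatrix}\right)$ has eigenvectors $(\lambda_\pm, 1)$ with $\lambda_+ \approx \Omega$ and $\lambda_- \approx 1/\Omega$; outside a thin strip $\{|\cos x| < N^{-\sigma}\}$ for a small $\sigma$, the unstable direction of $Ds_N$ is contained in a narrow cone around the horizontal and the stable one in a narrow cone around the vertical. Since a Pliss point $x \in B$ spends a definite positive fraction of its orbit outside such a thin strip, cone invariance under the iterates of $Ds_N$ along the orbit forces $E^u_c(x)$ into a cone transverse to the vertical direction and $E^s_c(x)$ into a cone transverse to the horizontal direction; the curves produced in Step 2, being tangent to these directions at $x$, inherit the same transversality (shrinking their length by an absolute factor if needed, still comfortably bigger than $N^{-7}$).

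The main obstacle I expect is Step 2: tracking the precise exponents in the Crovisier--Pujals construction so that the $C^2$-norm $O(N)$ of $f_N$, combined with the contraction rate $N^{-(1-3\delta)}$, produces a length bound no worse than $N^{-7}$. A secondary technical point is calibrating the constants in Pliss so that the measure estimate lands exactly on $\frac{1-7\delta}{1+7\delta}$; this is bookkeeping but must be done in tandem with the cone argument in Step 3, since the ``good'' set in that step is a bad strip that also costs a bit of density.
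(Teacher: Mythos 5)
Your skeleton (Pliss inside the center, then the Crovisier--Pujals construction in the $C^2$ center leaves, then cone estimates) is exactly the paper's, but Step 1 as written fails quantitatively, and the failure is not repaired by bookkeeping alone. If you run Lemma \ref{pliss} with $\alpha_2=-(1-\delta)\log N$, $\alpha_1\approx-\log(2N)$ and ask for the uniform rate $\alpha_2+\varepsilon=-(1-3\delta)\log N$, i.e.\ $\varepsilon=2\delta\log N$, the density bound is $\frac{\varepsilon}{\alpha_2+\varepsilon-\alpha_1}=\frac{2\delta\log N}{3\delta\log N+\log 2}\approx\frac23$, not $\frac{1-3\delta}{1+3\delta}$; intersecting the forward and backward sets then gives nothing near $\frac{1-7\delta}{1+7\delta}$. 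To reach density $\geq\frac{1}{1+7\delta}$ you must take $\varepsilon$ a definite fraction of $\log N$ (the paper takes $\varepsilon=\frac16\log N$ in Lemma \ref{setzs}), which degrades the uniform rate to about $N^{-5/6+\delta}<N^{-4/5}$. That weaker rate is what has to be fed into the Crovisier--Pujals scheme, and the calibration is constrained on both sides: with $\sigma=N^{-r}$, $\tilde\sigma=(2N)^{-1}$, $\rho=\sigma^2$, $\tilde\rho=\tilde\sigma^2$ (the center determinant is $1$), the inequality $\frac{\tilde\sigma\tilde\rho}{\sigma\rho}>\sigma$ becomes $2^{-3}N^{-3+3r}>N^{-r}$, i.e.\ essentially $r>\frac34$, while the Pliss-density requirement forces $r<\frac67$. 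The paper's $r=\frac45$ sits in this window; your $r=1-3\delta$ violates the density constraint. Once corrected, your Step 2 length bound still clears $N^{-7}$ (the paper gets $\tilde r_0\sim\eta/(N\lambda_1)$ with $\eta=N^{-3}$, $\lambda_1=2N^{-4/5}$, then loses a factor $(2N)^{-1}$ transporting from $f^{-1}(m)$ to $m$, as in Proposition \ref{sizemnfld}), so the gap is reparable, but the rate carried through Steps 2--3 must change.

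Step 3 also has a gap. Knowing that a Pliss point spends a positive fraction of its orbit outside the critical strip does not locate $E^{\pm}_m$ in the desired cones at the point $m$: a single passage through the critical strip at the iterate adjacent to time $0$ can rotate a horizontal cone to a nearly vertical one, so cone invariance along a positive-density set of times says nothing about the direction at time $0$ itself. The paper instead uses a one-step estimate at the point: by shifting the Pliss sets one iterate, $Z=f(Z^-)\cap f^{-1}(Z^+)$, one gets $\|Df(m)|_{E^+_m}\|\geq N^{4/5}$ (Remark \ref{contaanterior}), while every vector $(u,1)$ with $|u|\leq N^{-2/5}$ is expanded by at most $N^{3/5+o(1)}$, so $E^+_m$ must lie in $\C^{hor}_{\theta_1^{-1}}$ (Lemma \ref{cone1}); symmetrically for $E^-_m$. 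Finally, transversality of the whole curve cannot be deduced from tangency at the single point $x$ ``after shrinking by an absolute factor'': a $C^1$ curve carries no a priori modulus of continuity of its tangent at scale $N^{-7}$. In the paper this is built into the construction: the curve is produced tangent to a cone of width $\eta=N^{-3}$ around $E^{\mp}$ at $f^{-1}(m)$, and its $Df$-image lies in a cone of width $4N^2\eta\leq\theta_1/2$ around $E^{\mp}_m$, hence in $\C^{ver}_{4/\theta_1}$, respectively $\C^{hor}_{4/\theta_1}$ (Proposition \ref{sizemnfld}). You need this cone-field control coming from the construction itself, not a pointwise tangency argument.
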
 
See lemma \ref{setzs} and proposition \ref{sizemnfld} for precise statements.

\begin{remark}
From now on the norm $\|.\|$ will be the norm induced by the usual metric of $\T^2$ or $\T^4$. We will omit the dependence of $N$ by writing $f= f_N$. 
\end{remark}

We fix two scales $\theta_1= N^{-\frac{2}{5}}$ and $\theta_2 = N^{-\frac{3}{5}}$.

\subsection{ Points with good contraction and expansion}
Since $f$ is non-uniformly hyperbolic, by theorem \ref{ergodiccomponentspesin}, there are at most countably many ergodic components. Therefore $Leb = \displaystyle \sum_{i\in \N} c_i \nu_i$, where $c_i \geq 0$ and for every $i\in \N$ the probability measure $\nu_i$ is $f$-invariant and ergodic. As a consequence of Birkhoff's theorem, for each measure $\nu_i$ there exists a set $\Lambda_i$ with full $\nu_i$-measure such that for every $m\in \Lambda_i$ 
\begin{equation}
\label{setlambda}
\displaystyle \frac{1}{n} \sum_{j=0}^{n-1} \delta_{f^j(m)} \xrightarrow[n\to + \infty]{} \nu_i \textrm{ and } \frac{1}{n} \sum_{j=0}^{n-1} \delta_{f^{-j}(m)} \xrightarrow[n \to +\infty]{} \nu_i \textrm{, in the $weak^*$-topology.}
\end{equation}
	
Where $\delta_p$ is the dirac mass on the point $p$. If $\nu_i\neq \nu_j$ then $\Lambda_i \cap \Lambda_j = \emptyset$. Define 
\begin{equation}
\label{lambda}
\Lambda = \displaystyle \bigcup _{i\in \N} \Lambda_i.
\end{equation}

Recall that $\mathcal{R}$ is the set of regular points given by Oseledets theorem. By remark \ref{osedetspesin}, the center direction is decomposed by the Oseledets splitting for almost every point, that is, for $m\in \mathcal{R}$ there is a decomposition $E^c_m = E^-_m \oplus E^+_m$, where $E^-_m$ is the Oseledets direction related to the negative center exponent and $E^+_m$ is the direction related to the positive exponent. 

For each $i\in \N$ define the sets 
\[
\arraycolsep=1.2pt\def\arraystretch{2}
\begin{array}{rcl}
Z_i^- & = & \left\{ m\in \mathcal{R} \cap \Lambda_i: \forall n\geq 0 \textrm{ it holds } \displaystyle \left \Vert Df^n(m)|_{E^{-}_m}\right \Vert < \left(N^{-\frac{4}{5}} \right)^n\right \};\\
Z_i^+ & = & \left \{ m\in \mathcal{R} \cap \Lambda_i: \forall n\geq 0 \textrm{ it holds }\displaystyle\left \Vert Df^{-n}(m)|_{E^{+}_m}\right \Vert < \left( N^{-\frac{4}{5}} \right)^n\right \};\\
Z_i & = & f(Z_i^-) \cap f^{-1}(Z_i^+).
\end{array}
\]

Define also 
\begin{equation}
\label{Z}
Z = \displaystyle \bigcup_{i\in \N} Z_i.
\end{equation}

\begin{remark}
\label{contaanterior}
For each $i\in \N$, by the definition of $Z_i$, $f^{-1}(Z_i) \subset Z_i^-$. Observe that 
\[
1\leq  \left \Vert Df(f^{-1}(m))|_{E^-_{f^{-1}(m)}}\right \Vert . \left \Vert Df^{-1}(m)|_{E^-_m}\right \Vert \leq N^{-\frac{4}{5}}\left \Vert Df^{-1}(m)|_{E^-_m}\right \Vert 
\]
We conclude that $\left \Vert Df^{-1}(m)|_{E^-_m}\right \Vert \geq N^{\frac{4}{5}}$. Similarly $\left \Vert Df(m)|_{E^+_m}\right \Vert \geq N^{\frac{4}{5}}$.
\end{remark}

We will need the following version of the Pliss lemma.

\begin{lemma}[ \cite{crovisierpujalsstronglydissipative}, Lemma $3.1$]
\label{pliss}
For any $\varepsilon>0$, $\alpha_1 < \alpha_2$ and any sequence $(a_i) \in (\alpha_1, +\infty)^{\N}$ satisfying
$$
\displaystyle \limsup_{n\to +\infty} \frac{a_0 + \cdots + a_{n-1}}{n} \leq \alpha_2,
$$
there exists a sequence of integers $0 \leq n_1 \leq n_2 \leq \cdots $ such that 
\begin{enumerate}
\item for any $k \geq 1$ and $n>n_k$, one has $\displaystyle \frac{a_{n_k} + \cdots + a_{n-1}}{(n-n_k)} \leq \alpha_2 + \varepsilon$;\\
\item the upper density $\displaystyle \limsup \frac{n_k}{k}$ is larger than $\displaystyle \frac{\varepsilon}{\alpha_2+ \varepsilon - \alpha_1}$.
\end{enumerate}
\end{lemma}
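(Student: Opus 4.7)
The strategy is a rising-sun argument on partial sums. First, normalize by setting $b_j := a_j - (\alpha_2 + \varepsilon)$ and $T_n := \sum_{j=0}^{n-1} b_j$. The hypothesis becomes $\limsup T_n/n \leq -\varepsilon$, which in particular forces $T_n \to -\infty$, while the lower bound $a_j > \alpha_1$ gives $b_j > -\gamma$ with $\gamma := \alpha_2 + \varepsilon - \alpha_1 > 0$. Condition (1) in the lemma is equivalent to requiring that each $n_k$ be a \emph{forward maximum} of $(T_n)$, i.e.\ $T_{n_k} \geq T_m$ for all $m > n_k$; I will call such an index a Pliss time.

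To produce infinitely many Pliss times, I would observe that since $T_n \to -\infty$, for any $n_0 \in \N$ the supremum of $(T_m)_{m \geq n_0}$ is attained at some $m^* \geq n_0$, and any such $m^*$ is automatically Pliss. Enumerating the Pliss times as $n_1 < n_2 < \cdots$ then gives an infinite sequence satisfying (1) by construction.

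For the density in (2), introduce the non-increasing envelope $f(n) := \sup_{m \geq n} T_m$. A jump $f(n) > f(n+1)$ can occur only when $T_n = f(n)$ (i.e.\ at a Pliss time), and in that case the jump size is at most $-b_n < \gamma$. Telescoping yields
\[
f(0) - f(N) \;=\; \sum_{n=0}^{N-1}\bigl(f(n) - f(n+1)\bigr) \;\leq\; \gamma\cdot K(N),
\]
where $K(N)$ counts Pliss times in $[0,N)$. Conversely, for any $\eta \in (0, \varepsilon)$ the hypothesis gives $T_m \leq -(\varepsilon - \eta) m$ for $m \geq N_0(\eta)$, hence $f(N) \leq -(\varepsilon - \eta) N$ for large $N$ and so $f(0) - f(N) \geq (\varepsilon - \eta) N$. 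Combining, $K(N)/N \geq (\varepsilon - \eta)/\gamma$ for large $N$; letting $\eta \to 0^+$ yields $\liminf_N K(N)/N \geq \varepsilon/\gamma$, which translates into the claimed upper density bound for $(n_k)$.

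I expect the main subtlety to be the interplay between the limsup in the hypothesis and the density assertion, which requires introducing the auxiliary parameter $\eta$ and then removing it in the limit. Apart from that, the argument reduces to the elementary observation that the forward-maximum function $f$ can only decrease at Pliss times and decreases by at most $\gamma$ at a time, which turns the telescoping identity directly into a counting bound.
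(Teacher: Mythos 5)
Your proof is correct; the paper itself does not prove this lemma (it is quoted from Crovisier--Pujals, Lemma 3.1), and your rising-sun argument --- characterizing condition (1) as $n_k$ being a forward maximum of the partial sums $T_n$ of $a_j-(\alpha_2+\varepsilon)$, and counting such maxima via the decreasing envelope $f(n)=\sup_{m\geq n}T_m$, whose jumps occur only at Pliss times and are bounded by $\gamma=\alpha_2+\varepsilon-\alpha_1$ --- is essentially the standard proof of this statement. One small remark: what you actually bound is $\liminf_N K(N)/N$, i.e.\ the density $k/n_k$; the statement as printed writes $\limsup n_k/k$, which is a typo (it is the proportion $k/n_k$ of Pliss times that is used later, e.g.\ in the proof of Lemma \ref{setzs}), so your conclusion is the intended one.
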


Using this lemma we prove the following.

\begin{lemma}
\label{setzs}
Fix $\delta>0$ small and assume that $N$ is large enough such that proposition \ref{estimateprop} holds for $f=f_N$. Then, it is verified $ \nu_i(Z_i) \geq \frac{1-7\delta}{1+7\delta}$ and $Leb(Z) \geq \frac{1-7\delta}{1+7\delta}.$

\end{lemma}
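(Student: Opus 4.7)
The plan is to apply the Pliss lemma (Lemma \ref{pliss}) at $\nu_i$-a.e.\ point to the sequence of logarithms of one-step norms of $Df$ restricted to the Oseledets stable (resp.\ unstable) direction inside the center, and then convert the resulting positive density of ``good starting times'' into a measure estimate via Birkhoff's ergodic theorem.

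Fix an ergodic component $\nu_i$. Because $c_i\nu_i\le Leb$, Proposition \ref{estimateprop} gives that $\nu_i$-a.e.\ point has center Lyapunov exponents $\lambda^{\pm}$ with $|\lambda^{\pm}|>(1-\delta)\log N$; by ergodicity these values are $\nu_i$-almost surely constant, and Remark \ref{osedetspesin} provides the splitting $E^c=E^-\oplus E^+$. For $m\in\mathcal{R}\cap\Lambda_i$, set $a_j:=\log\|Df(f^j(m))|_{E^-_{f^j(m)}}\|$. Since $Df|_{E^c}=Ds_N$, inequality (\ref{basicineqfibered}) yields $a_j\ge-\log(2N)$, while Oseledets gives $\tfrac1n\sum_{j<n}a_j\to\lambda^-<-(1-\delta)\log N$. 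Applying Lemma \ref{pliss} with $\alpha_1$ slightly below $-\log(2N)$, $\alpha_2=-(1-\delta)\log N$ and $\varepsilon=(\tfrac15-\delta)\log N$, so that $\alpha_2+\varepsilon=-\tfrac45\log N$, produces a sequence of times $(n_k)$ of upper density at least
\[
\rho \;=\; \frac{(\tfrac15-\delta)\log N}{\tfrac15\log N+\log 2}\;\xrightarrow[N\to\infty]{}\;1-5\delta,
\]
at which $\|Df^{l}(f^{n_k}(m))|_{E^-}\|\le(N^{-4/5})^l$ for every $l\ge 1$; equivalently, $f^{n_k}(m)\in Z_i^-$. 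Because the Birkhoff average of $\mathbf 1_{Z_i^-}$ exists and pointwise dominates the upper density of the Pliss times, one concludes $\nu_i(Z_i^-)\ge\rho$.

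The symmetric argument for $f^{-1}$ and $E^+$ (whose Lyapunov exponent under $f^{-1}$ is $-\lambda^+<-(1-\delta)\log N$) gives $\nu_i(Z_i^+)\ge\rho$. By $f$-invariance of $\nu_i$, $\nu_i(f(Z_i^-))=\nu_i(Z_i^-)$ and $\nu_i(f^{-1}(Z_i^+))=\nu_i(Z_i^+)$, so inclusion-exclusion yields
\[
\nu_i(Z_i)\;=\;\nu_i\bigl(f(Z_i^-)\cap f^{-1}(Z_i^+)\bigr)\;\ge\;2\rho-1.
\]
For $N$ sufficiently large one has $\rho\ge 1-6\delta$, so $2\rho-1\ge 1-12\delta\ge\tfrac{1-7\delta}{1+7\delta}$ once $\delta$ is small (for instance $\delta<1/42$). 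The Lebesgue bound follows from $Z_i\subset Z$ and $Leb=\sum_i c_i\nu_i$, giving $Leb(Z)\ge\sum_i c_i\nu_i(Z_i)\ge\tfrac{1-7\delta}{1+7\delta}$. The main obstacle is the arithmetic: the lower bound $\alpha_1\approx-\log(2N)$ forced by $\|Ds_N^{-1}\|\le 2N$ caps the Pliss density at $\rho\to 1-5\delta$, so the bound obtained after combining the two one-sided estimates only barely exceeds the target $\tfrac{1-7\delta}{1+7\delta}$; one must track the $\log 2/\log N$ correction carefully to absorb it into the margin between $5\delta$ and $7\delta$.
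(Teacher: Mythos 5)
Your argument is correct and follows essentially the same route as the paper: apply the Pliss lemma to the one-step log-norms along $E^{-}$ (and along $E^{+}$ for $f^{-1}$), convert the density of Pliss times into a measure bound via Birkhoff, use $f$-invariance and inclusion–exclusion for $Z_i=f(Z_i^-)\cap f^{-1}(Z_i^+)$, and sum over the ergodic components. The only differences are your choice of Pliss parameter, $\varepsilon=(\tfrac15-\delta)\log N$, which makes $\alpha_2+\varepsilon$ exactly $-\tfrac45\log N$ (you should shrink $\varepsilon$ marginally so the strict inequality in the definition of $Z_i^{\pm}$ is met) and forces the extra smallness condition $\delta<1/42$, whereas the paper takes $\varepsilon=\tfrac16\log N$ and lands on the bound $\tfrac{1}{1+7\delta}$, hence $\tfrac{1-7\delta}{1+7\delta}$, directly.
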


\begin{proof}
Since $N$ is large enough, by proposition \ref{estimateprop}, for every $m\in \mathcal{R}\cap \Lambda_i$, and since $E^-(m)$ is one dimensional, we obtain
\[
\displaystyle \lim_{n\to +\infty} \frac{1}{n} \log \|Df^n(m)|_{E^{-}_m}\|= \lim_{n\to +\infty}\frac{1}{n} \sum_{j=0}^{n-1} \log \|Df(f^j(m))|_{E^-_{f^j(m)}}\| \leq -(1-\delta) \log N.
\]

Take $\displaystyle \varepsilon = \frac{1}{6}\log N$, $\alpha_1 = -\log N - \log 2$, $\alpha_2 = -(1-\delta) \log N$ and consider the sequence $\left(\log \| Df(f^j(m))|_{E^-_m}\|\right)_{j\in \N}$. Applying Pliss lemma \ref{pliss} for those quantities we obtain a sequence of integers $(n_k)_{k\in \N}$ such that for every $k\in \N$ and $n> n_k$ 
\[
\displaystyle \frac{1}{n-n_k} \sum_{j=n_k}^{n-1} \log \|Df(f^j(m))|_{E^-_{f^j(m)}}\| \leq -(1-\delta) \log N + \frac{1}{6} \log N = \log N^{-\frac{5}{6}+ \delta} < \log N^{-\frac{4}{5}}.
\]

From this we conclude

\[
\|Df^n(f^{n_k}(m))|_{E^-_{f^{n_k}(m)}}\| < \left( N^{-\frac{4}{5}} \right)^n, \textrm{ } \forall n \geq 0.
\]

Thus for every $k\in \N$ we have $f^{n_k}(m) \in Z_i^-$. Since $m\in \Lambda_i$, by Birkhoff's theorem and the second point in Pliss lemma 

\[\arraycolsep=1.2pt\def\arraystretch{2}
\begin{array}{rcl}
\displaystyle \nu_i(Z_i^-) & \geq & \displaystyle \limsup_{k \to + \infty} \frac{n_k}{k} \\
&\geq &\displaystyle \frac{\varepsilon}{-(1-\delta) \log N + \varepsilon +\log N +\log 2}\\
&= & \displaystyle\frac{1}{ (1+6\delta) + \frac{6\log 2}{\log N}} \geq \frac{1}{1+7\delta}.
\end{array}
\]

Similarly, $\nu_i(Z_i^+) \geq \frac{1}{1+ 7\delta}$. This implies that
\[
\nu_i(\T^4-Z_i^*) \leq \frac{7\delta}{1+7\delta}, \textrm{ for $* = -,+ $}.
\]

By choosing $\delta>0$ small enough, the measure of these sets can be taken close to $1$. From the definition of $Z_i$ we conclude that

\begin{equation*}
\nu_i(Z_i) = 1- \nu_i(\T^4-Z_i) \geq 1- \frac{14\delta}{1+7\delta} = \frac{1-7\delta}{1+7\delta}.
\end{equation*} 

Since $Z= \displaystyle \bigcup_{i\in \N} Z_i$ and the previous estimate is valid for every $i\in \N$, then
\begin{equation*}
Leb(Z) \geq \frac{1-7\delta}{1+7\delta}.\qedhere
\end{equation*}
\end{proof} 
 
 Let $T=\left[ \frac{1+7\delta}{28\delta}\right]$, we may assume that $\delta>0$ is small enough such that $T>20$, define 
\begin{equation}
\label{X}
X=  \displaystyle \bigcap_{k=-T+1}^{T-1} f^k(Z).
\end{equation}

\begin{lemma}
\label{measure}
For $N$ large enough, if $\nu_i$ is an ergodic component of the Lebesgue measure then 
$$
\nu_i (X) >0.
$$

\end{lemma}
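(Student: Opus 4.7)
The plan is to use a union-bound argument combined with the $f$-invariance of $\nu_i$ and the measure lower bound from Lemma \ref{setzs}. The definition of $T$ was tailored precisely so that this union bound yields a strictly positive measure for $X$.

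First I would observe that, because the sets $\Lambda_j$ are pairwise disjoint by construction (see \eqref{setlambda}), and $\nu_i(\Lambda_i) = 1$, one has $\nu_i(\Lambda_j) = 0$ for every $j \neq i$. Since $Z_j \subseteq \Lambda_j$ by definition, this gives $\nu_i(Z_j) = 0$ for $j \neq i$, so that
\[
\nu_i(Z) \;=\; \nu_i(Z_i) \;\geq\; \frac{1-7\delta}{1+7\delta}
\]
by Lemma \ref{setzs}. Equivalently, $\nu_i(Z^c) \leq 14\delta/(1+7\delta)$.

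Next I would write
\[
X^c \;=\; \bigcup_{k=-T+1}^{T-1} f^k(Z)^c,
\]
a union of $2T-1$ sets. Using that $\nu_i$ is $f$-invariant, each term has the same measure as $Z^c$, so a union bound yields
\[
\nu_i(X^c) \;\leq\; (2T-1)\,\nu_i(Z^c) \;\leq\; (2T-1)\,\frac{14\delta}{1+7\delta}.
\]

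Finally I would plug in the definition $T = \lfloor (1+7\delta)/(28\delta) \rfloor$, which gives $2T - 1 \leq (1+7\delta)/(14\delta) - 1$, and therefore
\[
\nu_i(X^c) \;\leq\; \left(\frac{1+7\delta}{14\delta} - 1\right)\cdot \frac{14\delta}{1+7\delta} \;=\; 1 - \frac{14\delta}{1+7\delta}.
\]
Consequently $\nu_i(X) \geq 14\delta/(1+7\delta) > 0$, as desired. There is no real obstacle here: the proof is a direct quantitative bookkeeping, and the only subtlety is noticing that the constant $T$ chosen in \eqref{X} is exactly the integer threshold that makes the union bound strictly less than $1$ once $\nu_i(Z)$ is known to exceed $(1-7\delta)/(1+7\delta)$.
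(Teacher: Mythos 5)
Your proof is correct and follows essentially the same route as the paper: bound $\nu_i(Z^c)$ via Lemma \ref{setzs}, write $X^c$ as a union of the iterates $f^k(Z^c)$ for $|k|\leq T-1$, and use the $f$-invariance of $\nu_i$ together with the choice of $T$ to conclude positivity. The only cosmetic differences are that you note $\nu_i(Z)=\nu_i(Z_i)$ exactly (the paper simply uses $Z\supset Z_i$, which suffices), and you count the $2T-1$ terms slightly more carefully than the paper's displayed bound; either way the union bound stays strictly below $1$.
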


\begin{proof}
Recall that $\nu_i(Z_i)\geq \frac{1-7\delta}{1+7\delta} $, for $N$ large enough, this implies that 
$$
\nu_i(\T^4-Z_i) \leq \frac{14\delta}{1+7\delta}.
$$
Therefore
$$
\begin{array}{rcl}
\nu_i(X)  =  1- \nu_i(X^c)& \geq & 1- \displaystyle \sum_{j=-T+1}^{T-1} \nu_i(f^k(\T^4-H)) \\
&\geq & 1- \left(2\left[\frac{(1+7\delta)}{28\delta}\right]-2\right).\frac{14\delta}{1+7\delta} >0.
\end{array}
$$
\end{proof}

\subsection{Cone estimates}
\label{manysets}

Let $V\subset \R^2$ be a one dimensional vector subspace inside $\R^2$ and let $V^{\perp}$ be the one dimensional subspace perpendicular to $V$. For any vector $w\in \R^2$ we can write $w= w_V + w_{V^{\perp}}$, the decomposition of $w$ in $V$ and $V^{\perp}$ coordinates. For $\theta>0$ define 
\[
\C_{\theta}( V) = \{w\in \R^2: \theta \|w_V\| \geq \|w_{V^{\perp}}\|\},
\]
the cone inside $\R^2$ around $V$ of size $\theta$. For simplicity if $V = \R.(1,0)$ then we just write $\C^{hor}_{\theta}= \C_{\theta}(V)$ and  $\C_{\theta}^{ver} = \C_{\theta}(V^{\perp})$, we will call them the horizontal and vertical cones respectively. Throughout this paper, for a direction $V$, we will write 
\[
\C_{\theta}(V,m) = \C_{\theta}(V) \times \{0\} \subset T_m\T^4 = \R^2 \times \R^2.
\]

Recall that $\theta_1 = N^{-\frac{2}{5}}$. 

\begin{lemma}
\label{cone1}
For $N$ large enough, for every $ m\in Z$ we have that $E^+_m \subset \mathscr{C}_{\theta_1^{-1}}(m)$, with $\theta_1 = N^{-\frac{2}{5}}$. Furthermore, $\mathscr{C}_{\frac{\theta_1}{2}}(E^+_m,m) \subset \mathscr{C}^{hor}_{\frac{4}{\theta_1}}(m)$.The same is valid for the $E^-_m$ direction and the vertical cone.
\end{lemma}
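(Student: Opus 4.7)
The plan is to exploit that, since $E^c = \R^2 \times \{0\}$ is $Df$-invariant and $Df(m)|_{E^c}$ is simply $Ds_N(m)$, the statement reduces to an explicit $2\times 2$ computation on $\R^2$. The key input from $m \in Z$ is, via Remark \ref{contaanterior}, the expansion bound $\|Df(m)|_{E^+_m}\| \geq N^{4/5}$ and, symmetrically, $\|Df^{-1}(m)|_{E^-_m}\| \geq N^{4/5}$. I will prove the $E^+_m$ statements; the $E^-_m$ statements follow by a dual computation using $Ds_N^{-1}$, or equivalently by the involution $I$ of Lemma \ref{symmetrylemma}, which conjugates $f^{-1}$ to a map of the same form and swaps the horizontal and vertical directions.

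For the inclusion $E^+_m \subset \mathscr{C}^{hor}_{\theta_1^{-1}}(m)$, I argue by contradiction. Let $v = (a,b,0,0)$ be a unit vector in $E^+_m$ and assume $|a| < \theta_1 |b|$ (i.e.\ $v \notin \mathscr{C}^{hor}_{\theta_1^{-1}}(m)$). Using $Df(m)\cdot v = (\Omega a - b,\, a,\, 0,\, 0)$ with $|\Omega(m)| \leq N+2$ and $\theta_1 = N^{-2/5}$,
\[
\|Df(m)\cdot v\| \;\leq\; (N+2)|a| + |b| + |a| \;\leq\; \bigl((N+3)\theta_1 + 1\bigr)|b| \;\leq\; 3N^{3/5}\|v\|
\]
for $N$ large, which contradicts $\|Df(m)\cdot v\| \geq N^{4/5}\|v\|$ once $3N^{3/5} < N^{4/5}$.

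For the inclusion $\mathscr{C}_{\theta_1/2}(E^+_m,m) \subset \mathscr{C}^{hor}_{4/\theta_1}(m)$, let $u = (a,b)$ be a unit vector spanning $E^+_m$; from the first part, $|b| \leq \theta_1^{-1}|a|$. Any $w \in \mathscr{C}_{\theta_1/2}(E^+_m)$ decomposes as $w = r\,u + s\,u^{\perp}$ with $u^{\perp} = (-b,a)$ and $|s| \leq (\theta_1/2)|r|$. Since $|s|\,|b| \leq (\theta_1/2)\theta_1^{-1}|r||a| = |r||a|/2$,
\[
|w_H| \;=\; |ra - sb| \;\geq\; |r||a| - |s||b| \;\geq\; \tfrac{1}{2}|r||a|,
\]
\[
|w_V| \;=\; |rb + sa| \;\leq\; |r||b| + |s||a| \;\leq\; \bigl(\theta_1^{-1} + \tfrac{\theta_1}{2}\bigr)|r||a|,
\]
so $|w_V|/|w_H| \leq 2\theta_1^{-1} + \theta_1 \leq 4\theta_1^{-1}$ for $N$ large, giving $w \in \mathscr{C}^{hor}_{4/\theta_1}(m)$.

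The $E^-_m$ case is identical after replacing $Df(m)$ by $Df^{-1}(m)$: its restriction to $E^c$ is $Ds_N(f^{-1}(m))^{-1}$, sending $(a,b) \mapsto (b,\,-a+\Omega b)$. This map expands any vector with $|b| \leq \theta_1|a|$ by a factor of at most $O(N^{3/5})$, so the $N^{4/5}$ lower bound on the contraction of $E^-_m$ forces $E^-_m \subset \mathscr{C}^{ver}_{\theta_1^{-1}}(m)$; the narrow-cone estimate then goes through verbatim with horizontal and vertical exchanged. I do not expect real obstacles — the computation is routine once the explicit block form of $Df$ is used — the only care needed is in tracking the direction of the inequalities between the various cone widths.
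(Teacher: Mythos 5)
Your proof is correct and follows essentially the same route as the paper: the lower bound $\|Df(m)|_{E^+_m}\|\geq N^{4/5}$ from Remark \ref{contaanterior} combined with the explicit form of $Ds_N$ shows vectors outside $\mathscr{C}^{hor}_{\theta_1^{-1}}$ are expanded by at most $O(N^{3/5})$, and the second inclusion is the same elementary cone computation (the paper does a worst-case boundary-vector estimate, you use an orthogonal decomposition, with the same constants). Your direct treatment of $E^-_m$ via $Ds_N^{-1}$ is a harmless substitute for the paper's appeal to the symmetry of Lemma \ref{symmetrylemma}.
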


\begin{proof}
From remark \ref{contaanterior}, we know that $\|Df(m)|_{E^+_m}\| \geq N^{\frac{4}{5}}$, for $m\in Z$. Take a vector of the form $(u,1)$, with $|u| \leq N^{-\frac{2}{5}}$, then for $N$ large enough
	\[\arraycolsep=1.2pt\def\arraystretch{1.5}
	\begin{array}{rclcl}
	\|Df(m).(u,1)\| &=& \|(u \Omega(m) -1, u)\|& \leq & |u||\Omega(m)| + 1 + |u|\\
	&\leq & |u|(N+2) + 1 + |u| & \leq& N^{-\frac{2}{5}}.N^{1 +\frac{1}{200}} + 1\\
	& \leq & N^{\frac{3}{5}+\frac{1}{200}}+1&\leq & N^{\frac{3}{5}+\frac{1}{100}} <N^{\frac{4}{5}}.
	\end{array}
	\]
		
	Hence, if $m\in Z$ then $E^+_m \subset \mathscr{C}_{\theta_1^{-1}}(m)$.
	
We want to determine $\theta>0$ such that the cone $\C^{hor}_{\theta}(m)$ contains the cone $\C_{\tilde{\eta}}(E^+_m,m)$. For this purpose we will consider a cone $\C_{\frac{\theta_1}{2}}(V,m)$, where the direction $V$ belongs to the boundary of the cone $\C^{hor}_{\theta_1^{-1}}(m)$.

Suppose $V$ is generated by the unit vector $(x,\frac{x}{\theta_1})$, with $x>0$. Observe that $V^{\perp}$ is generated by $(-\frac{x}{\theta_1},x)$. One of the boundaries of the cone $\C^{hor}_{\theta}(m)$ we are looking for is generated by the vector $\frac{\theta_1}{2}(-\frac{x}{\theta_1},x) + (x,\frac{x}{\theta_1})$.
	
	The size of the cone $\theta$ is given by 
	$$
	\theta = \frac{2.[x(\theta_1^2+2)]}{2x\theta_1} =\frac{\theta_1^2+2}{\theta_1} <\frac{4}{\theta_1}.
	$$
	
	Since the horizontal cones are symmetric with respect to the horizontal direction, we conclude that 
	\[
	\C_{\frac{\theta_1}{2}}(E^+_m,m) \subset\C^{hor}_{\theta}(m) \subsetneq \C^{hor}_{\frac{4}{\theta_1}}(m).
	\]
	
	By the symmetry of $f$, given by lemma \ref{symmetrylemma}, the same holds of the stable direction but using vertical cones.
\end{proof}

We define some critical regions. For that, define $I_1=I_1(N)= (-2N^{-\frac{3}{10}},2 N^{-\frac{3}{10}})$, $I_2=I_2(N) = \frac{I_1}{2}$, write $C_1 = \{\frac{\pi}{2}+I_1\} \cup \{\frac{3\pi}{2} +I_1\}$ and $C_2 =\{\frac{\pi}{2}+I_2\} \cup \{\frac{3\pi}{2} +I_2\}$. Consider the regions
\[\arraycolsep=1.2pt\def\arraystretch{1.5}
\begin{array}{rcl}
\displaystyle Crit_1 &=& \{ C_1 \times S^1  \times \T^2  \} \cup \{ S^1 \times C_1 \times \T^2 \}\\
\displaystyle Crit_2 &=& \{ C_2 \times S^1  \times \T^2  \} \cup \{ S^1 \times C_2 \times \T^2 \}.
\end{array}
\]

Write $G_* = (Crit_*)^c$, for $*=1,2$ and observe that $G_1 \subset G_2$. Observe also that each $G_*$ has four connected components, $\{G_{*,j}\}_{j=1}^4$. Each $G_{*,j}$ is a square and we can choose the index $j$ such that $G_{1,j} \subset G_{2,j}$.

\begin{remark}
\label{remarkado}
  The distance between the boundaries of these two sets is  
$$
d(\partial G_{1,j}, \partial G_{2,j}) = N^{-\frac{3}{10}} > N^{-7}, \textrm{ for $1 \leq j \leq 4$.}
$$
\end{remark}

Recall that $\theta_2= N^{-\frac{3}{5}}$.

\begin{lemma}
\label{esqueci}
If $N$ is large enough then
\begin{enumerate}
\item $Z \subset G_1 \subset G_2;$
\item If $m\in G_2$ then $Df(m).(\mathscr{C}^{hor}_{\frac{4}{\theta_1}}(m)) \subset \mathscr{C}^{hor}_{\theta_2}(f(m))$;
\item If $\gamma$ is a $C^1$-curve inside a center leaf, with length $l(\gamma)\geq N^{-\frac{3}{10}}$, such that $\gamma \subset G_2$ and is tangent to $\mathscr{C}^{hor}_{\theta_2}$ then $l(f(\gamma)) >4\pi$.
\end{enumerate}
Similar statements hold for the vertical cone and $f^{-1}$.
\end{lemma}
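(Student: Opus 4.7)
The plan is to handle the three assertions separately; all three ultimately rely on the size of $\Omega(m) = N\cos x + 2$, the key entry of $Ds_N$. The elementary trigonometric bound $|\cos x| \geq (2/\pi)N^{-3/10}$ on $G_2$ gives $|\Omega(m)| \geq N^{7/10}/4$ there; conversely, having $x \in C_1$ forces $|\cos x| \leq 2N^{-3/10}$ and so $|\Omega(m)| \leq 3 N^{7/10}$, and analogous bounds hold with $x$ replaced by $y$ (and $\Omega$ replaced by $N\cos y + 2$).

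For (1), suppose for contradiction that $m \in Z$ but $m \notin G_1$. From the matrix form of $Ds_N$ we have the trivial operator norm bound $\|Ds_N(m)\| \leq |\Omega(m)| + 2$. If the $x$-coordinate of $m$ lies in $C_1$, then since $E^+_m \subset E^c_m$ and $Df$ acts on $E^c$ by $Ds_N$,
\[
\|Df(m)|_{E^+_m}\| \leq \|Ds_N(m)\| \leq 3N^{7/10} + 2 < N^{4/5}
\]
for $N$ large, contradicting Remark \ref{contaanterior}. If instead the $y$-coordinate of $m$ is in $C_1$, observe that $s_N(x,y) = (\cdot, x)$ forces the $x$-coordinate of $f^{-1}(m)$ to equal the $y$-coordinate of $m$ (this is consistent with Lemma \ref{symmetrylemma}). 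Hence the decisive entry of $Ds_N(f^{-1}(m))$ is $N\cos y + 2$, of absolute value $\leq 3N^{7/10}$, and therefore $\|Df^{-1}(m)|_{E^-_m}\| \leq \|Ds_N(f^{-1}(m))^{-1}\| \leq 3N^{7/10} + 2 < N^{4/5}$, again contradicting Remark \ref{contaanterior}.

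For (2), fix $m \in G_2$ and $v = (a,b,0,0) \in \mathscr{C}^{hor}_{4/\theta_1}(m)$, i.e. $|b| \leq 4\theta_1^{-1}|a|$. Using $|\Omega(m)| \geq N^{7/10}/4$,
\[
|a\Omega(m) - b| \geq |a|\bigl(|\Omega(m)| - 4\theta_1^{-1}\bigr) \geq \tfrac{1}{8}|a|N^{7/10}
\]
for $N$ large, so $Df(m)v = (a\Omega(m) - b, a, 0, 0)$ satisfies $|a| \leq 8 N^{-7/10}|a\Omega(m) - b| \leq \theta_2 |a\Omega(m) - b|$, as $N^{1/10} \geq 8$ for $N$ large; hence $Df(m)v \in \mathscr{C}^{hor}_{\theta_2}(f(m))$. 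For (3), a unit $v = (a,b,0,0) \in \mathscr{C}^{hor}_{\theta_2}$ has $|a| \geq 1/\sqrt{2}$, so the same inequality yields $\|Df(m)v\| \geq |a\Omega(m) - b| \geq N^{7/10}/(8\sqrt{2})$ at every $m \in \gamma \subset G_2$; integrating,
\[
l(f(\gamma)) \geq \frac{N^{7/10}}{8\sqrt{2}}\cdot N^{-3/10} = \frac{N^{2/5}}{8\sqrt{2}} > 4\pi
\]
for $N$ large. The vertical-cone/$f^{-1}$ analogues follow from the same arguments after the coordinate swap provided by Lemma \ref{symmetrylemma}. The one genuinely delicate point is the $y$-case of (1), which requires identifying the correct $\Omega$-entry appearing in $Ds_N(f^{-1}(m))^{-1}$ via the $x$/$y$ symmetry between $f$ and $f^{-1}$; once this is done, parts (2) and (3) are routine consequences of the lower bound on $|\Omega|$ on $G_2$.
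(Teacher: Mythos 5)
Your proposal is correct and takes essentially the same route as the paper: lower and upper bounds on $|\Omega|$ coming from $|\cos x|$ on $G_2$ and on the critical strips, followed by the direct computation $Ds_N(m)(a,b)=(a\Omega(m)-b,a)$ to get the cone inclusion in item (2) and the uniform expansion inside $\mathscr{C}^{hor}_{\theta_2}$ that yields $l(f(\gamma))>4\pi$ in item (3). The only difference is that you treat the sub-case $y\in C_1$ of item (1) explicitly, via $\|Df^{-1}(m)|_{E^-_m}\|\leq\|Ds_N(f^{-1}(m))^{-1}\|$ and the identity that the $x$-coordinate of $f^{-1}(m)$ equals the $y$-coordinate of $m$, whereas the paper's written proof only records the $x\in C_1$ computation and leaves the other case to the symmetry of Lemma \ref{symmetrylemma}; your version is the more complete of the two.
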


\begin{proof}

1. If $m\notin G_1$ then for $N$ large enough, $|\cos x| < 4N^{-\frac{3}{10}}$, in particular
\[
\|Df(m)|_{E^c_m}\| \leq N|\cos x| + 4 < 4N^{\frac{7}{10}-\frac{1}{200}} + 4 < N^{\frac{7}{10}-\frac{1}{100}} <N^{\frac{4}{5}},
\]
thus $Z \subset G_1 \subset G_2.$
\begin{enumerate}
\setcounter{enumi}{1}
\item For any $m\in G_2$, $(u,v) \in \mathscr{C}^{hor}_{\frac{4}{\theta_1}}(m)$ we have
\[
\theta_2 ( |\Omega(m)||u| - |v|) \geq \theta_2|u| \left( \frac{1}{2}.N^{\frac{7}{10}} - 2 - 4N^{\frac{3}{5}}\right)= |u|\left( \frac{1}{2}N^{\frac{1}{10}} - 2N^{-\frac{3}{5}} -4 \right) >|u|.
\]

\item For any $m\in G_2$ observe that  
\begin{equation}
\label{cosx}
|\cos x| \geq \frac{N^{-\frac{3}{10}}}{2}.
\end{equation}
For $(u,v) \in \mathscr{C}^{hor}_{\theta_2}(m)$ an unit vector, we must have
\[
\arraycolsep=1.2pt\def\arraystretch{1.5}
\begin{array}{rclcl}
\|Df(m).(u,v)\| & \geq & |\Omega(m)||u| - |v| & \geq & |u| ( |\Omega(m)| -\theta_2)\\
& \geq & \frac{\|(u,v)\|}{1+\theta_2} (|\Omega(m)| -\theta_2) & \geq &\frac{1}{2}( N|\cos x| -2 - \theta_2) \\
& \geq & \frac{N^{\frac{7}{10}}}{4} - 1- \frac{\theta_2}{2} &> &  N^{\frac{1}{2}}.
\end{array}
\]
Thus we have
\begin{equation*}
l(f(\gamma)) \geq N^{\frac{1}{2}}.N^{-\frac{3}{10}} = N^{\frac{2}{10}} >4\pi. \qedhere
\end{equation*}
\end{enumerate}
\end{proof}

 \begin{remark}
Observe that the condition $\gamma \subset G_2$ in the previous lemma can be replaced by $P_x(\pi_1(\gamma)) \subset P_x(\pi_1(G_2))$. The same holds for the past changing $P_x$ by $P_y$ and horizontal to vertical cones.
\end{remark}

\subsection{A lower bound on the size of the invariant manifolds}
\label{lowerbound}
Let $(S_n)_{n=0}^{+\infty}$ be a sequence of surfaces, such that each surface has a metric that induces a distance $d_n(.,.)$ and let $(\psi_n)_{n\in \N}$ be a sequence of diffeomorphisms $\psi_n:S_{n-1} \to S_{n}$. A curve $\gamma \subset S_0$ is a stable manifold for the sequence $(\psi_n)_{n\in \N}$ if any two points $x$ and $y$ on $\gamma$ verifies that $d_n(\psi_{n} \circ \cdots \circ \psi_1 (x), \psi_{n} \circ \cdots \circ \psi_1 (y))$ converges to zero exponentially fast. We say that $\gamma$ has size bounded from below by $r>0$, if $l_0(\gamma)\geq r$, where $l_0(.)$ is the length of $\gamma$ inside $S_0$.

The next proposition gives us the existence of stable and unstable curves tangent to the center direction, with good estimates on its sizes and its tangent directions. The proof of this proposition follows the exact same steps as theorem $5$ in \cite{crovisierpujalsstronglydissipative}, but with the changes necessary to get the estimates we need. 

Theorem $5$ in \cite{crovisierpujalsstronglydissipative} proves the existence of stable manifolds with uniform size and ``geometry" in the following scenario. Let $g:S \to S$ be a $C^2$-diffeomorphism of a compact surface and let $\sigma, \tilde{\sigma}, \rho, \tilde{\rho} \in (0,1)$ be constants such that 
\begin{equation}
\label{inequalityimp}
\frac{\tilde{\sigma} \tilde{\rho}}{\sigma \rho} > \sigma.
\end{equation}
For any point $x\in S$ having a direction $E \subset T_xS$ such that for all $n\geq 0$
\[
\tilde{\sigma}^n \leq \|Dg^n(x)|_{E}\| \leq \sigma^n \textrm{ and } \tilde{\rho}^n \leq \frac{\|Dg^n(x)|_E\|^2}{|\det Dg^n(x)|} \leq \rho^n.
\]  
They obtain stable manifolds for such points. Inequality (\ref{inequalityimp}) is important in the construction. That is why we need a good control on the Lyapunov exponent along the center, given by proposition \ref{estimateprop}.

\begin{proposition}
\label{sizemnfld}
For $N$ large enough, for each $m\in Z$, there are two $C^1$-curves $W^*(m)$ contained in $W^c(m)$, tangent to $E^*_m$ and with length bounded from below by $r_0 = N^{-7}$, for $*=-,+$. Those curves are $C^1$-stable and unstable manifolds for $f$, respectively. Moreover, $T_pW^+_{r_0}(m) \subset \mathscr{C}^{hor}_{\frac{4}{\theta_1}}(p)$ and  $T_qW^-_{r_0}(m) \subset \mathscr{C}^{ver}_{\frac{4}{\theta_1}}(q)$, for every $p\in W^+_{r_0}(m)$ and $q\in W^-_{r_0}(m)$.
\end{proposition}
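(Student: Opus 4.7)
The plan is to obtain $W^-(m)$ by adapting the stable manifold theorem of Crovisier and Pujals (Theorem~5 of \cite{crovisierpujalsstronglydissipative}) to the nonstationary sequence of area preserving surface diffeomorphisms given by restricting $f$ to the two dimensional center leaves along the forward orbit of $m':=f^{-1}(m)$. The unstable curve $W^+(m)$ is then produced by running the same argument for $f^{-1}$: by Lemma~\ref{symmetrylemma}, $f^{-1}$ is conjugate through the involution $I$ to a map of the same form as $f$, with horizontal and vertical cones interchanged, so no separate construction is needed.

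Concretely, set $S_n:=W^c(f^n(m'))$ and $\psi_n:=f|_{S_{n-1}}\colon S_{n-1}\to S_n$. By Proposition~\ref{coherence} each $S_n$ is a $C^2$ surface and each $\psi_n$ a $C^2$ diffeomorphism. Since $\det Ds_N\equiv 1$ and $f$ acts on a center leaf by $s_N$ plus a constant translation in the $x$-coordinate, each $\psi_n$ is area preserving, i.e.\ $|\det D\psi_n|\equiv 1$. From $m'\in Z_i^-$ one has $\|Df^n(m')|_{E^-_{m'}}\|\leq N^{-4n/5}$ for every $n\geq 0$, while (\ref{basicineqfibered}) supplies the lower bound $(2N)^{-n}$. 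Taking
\[
\sigma=N^{-4/5},\qquad \tilde\sigma=(2N)^{-1},\qquad \rho=\sigma^2,\qquad \tilde\rho=\tilde\sigma^2,
\]
the area preserving property turns the Crovisier--Pujals bunching inequality $\tilde\sigma\tilde\rho/(\sigma\rho)>\sigma$ into $N^{-3/5}/8>N^{-4/5}$, which holds for every $N>8^{5}$. This is exactly where Proposition~\ref{estimateprop} is used: the contraction exponent $4/5$ (rather than the trivial bound coming from $\log(2N)$) opens a polynomial gap in the bunching.

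With these data I would run the Crovisier--Pujals graph transform along $(\psi_n)_{n\geq 0}$ starting from a narrow cone around $E^-_{m'}$ inside $S_0$. The output is an invariant cone field, and by a fixed point argument a $C^1$ curve $\gamma'\subset S_0$ through $m'$ tangent to $E^-_{m'}$, whose tangent direction at every point lies in that cone. The quantitative part of their argument controls the length of $\gamma'$ in terms of the four hyperbolicity parameters and the $C^2$ norm of the $\psi_n$. From (\ref{basicineqfibered}) this norm is $O(N)$, and combined with the bunching gap of order $N^{-3/5}$, the standard bookkeeping yields $\ell(\gamma')\geq c\,N^{-\alpha}$ with an explicit $\alpha$ comfortably smaller than $7$. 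Pushing $\gamma'$ forward by $\psi_1$, whose derivative has norm at most $2N$, then produces $W^-(m):=\psi_1(\gamma')\subset W^c(m)$ through $m$ of length at least $N^{-7}$. By Lemma~\ref{cone1} the initial cone can be chosen inside $\mathscr{C}^{ver}_{4/\theta_1}$, and since the graph transform preserves such a cone, one obtains $T_pW^-(m)\subset\mathscr{C}^{ver}_{4/\theta_1}(p)$ at every $p\in W^-(m)$; the symmetric construction transported by $I$ yields the analogous horizontal cone estimate along $W^+(m)$.

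The main difficulty is not conceptual---a single iterate places us in the Crovisier--Pujals hypotheses, and the Jacobian condition is automatic from $\det Ds_N\equiv 1$---but the quantitative bookkeeping inside their graph transform argument. One has to track how the aperture of the invariant cones narrows, how the length of the constructed curve degrades under iteration and the final push by $\psi_1$, and how all of these depend on the $N$-dependent constants $\sigma,\tilde\sigma$ and $\|D^2 f\|=O(N)$. The generous exponent $-7$ in $r_0=N^{-7}$ is chosen precisely so that this bookkeeping has ample margin.
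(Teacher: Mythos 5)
Your proposal follows essentially the same route as the paper: both run the Crovisier--Pujals construction (Theorem 5 of \cite{crovisierpujalsstronglydissipative}) along the sequence of $C^2$ center leaves through the forward orbit of $f^{-1}(m)$, with the same parameters $\sigma=N^{-4/5}$, $\tilde{\sigma}=(2N)^{-1}$, $\rho=\sigma^2$, $\tilde{\rho}=\tilde{\sigma}^2$ coming from $f^{-1}(m)\in Z_i^-$ and from $\det Ds_N\equiv 1$, the same verification of the bunching inequality, and the same final push-forward by one iterate of $f$ to place the curve at $m$; where the paper treats the unstable curve as ``analogous'' you invoke the involution of Lemma~\ref{symmetrylemma}, which is a harmless variant.

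The one step whose justification would fail as written is the cone estimate. First, Lemma~\ref{cone1} is stated for points of $Z$, and $f^{-1}(m)$ need not lie in $Z$; the fact that $E^-_{f^{-1}(m)}$ sits in a narrow vertical cone does hold, but it requires the separate (easy) computation from $\|Df(f^{-1}(m))|_{E^-}\|<N^{-4/5}$, not a citation of that lemma. Second, and more seriously, the final curve is $\psi_1(\gamma')$, and no cone of fixed aperture around the vertical is preserved under the forward push by $Df$: on the center, $Ds_N$ sends the vertical vector $(0,1)$ to the horizontal vector $(-1,0)$, so ``the graph transform preserves such a cone'' does not yield $T_pW^-(m)\subset\C^{ver}_{4/\theta_1}(p)$. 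The correct mechanism, which the paper makes quantitative, is that the curve at $f^{-1}(m)$ is tangent to a cone of aperture $\eta$ around $E^-_{f^{-1}(m)}$, one application of $Df$ (norm and co-norm on $E^c$ bounded by $2N$) opens this cone by a factor at most $4N^2$, so one must impose $4N^2\eta\le\theta_1/2$ (the paper takes $\eta=N^{-3}$), and only then does Lemma~\ref{cone1}, applied at $m\in Z$, give $\C_{\theta_1/2}(E^-_m)\subset\C^{ver}_{4/\theta_1}$. Moreover this same $\eta$ enters linearly in the length of the constructed curve ($\tilde{r}_0\approx\eta/(216\,N\lambda_1)$ with $\lambda_1=2N^{-4/5}$), so the aperture requirement and the bound $r_0=N^{-7}$ are one coupled computation rather than two independent margins; this is exactly the bookkeeping your sketch defers, and it is where the exponent $7$ actually comes from.
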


\begin{proof}
We use some of the notation of the proof of Theorem 5 in \cite{crovisierpujalsstronglydissipative}. If $m\in Z$, by the definition of $Z$, $m\in Z_i$ for some $i\in \N$. Since $Z_i = f(Z_i^-) \cap f^{-1}(Z_i^+)$ we have that $f^{-1}(m)\in Z^-_i$, for this point it holds that
\[
(2N)^{-n}\leq \left \Vert Df^n(f^{-1}(m))|_{E^-_{f^{-1}(m)}}\right \Vert < \left(N^{-\frac{4}{5}}\right)^n, \textrm{ } \forall n\geq 0.
\]
Since $\left \vert \det Df(p)|_{E^c_{p}}\right \vert = \left \vert \det Ds_N(p)\right \vert = 1$ for every $p\in \T^4$, it also holds
\[
(2N)^{-2n}\leq \frac{\left \Vert Df^n(f^{-1}(m))|_{E^-_{f^{-1}(m)}}\right \Vert ^2}{\left|\det Df^n(f^{-1}(m))|_{E^c_{f^{-1}(m)}}\right|} <\left(N^{-2.\left(\frac{4}{5}\right)}\right)^n, \textrm{ } \forall n\geq 0.
\]

For each $n\in \N$ consider $\psi_n:V_{n-1} \to T_{f^{n}(m)}\T^2$ to be the lifted dynamics by the exponential map of the diffeomorphism $f|_{W^c(f^{n-1}(m))}$ along the orbit of $m$, that goes from a neighborhood $V_n$ of $0$ in $T_{f^{n-1}(m)}\T^2$ to a neighborhood of $0$ in $T_{f^{n}(m)}\T^2$. Since the center leaves are $C^2$, we have that $f|_{W^c(f^{n-1}(m))}$ is a $C^2$-diffeomorphism, this implies that $\psi_n$ is a $C^2$-diffeomorphism into its image.

Take $\sigma=N^{-\frac{4}{5}}$, $\tilde{\sigma} = (2N)^{-1}$, $\rho = \sigma^2$ and $\tilde{\rho} = \tilde{\sigma}^2$. Consider
$$
\lambda_1 = 2N^{-\frac{4}{5}} = 2\sigma \textrm{ and } \lambda_2 = \frac{1}{2.(2N)^2} = \frac{\tilde{\rho}}{2},
$$
and take
$$
C_0=3 > \displaystyle \sum_{k\geq 0 }\left(\frac{\sigma}{\lambda_1}\right)^k = 2 = \displaystyle \sum_{k\geq 0} \left(\frac{\lambda_2}{\tilde{\rho}}\right)^k.
$$

Let $E_n = E^-_{f^{n-1}(m)}$ and $F_n=E_n^{\perp}$ and use the basis $E_n \oplus F_n$. We define
$$
m_n = \left \Vert Df^n(m)|_{E^-_{f^{-1}(m))}}\right \Vert \textrm{ and } M_n = \frac{|\det Df^n|_{E^c}(f^{-1}(m))|}{m_n} = \frac{1}{m_n}.
$$ 
Using this notation it is also defined
\[\arraycolsep=1.2pt\def\arraystretch{2.5}
\begin{array}{c}
\displaystyle A_n= \sum_{k\geq 0} \lambda_1^{-k} m_{n+k}/m_n,\\
\displaystyle B_n = \sum_{k=0}^n \lambda_2^{k-n} \frac{M_k/M_n}{m_k/m_n}.
\end{array}
\]

The proof of theorem $5$ in \cite{crovisierpujalsstronglydissipative} gives
\begin{equation}
\label{anbn}
A_n \leq C_0 \left(\frac{\lambda_1}{\tilde{\sigma}}\right)^n \textrm{ and } B_n \leq C_0 \left(\frac{\rho}{\lambda_2}\right)^n.
\end{equation}

Define the change of coordinates in $T_{f^{n-1}(m)}\T^2$ given by $\Delta_n = Diag(A_n, A_n B_n)$, where the map $\Delta_n$ is defined using the coordinates $E_n \oplus F_n$. Observe that $A_n$ and $B_n$ are larger or equal to $1$, in particular, $\|\Delta_n\| = A_n B_n$ and $\|\Delta_n^{-1}\|=A_n^{-1}<1$.

Write $h_n= \Delta_{n+1} \circ \psi_n \circ \Delta_n^{-1}$ and $H_n = \Delta_{n+1} \circ D\psi_n(0) \circ \Delta_n^{-1}$. We have
$$
H_n = \begin{pmatrix}
a & d\\
0 & c
\end{pmatrix}
\textrm{ and } 
H_n^{-1} = \begin{pmatrix}
\frac{1}{a} & -\frac{d}{ca}\\
0 & \frac{1}{c}
\end{pmatrix}.
$$

From the proof of theorem $5$ in \cite{crovisierpujalsstronglydissipative}, we obtain

\begin{alignat}{3}
(\|Df|_{E^c}\|.\|Df^{-1}|_{E^c}\|^2)^{-1} & \leq & |a| & <&& \lambda_1                  \label{uno}\\
|a|\lambda_2^{-1} & \leq & |c| & \leq && \lambda_1 \lambda_2^{-1} \|Df|_{E^c}\|.\|Df^{-1}|_{E^c}\| + \lambda_1 \|Df^{-1}|_{E^c}\|^2\label{dos}\\
&&|d| &\leq && \|Df|_{E^c}\|.\|Df^{-1}|_{E^c}\||a|.\label{tres}
\end{alignat}

Using inequalities (\ref{dos}) and (\ref{tres}), we have 
$$
\left|\frac{d}{c}\right|\leq \frac{\|Df|_{E^c}\|.\|Df^{-1}|_{E^c}\| |a|}{|a| \lambda_2^{-1}} < \frac{(2N)^2}{2.(2N)^2} = \frac{1}{2}.
$$

Let us set $\xi= \frac{\tilde{\sigma} \lambda_2}{\lambda_1^2 \rho}$ and observe that for $N$ large enough $\xi>4$. For $\eta \leq \frac{1}{2}$ we will consider $\widetilde{\mathscr{C}}_{(\eta,n)}= \C_{\eta}(E_n)$ the cone of size $\eta$ around the direction $E_n$. If $(u,v)\in \widetilde{\mathscr{C}}_{(\eta,n+1)}$, using (\ref{uno}) and the estimate on $\left| \frac{d}{c}\right|$, we have
\[\arraycolsep=1.2pt\def\arraystretch{1.5}
\begin{array}{rclcl}
\|H_n^{-1}.(u,v)\| &\geq & \left|\frac{u}{a}\right|- \left|\frac{dv}{ca}\right|& \geq & \left|\frac{u}{a}\right| \left( 1-\left|\frac{d\eta}{c}\right| \right)  \\
& \geq & \frac{\|(u,v)\|}{(1+\eta) \lambda_1} \left( 1- \frac{\eta}{2}\right) & \geq & \frac{\|(u,v)\|}{\frac{3}{2} \lambda_1} . \frac{1}{2} .\frac{3}{2} = \frac{\|(u,v)\|}{2\lambda_1} > \frac{\|(u,v)\|}{\xi \lambda_1}.
\end{array}
\]

We conclude that the vectors of the cone $\widetilde{\mathscr{C}}_{(\eta,n+1)}$ are expanded by $\frac{1}{2 \lambda_1}$ by $H_n^{-1}$. Observe that if a linear map is $\frac{\eta}{6}$-close to $H_n^{-1}$ then the vectors inside $\widetilde{\C}_{\eta,n+1}$ are expanded by at least $(4\lambda_1)^{-1}> (\xi \lambda_1)^{-1}$. It is easy to see that such cone is contracted by any linear map $\frac{\eta}{6}$-close to $H_n^{-1}$.

Recall that $\|Df|_E^c\| \leq 2N$ and $\|D^2f^{-1}|_{W^c}\|\leq N$. Since $\|\Delta_{n+1}^{-1}\|<1$, we obtain
\[
\|Dh_n^{-1}(0)- Dh_n^{-1}(y)\|\leq \|\Delta_n\|.\|\Delta_{n+1}^{-1}\|.\|D^2f^{-1}|_{W^c}\|.\|\Delta_{n+1}^{-1}\|.\|y\| \leq N A_nB_n\|y\|.
\]
Using (\ref{anbn}), we have that $Dh^{-1}_n(y)$ is $\frac{\eta}{4|a|}$-close to $H_n^{-1}$ in a ball of radius
$$
\tilde{r}_{n+1} = \frac{\eta}{6N A_nB_n}> \frac{\eta}{6NC_0^2  } \left(\frac{\tilde{\sigma}\lambda_2}{\lambda_1 \rho}\right)^{n}>\frac{\eta}{54N}.(4\lambda_1)^{n}.
$$

Since $Dh_n^{-1}$ expands the vectors inside the cone $\widetilde{\C}_{\eta,n+1}$ by at least $(\xi \lambda)^{-1}>(4\lambda_1)^{-1}$, we can take 
\[
\tilde{r}_0 = \frac{\eta}{54N}.\frac{1}{4 \lambda_1} =\frac{\eta}{216 N \lambda_1}. 
\]

The proof of theorem $5$ in \cite{crovisierpujalsstronglydissipative} gives us a $C^1$-curve inside $T_{f^{-1}(m)}T^2$ tangent to the cone $\widetilde{\C}_{\eta,0}$, of size $\tilde{r}_0$, which is a stable manifold for the sequence $(h_n)_{n\in \N}$. 

To obtain a stable manifold for the sequence $(\psi_n)_{n\in \N}$ we need to apply $\Delta_0$ to this curve. Recall that $\Delta_0 = Diag(A_0,A_0)$, in particular it preserves the size and direction of a cone. Thus, we obtain that $\Delta_0(\widetilde{\C}_{(\eta,0)})= \C_{\eta}(E^-_{f^{-1}(m)})$.

To obtain a stable manifold for $f$, instead of the sequence $(\psi_n)_{n\in \N}$, we must project this curve by the exponential map, this projection will be denoted by $W^-(f^{-1}(m))$. Since $\T^2$ is the flat torus, the derivative of the exponential map is the identity. We conclude that the stable manifold for $f$ at the point $f^{-1}(m)$ is tangent to $\C_{\eta}(E^-_{f^{-1}(m)})$.

Now we estimate the size of the cones in the proposition at the point $m$. So far, the only restriction we have is $\eta \leq \frac{1}{2}$. Since $\|Df^{-1}|_{E^c}\|$ and $\|Df|_{E^c}\|$ are bounded from above by $2N$, 
$$
Df(f^{-1}(m)).\C_{\eta}(E^-_{f^{-1}(m)},f^{-1}(m)) \subset \C_{4N^2 \eta}(E^-_m,m).
$$

Using the estimates from lemma \ref{cone1}, we want $4N^2 \eta \leq \frac{\theta_1}{2} = \left(2N^{\frac{2}{5}}\right)^{-1}$, therefore, the additional restriction we put now is $\eta < \left(8N^{2+\frac{2}{5}}\right)^{-1}$. Since $N$ is large, we can take $\eta= N^{-3}$, for instance. By lemma \ref{cone1}, we have $E^-_m \subset \C^{ver}_{\theta_1^{-1}}$ and  $\C_{4N^2\eta}(E^-_m) \subset \C^{ver}_{\frac{4}{\theta_1}}$. This proves the estimate on the cones of the proposition. 

With this restriction, now we estimate the size of the stable manifold at the point $m$. For $\eta= N^{-3}$, we obtain for $N$ large enough, 
$$
\tilde{r}_0= \frac{2\eta}{216N\lambda_1} = \frac{1}{532.N^{4 - \frac{4}{5}}}>\frac{1}{N^5}.
$$

From this one can conclude that the stable manifold at the point $f^{-1}(m)$ has size bounded below by $N^{-5}$, this implies that at the point $m$ the stable manifold has size bounded by $(2N)^{-1}.N^{-5} > N^{-7} =r_0$, which concludes the proof for $W^-_{r_0}(m)$. The proof for the unstable manifold is analogous.
\end{proof}

\begin{remark}
From item $1$ of lemma \ref{esqueci} and Remark \ref{remarkado}, if $m\in Z$ then $W^*_{r_0}(m) \subset G_2$, for $*= -,+$.
\end{remark}
\section{Ergodicity of the system $f_N$}
\label{ergodicfiber}

In this section assuming proposition \ref{estimateprop}, we prove:

\begin{theorem}
For $N$ large enough $f_N$ is ergodic.
\end{theorem}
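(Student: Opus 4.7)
The plan is to argue by contradiction: suppose $f_N$ admits two distinct ergodic components $\nu_i\neq\nu_j$ of volume with essential supports $\Lambda_i$ and $\Lambda_j$. By Lemma~\ref{measure}, both $\Lambda_i\cap X$ and $\Lambda_j\cap X$ have positive Lebesgue measure. For every point $m\in\Lambda_i\cap X$, Proposition~\ref{sizemnfld} yields center-stable and center-unstable Pesin curves $W^\pm_{r_0}(m)$ of length at least $N^{-7}$, with tangent directions inside the vertical/horizontal cones. Saturating these curves by the strong stable and strong unstable foliations $\mathcal{F}^{ss}$ and $\mathcal{F}^{uu}$ of partial hyperbolicity, and using the absolute continuity of these foliations together with that of the Pesin stable/unstable partitions (Theorem~\ref{absolutecontinuouspesin}), I would obtain, for each component, a ``box-like'' set $V_i\subset\Lambda_i$ of positive Lebesgue measure on which forward and backward Birkhoff averages of continuous functions are constant --- the Hopf argument in the non-uniformly hyperbolic setting.

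To reach a contradiction, the second step is to enlarge these center Pesin curves so that they become macroscopic inside $W^c\simeq\T^2$. Starting from $m\in\Lambda_i\cap X$, I would push $W^+_{r_0}(f^{-n}(m))$ forward by $f^n$, using that the orbit of $m$ visits $Z$ with positive Birkhoff frequency: each visit expands the length by at least $N^{4/5}$ in the good region $G_2$, and the cone $\mathscr{C}^{hor}_{\theta_2}$ is forward-invariant inside $G_2$ by Lemma~\ref{esqueci}; once the length exceeds $N^{-3/10}$, a single further iterate in $G_2$ produces a curve longer than $4\pi$, so the enlarged center-unstable curve wraps around the horizontal circle of the center torus $W^c(f^n(m))$. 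By the symmetry of Lemma~\ref{symmetrylemma}, a symmetric argument applied to $m'\in\Lambda_j\cap X$ produces a center-stable curve wrapping around the vertical circle of $W^c(m')$.

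The third and decisive step uses the density of center leaves. The projection $\pi_2$ semi-conjugates $f_N$ to the Anosov map $A^{2N}$ on $\T^2$, which is ergodic, so for Lebesgue-a.e.\ point the orbit of its center leaf is dense in the space of center leaves. I would therefore choose a sequence $n_k\to\infty$ with $\pi_2(f^{n_k}(m))\to\pi_2(m')$, so that the center leaf through $f^{n_k}(m)$ is $C^0$-close to $W^c(m')$. Transporting the macroscopic horizontal unstable curve at $f^{n_k}(m)$ to $W^c(m')$ via the strong unstable holonomy $H^u$ — which is $C^1$ and varies continuously with the base points by Theorem~\ref{holonomies} — yields a macroscopic curve in $W^c(m')$ still tangent to a horizontal cone. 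This transported curve must intersect transversally the macroscopic vertical stable curve at $m'$ inside $W^c(m')\simeq\T^2$, and after applying the strong-foliation saturations of the first step, the boxes $V_i$ and $V_j$ overlap on a set of positive Lebesgue measure, contradicting $\nu_i\neq\nu_j$.

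The main obstacle is the holonomy transport step. One has to verify that $H^u$ applied to the long horizontal unstable curve at $f^{n_k}(m)$ produces, inside $W^c(m')$, a curve which (i) still lies in a horizontal cone of controlled opening and (ii) retains macroscopic length, so that it meets the vertical stable curve through $m'$ transversally. This relies crucially on the uniform $C^1$-convergence in Theorem~\ref{holonomies} and compactness with respect to the choice of base points. A secondary technical point is ensuring that, during the enlargement in the second step, the entire Pesin curve (not only its base point) stays inside $G_2$ long enough for its length to reach the threshold $N^{-3/10}$; this uses the cone invariance from Lemma~\ref{esqueci} combined with the bounded geometry of the curve. The fact that we must route the connection through the base dynamics $A^{2N}$ rather than through a stable/unstable chain in $\T^4$ is exactly where the proof differs from the accessibility-based arguments.
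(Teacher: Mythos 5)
Your overall architecture (contradiction, positive-measure set $X$ from Pliss, Pesin curves of size $N^{-7}$ from Proposition \ref{sizemnfld}, enlargement to curves of length $>4\pi$ inside good regions with cone control, density of center leaves via $\pi_2$ and $A^{2N}$, then Hopf) is the same as the paper's, but the decisive connecting step is wrong as stated. You propose to choose $n_k$ with $\pi_2(f^{n_k}(m))\to\pi_2(m')$ and then carry the long horizontal unstable curve into $W^c(m')$ by the strong unstable holonomy $H^u$. In this fibered system $\pi_2(W^{uu}(z))=W^u_{A^{2N}}(\pi_2(z))$ and $W^c(m')=\pi_2^{-1}(\pi_2(m'))$, so the unstable holonomy can only reach center leaves lying over points of the one-dimensional unstable leaf $W^u_{A^{2N}}(\pi_2(f^{n_k}(m)))$ in the base; a generic nearby leaf produced by density of the $\pi_2$-orbit is \emph{not} of this form, no matter how close, so there is no $H^u$ carrying your curve into $W^c(m')$ (and Theorem \ref{holonomies} only controls local, bounded-length holonomies anyway). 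A secondary problem of the same nature: the long \emph{vertical} stable curve through $m'$ does not live in $W^c(m')$ either --- enlarging $W^-_{r_0}$ requires backward iteration, so the macroscopic stable curve sits in $W^c(f^{-l}(m'))$, not in $W^c(m')$. Hence the intended intersection ``inside $W^c(m')$'' between the transported unstable curve and the stable curve cannot be produced by your construction.

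The paper's mechanism avoids this: it does not transport either curve into a prescribed leaf. Both typical points are moved toward a \emph{common} limit leaf $W^c(q)$ ($m_1$ forward, $m_2$ backward), the long horizontal curve $\gamma^+_{n_k}(m_1)$ is saturated by strong unstable discs $W^{uu}_R$ and the long vertical curve $\gamma^-_{-l_j}(m_2)$ by strong stable discs $W^{ss}_R$, giving two $2$-dimensional submanifolds $L^u_k(m_1)$ and $L^s_j(m_2)$ near $W^c(q)$; in the base their $\pi_2$-images are local unstable and stable arcs of $A^{2N}$ which cross, and over the crossing the two center-slices lie in one and the same center leaf and intersect transversally because one is tangent to $\C^{hor}_{\theta_3}$ and the other to $\C^{ver}_{\theta_3}$ (this is exactly where Theorem \ref{holonomies} is used, via Lemma \ref{transversality}, to control the cones of the slices --- not to move a curve onto a chosen leaf). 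The contradiction is then obtained not by showing the Hopf ``boxes'' overlap in positive measure, but by taking the single transverse intersection point, a Pesin block of positive $\nu_2$-measure, and the absolute continuity of the Pesin stable lamination to equate $\int\varphi\,d\nu_1=\int\varphi\,d\nu_2$; your claimed positive-measure overlap of $V_i$ and $V_j$ does not follow directly from a transverse intersection and would need this extra absolute-continuity step in any case. (A minor further inaccuracy: inside $G_2$ the expansion available for vectors in $\C^{hor}_{\theta_2}$ is $N^{1/2}$, not $N^{4/5}$, and the paper keeps the base point in the good region for $\geq 15$ iterates by the definition of $X$, not merely by positive visit frequency.)
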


The proof is by contradiction. Suppose that $f=f_N$ is not ergodic, then there are at least two different ergodic components, $\nu_1$ and $\nu_2$. Let $\varphi:\T^4\to \R$ be a continuous function such that 
\[
\int \varphi d\nu_1 \neq \int \varphi d\nu_2.
\]

Consider the forward and backward Birkhoff's average 
$$
\varphi^+(m) = \displaystyle \lim_{n\to +\infty} \frac{1}{n}\sum_{j= 0}^{n-1}\varphi\circ f^j(m) \textrm{ and } \varphi^-(m) = \displaystyle \lim_{n\to +\infty} \frac{1}{n}\sum_{j= 0}^{n-1}\varphi\circ f^{-j}(m).
$$

Recall that we defined at the beginning of section \ref{intmnfld}, the set $\Lambda_i$ as the set of points such that any $m_i\in \Lambda_i$, it holds that $\varphi^+(m_i) = \varphi^-(m_i) = \int \varphi d\nu_i$, for $i=1,2$ and any continuous function $\varphi: \T^4 \to \R$.

First we remark that for almost every $ m\in \T^4$ the stable part of the Oseledets decomposition, defined in (\ref{oseledecsdirection}), is given by $E^s_m = E^{ss}_m \oplus E^-_m$. By theorem \ref{pesincontinuity} there is a $C^1$ stable Pesin manifold, $W^s(m)$, such that $T_mW^s(m) = E^{ss}_m \oplus E^-_m$, analogously for the unstable direction. Recall that the stable Pesin manifold has a topological characterization given by 
$$
W^s(m) = \{ y\in \T^4 : \displaystyle \limsup_{n\to +\infty} \frac{1}{n} \log d(f^n(m), f^n(y)) <0\}.
$$
The set $H$ was defined in (\ref{Z}). For $m\in Z$ consider 
\[
\displaystyle \widehat{W}^s(m) = \bigcup_{y\in W^-_{r_0}(m)}W^{ss}(y),
\]
where $W^-_{r_0}(m)$ is the stable manifold constructed in proposition \ref{sizemnfld} and $r_0 = N^{-7}$.

\begin{remark}
\label{c1sub}
By the topological characterization of the stable Pesin manifold we conclude that $\widehat{W}^s(m) \subset W^s(m)$. Observe that the strong stable manifold subfoliates the Pesin stable manifold, in particular $\widehat{W}^s(m)$ is open inside the Pesin manifold. We conclude that $\widehat{W}^s(m)$ is a $C^1$-submanifold and for every $m\in Z$ the stable and unstable Pesin manifolds contain a disc of size $r_0$. Analogously for the unstable manifold.
\end{remark}

Since $\varphi$ is continuous, for every $z\in W^s(m)$ and $w\in W^u(m)$, with $m\in \Lambda$, we obtain $\varphi^+(m) = \varphi^+(z)$ and $\varphi^-(w) = \varphi^-(m)$, where $\Lambda$ was defined in (\ref{lambda}) and has full Lebesgue measure.

\begin{claim}
There exists an invariant set $B$ of full Lebesgue measure, such that for every $m\in B$ and for Lebesgue almost every point $z \in W^u(m)$ it is verified $\varphi^-(z) = \varphi^+(z)$.
\end{claim}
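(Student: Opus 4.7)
The plan is a Hopf-style Fubini argument enabled by the absolute continuity of the unstable Pesin foliation. I will construct $B$ in three stages: first a full-measure set where the Birkhoff averages $\varphi^+$ and $\varphi^-$ agree, then a full-measure set where this agreement transfers to Lebesgue almost every point on each local unstable Pesin manifold, and finally an $f$-invariant full-measure set where the agreement transfers to almost every point on the entire global unstable Pesin manifold.

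\textbf{Step 1 (Birkhoff).} Let $B_0$ be the set of regular points $m\in\mathcal{R}$ at which both $\varphi^+(m)$ and $\varphi^-(m)$ exist and coincide. Since $f$ preserves Lebesgue and $\varphi$ is continuous (in particular integrable), Birkhoff's theorem gives that both averages exist Lebesgue almost everywhere and are both equal to the conditional expectation of $\varphi$ with respect to the $\sigma$-algebra of $f$-invariant sets; in particular they agree a.e. Combined with Theorem \ref{oseledets}, the set $B_0$ has full Lebesgue measure and is $f$-invariant.

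\textbf{Step 2 (Absolute continuity).} By Theorem \ref{absolutecontinuouspesin} and Remark \ref{absoluteremark}, on each Pesin block $\mathcal{R}_{\varepsilon,l}$ the Lebesgue measure admits a local disintegration along local unstable Pesin manifolds whose conditional measures are equivalent to the induced Lebesgue measure on the leaves. Applying the Fubini-type identity (\ref{desintegration}) to $B_0^c$ and using the decomposition (\ref{unionpesinblocks}), we obtain a full Lebesgue-measure set $C\subset B_0$ such that for every $m\in C$,
\[
Leb_{W^u_{loc}(m)}\bigl(B_0^c \cap W^u_{loc}(m)\bigr) = 0,
\]
where $Leb_{W^u_{loc}(m)}$ denotes the Lebesgue measure induced on the local unstable Pesin manifold.

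\textbf{Step 3 (Invariance and globalization).} Define $B = \bigcap_{n\in\Z} f^n(C)$. Since $f$ preserves Lebesgue and $C$ has full measure, $B$ is $f$-invariant and of full Lebesgue measure. Pick $m\in B$; for every $n\geq 0$ the preimage $f^{-n}(m)$ lies in $C$, so $B_0^c \cap W^u_{loc}(f^{-n}(m))$ has zero Lebesgue measure inside $W^u_{loc}(f^{-n}(m))$. The restriction of $f^n$ to the Pesin unstable manifold of $f^{-n}(m)$ is a $C^1$-diffeomorphism onto its image, hence sends zero-Lebesgue-measure subsets of the leaf to zero-Lebesgue-measure subsets; and $B_0$ is $f$-invariant, so $f^n(B_0^c)=B_0^c$. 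Therefore the submanifold $f^n(W^u_{loc}(f^{-n}(m))) \subset W^u(m)$ meets $B_0^c$ in a set of zero Lebesgue measure. Writing the global unstable Pesin manifold as the countable union
\[
W^u(m) = \bigcup_{n\geq 0} f^n\bigl(W^u_{loc}(f^{-n}(m))\bigr),
\]
we conclude $Leb_{W^u(m)}(B_0^c \cap W^u(m)) = 0$, which is exactly the conclusion of the claim (since $B_0 \subset \{\varphi^+=\varphi^-\}$).

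\textbf{Expected obstacle.} The main non-routine point is ensuring that the set on which the conclusion holds is actually $f$-invariant: the set $C$ produced by absolute continuity need not itself be invariant, which is why we pass to $B = \bigcap_{n\in\Z} f^n(C)$. Once invariance is arranged, everything else is a standard Hopf/Fubini manipulation made legitimate by Theorem \ref{absolutecontinuouspesin}.
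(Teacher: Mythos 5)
Your proof is correct and follows essentially the same route as the paper: both rest on the absolute continuity of the unstable Pesin partition (Theorem \ref{absolutecontinuouspesin}) together with a Fubini-type argument, the paper phrasing it as a contradiction at a density point using holonomies along a transversal while you disintegrate directly on Pesin blocks via Remark \ref{absoluteremark}. Your Step 3, which makes explicit the invariance of $B$ and the passage from $W^u_{loc}$ to the global manifold $W^u(m)=\bigcup_{n\geq 0} f^n(W^u_{loc}(f^{-n}(m)))$, spells out a point the paper leaves implicit, so it is a refinement rather than a different approach.
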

\begin{proof}
If the claim does not hold, then there is a set of positive measure $C \subset \T^4$, such that for every $m\in C$ there is a set $C_m\subset W^u_{loc}(m)-\Lambda$, of positive Lebesgue measure inside $W^u_{loc}(m)$.

Observe that 
\[
\displaystyle \bigcup_{m\in C}C_m \subset \Lambda^c.\]

Suppose that $m\in C$ is a density point, take $T$ a small transversal to $W^u_{loc}(m)$ and consider $B(m,r)$ a small ball around $m$. By theorem \ref{abscontinuitystablepartition}, the unstable partition is absolutely continuous. In particular a Fubini-like formula holds. There is a set $Q$ of positive measure inside $T$, such that for every $q\in Q$, it holds that $W^u_{loc}(q) \cap C \cap B(m,r) \neq \emptyset$. Thus integrating the unstable measure of the sets $C_q \cap B(m,r)$ along $T$, we conclude that 
\[
Leb( \Lambda^c \cap B(m,r))>0.
\]  

This is a contradiction with the fact that $\Lambda$ has full measure.
\end{proof}

Recall that we defined $X= \displaystyle \bigcap_{k=-T+1}^{T-1}f^k(Z)$ and $\theta_2 = N^{-\frac{3}{5}}$. Recall also that we defined in section \ref{manysets} the sets $G_1$ and $G_2$.

\begin{lemma}
\label{bigmnfld}
For $N$ large enough and $n\geq 15$, for every $m\in X$ there are two curves $\gamma^-_{-n}(m) \subset f^{-n}(W^-_{r_0}(m))$ and $\gamma^+_n(m) \subset f^n(W^+_{r_0}(m))$ with length greater than $4\pi$. The tangent vectors of each of those curves are contained in the cone $\C^{ver}_{\theta_2}$ and $\C^{hor}_{\theta_2}$, respectively.
\end{lemma}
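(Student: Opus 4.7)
The plan is to iterate $f$ starting from the curve $W^+_{r_0}(m)$, controlling both its length and tangent direction at each step. By proposition \ref{sizemnfld} and the remark following it, $W^+_{r_0}(m)$ has length at least $r_0 = N^{-7}$, is tangent to $\mathscr{C}^{hor}_{4/\theta_1}$, and lies in $G_2$. Applying $f$ once and invoking Item $2$ of lemma \ref{esqueci}, the image $f(W^+_{r_0}(m))$ is tangent to $\mathscr{C}^{hor}_{\theta_2}$. The next ingredient I would establish is that at any point of $G_2$, the cone $\mathscr{C}^{hor}_{\theta_2}$ is $Df$-invariant and unit vectors in it are stretched by a factor at least $N^{1/2}$. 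Both assertions reduce to the lower bound $|\Omega(m)|\geq N^{7/10}/2$ valid on $G_2$: this gives $\theta_2(|\Omega(m)|-\theta_2)\geq 1$ (the cone invariance condition, checked by the same kind of computation as in the proof of Item $2$ of lemma \ref{esqueci}) and, combined with $|u|\geq 1/\sqrt{2}$ for unit $(u,v)\in \mathscr{C}^{hor}_{\theta_2}$, the expansion estimate $\|Df(m).(u,v)\|\geq |u|(|\Omega(m)|-\theta_2)\geq N^{1/2}$.

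Define inductively $\gamma_k \subset f^k(W^+_{r_0}(m))$ as the sub-curve through $f^k(m)$ of length $\min(|f(\gamma_{k-1})|,N^{-3/10})$, starting with $\gamma_0 = W^+_{r_0}(m)$. Since $m \in X$ gives $f^k(m)\in Z\subset G_1$ for $|k|\leq T-1$, and since the distance from $\partial G_1$ to $\partial G_2$ is $N^{-3/10}$ (remark \ref{remarkado}), each $\gamma_k$ stays in $G_2$ and its tangent vectors remain in $\mathscr{C}^{hor}_{\theta_2}$ by the cone invariance above. The expansion just established yields $|\gamma_k|\geq \min(N^{1/2}|\gamma_{k-1}|,N^{-3/10})$. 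Starting from $|\gamma_0|\geq N^{-7}$, an elementary geometric-growth computation shows the length reaches $N^{-3/10}$ at some step $k_0\leq 14$; for $k>k_0$ the trimming simply keeps $|\gamma_k|=N^{-3/10}$. Finally, Item $3$ of lemma \ref{esqueci} applied to $\gamma_{n-1}$ (of length $\geq N^{-3/10}$, contained in $G_2$, tangent to $\mathscr{C}^{hor}_{\theta_2}$) yields $|f(\gamma_{n-1})|>4\pi$, and I would set $\gamma^+_n(m):=f(\gamma_{n-1})$. Since $T>20$ by the choice of $\delta$, the construction is valid for every $n$ with $15\leq n-1\leq T-1$.

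The statement for $\gamma^-_{-n}(m)$ is obtained by the symmetric argument for $f^{-1}$ applied to $W^-_{r_0}(m)$. Lemma \ref{symmetrylemma} conjugates $f_N^{-1}$ to a map of the same form as $f_N$ via the involution $I(x,y,z,w)=(y,x,z,w)$; under this conjugation the vertical cone $\mathscr{C}^{ver}_{\theta_2}$ plays the role of $\mathscr{C}^{hor}_{\theta_2}$, and the critical regions defined through $P_y$ play the role of those through $P_x$. Hence every cone-invariance and expansion estimate in the forward argument transfers verbatim, together with the observation (already made after lemma \ref{esqueci}) that its Item $2$ and Item $3$ have direct analogues for $f^{-1}$ and vertical cones.

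The technical heart of the argument is the pair of verifications that $Df$ preserves $\mathscr{C}^{hor}_{\theta_2}$ at every point of $G_2$ and that the trimmed sub-curves $\gamma_k$ never exit $G_2$. The first rests on the explicit form of $Ds_N$ and the quantitative lower bound on $|\Omega|$ on $G_2$, and the second crucially combines the safety margin $N^{-3/10}$ between $\partial G_1$ and $\partial G_2$ with the hypothesis $m\in X$, which confines the orbit $\{f^k(m)\}_{|k|\leq T-1}$ to $G_1$. Without either ingredient one could not guarantee both that the cone estimate applies all along the curve and that subsequent iterates stay in the good region.
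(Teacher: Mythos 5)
Your construction works, and matches the paper's, only up to a bounded time: trimming the iterated curve around the orbit point $f^k(m)$ requires $f^k(m)$ to lie in $G_1$ (so that a curve of length $N^{-3/10}$ through it stays in $G_2$), and membership in $X=\bigcap_{k=-T+1}^{T-1}f^k(Z)$ only guarantees this for $|k|\leq T-1$. You acknowledge this yourself by concluding validity ``for every $n$ with $15\leq n-1\leq T-1$''. But $T=\left[\frac{1+7\delta}{28\delta}\right]$ is a fixed finite number depending only on $\delta$, while the lemma asserts the existence of $\gamma^+_n(m)$ for \emph{all} $n\geq 15$; this is essential later, since the Hopf argument uses the curves $\gamma^+_{n_k}(m_1)$ along a sequence $n_k\to+\infty$ chosen so that $f^{n_k}(W^c(m_1))$ accumulates on a prescribed center leaf. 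So the proposal does not prove the statement as used.

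The missing idea is the bootstrapping step the paper performs after the curve first becomes long: once $l(\gamma^+_{k_0^++1})>4\pi$ and its tangents lie in $\C^{hor}_{\theta_2}$, its projection to the $x$-coordinate covers all of $S^1$, so it necessarily crosses the fixed collar $\widetilde{G}$ (points with $N^{-3/10}\leq|x-\frac{\pi}{2}|\leq 2N^{-3/10}$ or $N^{-3/10}\leq|x-\frac{3\pi}{2}|\leq 2N^{-3/10}$), a region defined purely by the $x$-coordinate on which $|\cos x|\geq N^{-3/10}/2$, hence where the cone $\C^{hor}_{\theta_2}$ is preserved and its vectors are expanded by at least $N^{1/2}$ (this is the content of the remark after lemma \ref{esqueci}: the hypothesis $\gamma\subset G_2$ can be replaced by a condition on $P_x(\pi_1(\gamma))$ alone). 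A connected sub-curve crossing $\widetilde{G}$ has length at least $N^{-3/10}$, so its image under $f$ again has length greater than $4\pi$ and tangents in $\C^{hor}_{\theta_2}$; iterating this produces $\gamma^+_n(m)$ for every $n$, with no further information about the orbit of $m$. Your orbit-centered trimming cannot be repaired without some such device, because beyond time $T-1$ nothing prevents $f^n(m)$ from entering the critical region. (A minor secondary point: at the very first step the tangent of $W^+_{r_0}(m)$ is only known to lie in $\C^{hor}_{4/\theta_1}$, where one only has $\|Df\|>1$, not expansion by $N^{1/2}$; this shifts your step count by one, consistent with the paper's $k_0^+\leq 15$.)
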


\begin{proof}

If $m\in X$ then 
\[
\{f^{-T+1}(m), \cdots , f^{T-1}(m)\}\subset Z \subset G_1\subset G_2, \textrm{ where $T=\left[ \frac{1+7\delta}{28\delta}\right]>20$.}
\]

 Define $W^+_k(m) = f^k(W^+_{r_0}(m))$ and observe that for every $z\in W^+_k(m)$, if $z\in G_2$ and $T_zW^+_k(m)\subset \C^{hor}_{\theta_2}$ then $T_{f(z)}W^+_{k+1}(m) \subset \C^{hor}_{\theta_2}$.

By Proposition \ref{sizemnfld}, $TW^+_0(m) \subset \C^{hor}_{\frac{4}{\theta_1}}$. Since $m\in Z \subset G_1$, by remark \ref{remarkado} we conclude that $W^+_0(m) \subset G_2$. Item $2$ of lemma \ref{esqueci} implies that $TW_1^+(m) \subset \C^{hor}_{\theta_2}$. 
 
If $p\in G_2$ and $(u,v) \in \C^{hor}_{\theta_2}(p)$ is an unit vector, then $\|Df(p).(u,v)\|> N^{\frac{1}{2}}$. For a $C^1$-curve $\gamma$ containing $m$ with length $N^{-7}$, such that $ \gamma \subset G_2$ and $T\gamma \subset \C^{hor}_{\theta_2}$, let $k\in \N$ be the largest number such that $f^j(\gamma) \subset G_2$, for every $j=1, \cdots ,k$. Since the vectors inside $\C^{hor}_{\theta_2}$ are expanded by at least $N^{\frac{1}{2}}$ and the cone $\C^{hor}_{\theta_2}$ is preserved by the derivative of the points in $G_2$, we conclude that $k\leq 14$.   

Let $k^+_0 \in \N$ be the smallest number such that $W^+_{k^+_0}(m) \cap \partial G_2 \neq \emptyset$. Recall that if $p\in G_2$ and $(u,v) \in \C^{hor}_{\frac{4}{\theta_1}}$, then by (\ref{cosx}), $\|Df(p).(u,v)\| >1$. Since $r_0 = N^{-7}$, we obtain that the curve $W_1^+(m) \subset \C^{hor}_{\theta_2}$ has length at least $N^{-7}$ and is tangent to $\C^{hor}_{\theta_2}$, by the previous paragraph $k^+_0 \leq 15$.

If $m\in X$, the connected component of $W^+_{k^+_0}(m)\cap G_2$ containing $f^{k^+_0}(m)$, which we will denote by $\widehat{W}^+_{k^+_0}(m)$, intersects the boundary of $G_2$ and $T\widehat{W}^+_{k^+_0}(m) \subset \C^{hor}_{\theta_2}$. Since $k_0^+ < T$, we know that $f^{k^+_0}(m) \in Z \subset G_1 \subset G_2$. We conclude that $\widehat{W}^+_{k^+_0}(m)$ also intersects the boundary of $G_1$.

 Let $\gamma^+_{k^+_0}$ be a connected component of $\widehat{W}^+_{k^+_0}(m) \cap (G_2-G_1)$, such that $\gamma^+_{k^+_0} \cap \partial G_1 \neq \emptyset$ and $\gamma^+_{k^+_0}\cap \partial G_2 \neq \emptyset$, see figure \ref{figure1}. The curve $\gamma^+_{k^+_0}$ is a $C^1$-curve that verifies the hypothesis of item $3$ from lemma \ref{esqueci}. Thus $l(f(\gamma^+_{k^+_0})) >4\pi$, $Tf(\gamma^+_{k^+_0}) \subset \C^{hor}_{\theta_2}$ and by definition $f(\gamma^+_{k^+_0}) \subset W^+_{k^+_0+1}(m)$. Define $\gamma_{k_0^+ + 1}(m) = f(\gamma^+_{k_0^+})$.
 
\begin{figure}[h]
\centering
\includegraphics[scale= 0.6]{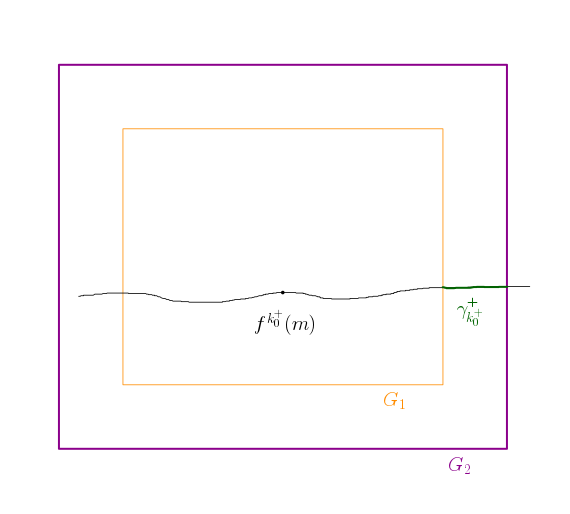}
\caption{The curve $\gamma_{k_0^+}^+$}
\label{figure1}
\end{figure}

Let 
\[\widetilde{G} = \left\{(x,y,z,w) \in \T^4: N^{-\frac{3}{10}} \leq |x - \frac{\pi}{2}| \leq 2N^{-\frac{3}{10}} \textrm{ or } N^{-\frac{3}{10}} \leq |x-\frac{3\pi}{2}| \leq 2N^{-\frac{3}{10}}\right\}.
\] 
It is easy to see that $\widetilde{G}$ has four connected components, each connected component having two boundaries. Since the critical region only depends on the coordinate $x$, for any point $p \in \widetilde{G}$, the derivative $Df(p)$ expands any vector inside $\C^{hor}_{\theta_2}$ by at least $N^{\frac{1}{2}}$.
 
We build $\gamma_n^+ \subset f(\gamma_{n-1}^+)$ inductively for $n> k_0^+ + 1$. Let us build it for $n= k_0^=+2$. Observe that $P_x(\pi_1(\gamma_{k^+_0+1}^+))= S^1$. Consider then $\widetilde \gamma^+_{k_0^++1}$ to be a connected component of $\gamma_{k_0^++1}^+(m) \cap \widetilde{G}$ that intersects the two boundaries of a connected component of $\widetilde{G}$. Define $\gamma^+_{k_0^++2}(m) = f(\tilde{\gamma}^+_{k_0^++1})$, observe that $l(\gamma^+_{k_0^++2}(m)) >4\pi$ and $Tf(\gamma_{k_0^++2}(m)) \subset \C^{hor}_{\theta_2}$. In this way we can build inductively the curves $\gamma_n^+(m)$ that satisfy the conclusions of the lemma. In a similar way we construct the curves $\gamma^-_{-n}(m)$. Since $k_0^+ \leq 15$ and $k_0^- \leq 15$, then this certainly holds for $n> 15$.
\end{proof}

For each $R>0$, $n\geq 15$ and $m\in X$, define 
\begin{equation}
\label{manifoldsat}
\displaystyle W^s_{R,-n}(m) = \bigcup_{q\in \gamma_{-n}^-(m)} W^{ss}_R(q),
\end{equation}
where the curve $\gamma_{-n}^-(m)$ is the curve given by lemma \ref{bigmnfld}. Define in a similar way the set $W^u_{R,n}(m)$. For the same reason as we explained in remark \ref{c1sub}, we obtain that $W^s_{R,-n}(m)$ and $W^u_{R,n}(m)$ are $C^1$-submanifolds.

\begin{lemma}
\label{transversality}
Fix $\theta_3>0$, such that $\theta_3> \theta_2$ and that satisfies $\C^{hor}_{\theta_3} \cap \C_{\theta_3}^{ver} = \{0\}$. There exists $0<R<1$ such that if $n\geq 15$, $m \in X$ and $m^- \in W^s_{R,-n}(m)$, then
\[
T(W^s_{2,-n}(m) \cap W^c(m^-)) \subset \C^{ver}_{\theta_3}.
\]
A similar result holds for $W^u_{R,n}(m)$. 

\end{lemma}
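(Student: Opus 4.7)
The plan is to realize the intersection $W^s_{2,-n}(m) \cap W^c(m^-)$ locally around each of its points as the image of (a piece of) $\gamma^-_{-n}(m)$ under a strong stable holonomy between center leaves, and then to control the derivative of this holonomy using Theorem \ref{holonomies}. First I would check that $f_N$ (for $N$ large) satisfies the hypotheses of that theorem: absolute partial hyperbolicity, dynamical coherence (by Proposition \ref{coherence}), $2$-normal hyperbolicity, and center bunching. The last two hold because $\|Df^{\pm 1}|_{E^{ss,uu}}\|$ is exponential in $N$ (as $\lambda^{2N}$, $\mu^{2N}$) while the center rates are at most polynomial in $N$. Thus the family $\{H^s_{p,q}\}_{p \in \T^4,\, q \in W^{ss}_1(p)}$ of strong stable holonomies between center leaves is $C^1$ and varies continuously in the $C^1$-topology with $(p,q)$.

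Since $H^s_{p,p} = \mathrm{id}_{W^c_{R_1}(p)}$, using the trivialization $T\T^4 = \T^4 \times \R^4$ and the compactness of $\T^4$, the $C^1$-continuity gives: for every $\varepsilon > 0$ there exists $R_\varepsilon \in (0,1)$ such that $DH^s_{p,q}(z)$ is $\varepsilon$-close to the inclusion $T_zW^c(p) \hookrightarrow \R^4$ whenever $q \in W^{ss}_{R_\varepsilon}(p)$ and $z \in W^c_{R_1}(p)$, uniformly in $p$. I choose $\varepsilon > 0$ small enough that any $\varepsilon$-perturbation of a unit vector of $\C^{ver}_{\theta_2}$ lies in $\C^{ver}_{\theta_3}$; this is possible because $\theta_3 > \theta_2$ and the cone $\C^{ver}_{\theta_2}$ is compactly contained in $\C^{ver}_{\theta_3}$. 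Then I set $R$ to be a small enough constant, depending only on $\varepsilon$ and the transverse geometry of $\mathcal{F}^{ss}$, such that if $m^- \in W^s_{R,-n}(m)$ then for every point $m' \in W^s_{2,-n}(m) \cap W^c(m^-)$ the strong stable distance from $m'$ to $\gamma^-_{-n}(m)$ is at most $R_\varepsilon$.

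The structural step is then the identification: if $m' \in W^s_{2,-n}(m) \cap W^c(m^-)$, there is $q' \in \gamma^-_{-n}(m)$ with $m' = H^s_{q',m'}(q')$, and in a neighborhood of $q'$ the holonomy $H^s_{q',m'}$ sends $\gamma^-_{-n}(m)$ diffeomorphically onto a piece of $W^s_{2,-n}(m) \cap W^c(m^-)$ near $m'$. By Lemma \ref{bigmnfld}, $T_{q'}\gamma^-_{-n}(m) \subset \C^{ver}_{\theta_2}$, and by the choice of $R$, $DH^s_{q',m'}$ is $\varepsilon$-close to the identity, so $T_{m'}(W^s_{2,-n}(m) \cap W^c(m^-)) \subset \C^{ver}_{\theta_3}$. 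The case of $W^u_{R,n}(m)$ is symmetric via Lemma \ref{symmetrylemma}.

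The main obstacle is ensuring that $R$ is uniform in $n$, $m$, and $m^-$: although $\gamma^-_{-n}(m)$ is a long curve (length $> 4\pi$), the strong stable distance from any of its points to the corresponding point on $W^c(m^-)$ must be uniformly bounded by a multiple of $R$, independently of how far along the curve we are. This follows from the uniform transverse structure of $\mathcal{F}^{ss}$ with respect to the (uniformly compact) center foliation of $f_N$, and is what makes the choice of $R$ independent of $n$.
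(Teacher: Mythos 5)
Your proposal is correct and follows essentially the same route as the paper: invoke Theorem \ref{holonomies} for the $C^1$-continuity of the strong stable holonomies between center leaves, use $DH^s_{p,p}=\mathrm{Id}$ plus compactness of $\T^4$ to pick a uniform $R$ under which the holonomy derivative maps $\C^{ver}_{\theta_2}$ into $\C^{ver}_{\theta_3}$, and identify $W^s_{2,-n}(m)\cap W^c(m^-)$ with the holonomy image of $\gamma^-_{-n}(m)$, whose tangents lie in $\C^{ver}_{\theta_2}$ by Lemma \ref{bigmnfld}. The only cosmetic difference is that the paper defines the holonomy globally on whole center leaves via $\pi_2(W^{ss}(p))=W^{ss}_A(\pi_2(p))$, which also settles the uniformity-along-the-curve point you handle by the uniform transversality of $\mathcal{F}^{ss}$ to the compact center foliation.
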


\begin{proof}
For any $p\in \T^4$, it holds that $\pi_2(W^{ss}(p)) = W^{ss}_A(\pi_2(p))$, where $W^{ss}_A(\pi_2(p))$ is the stable manifold of the point $\pi_2(p)$ for the linear Anosov system. Thus, given any point $q\in W_1^{ss}(p)$, for every $b\in W^c(p)$ there is only one point in $W^{ss}(b) \cap W^c(q)$. We define the stable holonomy map 
\[
\begin{array}{rlcl}
H^s_{p,q}:& W^c(p) & \longrightarrow & W^c(q)\\
& b & \mapsto & W^{ss}(b) \cap W^c(q).
\end{array}
\]
Locally this map is given by the holonomy map defined in section \ref{preliminary}. This is a $C^1$-diffeomorphism and we can naturally write $DH^s_{p,q}(p):\R^2 \to \R^2$.

From theorem \ref{holonomies} this family of maps vary continuously in the $C^1$-topology with the points $(p,q)$. Since $DH^s_{p,p}= Id$, by the compactness of $\T^4$, there is $R \in (0,1)$ such that for any $q\in W^{ss}_{R}(p)$ it holds $DH^s_{p,q}(p).(\C^{ver}_{\theta_2}) \subset \C^{ver}_{\theta_3}$. 

Observe that $W^s_{2,-n}(m)$ is contained inside a center-stable leaf, which is subfoliated by strong stable leaves. For this subfoliation, restricted to a center-stable leaf, the center manifolds are transversals. Thus for $m^- \in W^s_{R,-n}(m)$, the  $W^s_{2,-n}(m) \cap W^c(m^-)$ is given by $H^s_{m,m^-}(\gamma_{-n}^-(m))$. By our choice of $R$ and since $T\gamma_{-n}^-(m) \subset \C^{ver}_{\theta_2}$ the conclusion of the lemma follows.
\end{proof}

\begin{lemma}
\label{densecenter}
There is a set of full measure $D\subset \T^4$ such that for every $p\in D$ the orbit of $W^c(p)$ is dense among the center leaves.
\end{lemma}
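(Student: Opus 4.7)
The plan is to push the problem down to the quotient by the center foliation. The center bundle of $f_N$ is $E^c = \R^2 \times \{0\}$, so the center leaves are exactly the tori $W^c((x,y,z,w)) = \T^2 \times \{(z,w)\}$. Consequently the space of center leaves is canonically identified with $\T^2$ via the second projection $\pi_2(x,y,z,w) = (z,w)$, and this identification is a smooth trivial fibration.

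From the formula $f_N(x,y,z,w) = (s_N(x,y) + P_x \circ A^N(z,w), A^{2N}(z,w))$ one sees that $\pi_2 \circ f_N = A^{2N} \circ \pi_2$, so the induced dynamics on the leaf space $\T^2$ is the linear toral automorphism $\overline{f} := A^{2N}$. First I would record this semiconjugacy and note that it carries the orbit of the leaf $W^c(p)$ to the $\overline{f}$-orbit of $\pi_2(p)$; so density of the leaf orbit in the leaf space is equivalent to density of $\pi_2(p)$ under $\overline{f}$.

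Next, since $A\in SL(2,\Z)$ is hyperbolic, $\overline{f} = A^{2N}$ is an ergodic (indeed mixing) automorphism of $\T^2$ preserving Lebesgue. Choose a countable basis $\{U_j\}_{j\in\N}$ of open sets of $\T^2$. By Birkhoff applied to $(\overline{f}, Leb_{\T^2}, \mathbf{1}_{U_j})$, for each $j$ there is a full-Lebesgue set $D_j \subset \T^2$ such that every $q\in D_j$ has forward orbit visiting $U_j$. Set $D_0 = \bigcap_j D_j$; then $D_0 \subset \T^2$ has full Lebesgue measure and every $q\in D_0$ has dense $\overline{f}$-orbit.

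Finally, define
\[
D := \pi_2^{-1}(D_0) = \bigl\{ (x,y,z,w) \in \T^4 : (z,w) \in D_0 \bigr\}.
\]
Because $\pi_2$ is the projection of a product and $Leb_{\T^4} = Leb_{\T^2}\otimes Leb_{\T^2}$, Fubini gives $Leb(D) = 1$. For any $p\in D$, the sequence $(\overline{f}^n(\pi_2(p)))_{n\ge 0}$ is dense in $\T^2$, hence the orbit $(f_N^n(W^c(p)))_{n\ge 0} = (W^c(f_N^n(p)))_{n\ge 0}$ is dense in the space of center leaves, which is exactly the statement of the lemma. I do not expect any serious obstacle: the whole argument rests on the very special (and here already established) fact that the center foliation of $f_N$ is smooth, trivial, and admits a global factor, which reduces the claim to ergodicity of a linear hyperbolic toral automorphism.
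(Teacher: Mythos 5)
Your proposal is correct and follows essentially the same route as the paper: the paper also uses the semiconjugacy $\pi_2\circ f_N = A^{2N}\circ\pi_2$, the fact that almost every point of $\T^2$ has dense $A^{2N}$-orbit (by ergodicity), and then takes $D=\pi_2^{-1}(D_A)$ using that $Leb_{\T^4}$ is the product measure. Your elaboration via Birkhoff on a countable basis just fills in the standard fact the paper cites directly.
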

\begin{proof}
For the linear Anosov $A^{2N}$, there is a set $D_A$ of full measure, with the property that every point in $D_A$ has dense orbit. This follows from the ergodicity of $A^{2N}$ for the Lebesgue measure. 

Since the Lebesgue measure of $\T^4$ is the product measure of the Lebesgue measure of each $\T^2$, take $D = \pi_2^{-1}(D_A)$. For any $p\in \T^4$ it holds that 
\[
\pi_2(f(W^c(p)) = A^{2N}(\pi_2(p)).
\] 

For any $q\in \T^2$, $\pi_2^{-1}(q)$ is a center leaf. Thus the dynamics among the center leaves is conjugated to $A^{2N}$ by $\pi_2$. Therefore, for any $p\in D$, since $\pi_2(p) \in D_A$ we conclude that the orbit of $W^c(p)$ is dense among the center leaves.
\end{proof}

Take $m_1\in  X\cap D \cap B \cap \Lambda_1$ and $m_2 \in X\cap D \cap B \cap  \Lambda_2$. From the definition of $\Lambda_1$ and $\Lambda_2$, for these two points
\[
\varphi^-(m_1) = \int \varphi d\nu_1 \textrm{ and } \varphi^+(m_2) =  \int \varphi d\nu_2.
\]

Fix a center leaf $W^c(q)$. Since $m_1,m_2 \in D$, there are two sequences $n_k \to +\infty$ and $l_j \to +\infty$, such that
\[
f^{n_k}(W^c(m_1)) \to W^c(q) \textrm{ and } f^{-l_j}(W^c(m_2)) \to W^c(q).
\]

By lemma \ref{bigmnfld}, there are curves $\gamma_{n_k}^+(m_1)$ and $\gamma_{-l_j}^-(m_2)$ with length bigger that $4\pi$ and contained in the cone $\C^{hor}_{\theta_2}$ and $\C^{ver}_{\theta_2}$, respectively. Take $R$ given by lemma \ref{transversality} and consider the sets

\begin{eqnarray*}
L^u_k(m_1) = \displaystyle \bigcup_{z\in \gamma^+_{n_k}(m_1)} W^{uu}_{R}(z)\subset W^u(f^{n_k}(m_1)) \\
L^s_j(m_2) = \displaystyle \bigcup_{z\in \gamma^-_{-l_j}(m_2)} W^{ss}_{R}(z) \subset W^s(f^{-l_j}(m_2)).
\end{eqnarray*}

For $k$ and $j$ large enough, $f^{n_k}(W^c(m_1))$ and $f^{-l_j}(W^c(m_2))$ are very close to the leaf $W^c(q)$. Thus by the control on the angles that we obtained in lemma \ref{transversality}, there is a transversal intersection between $L^u_k(m_1)$ and $ L^s_j(m_2)$. In particular $W^u(f^{n_k}(m_1))$ and $W^s(f^{-l_j}(m_2))$ intersects transversely. Before we continue with the proof we make the following remark.

\begin{remark}
\label{propertiesgeometrical}
This transverse intersection between stable and unstable manifolds is the key property to obtain ergodicity. We will see that the rest of the proof is a standard application of Hopf argument in the non-uniformly hyperbolic scenario. Three properties imply this transverse intersection:
\begin{enumerate}
\item For any point inside a certain set with positive measure for any ergodic component, there exists a stable curve inside the center manifold, with large size and controlled geometry. Similarly the existence of such a set but with the existence of an unstable curve. This is given by lemma \ref{bigmnfld}. Indeed, we can take the sets
\[
\displaystyle X^s= \bigcup_{n\geq 15} f^{-n}(X) \textrm{ and } X^u = \bigcup_{n\geq 15} f^n(X);
\]
\item The control of the holonomies, which will give a control on the tangent space of Pesin's manifolds considered in (\ref{manifoldsat}). This is given by lemma \ref{transversality};
\item The density of the orbit of almost every center leaf, which is given by lemma \ref{densecenter}. 
\end{enumerate}
\end{remark}

Now we continue with the proof. Fix $\varepsilon>0$ small and $l\in \N$ large enough such that the Pesin block $\mathcal{R}_{\varepsilon,l}$ has positive $\nu_2$ measure. By theorem \ref{pesincontinuity}, there is a number $\varepsilon_1>0$ such that every point $q\in \mathcal{R}_{\varepsilon,l}$ has a disc contained in $W^s(q)$ of size $\varepsilon_1$, which we will denote it by $W^s_{loc}(q)$. Furthermore, those discs vary $C^1$-continuously with the point $q\in \mathcal{R}_{\varepsilon.l}$.

Let $p$ be a point of transversal intersection between $L^u_k(m_1)$ and $ L^s_j(m_2)$. Take $M>0$ large enough such that $f^{M-l_j}(m_2) \in \mathcal{R}_{\varepsilon,l} $ and $d(f^{M-l_j}(m_2), f^M(p))< < \varepsilon_1$, such $M$ exists since $m_2$ is a typical point for $\nu_2$ and the set $R_{\varepsilon,l}$ has positive $\nu_2$-measure. We may assume that $f^{M-l_j}(m_2)$ is a density point of $\mathcal{R}_{\varepsilon,l} \cap \Lambda_2$. Fix a disc $T$ transverse to $W^s_{loc}(f^{M-l_j}(m_2))$ such that $\mathcal{R}_{\varepsilon,l} \cap \Lambda_2\cap T$ has positive measure inside $T$.

\begin{figure}[h]

\begin{center}
\begin{subfigure}[b]{0.4\linewidth}
\includegraphics[scale=0.4]{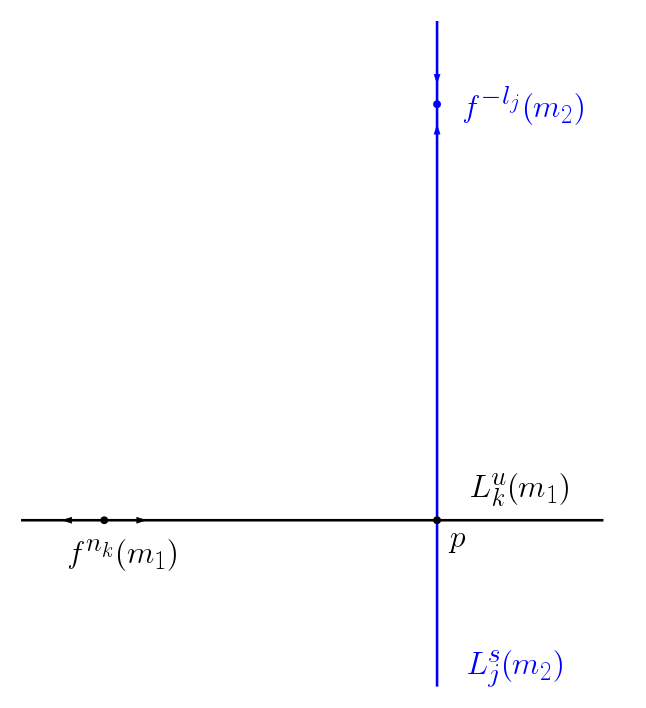}
\end{subfigure}
\begin{subfigure}[b]{0.4\linewidth}
\includegraphics[scale=0.4]{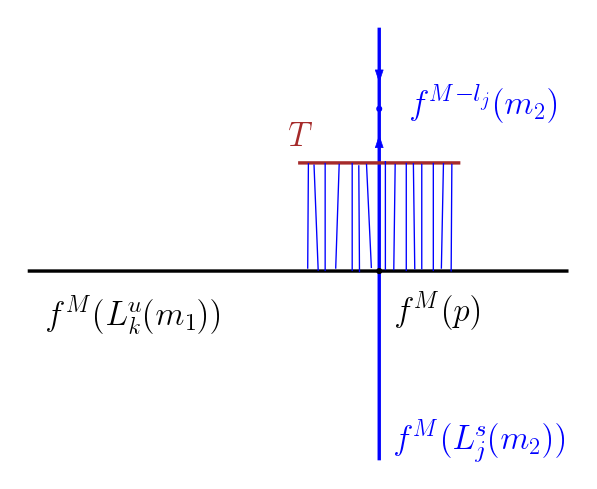}
\end{subfigure}
\caption{The transverse intersection and the holonomy}
\end{center}
\label{image2}
\end{figure}

Consider a disc $D^u \subset f^M(L^u_k(m_1))$ centered in $f^M(p)$ and observe that this disc is transverse to $W^s_{loc}(f^{M-l_j}(m_2))$. By the absolute continuity of the Pesin manifolds, we conclude that the set $A=\{W^s_{loc}(z) \cap D^u: z\in \mathcal{R}_{\varepsilon,l} \cap \Lambda_2\cap T\}$ has positive measure inside $W^u(f^{M+n_k}(m_1))$.

By the invariance of $B$, we know that $f^{M+n_k}(m_1)\in B$ and for almost every point $q\in W^u(f^{M+n_k}(m_1))$, it holds that $\varphi^+(q) = \varphi^-(q)$. Fix $\hat{z} \in A$ such that $\varphi^+(\hat{z}) = \varphi^-(\hat{z})$ and let $z\in \mathcal{R}_{\varepsilon,l} \cap \Lambda_2 \cap T$ be the point with $\hat{z} \in W^s_{loc}(z)$.

Since $z\in \Lambda_2$ and $\hat{z} \in W^s(z)$, we know that $\varphi^+(m_2) = \varphi^+(z)= \varphi^+(\hat{z})$. On the other hand, $\hat{z} \in W^u(f^{M+n_k}(m_1))$ implies that $\varphi^-(\hat{z}) = \varphi^{-}(m_1)$. Thus,

\[
\int \varphi d\nu_1 = \varphi^-(m_1) = \varphi^-(\hat{z}) = \varphi^+(\hat{z}) = \varphi^+(z) = \varphi^+(m_2) = \int \varphi d\nu_2.
\]
This is a contradiction since we assumed that $\int \varphi d\nu_1 \neq \int \varphi d\nu_2$. We conclude that there is only one ergodic component, in particular, the Lebesgue measure is ergodic. Thus we have proved that for $N$ large enough, $f_N = f$ is ergodic.

\section{Stable ergodicity of the system $f_N$}
\label{stableergodicity}

In this section we show how to adapt the proof of the ergodicity of $f_N$ to obtain $C^2$-stable ergodicity. Recall that for a vector $v\in T_m\T^4$, we defined $v_1= D\pi_1(m).v$ For a direction $E \subset T_m \T^4$ we will write $(E)_1 = D\pi_1(m). E$. For this section we fixed $0< \delta <<1$ small and we are assuming that $N$ is large and $\mathcal{U}_N$ is small enough such that proposition \ref{estimateprop} holds. Using proposition \ref{coherence} and the estimates in (\ref{basicineqfibered}), one easily obtains the following lemma.

\begin{lemma}
\label{manyconsiderations}
For each $\beta>0$, if $N$ is large and $\mathcal{U}_N$ is small enough, for $g\in \mathcal{U}_N$ it holds
\begin{enumerate}
	\item $g$ is partially hyperbolic, with a decomposition $TM = E^{ss}_g \oplus E^c_g \oplus E^{uu}_g$;
	\item $g$ is dynamically coherent and leaf conjugated to $f$ by a homeomorphism $h_g:\T^4 \to \T^4$;
	\item  $d_{C^2}(W_g^c(m),W^c_f(m)) \leq \beta$;
	\item $\|Dg(m)|_{E^c_{g,m}}\| \in ( e^{-\beta}\|Df(m)|_{E^c_{f,m}}\|, e^{\beta} \|Df(m)|_{E^c_{f,m}}\|);$
	\item $|\det Dg(m)|_{E^c_{g,m}}| \in (e^{-\beta},e^{\beta})$;
	\item $\|D^2g(m)|_{W^c_{g}(m)}\| \leq 2N$;
	\item  $ \max\{\|Dg(m)|_{E^c_{g,m}}\|, \|Dg^{-1}(m)|_{E^c_{g,m}}\|\} \leq 2N$;
	\item $ \min \{m(Dg(m)|_{E^c_{g,m}}), m(Dg^{-1} (m)|_{E^c_{g,m}})\}\geq (2N)^{-1}$;
	\item $\|Dg(m).v^c\| \in (e^{-\beta}\|Dg(m).v_1^c\|, e^{\beta} \|Dg(m).v_1^c\|)$, where $v^c \in E^c_{g,m}$;
	\item for points $p\in \T^4$ and $q\in W^c_{g}(p)$, let $exp^c_q: T_q W^c_g(p) \to W^c_g(p)$ be the exponential map of the center leaf. For any $C^1$-curve $\gamma \subset B(0,\frac{1}{2}) \subset T_q W^c_g(p)$, it holds $l_q ( \gamma) \in (e^{-\beta} l(exp^c_q(\gamma)), e^{\beta} l(exp^c_q(\gamma)))$, where $l_q(\gamma)$ is the length of the curve with respect to the inner product $<.,.>_q$ on $T_qW^c_g(p)$, the usual metric of $\T^4$ at the point $q$.	
\end{enumerate}
\end{lemma}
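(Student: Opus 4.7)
The plan is to obtain all ten items as routine perturbative consequences of Proposition \ref{coherence} together with the baseline estimates (\ref{basicineqfibered}) for $f_N$, grouped into four clusters: the structural items (1)--(3), the first-order estimates along the center (4),(5),(7),(8),(9), the second-order estimate (6), and the exponential-map comparison (10). Since only finitely many inequalities must be checked, it suffices to produce a single $C^2$-neighborhood $\mathcal{U}_N$ of $f_N$ that simultaneously realises each bound with slack $\beta$; the neighborhoods arising in each cluster can be intersected at the end.

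For (1) I would invoke the well-known fact that partial hyperbolicity is a $C^1$-open property (one just propagates the cone-field characterization for $f_N$, which holds with uniform constants thanks to (\ref{basicineqfibered})) and obtain the splitting $TM = E^{ss}_g \oplus E^c_g \oplus E^{uu}_g$. Items (2) and (3) are exactly the content of Proposition \ref{coherence}: shrinking $\mathcal{U}_N$ gives dynamical coherence, leaf conjugacy through a homeomorphism $h_g$, and $C^2$-proximity of the center leaves. As a direct corollary, the unit tangent field along the center foliation of $g$ is $C^0$-close (in fact $C^1$-close on each leaf) to that of $f_N$, because $W^c_g(m)$ is $C^2$-close to $W^c_f(m) = \mathbb{R}^2 \times \{0\} + m$.

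For the first-order items I would combine this continuity of $E^c_g$ with the $C^1$-continuity of $g \mapsto Dg$ and the baseline bounds (\ref{basicineqfibered}). Concretely, $\|Dg(m)|_{E^c_{g,m}}\|$ differs from $\|Df(m)|_{E^c_{f,m}}\|$ by an amount controlled by $\|g-f\|_{C^1} + \mathrm{dist}(E^c_g, E^c_f)$, which can be made smaller than any prescribed multiplicative factor $e^{\beta}$; this yields (4). Item (5) follows the same way using that $|\det Df|_{E^c_f}| = 1$ (since $Ds_N \in SL(2,\mathbb{R})$). Items (7) and (8) then follow from (4) applied to both $g$ and $g^{-1}$, or equivalently from (\ref{basicineqfibered}) upgraded with the same perturbation argument. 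For (9), note that $E^c_f = \mathbb{R}^2 \times \{0\}$, hence $v_1^c = D\pi_1(m).v^c$ agrees with $v^c$ for $f$; for $g$, since $E^c_g$ is close to $\mathbb{R}^2 \times \{0\}$, the projection $v^c \mapsto v_1^c$ is close to the identity restricted to $E^c_{g,m}$, so $\|Dg(m).v^c\|$ and $\|Dg(m).v_1^c\|$ differ multiplicatively by $e^{\pm \beta}$.

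For item (6), the $C^2$-closeness of the center leaves (Remark \ref{continuitycoherent}) combined with $\|g - f\|_{C^2} < \mathrm{diam}(\mathcal{U}_N)$ guarantees that the intrinsic Hessian of $g$ along $W^c_g(m)$ is close to that of $f$ along $W^c_f(m)$, and the latter is bounded by $N$ from (\ref{basicineqfibered}); any slack of factor $2$ is then immediate. Finally for (10), since the leaves of $\mathcal{F}^c_g$ are $C^2$-submanifolds with bounded second fundamental form (uniform in $\mathcal{U}_N$), the exponential map $\exp^c_q$ has bounded distortion on a ball of radius $\tfrac{1}{2}$ in $T_qW^c_g(p)$, and the Euclidean metric of $\mathbb{T}^4$ restricted to $W^c_g(p)$ differs from the pullback metric by a factor close to $1$; combining these two comparisons gives (10) after shrinking $\mathcal{U}_N$ once more. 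I do not foresee a genuine obstacle: the only delicate point is bookkeeping, namely verifying that the constants $\beta$ in each item can be made arbitrarily small by a single choice of $\mathcal{U}_N$, which reduces to continuity of finitely many maps at $f_N$.
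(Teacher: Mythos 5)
Your proposal is correct and takes essentially the same route as the paper, which offers no detailed proof of this lemma beyond the remark that it follows from Proposition \ref{coherence} and the estimates in (\ref{basicineqfibered}) --- exactly the perturbative bookkeeping you spell out. One small caveat: in item (9) the discrepancy $v^c - v^c_1$ lies in directions expanded by roughly $\mu^{2N}$, so the required closeness of $E^c_g$ to $\R^2\times\{0\}$ depends on $N$ (of order $\mu^{-2N}$), which is harmless since $\mathcal{U}_N$ is chosen after $N$ is fixed, as your formulation already allows.
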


From now on we fix $0<\beta<<1$. By proposition \ref{estimateprop}, every diffeomorphism $g\in \mathcal{U}_N$ is non-uniformly hyperbolic. Using theorem \ref{ergodiccomponentspesin}, we obtain the ergodic decomposition $Leb = \displaystyle \sum_{i \in \N} c_i \nu_{g,i}$. We define similarly as in section \ref{intmnfld} the sets $\{\Lambda_{g,i}\}_{i\in \N}$. Let $\mathcal{R}_g$ be the set of regular points for $g$. For a regular point $p\in \mathcal{R}_g$, let $E^-_{g,p}$ and $E^+_{g,p}$ be the directions of the Oseledets splitting. It holds that $E^c_{g,p} = E^-_{g,p} \oplus E^+_{g,p}$.  

We define the sets

\[
\arraycolsep=1.2pt\def\arraystretch{2}
\begin{array}{rcl}
Z_{g,i}^- & = & \left \{ m\in \mathcal{R}_g \cap \Lambda_{g,i}: \forall n\geq 0 \textrm{ it holds } \displaystyle \left \Vert Dg^n(m)|_{E^{-}_{g,m}}\right \Vert  < \left(N^{-\frac{4}{5}} \right)^n\right \};\\
Z_{g,i}^+ & = & \left\{ m\in \mathcal{R}_g \cap \Lambda_{g,i}: \forall n\geq 0 \textrm{ it holds }\displaystyle\left \Vert Dg^{-n}(m)|_{E^{+}_{g,m}}\right \Vert < \left( N^{-\frac{4}{5}} \right)^n\right\};\\
Z_{g,i} & = & g(Z_{g,i}^-) \cap g^{-1}(Z_{g,i}^+);\\
Z_g &=& \displaystyle \bigcup_{i\in \N} Z_{g,i}.
\end{array}
\]

\begin{lemma}
\label{stablefirstlemma}
For every $g\in \mathcal{U}_N$, it holds that $ \nu_{g,i}(Z_{g,i}) \geq \frac{1-7\delta}{1+7\delta}$ and $Leb(Z_g) \geq \frac{1-7\delta}{1+7\delta}.$
\end{lemma}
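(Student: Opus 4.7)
The plan is to follow the proof of Lemma \ref{setzs} essentially verbatim, replacing $f$ by $g$ and verifying that all the constants used there only depend on $N$ and $\delta$ (and not on the particular choice of $g\in \mathcal{U}_N$). The key observation is that the two inputs to the proof of Lemma \ref{setzs}, namely the upper bound on the negative center Lyapunov exponent and the uniform lower bound on $\log \|Df(m)|_{E^c_m}\|$, have direct analogues for $g$: the former is given by Proposition \ref{estimateprop}, which applies uniformly to every $g\in \mathcal{U}_N$, and the latter follows from item 8 of Lemma \ref{manyconsiderations}, which gives $m(Dg(m)|_{E^c_{g,m}})\geq (2N)^{-1}$, hence $\log \|Dg(m)|_{E^-_{g,m}}\|\geq -\log(2N)$ for every $m$.

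More precisely, I would start by fixing $i\in\N$ and $m\in \mathcal{R}_g\cap \Lambda_{g,i}$. Since $E^-_{g,m}$ is one-dimensional, Proposition \ref{estimateprop} gives
\[
\lim_{n\to+\infty}\frac{1}{n}\sum_{j=0}^{n-1}\log\|Dg(g^j(m))|_{E^-_{g,g^j(m)}}\|\leq -(1-\delta)\log N.
\]
Then I would apply Lemma \ref{pliss} to the sequence $a_j=\log \|Dg(g^j(m))|_{E^-_{g,g^j(m)}}\|$ with the same choice of parameters $\varepsilon=\tfrac{1}{6}\log N$, $\alpha_1=-\log N-\log 2$, $\alpha_2=-(1-\delta)\log N$ as in the original proof; the lower bound $a_j>\alpha_1$ is exactly the content of item 8 of Lemma \ref{manyconsiderations}. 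This produces a density-$\tfrac{1}{1+7\delta}$ set of indices $(n_k)$ at which
\[
\|Dg^n(g^{n_k}(m))|_{E^-_{g,g^{n_k}(m)}}\|<\left(N^{-4/5}\right)^n,\qquad \forall n\geq 0,
\]
so that $g^{n_k}(m)\in Z^-_{g,i}$ for all $k$. Birkhoff's theorem applied along $m\in\Lambda_{g,i}$ then gives $\nu_{g,i}(Z^-_{g,i})\geq \tfrac{1}{1+7\delta}$.

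Next I would invoke the symmetric argument for the past: by the analogous statement of Proposition \ref{estimateprop} for $g^{-1}$ (or equivalently by applying it directly to the positive exponent) and the lower bound $m(Dg^{-1}(m)|_{E^c_{g,m}})\geq (2N)^{-1}$, also from item 8 of Lemma \ref{manyconsiderations}, the same Pliss argument yields $\nu_{g,i}(Z^+_{g,i})\geq \tfrac{1}{1+7\delta}$. From the definition $Z_{g,i}=g(Z^-_{g,i})\cap g^{-1}(Z^+_{g,i})$ and invariance of $\nu_{g,i}$ under $g$,
\[
\nu_{g,i}(\T^4\setminus Z_{g,i})\leq \nu_{g,i}(\T^4\setminus Z^-_{g,i})+\nu_{g,i}(\T^4\setminus Z^+_{g,i})\leq \frac{14\delta}{1+7\delta},
\]
which gives the desired bound $\nu_{g,i}(Z_{g,i})\geq \tfrac{1-7\delta}{1+7\delta}$. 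Summing over ergodic components with $Leb=\sum_i c_i\nu_{g,i}$ and $Z_g=\bigcup_i Z_{g,i}$ yields $Leb(Z_g)\geq \tfrac{1-7\delta}{1+7\delta}$.

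There is no serious obstacle, since the entire argument of Lemma \ref{setzs} goes through once the two quantitative ingredients are provided uniformly on $\mathcal{U}_N$. The only mild subtlety is to confirm that the constants appearing in the Pliss application ($\alpha_1, \alpha_2, \varepsilon$) are the same choices that worked for $f_N$, and that the strict inequality $-(1-\delta)\log N + \tfrac{1}{6}\log N < -\tfrac{4}{5}\log N$ still holds for $\delta$ small enough, which is automatic from the setup of the section.
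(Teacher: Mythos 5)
Your proof is correct and is exactly what the paper intends: the paper's own proof of Lemma \ref{stablefirstlemma} consists of the single remark that it is analogous to Lemma \ref{setzs}, and you have carried out that analogy faithfully, correctly identifying the two ingredients that must hold uniformly on $\mathcal{U}_N$ (the exponent bound from Proposition \ref{estimateprop} and the lower bound $m(Dg|_{E^c_g})\geq (2N)^{-1}$ from item 8 of Lemma \ref{manyconsiderations}, which plays the role of (\ref{basicineqfibered})). The Pliss parameters, the density computation, and the final combination via $Z_{g,i}=g(Z^-_{g,i})\cap g^{-1}(Z^+_{g,i})$ and the ergodic decomposition all match the original argument.
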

The proof is analogous to the proof of lemma \ref{setzs}. Let $T=\left[ \frac{1+7\delta}{28\delta}\right]$ and define 
\begin{equation}
\label{Xg}
X_g=  \displaystyle \bigcap_{k=-T+1}^{T-1} g^k(Z_g).
\end{equation}
The proof of the next lemma is the same as the proof of lemma \ref{measure}.

\begin{lemma}
\label{measurerob}
For $N$ large and $\mathcal{U}_N$ small enough, if $\nu_{g,i}$ is an ergodic component of the Lebesgue measure then 
$$
\nu_{g,i} (X_g) >0.
$$

\end{lemma}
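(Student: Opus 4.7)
The plan is to imitate the proof of Lemma \ref{measure} verbatim, replacing $f$ by $g$, $\nu_i$ by $\nu_{g,i}$, $Z$ by $Z_g$, and invoking Lemma \ref{stablefirstlemma} in place of Lemma \ref{setzs}. Since the measure $\nu_{g,i}$ is $g$-invariant (being an ergodic component of the $g$-invariant Lebesgue measure), the argument reduces to a purely combinatorial bound on the measure of the complement of $X_g$.

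First, I would observe that since $Z_{g,i} \subset Z_g$, Lemma \ref{stablefirstlemma} yields
\[
\nu_{g,i}(\T^4 \setminus Z_g) \leq \nu_{g,i}(\T^4 \setminus Z_{g,i}) \leq \frac{14\delta}{1+7\delta}.
\]
Next, using the definition (\ref{Xg}) of $X_g$ as a finite intersection of iterates of $Z_g$, I would write
\[
\T^4 \setminus X_g = \bigcup_{k=-T+1}^{T-1} g^k(\T^4 \setminus Z_g),
\]
and bound $\nu_{g,i}(\T^4 \setminus X_g)$ by the subadditivity of $\nu_{g,i}$ and its $g$-invariance, obtaining
\[
\nu_{g,i}(\T^4 \setminus X_g) \leq (2T-1)\,\nu_{g,i}(\T^4 \setminus Z_g) \leq (2T-1)\cdot \frac{14\delta}{1+7\delta}.
\]

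Finally, since $T = \left[\frac{1+7\delta}{28\delta}\right]$, the factor $2T-1$ is strictly less than $\frac{1+7\delta}{14\delta}$, so that the right-hand side is strictly less than $1$. Consequently $\nu_{g,i}(X_g) = 1 - \nu_{g,i}(\T^4 \setminus X_g) > 0$, as desired. There is no serious obstacle here: the only ingredient beyond Lemma \ref{stablefirstlemma} is the $g$-invariance of the ergodic component, and the estimate is identical to the one already carried out in Lemma \ref{measure}. The whole point of fixing the integer $T$ of this particular size in (\ref{X}) and (\ref{Xg}) was precisely to make the above arithmetic work uniformly in $g \in \mathcal{U}_N$.
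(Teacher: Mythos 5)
Your proposal is correct and is essentially the paper's own argument: the paper simply states that the proof is the same as that of Lemma \ref{measure}, namely invariance of $\nu_{g,i}$, subadditivity over the $2T-1$ iterates in (\ref{Xg}), and the bound $\nu_{g,i}(\T^4\setminus Z_g)\leq \frac{14\delta}{1+7\delta}$ from Lemma \ref{stablefirstlemma} via $Z_{g,i}\subset Z_g$. Your count of $2T-1$ terms (versus the $2T-2$ appearing in the paper's display for Lemma \ref{measure}) is the careful one, and the arithmetic with $T=\left[\frac{1+7\delta}{28\delta}\right]$ still gives a bound strictly below $1$, so the conclusion holds.
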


Now we make a few estimates on the cones. Recall that $\theta_1 = N^{-\frac{2}{5}}$.

\begin{lemma}
	\label{staberglemcone1}
	If $N$ is large and $\mathcal{U}_N$ is small enough then for each $g\in \mathcal{U}_N$, for every $m\in Z_g$, it is verified that $(E^+_{g,m})_1 \subset \C^{hor}_{\theta_1^{-1}}(m)$. Furthermore,  $\C_{\frac{\theta_1}{2}}((E^+_{g,g(m)})_1,m) \subset \C^{hor}_{\frac{4}{\theta_1}}(m)$. The same holds for the $E^-_{g,m}$ and the vertical cone.
\end{lemma}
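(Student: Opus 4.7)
The plan is to adapt the proof of Lemma~\ref{cone1}, replacing the explicit action of $Df_N$ by the perturbation estimates of Lemma~\ref{manyconsiderations}. The starting point is to extract from $m\in Z_g$ an expansion inequality: since $m\in Z_{g,i}$ for some $i$, we have $g(m)\in Z_{g,i}^+$, and evaluating the defining inequality at $n=1$ yields
\[
\|Dg^{-1}(g(m))|_{E^+_{g,g(m)}}\|<N^{-4/5},\quad\text{equivalently}\quad \|Dg(m)|_{E^+_{g,m}}\|>N^{4/5}.
\]

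For the first inclusion I would argue by contradiction. Let $v^c\in E^+_{g,m}$ be a unit vector and write $v_1^c=D\pi_1(m).v^c=(a,b,0,0)$; if $(E^+_{g,m})_1\not\subset \C^{hor}_{\theta_1^{-1}}(m)$, then $|a|<\theta_1|b|=N^{-2/5}|b|$. By item~3 of Lemma~\ref{manyconsiderations}, $E^c_{g,m}$ is close to $\R^2\times\{0\}$, so $\|v_1^c\|$ is near $1$ and $|b|$ is bounded. Item~9 combined with the $C^1$-closeness of $Dg$ to $Df$ gives $\|Dg(m).v^c\|\leq e^\beta(\|Df(m).v_1^c\|+\varepsilon)$ for arbitrarily small $\varepsilon$ once $\mathcal{U}_N$ is shrunk enough. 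Using $Df(m).v_1^c=(a\Omega(m)-b,a,0,0)$ and $|\Omega(m)|\leq N+2$, the same computation as in Lemma~\ref{cone1} yields $\|Df(m).v_1^c\|\leq N^{3/5}(1+o(1))$, and for $\beta$ small and $N$ large one concludes $\|Dg(m).v^c\|<N^{4/5}$, contradicting the lower bound above.

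The second inclusion follows from an analogous contradiction argument at the point $g(m)$, using $Dg^{-1}$ in place of $Dg$. Let $w\in E^+_{g,g(m)}$ be a unit vector with $w_1=(a,b,0,0)$, and assume $|a|<\theta_1|b|$. The block formula for $[Df(m)]^{-1}$ gives $Df^{-1}(g(m)).w_1=(b,-a+\Omega(m)b,0,0)$. The analog of item~1 of Lemma~\ref{esqueci} for $g$ (proved by the same argument together with item~4 of Lemma~\ref{manyconsiderations}) shows $Z_g\subset G_1$, hence $|\Omega(m)|\geq N^{7/10}$ for $N$ large; combined with $|a|\ll|b|$ and a backward analog of item~9 applied to $g^{-1}$, this yields $\|Dg^{-1}(g(m)).w\|\geq N^{1/2}$, in contradiction with $\|Dg^{-1}(g(m))|_{E^+_{g,g(m)}}\|<N^{-4/5}$. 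Hence $(E^+_{g,g(m)})_1\subset \C^{hor}_{\theta_1^{-1}}$, and the elementary planar geometry at the end of the proof of Lemma~\ref{cone1} produces $\C_{\theta_1/2}((E^+_{g,g(m)})_1,m)\subset \C^{hor}_{4/\theta_1}(m)$. The statement for $E^-_{g,m}$ and the vertical cone follows by the same two-step argument applied to $g^{-1}$, using $g^{-1}(m)\in Z_{g,i}^-$ instead of $g(m)\in Z_{g,i}^+$ and the symmetric block structure of the derivative.

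The main obstacle will be fitting the perturbation errors inside the tight margins of the original proof, which gains only a factor of $N^{1/5}$. One must fix $\beta$ small first and then shrink $\mathcal{U}_N$ (with a size that may depend on $N$) so that every factor $e^{\pm\beta}$ and every $C^1$-perturbation error remains negligible compared to $N^{1/5}$; once this is arranged, the argument reduces to the same explicit calculation on $Ds_N$ and its inverse that appears in Lemma~\ref{cone1}.
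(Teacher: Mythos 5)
Your proof is correct and, for the first inclusion, it is essentially the paper's own argument: the contradiction between $\|Dg(m)|_{E^+_{g,m}}\|>N^{4/5}$ (extracted from $g(m)\in Z_{g,i}^+$ at $n=1$, exactly as in Remark~\ref{contaanterior}) and the upper bound coming from the $e^{\pm\beta}$ comparisons of Lemma~\ref{manyconsiderations} together with the explicit computation of $Ds_N$ on a vector $(u,1)$ with $|u|\leq N^{-2/5}$, which is the computation of Lemma~\ref{cone1}. Where you diverge is the second inclusion: the paper disposes of it by repeating verbatim the planar cone geometry at the end of Lemma~\ref{cone1}, implicitly reading the direction as $(E^+_{g,m})_1$ (the index $g(m)$ in the statement appears to be a typo --- it is the version at $m\in Z_g$ that is actually invoked later, in the proof of Proposition~\ref{stablesizenotfibered}), whereas you add a genuine extra step: a backward contradiction with $\|Dg^{-1}(g(m))|_{E^+_{g,g(m)}}\|<N^{-4/5}$, using $Ds_N^{-1}(a,b)=(b,-a+\Omega b)$ and $|\Omega(m)|\geq N^{7/10}$ from $Z_g\subset G_1$, which proves the literal statement at $g(m)$ and only then applies the planar geometry. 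Both routes are sound; your extra step costs a little more perturbative bookkeeping (comparing $Dg^{-1}$ with $Df^{-1}$ after fixing $\beta$ and shrinking $\mathcal{U}_N$ in an $N$-dependent way, which the setting allows, and noting that the relevant $\Omega$ is evaluated at a point $O(\mathrm{dist}_{C^0}(g,f))$-close to $m$), but it removes any reliance on the charitable reading of the statement, and it causes no circularity since $Z_g\subset G_1$ is proved from item $4$ of Lemma~\ref{manyconsiderations} alone.
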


\begin{proof}

For $m\in Z_g$, it holds that $\|Dg(m)|_{E^+_{g,m}}\| \geq N^{\frac{4}{5}}$. Take a vector of the form $(u,1)$, identifying $(u,1) = (u,1,0,0)$, with $|u| \leq N^{-\frac{2}{5}}$. For $N$ large enough and from the calculations made in the proof of lemma \ref{cone1}, which for this part does not use that $m\in Z_g$, we obtain 
\[\arraycolsep=1.2pt\def\arraystretch{1.5}
\begin{array}{rclcl}
\|Dg(m).(u,1)\| &\leq & e^{\beta}\|Df(m).(u,1)\| &\leq & e^{\beta}N^{\frac{3}{5}+\frac{1}{100}}  <N^{\frac{3}{5} + \frac{1}{50}}.
\end{array}
\]

Suppose that such $(u,1)$ generates $(E^+_{g,m})_1$, then
\[
\|Dg(m)|_{E^+_{g,m}}\| \leq e^{\beta} \frac{\|Dg(m).(u,1)\|}{\|(u,1)\|} \leq N^{\frac{3}{5} + \frac{1}{25}} < N^{\frac{4}{5}},
\]
which is a contradiction since $m\in Z_g$. The proof of the second part of the lemma is exactly the same as in lemma \ref{cone1}.
\end{proof}

Recall that we defined in section \ref{manysets} the sets $Crit_1$, $Crit_2$, $G_1$ and $G_2$. Also recall that $\theta_2 = N^{-\frac{3}{5}}$. We obtain the following lemma, by continuity and lemma \ref{esqueci}.

\begin{lemma}
\label{stabergestimates1}
For $N$ large, $\mathcal{U}_N$ small enough and $g\in \mathcal{U}_N$, it holds that
\begin{enumerate}
\item $Z_g \subset G_1 \subset G_2;$
\item If $m\in G_2$ then $\left(Dg(m).\mathscr{C}^{hor}_{\frac{4}{\theta_1}}(m)\right)_1 \subset \mathscr{C}^{hor}_{\theta_2}(g(m))$;\\
\item If $\gamma\subset G_2$ is a $C^1$-curve inside a center leaf such that the curve $\pi_1(\gamma)$ is tangent to $\C^{hor}_{\theta_2}$ and has length $l(\pi_1(\gamma))\geq N^{-\frac{3}{10}}$ then $l(g(\gamma)) > 4\pi$.
\end{enumerate}
Similar statements hold for the vertical cone and $g^{-1}$.
\end{lemma}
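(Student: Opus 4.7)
The plan is to imitate the proof of Lemma \ref{esqueci} step by step, inserting at each step the $C^2$-perturbation estimates of Lemma \ref{manyconsiderations} and checking that the polynomial slack in the $f$-inequalities comfortably absorbs the $e^{\pm\beta}$ factors that arise from $g\in\mathcal{U}_N$.

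For item 1, I would first establish the analog of Remark \ref{contaanterior} for $g$: since $m\in Z_g = g(Z_g^-)\cap g^{-1}(Z_g^+)$ and the Oseledets splitting refines the partially hyperbolic splitting (Remark \ref{osedetspesin}), any $m\in Z_g$ satisfies $\|Dg(m)|_{E^+_{g,m}}\|\geq N^{4/5}$ and hence $\|Dg(m)|_{E^c_{g,m}}\|\geq N^{4/5}$. By item 4 of Lemma \ref{manyconsiderations}, this forces $\|Df(m)|_{E^c_{f,m}}\|\geq e^{-\beta}N^{4/5}$. Since the direct computation used in Lemma \ref{esqueci}(1) gives $\|Df(m)|_{E^c_{f,m}}\|<N^{4/5}$ whenever $m\notin G_1$, we get a contradiction, proving $Z_g\subset G_1$; the inclusion $G_1\subset G_2$ is part of the definition.

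For item 2, I would repeat the cone calculation of Lemma \ref{esqueci}(2). By $C^2$-closeness, at any $m\in G_2$ the four partial derivatives $\partial_x g_1,\partial_y g_1,\partial_x g_2,\partial_y g_2$ are within $\beta$ of $\Omega(m),-1,1,0$, respectively. For $(u,v,0,0)\in\C^{hor}_{4/\theta_1}(m)$ with $|v|\leq(4/\theta_1)|u|$, projecting $Dg(m).(u,v,0,0)$ onto the horizontal plane produces a horizontal component of magnitude at least $|u|(|\Omega(m)|-\beta-4\beta/\theta_1)$ and a vertical component of magnitude at most $|u|(1+\beta+4\beta/\theta_1)$. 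Using $|\Omega(m)|\geq N^{7/10}/2$ in $G_2$ together with $\theta_2 = N^{-3/5}$, the same arithmetic as in Lemma \ref{esqueci}(2) (with the additional $\beta$-terms absorbed by a single factor of $N^{1/10}$) yields $(Dg(m).(u,v,0,0))_1\in\C^{hor}_{\theta_2}(g(m))$.

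For item 3, the key step is to relate the expansion of a tangent vector to $\gamma\subset W^c_g$ to the horizontal calculation already done for $f$. Given $p\in\gamma$ and a unit tangent vector $v\in T_p\gamma\subset E^c_{g,p}$, item 9 of Lemma \ref{manyconsiderations} gives $\|Dg(p).v\|\geq e^{-\beta}\|Dg(p).(v_1,0)\|$ where $v_1 = D\pi_1(p).v\in\C^{hor}_{\theta_2}$ by hypothesis. Since the center leaf $W^c_g(p)$ is $C^2$-close to horizontal (item 3 of Lemma \ref{manyconsiderations}) the norms $\|(v_1,0)\|$ and $\|v\|=1$ agree up to a factor $e^{\pm\beta}$; and by $C^1$-closeness together with Lemma \ref{esqueci}(3), $\|Dg(p).(v_1,0)\|\geq N^{1/2}/2$. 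Hence $\|Dg(p).v\|\geq N^{1/2}/4$. Integrating along $\gamma$, using item 10 of Lemma \ref{manyconsiderations} to compare the arclength of $g(\gamma)$ with its tangential integral and $l(\gamma)\geq e^{-\beta}l(\pi_1(\gamma))\geq \tfrac12 N^{-3/10}$, yields $l(g(\gamma))\geq N^{1/5}/16>4\pi$.

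The main technical delicacy is item 3: one must be careful that the ambient length of $\gamma$ in $W^c_g$ is comparable to that of its horizontal projection $\pi_1(\gamma)$, and that the norm of a $g$-center-tangent vector agrees with the norm of its horizontal embedding up to $e^{\pm\beta}$; both are precisely the content of items 9 and 10 of Lemma \ref{manyconsiderations}. The ``similar statements'' for $g^{-1}$ and vertical cones follow by running the identical argument with $g$ replaced by $g^{-1}$ and the roles of the $x$- and $y$-coordinates exchanged; all the needed estimates (items 4--10 of Lemma \ref{manyconsiderations}) are stated symmetrically in $g$ and $g^{-1}$, and $g^{-1}$ is $C^2$-close to $f^{-1}$, which by Lemma \ref{symmetrylemma} has the same structure as $f$ with $x$ and $y$ swapped.
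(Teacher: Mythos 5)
Items 1 and 3 of your argument are correct and essentially coincide with the paper's proof. Item 1 is the paper's computation with the factor $e^{\pm\beta}$ moved to the other side (the paper shows $\|Dg(m)|_{E^c_{g,m}}\|\leq e^{\beta}\|Df(m)|_{E^c_{f,m}}\|<N^{4/5}$ for $m\notin G_1$, contradicting $m\in Z_g$), and your item 3 is the paper's estimate $l(g(\gamma))\geq l(\pi_1(g(\gamma)))>e^{-\beta}N^{1/2-3/10}>4\pi$ obtained from Lemma \ref{esqueci}(3) together with items 4 and 9 of Lemma \ref{manyconsiderations}; your appeal to item 10 there is unnecessary but harmless. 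For items 1 and 3 the fixed multiplicative $e^{\pm\beta}$ slack really is absorbed by the polynomial margins, as you say.

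The gap is in item 2. You model $D\pi_1\circ Dg(m)$ as $Ds_N(m)$ plus an additive entry error $\beta$, with $\beta$ the constant fixed once and for all, and you claim the extra terms are ``absorbed by a single factor of $N^{1/10}$''. They are not: for $(u,v)\in\C^{hor}_{4/\theta_1}$ one has $|v|\leq 4N^{2/5}|u|$, so the error in the \emph{vertical} component of the image can be as large as $4\beta N^{2/5}|u|$, while the total slack in the inequality $\theta_2\,(\text{horizontal})\geq(\text{vertical})$ is only of order $N^{1/10}|u|$ (this is precisely the factor $K=\tfrac12 N^{1/10}-2N^{-3/5}-4$ the paper extracts from the proof of Lemma \ref{esqueci}(2)). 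Since $N^{2/5}\gg N^{1/10}$, for any fixed $\beta>0$ the required inequality fails for large $N$: within your error budget, an error of size $\beta$ in the coefficient of $v$ in the vertical component already pushes the image of the edge vector $v=4N^{2/5}u$ outside $\C^{hor}_{\theta_2}$. (Your stated lower bound for the horizontal component also drops the term $-|v|\geq-4N^{2/5}|u|$, which is the dominant correction even for $f$, though the surrounding text suggests you meant to keep it.) The statement is nevertheless true because $\mathcal{U}_N$ is chosen \emph{after} $N$: either require the $C^1$-distance from $g$ to $f$ to be smaller than a negative power of $N$ (of order $N^{-3/10}$ suffices), or argue as the paper does, namely record that $Df(m).\C^{hor}_{4/\theta_1}(m)\subset\C^{hor}_{\theta_2/K}(f(m))$ is a uniformly strict inclusion and invoke continuity for $\mathcal{U}_N$ small enough. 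With that correction your item 2, and hence the whole proof, matches the paper's.
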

\begin{proof}
1. For $m\notin G_1$, by item $4$ of lemma \ref{manyconsiderations}, it holds 
\[
\|Dg(m)|_{E^c_{g,m}}\| \leq e^{\beta} \|Df(m)|_{E^c_{f,m}}\| < e^{\beta} N^{\frac{7}{10} - \frac{1}{100}} < N^{\frac{4}{5}}.
\]
\begin{enumerate}
\setcounter{enumi}{1}
\item The proof of item $2$ of lemma \ref{esqueci} actually gives that for $m\in G_2$, it holds
\[
Df(m).(\C^{hor}_{\frac{4}{\theta_1}}(m)) \subset \C^{hor}_{\frac{\theta_2}{K}}(f(m)),
\]
where $K = \frac{1}{2}N^{\frac{1}{10}} - 2N^{-\frac{3}{5}} -4 $. In particular, the inclusion of item $2$ of lemma \ref{esqueci} is uniformly strict. Thus, if $\mathcal{U}_N$ is small enough the conclusion follows.

\item From the estimates made in the proof of item $3$ of lemma \ref{esqueci} and by items $4$ and $9$ of lemma \ref{manyconsiderations}, it follows that 
\[
l(g(\gamma)) \geq l(g(\pi_1(\gamma))) > e^{-\beta}N^{\frac{1}{2}-\frac{3}{10}} > 4\pi.\qedhere 
\]
\end{enumerate}
\end{proof}

Now we estimate the size of the stable and unstable manifolds analogous to proposition \ref{sizemnfld}.

\begin{proposition}
	\label{stablesizenotfibered}
	Let $N$ be large and $\mathcal{U}_N$ be small enough. For $g\in \mathcal{U}_N$ and $m\in Z_g$, there are two $C^1$-curves, $W^*_g(m)$, contained in $W^c_g(m)$, tangent to $E^*_{g,m}$ and with length bounded from below by $r_0=N^{-7}$, for $*=-,+$. Those curves are $C^1$-stable and unstable manifolds for $g$, respectively. Moreover, $\left( T_pW^+_{g,r_0}(m) \right)_1\subset \mathscr{C}^{hor}_{\frac{4}{\theta_1}}(p)$ and  $\left(T_qW^-_{g,r_0}(m)\right)_1 \subset \mathscr{C}^{ver}_{\frac{4}{\theta_1}}(q)$, for every $p\in W^+_{g,r_0}(m)$ and $q\in W^-_{g,r_0}(m)$.
	 	
\end{proposition}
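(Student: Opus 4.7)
The plan is to mimic the proof of Proposition \ref{sizemnfld} step by step, replacing $f$ by $g$ and absorbing the deviations from the fibered model into the small constant $\beta$ furnished by Lemma \ref{manyconsiderations}. Concretely, for $m\in Z_g$ and each $n\in\N$, I would define $\psi_n:V_{n-1}\to T_{g^{n}(m)}\T^2$ as the lift via the intrinsic exponential map of $g|_{W^c_g(g^{n-1}(m))}$ along the orbit of $m$. Item $(3)$ of Lemma \ref{manyconsiderations} guarantees the center leaves of $g$ are $C^2$, so each $\psi_n$ is a $C^2$-diffeomorphism, and item $(6)$ gives a uniform bound $\|D^2\psi_n\|\leq 2N$ that replaces $\|D^2s_N\|\leq N$ used in the fibered case.

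Next I would apply the general stable manifold construction of Crovisier--Pujals (Theorem 5 of \cite{crovisierpujalsstronglydissipative}) to the sequence $(\psi_n)$. Since $g^{-1}(m)\in Z_g^{-}$ by construction, the upper bound $\|Dg^n(g^{-1}(m))|_{E^-}\|<(N^{-4/5})^n$ holds, and the lower bound $(2N)^{-n}\leq\|Dg^n(g^{-1}(m))|_{E^-}\|$ follows from item $(8)$ of Lemma \ref{manyconsiderations}. For the determinant, item $(5)$ gives $|\det Dg^n|_{E^c}|\in(e^{-n\beta},e^{n\beta})$, so I would set
\[
\sigma=N^{-4/5},\quad \tilde{\sigma}=(2N)^{-1},\quad \rho=e^{\beta}\sigma^2,\quad \tilde{\rho}=e^{-\beta}\tilde{\sigma}^2,
\]
and choose $\beta$ small enough so that the basic inequality $\tilde{\sigma}\tilde{\rho}/(\sigma\rho)>\sigma$ of \cite{crovisierpujalsstronglydissipative} remains valid. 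The auxiliary quantities $\lambda_1=2\sigma$, $\lambda_2=\tilde{\rho}/2$, $C_0=3$, and the estimates $A_n\leq C_0(\lambda_1/\tilde{\sigma})^n$, $B_n\leq C_0(\rho/\lambda_2)^n$, as well as the bounds \eqref{uno}--\eqref{tres} on the diagonalized dynamics $H_n$, go through unchanged up to replacing the constants by themselves times factors of $e^{\pm O(\beta)}$ coming from items $(4)$, $(7)$, $(8)$ of Lemma \ref{manyconsiderations}. Choosing $\eta=N^{-3}$ and $\tilde{r}_0=\eta/(216N\lambda_1)$ yields a $C^1$-curve tangent to $\C_\eta(E^-_{g,g^{-1}(m)})$ inside $T_{g^{-1}(m)}W^c_g(g^{-1}(m))$, of length at least $N^{-5}$ for $N$ large, which is a stable manifold for $(\psi_n)$. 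Projecting through $\exp^c$ and using item $(10)$ of Lemma \ref{manyconsiderations} gives a stable manifold $W^-_g(g^{-1}(m))\subset W^c_g(g^{-1}(m))$ for $g$ of length at least $\tfrac12 N^{-5}$.

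Finally, to obtain the curve at $m$ with the claimed cone estimate, I would push forward by $g$: item $(8)$ of Lemma \ref{manyconsiderations} gives $l(W^-_g(m))\geq (2N)^{-1}\cdot\tfrac12 N^{-5}>N^{-7}=r_0$. For the cone containment, the image $Dg(g^{-1}(m)).\C_\eta(E^-_{g,g^{-1}(m)})$ is a cone around $E^-_{g,m}$ of size at most $4N^2\eta<\theta_1/2$ (using items $(7)$--$(8)$ of Lemma \ref{manyconsiderations} and $\eta=N^{-3}$). Applying $D\pi_1$ and invoking the second part of Lemma \ref{staberglemcone1}, which says $\C_{\theta_1/2}((E^-_{g,m})_1,m)\subset\C^{ver}_{4/\theta_1}(m)$, yields the desired inclusion $(T_qW^-_{g,r_0}(m))_1\subset\C^{ver}_{4/\theta_1}(q)$ for every $q\in W^-_{g,r_0}(m)$ (after possibly shrinking $r_0$ to a comparable scale to ensure this holds along the whole curve, using the continuity of $E^-_g$ on a Pesin block and the smallness of the curve). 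The argument for $W^+_g(m)$ is symmetric via the involution of Lemma \ref{symmetrylemma}, applied in the robust setting through Lemma \ref{manyconsiderations}.

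The main obstacle is bookkeeping: I need to verify that after replacing $f$ by $g$ the constants in \eqref{uno}--\eqref{tres} and in the contraction/expansion lemma for $H_n^{-1}$ are only perturbed by factors $e^{O(\beta)}$, which requires item $(9)$ of Lemma \ref{manyconsiderations} to relate norms along $E^c_g$ to norms of their $\pi_1$-projections, so that the cone computation at the final step closes with the slightly larger constant $4/\theta_1$ instead of $2/\theta_1$. Once this is done, the geometric cone inclusion is the key output, because it is precisely what feeds into Lemmas \ref{stabergestimates1} and \ref{bigmnfld} in the robust version of the ergodicity argument.
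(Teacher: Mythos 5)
Your proposal is correct and follows essentially the same route as the paper: lift $g$ along the $C^2$ center leaves, run the Crovisier--Pujals construction with the $e^{\pm\beta}$-perturbed constants $\rho=e^{\beta}\sigma^2$, $\tilde\rho=e^{-\beta}\tilde\sigma^2$ from Lemma \ref{manyconsiderations}, take $\eta=N^{-3}$, push forward by $g$, and close the cone estimate after projecting by $D\pi_1$ via Lemma \ref{staberglemcone1}. Two cosmetic points: no shrinking of $r_0$ or Pesin-block continuity is needed, since the constructed curve is tangent to the cone $\C_{\eta}(E^-_{g,g^{-1}(m)})$ along its whole length; and for the unstable curve one simply repeats the argument for $g^{-1}$ (the involution of Lemma \ref{symmetrylemma} is exact only for $f_N$, not for a general $g\in\mathcal{U}_N$).
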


\begin{proof}
The main difference in the proof is that we have to project by $Dpi_1$ the tangent directions of the curves constructed. By lemma \ref{manyconsiderations} we will have good control of what happens after this projection, obtaining the desired estimates.
 
Using item $5$ of lemma \ref{manyconsiderations}, for $m\in Z_{g}$, it holds that 
\[
(2N)^{-n}\leq \left\Vert Dg^n(g^{-1}(m))|_{E^-_{g,g^{-1}(m)}}\right \Vert < \left(N^{-\frac{4}{5}}\right)^n,
\]
and
\[
(2N)^{-2n}e^{-n\beta}\leq \frac{\left \Vert Dg^n(g^{-1}(m))|_{E^-_{g,g^{-1}(m)}}\right \Vert ^2}{\left \vert \det Dg(g^{-1}(m))|_{E^c_{g,g^{-1}(m)}}\right \vert} <\left(e^{\beta}N^{-2.\left(\frac{4}{5}\right)}\right)^n.
\]

In the same way as in the proof of proposition \ref{sizemnfld}, consider the lifted dynamics $\psi_n:V_{n-1} \to T_{g^{n}(m)} W^c_g(g^{n}(m))$ of the diffeomorphism $g|_{W^c_g(g^{n-1}(m))}$, that goes from a neighborhood $V_n$ of $0$ in  $T_{g^{n-1}(m)} W^c_g(g^{n-1}(m))$ into a neighborhood of $0$ in $T_{g^{n}(m)} W^c_g(g^{n}(m))$. Since the center leaves are $C^2$, we have that $g|_{W^c_g(g^{n-1}(m)}$ is a $C^2$-diffeomorphism, which implies that $\psi_n$ is a $C^2$-diffeomorphisms into its image.

Take $\sigma = N^{-\frac{4}{5}}$, $\lambda_1= 2\sigma$, $\tilde{\sigma} = (2N)^{-1}$, $\rho= e^{\beta}\sigma^2$, $\tilde{\rho} = e^{-\beta}\tilde{\sigma}^2$, $\lambda_2 = \frac{\tilde{\rho}}{2}$ and $C_0=3$. Let $\xi =\frac{\tilde{\sigma}\lambda_2}{\lambda_1^2 \rho}$ and observe that for $N$ large enough 
\[
	\xi = \frac{\tilde{\sigma}\lambda_2}{\lambda_1^2 \rho}= 2^{-6} e^{-2\beta}N^{\frac{1}{5}} > 4.
\]
Following the same construction as in proposition \ref{sizemnfld}, one obtains the maps $\Delta_n$, $h_n$ and $H_n$. Recall that 
$$
H_n = \begin{pmatrix}
a & d\\
0 & c
\end{pmatrix}
\textrm{ and } 
H_n^{-1} = \begin{pmatrix}
\frac{1}{a} & -\frac{d}{ca}\\
0 & \frac{1}{c}
\end{pmatrix}.
$$

It also holds that 

\begin{alignat}{3}
(\|Dg|_{E_g^c}\|.\|Dg^{-1}|_{E_g^c}\|^2)^{-1} & \leq & |a| & <&& \lambda_1\label{runo}\\
|a|\lambda_2^{-1} & \leq & |c| & \leq && \lambda_1 \lambda_2^{-1} \|Dg|_{E_g^c}\|.\|Dg^{-1}|_{E_g^c}\| + \lambda_1 \|Dg^{-1}|_{E_g^c}\|^2\label{rdos}\\
&&|d| &\leq && \|Dg|_{E_g^c}\|.\|Dg^{-1}|_{E_g^c}\||a|.\label{rtres}
\end{alignat}

By item $4$ of lemma \ref{manyconsiderations} and using the previous inequalities
$$
\left|\frac{d}{c}\right|\leq \frac{\|Dg|_{E_g^c}\|.\|Dg^{-1}|_{E_g^c}\| |a|}{|a| \lambda_2^{-1}} < \frac{e^{2\beta}(2N)^2}{2e^{\beta}.(2N)^2} = \frac{e^{\beta}}{2}.
$$

For $\eta \leq \frac{1}{2}$ define the cone $\widetilde{\mathscr{C}}_{(\eta,n)}= \C_{\eta}(E_n)$, the cone of size $\eta$ around the direction $E_n$ inside $T_{g^{n-1}(m)}W^c_g(g^{n-1}(m))$. Using the estimate on $\left|\frac{d}{c}\right|$, following the same steps as in the proof of proposition \ref{sizemnfld}, we obtain that any linear map $\frac{\eta}{6}$-close to $H_n^{-1}$ contracts the cone $\widetilde{\C}_{(\eta,n+1)}$ and expands any vector inside $\widetilde{\C}_{(\eta,n+1)}$ by at least $\frac{1}{4\lambda_1}$.

By item $6$ of lemma \ref{manyconsiderations}, for any point $q\in \T^4$, it holds that $\|D^2g(q)|_{W^c_g(q)}\| \leq 2N$. Thus $(Dh_n(y))^{-1}$ is $\frac{\eta}{6}$-close to $H_n^{-1}$ in the ball of radius
\[
\tilde{r}_{n+1} = \frac{\eta}{12N\|\Delta_n\|} >\frac{\eta}{108N}(4\lambda_1)^n.
\]

Arguing similarly as in the proof of proposition \ref{sizemnfld}, we can take
\[
\tilde{r}_0 = \frac{\eta}{432 N \lambda_1}.
\]

Also by similar reasons as in the proof of proposition \ref{sizemnfld}, taking $\eta= N^{-3}$ we obtain a stable manifold for the sequence $(\psi_n)_{n\in \N}$ with size bounded from below by $\tilde{r}_0 > N^{-4+\frac{2}{5}}$, for $N$ large enough. The projection of this stable manifold by the exponential map gives the stable manifold $W^-_g(g^{-1}(m))$ for $g$ at the point $g^{-1}(m)$. By item $10$ of lemma \ref{manyconsiderations}, this stable manifold has size bounded from below by $e^{-\beta}. N^{-4+\frac{2}{5}}> N^{-5}$. Thus $W^-_g(m) = g(W^-_g(g^{-1}(m)))$ has size bounded from below by $r_0 = N^{-7}$.

The stable manifold for the sequence $(\psi_n)$ is tangent to the cone $\widetilde{\C}_{(\eta,0)}$ and at the origin is tangent to the direction $E^-_{g,m}$. By items $3$, $7$ and $8$ of lemma \ref{manyconsiderations}, for any $q\in \T^4$
\begin{equation}
\label{tamoainaatividade}
\left(Dg(q).(\widetilde{\C}_{2\eta,0})_1\right)_1 \subset \C_{e^{2\beta}8N^2 \eta}((E^-_{g,m})_1,m),
\end{equation}
where $(\widetilde{\C}_{2\eta,0})_1$ is identified with $(\widetilde{\C}_{2\eta,0})_1 \times \{0\}$.

 The stable manifold $W^-_g(g^{-1}(m))$ at the point $q$ is tangent to $Dexp^c_m((exp^c_m)^{-1}(q)). \widetilde{\C}_{(\eta,0)}$. If $\beta>0$ is small enough, then $Dexp^c_m(p)$ is close to be the identity, for any $p\in B(0,\frac{1}{2})$. Thus $\left( T_qW^-_g(g^{-1}(m))\right)_1 \subset \left( \widetilde{\C}_{2\eta,0}\right)_1$. By (\ref{tamoainaatividade}), we obtain
 \[
 \left( T_qW^-_{g,r_0}(m)\right)_1 \subset \C_{e^{2\beta}8N^2 \eta}((E^-_{g,m})_1,q).
 \]
 
By lemma \ref{staberglemcone1} and our choice of $\eta$, we conclude that 
\[
\left( T_qW^-_{g,r_0}(m)\right)_1 \subset \mathscr{C}^{ver}_{\frac{4}{\theta_1}}(q).\qedhere
\]
\end{proof}

So far we have obtained the results analogous to section \ref{intmnfld}. Now we will obtain the results analogous to the results used in section \ref{ergodicfiber} to obtain the ergodicity of $f$. The following is analogous to lemma \ref{bigmnfld}.

\begin{lemma}
\label{notfiberedbigmanifold}
For $N$ large, $\mathcal{U}_N$ small and $n>15$, if $\nu_{g,i}$ is an ergodic component of the Lebesgue measure, then for every $m\in X_g$ there are two curves $\gamma_{g,-n}^-(m) \subset g^{-n}(W^-_{g,r_0}(m))$ and $\gamma_{g,n}^+(m) \subset g^n(W^+_{g,r_0}(m))$ with length greater than $4\pi$, such that $\left( T \gamma^-_{g,-n}(m)\right)_1 \subset \C^{ver}_{\theta_2}$ and $\left( T\gamma^+_{g,n}(m)\right)_1 \subset \C^{hor}_{\theta_2}$.

\end{lemma}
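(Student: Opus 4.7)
The plan is to imitate the proof of Lemma \ref{bigmnfld}, making three systematic replacements: $f \leadsto g$, the curves $W^{\pm}_{r_0}(m)$ from Proposition \ref{sizemnfld} by their robust counterparts $W^{\pm}_{g,r_0}(m)$ from Proposition \ref{stablesizenotfibered}, and every tangent-cone condition $T\gamma \subset \C^{*}$ by the projected version $(T\gamma)_1 \subset \C^{*}$. The three conclusions of Lemma \ref{esqueci} used in the proof of Lemma \ref{bigmnfld} (containment $Z \subset G_1$, cone invariance on $G_2$, length expansion by crossing $G_2\setminus G_1$) are replaced by the three items of Lemma \ref{stabergestimates1}, while Lemma \ref{manyconsiderations} supplies whatever quantitative closeness between $g$-objects and $f$-objects is needed.

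I focus on $\gamma^+_{g,n}(m)$; the backward curve $\gamma^-_{g,-n}(m)$ is produced identically from $g^{-1}$ and the vertical cones. Set $W^+_{g,k}(m) := g^k(W^+_{g,r_0}(m))$. Since $m \in X_g$, the iterates $g^k(m)$ with $|k| \leq T-1$ lie in $Z_g \subset G_1 \subset G_2$, by item 1 of Lemma \ref{stabergestimates1}. Proposition \ref{stablesizenotfibered} furnishes $l(W^+_{g,0}(m)) \geq r_0 = N^{-7}$ and $(TW^+_{g,0}(m))_1 \subset \C^{hor}_{4/\theta_1}$; because $r_0 \ll N^{-3/10} = d(\partial G_1, \partial G_2)$ (Remark \ref{remarkado}) one has $W^+_{g,0}(m) \subset G_2$, and item 2 of Lemma \ref{stabergestimates1} then gives $(TW^+_{g,1}(m))_1 \subset \C^{hor}_{\theta_2}$. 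The expansion computation used in the proof of item 3 of Lemma \ref{esqueci}, combined with items 4 and 9 of Lemma \ref{manyconsiderations}, shows that any vector tangent to a $W^c_g$-leaf whose horizontal projection lies in $\C^{hor}_{\theta_2}$ is expanded by $Dg$ by a factor at least $cN^{1/2}$ on $G_2$. This forces the first exit time $k_0^+$ of $W^+_{g,k}(m)$ from $G_2$ to satisfy $k_0^+ \leq 15$. As $g^{k_0^+}(m) \in Z_g \subset G_1$, I extract a connected component $\gamma^+_{g,k_0^+}$ of $W^+_{g,k_0^+}(m) \cap (G_2 \setminus G_1)$ crossing from $\partial G_1$ to $\partial G_2$; its horizontal projection has length at least $N^{-3/10}$, so item 3 of Lemma \ref{stabergestimates1} yields $l(g(\gamma^+_{g,k_0^+})) > 4\pi$, and I set $\gamma^+_{g,k_0^++1}(m) := g(\gamma^+_{g,k_0^+})$.

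For $n > k_0^+ + 1$ the construction continues inductively exactly as in Lemma \ref{bigmnfld}: the horizontal projection of $\gamma^+_{g,n-1}(m)$ surjects onto $S^1$ in the $x$-coordinate, so I select a connected component of $\gamma^+_{g,n-1}(m) \cap \widetilde{G}$ crossing one component of $\widetilde{G}$, apply item 3 of Lemma \ref{stabergestimates1} once more, and set $\gamma^+_{g,n}(m)$ to be its image under $g$. Since $k_0^+ \leq 15$, this produces $\gamma^+_{g,n}(m)$ for every $n > 15$. The main technical subtlety, and the reason for the $(\,\cdot\,)_1$ formalism throughout Section \ref{stableergodicity}, is that cone control is imposed on the horizontal projections $(T\gamma)_1$ rather than on $T\gamma$ itself, so one must check that the projection does not destroy either cone invariance or the length-expansion bound; this is precisely why Lemma \ref{stabergestimates1} is phrased in terms of projected cones and why item 9 of Lemma \ref{manyconsiderations} reconciles the genuine norm on $TW^c_g$ with the horizontal one. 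Beyond this bookkeeping no new difficulty arises with respect to Lemma \ref{bigmnfld}.
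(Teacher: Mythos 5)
Your proposal is correct and follows essentially the same route as the paper: the paper's proof likewise transports the argument of Lemma \ref{bigmnfld} to $g$ by working with the $\pi_1$-projected cones, using Proposition \ref{stablesizenotfibered} for the initial curve, Lemma \ref{stabergestimates1} in place of Lemma \ref{esqueci} to get cone invariance and the $4\pi$ length bound after crossing $G_2\setminus G_1$, and then continuing inductively exactly as in the fibered case. Your write-up merely spells out the steps (the bound $k_0^+\leq 15$, the role of Remark \ref{remarkado}, items 4 and 9 of Lemma \ref{manyconsiderations}) that the paper leaves implicit by referring back to Lemma \ref{bigmnfld}.
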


\begin{proof}
The difference from the fibered case is to consider the projection by $\pi_1$. For $m\in X_g$, it holds that $W^+_{g,r_0}(m) \subset G_2$. Define $W^+_{k,g}(m) = g^k(W^+_{g,r_0}(m))$. By lemma \ref{stablesizenotfibered}, $\left( TW^+_{g,r_0}(m)\right)_1 \subset \C^{hor}_{\frac{4}{\theta_1}}$ and by lemma \ref{stabergestimates1}, $\left( TW^+_{1,g}(m)\right)_1 \subset \C^{hor}_{\theta_2}$. 

Construct in a similar way as in the proof of lemma \ref{bigmnfld} the number $k_0^+ \in \N$ and the curve $\gamma^+_{k_0^+,g}$. Since this curve must intersect $\partial G_1$ and $\partial G_2$, it has length $l(\pi_1 ( \gamma^+_{k_0^+,g})) \geq N^{-\frac{3}{10}}$ and $\pi_1(\gamma^+_{k_0^+,g})$ is tangent to $\C^{hor}_{\theta_2}$. By lemma \ref{stabergestimates1}, $l(g(\gamma^+_{k_0^+,g}))>4\pi$ and $\pi_1(g(\gamma^+_{k_0^+,g}))$ is tangent to $\C^{hor}_{\theta_2}$. The rest of the proof is the same as the proof of lemma \ref{bigmnfld}.
\end{proof}
For $R>0$, let
\[
\displaystyle W^s_{g,R,-n}(m) = \bigcup_{q\in \gamma_{g,-n}^-(m)} W_{g,R}^{ss}(q),
\]
where the curve $\gamma_{g,-n}^-(m)$ is the curve given by the previous lemma. Define similarly $W^u_{g,R,n}(m)$.  For the same reason as we explained in remark \ref{c1sub}, we obtain that $W^s_{g,R,-n}(m)$ and $W^u_{g,R,n}(m)$ are $C^1$-submanifolds. The next lemma is similar to lemma \ref{transversality}.
\begin{lemma}
\label{transversalnotfibered}
Fix $\theta_3>0$, such that $\theta_3 > \theta_2$ and satisfies $\C^{hor}_{\theta_3} \cap \C_{\theta_3}^{ver} = \{0\}$. For $g\in \mathcal{U}_N$, there exists $0<R<1$ such that if $n\geq 15$, $m \in X_g$ and $m^- \in W^s_{g,R,-n}(m) \subset W^s_{g,2,-n}(m)$, then
\[
\left( T(W^s_{g,2,-n}(m) \cap W_g^c(m^-))\right)_1 \subset \C^{ver}_{\theta_3}.
\]
A similar result holds for $W^u_{g,R,n}(m)$.
\end{lemma}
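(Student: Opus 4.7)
The plan is to follow the proof of Lemma \ref{transversality} almost verbatim, but replacing the strong stable holonomies of $f$ with those of $g$. For this I first need to check that Theorem \ref{holonomies} applies to every $g\in\mathcal{U}_N$. The hypotheses (absolute partial hyperbolicity, dynamical coherence, $2$-normal hyperbolicity, center bunching, and $\chi^c_-<1<\chi^c_+$) are all $C^2$-open conditions and hold for $f_N$ for $N$ large, by Proposition \ref{coherence} and the estimates in Lemma \ref{manyconsiderations}. Hence for $\mathcal{U}_N$ small I obtain a family of $C^1$ center-manifold holonomies $\{H^s_{g,p,q}\}_{p\in\T^4,\,q\in W^{ss}_{g,1}(p)}$ that depends continuously on $(p,q)$ in the $C^1$-topology (and also continuously on $g$ in the $C^1$-topology over $\mathcal{U}_N$, which can be extracted from the uniform rate of convergence in the proof of Theorem \ref{holonomies}).

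Second, since $H^s_{g,p,p}=\mathrm{Id}$ one has $DH^s_{g,p,p}(p)=\mathrm{Id}$ on $T_pW^c_g(p)$. By compactness of $\T^4$ and the $C^1$-continuity of the family, for every $\varepsilon>0$ there is $R=R(\varepsilon,g)\in(0,1)$ such that for all $p\in\T^4$ and all $q\in W^{ss}_{g,R}(p)$ the map $DH^s_{g,p,q}(p)\colon T_pW^c_g(p)\to T_qW^c_g(q)$ is $\varepsilon$-close to the identity, once both tangent spaces are identified with $\R^2$ via $D\pi_1$. Here I use item $3$ of Lemma \ref{manyconsiderations}: the center leaves of $g$ are $C^2$-close to those of $f$, so $D\pi_1|_{T_pW^c_g(p)}\colon T_pW^c_g(p)\to\R^2$ is a near-isomorphism whose inverse has norm arbitrarily close to $1$ when $\mathcal{U}_N$ is small. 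A standard cone-opening argument then shows that, for $\varepsilon$ small enough (depending only on the gap between $\theta_2$ and $\theta_3$), if $v\in T_pW^c_g(p)$ satisfies $(v)_1\in\C^{ver}_{\theta_2}$ then $(DH^s_{g,p,q}(p).v)_1\in\C^{ver}_{\theta_3}$; fix such an $\varepsilon$ and the corresponding $R$.

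Third, I apply this to the concrete curve. Given $m\in X_g$ and $m^-\in W^s_{g,R,-n}(m)$, by definition there exists $q\in\gamma^-_{g,-n}(m)$ with $m^-\in W^{ss}_{g,R}(q)$. The whole curve $\gamma^-_{g,-n}(m)$ lies in the single center leaf $W^c_g(g^{-n}(m))=W^c_g(q)$, and the set $W^s_{g,2,-n}(m)$ is foliated, inside the center-stable leaf, by strong stable manifolds through the points of $\gamma^-_{g,-n}(m)$. Therefore the intersection $W^s_{g,2,-n}(m)\cap W^c_g(m^-)$ is exactly the image of $\gamma^-_{g,-n}(m)$ under the strong stable holonomy $H^s_{g,q',m'}$ from $W^c_g(q)$ to $W^c_g(m^-)$, for the appropriate pair of basepoints. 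By Lemma \ref{notfiberedbigmanifold}, $(T\gamma^-_{g,-n}(m))_1\subset\C^{ver}_{\theta_2}$, and by the previous paragraph the holonomy sends such tangent vectors to vectors whose $D\pi_1$-projection lies in $\C^{ver}_{\theta_3}$, giving exactly the desired inclusion. The analogous statement for $W^u_{g,R,n}(m)$ follows by the symmetry $I$ of Lemma \ref{symmetrylemma}. The main obstacle I foresee is ensuring that $R$ can be chosen uniformly in $g\in\mathcal{U}_N$ rather than only for each fixed $g$; this is handled by the uniform rate of convergence (depending only on the partial hyperbolicity constants) in the proof of Theorem \ref{holonomies}, which yields an $R$ that works for all $g$ in a sufficiently small $C^2$-neighborhood of $f_N$.
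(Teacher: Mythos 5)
Your proposal is correct and takes essentially the approach the paper intends: the paper offers no separate proof, saying only that the lemma is ``similar to lemma \ref{transversality}'', and your argument—checking that the hypotheses of Theorem \ref{holonomies} are $C^2$-open so the $C^1$ family of stable holonomies exists for every $g\in\mathcal{U}_N$, using $DH^s_{g,p,p}=\mathrm{Id}$ plus compactness to choose $R$, handling the $(\cdot)_1$ projection via the $C^2$-closeness of the center leaves of $g$ to the horizontal tori, and identifying $W^s_{g,2,-n}(m)\cap W^c_g(m^-)$ with the holonomy image of $\gamma^-_{g,-n}(m)$—is precisely that adaptation. A minor remark: for the unstable case it is cleaner to apply the same argument to the unstable holonomies $H^u_{g,p,q}$ (i.e.\ Theorem \ref{holonomies} for $g^{-1}$) rather than the involution of Lemma \ref{symmetrylemma}, since $I\circ g^{-1}\circ I$ lies near the Berger--Carrasco map built from $A^{-1}$ and not in $\mathcal{U}_N$ itself; this changes nothing substantive.
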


The main difference for the non fibered case is given in the following proposition.

\begin{proposition}
\label{brinarg}
For $N$ large and $\mathcal{U}_N$ small enough, if $g\in \mathcal{U}_N$ then for Lebesgue almost every point $m\in \T^4$ its central leaf $W^c_g(m)$ has dense orbit among the center leaves. 
\end{proposition}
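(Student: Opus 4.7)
The plan is to reduce the question to the Lebesgue-ergodicity of $A^{2N}$ on $\T^2$ via a continuous semi-conjugacy built from the leaf conjugacy. Set $\phi := \pi_2 \circ h_g^{-1}:\T^4 \to \T^2$, where $h_g$ is the leaf conjugacy of Proposition \ref{coherence}. From the leaf-conjugacy identity $h_g(f(L)) = g(h_g(L))$ and $\pi_2 \circ f = A^{2N} \circ \pi_2$, one checks directly that $\phi \circ g = A^{2N} \circ \phi$; since the fibres of $\phi$ are precisely the center leaves of $g$, the quotient $\T^4/\mathcal{F}^c_g$ is realised topologically as $\T^2$ via $\phi$, and the orbit of $W^c_g(m)$ is dense in the leaf space if and only if the $A^{2N}$-orbit of $\phi(m)$ is dense in $\T^2$. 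The semi-conjugacy combined with the topological characterisation of Anosov unstable leaves also gives the inclusion $\phi(W^{uu}_g(m))\subset W^u_{A^{2N}}(\phi(m))$ for every $m$.

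Let $D_A\subset\T^2$ be the set of $A^{2N}$-dense-orbit points (full Lebesgue measure by ergodicity) and decompose $Leb=\sum_i c_i\nu_{g,i}$ into ergodic components (Theorem \ref{ergodiccomponentspesin}), each absolutely continuous w.r.t.\ $Leb$. By Birkhoff applied to $\nu_{g,i}$ with the test functions $1_U\circ\phi$ for $U$ ranging over a countable basis of $\T^2$, it suffices to show that $\phi_*\nu_{g,i}$ has full topological support on $\T^2$ for every $i$: this forces $\nu_{g,i}$-a.e.\ $m$ to satisfy $\phi(m)\in D_A$, and summing over $i$ yields the proposition.

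To prove full support, assume for contradiction that $\nu_{g,i}(\phi^{-1}(U))=0$ for some non-empty open $U\subset\T^2$. Since $\mathcal{F}^{uu}_g$ is absolutely continuous (as the strong unstable foliation of a $C^2$ partially hyperbolic diffeomorphism) and $\nu_{g,i}\ll Leb$, the conditionals of $\nu_{g,i}$ along $W^{uu}_g$-leaves are absolutely continuous w.r.t.\ arc length. Moreover, two points on the same strong unstable leaf share backward Birkhoff limits (by exponential backward contraction and uniform continuity), so for $\nu_{g,i}$-a.e.\ $m$ the set $W^{uu}_g(m)\cap\Lambda_{g,i}$ has full arc length in $W^{uu}_g(m)$; hence $W^{uu}_g(m)\cap\phi^{-1}(U)$ has zero arc length, and being open in the leaf it must be empty. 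By $g$-invariance the same holds along the entire $g$-orbit of $m$. Now fix a non-degenerate arc $J\subset W^{uu}_g(m)$ small enough that $\phi|_J$ is a homeomorphism onto a non-trivial arc $\phi(J)\subset W^u_{A^{2N}}(\phi(m))$; then $\phi(g^n J)=A^{2Nn}(\phi(J))$ is a segment of the irrational line $W^u_{A^{2N}}(A^{2Nn}\phi(m))$ of length $\gtrsim\mu^{2Nn}$ times the length of $\phi(J)$. For $n$ large enough, Weyl equidistribution of long segments of irrational lines on $\T^2$ forces this segment to meet $U$, contradicting $W^{uu}_g(g^n m)\cap\phi^{-1}(U)=\emptyset$.

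The main obstacle is the low regularity of $\phi$: because $h_g$ is only a homeomorphism, $\phi$ is not Lipschitz, and lengths cannot be computed at the level of $\phi$ directly. This is handled in two pieces: the semi-conjugacy $A^{2N}\circ\phi=\phi\circ g$ transfers the uniform exponential expansion of $g$ along $W^{uu}_g$ to exponential expansion of $A^{2N}$ on the image arcs, while the $C^0$-closeness of $h_g$ to the identity, obtained by shrinking $\mathcal{U}_N$, ensures that $\phi|_{W^{uu}_g(m)}$ is a local homeomorphism onto $W^u_{A^{2N}}(\phi(m))$ and in particular does not collapse small unstable arcs. Once these two points are in place, the contradiction is a clean combination of Anosov absolute continuity with Weyl equidistribution.
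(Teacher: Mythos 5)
Your proof is correct, but it follows a genuinely different route from the paper's. The paper also passes to the leaf space via the leaf conjugacy, but its core technical step is a claim that the projection of a single strong \emph{stable} leaf $W^{ss}_g(q)$ covers the entire stable set of the quotient Anosov map (proved by an open--closed connectedness argument using uniform transversality and the uniform compactness of the center foliation); combined with minimality of the stable foliation of $A^{2N}$, absolute continuity of $\mathcal{F}^{ss}_g$, and a Birkhoff-generic point, this yields the density. You avoid that surjectivity claim entirely: you work with the semi-conjugacy $\phi\circ g=A^{2N}\circ\phi$, show each ergodic component $\nu_{g,i}$ pushes forward to a fully supported measure on $\T^2$ by iterating a small \emph{unstable} arc and letting all length and density estimates happen on the linear side (exact expansion by $\mu^{2Nn}$ plus equidistribution of long segments of the irrational unstable line), and then conclude by Birkhoff over a countable basis. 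Your approach trades the paper's geometric covering claim for the Hopf-type fact that ergodic components of volume are mod-$0$ saturated by strong unstable leaves, plus unique ergodicity of the irrational linear flow; it is arguably more elementary on the foliation side and adapts immediately to powers $g^k$ (useful for the Bernoulli section), while the paper's claim gives finer structural information (each strong stable leaf meets every center leaf in its quotient stable set). One step should be tightened: the passage from $\nu_{g,i}(\phi^{-1}(U))=0$ to ``$W^{uu}_g(m)\cap\phi^{-1}(U)$ has zero arc length'' does not follow from absolute continuity of the \emph{conditionals of $\nu_{g,i}$} (their densities may vanish on sets of positive arc length); it should instead be run through the Fubini property of $Leb$ for the absolutely continuous foliation $\mathcal{F}^{uu}_g$, using that $Leb|_{\Lambda_{g,i}}=c_i\nu_{g,i}$ and that $\Lambda_{g,i}$ is (mod $0$) a union of strong unstable leaves because backward Birkhoff averages are constant along them --- exactly the ingredients you already invoke, so the fix is routine.
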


\begin{proof}

For $\mathcal{U}_N$ small enough, for every $g\in \mathcal{U}_N$ there is a homeomorphism $h_g:\T^4 \to \T^4$, that takes center leaves of $f_N$ to center leaves of $g$, such that for every $m\in \T^4$ it is verified 
\[
g \circ h_g(W^c_f(m)) = h_g \circ f(W^c_f(m))
\]

Consider the quotients $M_f = \T^4/\sim^c_f $ and $M_g = \T^4/\sim^c_g$, where $p \sim^c_*q$ if and only if $q\in W^c_*(p)$ for $* = f, g$. We denote $\pi_f: \T^4 \to M_f$ and $\pi_g:\T^4 \to M_g$ the respective projections. Observe that $M_f = \T^2$ and that the induced dynamics $\tilde{f}:M_f \to M_f$ of $f$ is given by $A^{2N}$. Endow $M_g$ with the distance $d_g$ given by the Hausdorff distance on the center leaves, that is, 
\[
d_g(L,W) = d_{\textrm{Haus}}(\pi_g^{-1}(L), \pi_g^{-1}(W)).
\]

By the leaf conjugacy equation, the induced dynamics $\tilde{g}:M_g \to M_g$ of $g$ is conjugated to the linear Anosov $A^{2N}$ on $\T^2$ by the homeomorphism induced by $h_g$, which we will denote by $\tilde{h}_g$. Denote by $W^s_{A^{2N}}(.)$ the stable manifold of $A^{2N}$ on $\T^2$ and let 
\[
W^s_{\tilde{g}}(L)= \{W \in M_g: \displaystyle \lim _{n\to + \infty} d_g(\tilde{g}^n(L), \tilde{g}^n(W)) = 0\}, 
\] 
be the stable set of $L$. 

\begin{claim}

For every $m\in \T^4$, for every $q\in W^c_g(m)$, it is verified that 
\[
\pi_g(W^{ss}_g(q)) = W^s_{\tilde{g}}(\pi_g(m)) = \tilde{h}_g(W^s_{A^{2N}}(\pi_f(h_g^{-1}(m)))),
\]
and $\pi_g$ is a bijection from $W^{ss}_g(q)$ to $W^s_{\tilde{g}}(\pi_g(m))$.

\end{claim}

\begin{proof}

The leaf conjugacy equation implies that $ W^s_{\tilde{g}}(\pi_g(m)) = \tilde{h}_g(W^s_{A^{2N}}(\pi_f(h_g^{-1}(m))))$, in particular, $ W^s_{\tilde{g}}(\pi_g(m))$ is a continuous curve homeomorphic to a line.

It is immediate to see that $\pi_g(W^{ss}_g(q)) \subset W^s_{\tilde{g}}(\pi_g(m))$. We also have that $W^{ss}_g(q) \cap W^c_g(q) = \{q\}$. Indeed, since the angle between $E^c_g$ and $E^{ss}_g$ is uniformly bounded away from zero and the center foliation is uniformly compact, the map $\pi_g|_{W^{ss}_{g, loc}(z)}$ is injective, for every $z\in \T^4$ and for some small uniform size of stable leaf which we write $W^{ss}_{loc}(z)$. If there were two points $\{p, q \} \subset W^{ss}_g(q) \cap W^c_g(q) $ then for $n$ large enough $\{g^n(p), g^{n}(q)\} \subset W^{ss}_{g,loc}(g^n(q)) \cap W^c_g(g^n(q))$, which contradicts the fact that $\pi_g|_{W^{ss}_{g, loc}(q)}$ is injective. It remains to show the surjectivity.

We work inside $W^{cs}(m)$, which is foliated by strong stable manifolds. Take $P\in W^s_{\tilde{g}}(\pi_g(m))$ and consider its central leaf $F=\pi_g^{-1}(P)$. This is a transversal section of the $C^1$ foliation by strong stable manifolds inside the manifold $W^{cs}_g(m)$. Consider the set $L_{m,F} =\{z\in W^c_g(m): W^{ss}_g(z) \cap F \neq \emptyset\}$. 

Fix a small $\varepsilon >0$. Since the angle between $E^{ss}_g$ and $E^c$ is uniformly bounded away from zero and the center foliation is uniformly compact, for any point $p\in \T^4$, it holds that 
\[
\mathcal{V}^s_g(p) :=\displaystyle \bigcup_{q\in W^c_g(p)} W^{ss}_{g,\varepsilon}(q),
\]
contains a neighborhood of $W^c_g(p)$ inside $W^{cs}_g(p)$ of uniform size, independent of $p$.

Since $P\in W^s_{\tilde{g}}(\pi_g(m))$, take $n$ large enough such that $\pi^{-1}_g(\tilde{g}^n(P)) \cap\mathcal{V}^s_g(g^n(m)) \neq \emptyset $. Thus, there exists some $q_n\in  W^c_g(g^n(m))$ such that $W^{ss}_{g,\varepsilon}(q) \cap \pi^{-1}_g(\tilde{g}^n(P))\neq \emptyset$. We conclude that $W^{ss}_g(g^{-n}(q_n)) \cap F \neq \emptyset$, in particular, $L_{m,F} \neq \emptyset $.

If $\hat{p} \in L_{m,F}$ let $\gamma_{\hat{p},F}$ be a simple $C^1$ curve contained in $W^{ss}_g(\hat{p})$ connecting $\hat{p}$ and $F$, there is a foliated chart containing $\gamma_{\hat{p},F}$. Since $F$ is transversal to the foliation, we have that there is an open neighborhood of $\hat{p}$ inside $W^c_g(m)$ such that the strong stable manifold of every point in this neighborhood intersects $F$, thus $L_{m,F}$ is open.

Since $W^c_g(m)$ and $F$ are compact the distance, inside $W^{cs}_g(m)$, between them is smaller than a constant $\tilde{R}>0$. Observe that the tangent spaces of stable manifolds are contained inside a cone, transverse to the central direction in $W^{cs}_g(m)$. Thus, for $\hat{p} \in L_{m,F}$, the length of the piece of $W^{ss}_g(\hat{p})$ starting in $\hat{p}$ and ending in $F$ is bounded by a constant $C>0$. 

Let $(p_n)_{n\in \N} \subset L_{m,F}$ be a sequence such that $p_n \to p\in W^c_g(m)$. Consider $W^{ss}_{g,2C}(p)$ the strong stable manifold of size $2C$. Since compact parts of the strong stable manifold vary continuously with the point, $W^{ss}_{g,2C}(p_n)$ converges in the $C^2$-topology to $W^{ss}_{g,2C}(p)$. Take the sequence of points $(q_n)_{n\in \N}$ defined as $q_n \in W^{ss}_{g,2C}(p_n) \cap F$. Thus, $q_n \to q\in W^{ss}_{g,2C}(p)$ and since $F$ is closed, $q\in F$. Therefore $q\in W^{ss}_{g,2C}(p) \cap F$ and $L_{m,F}$ is closed. Since $W^c_g(m)$ is connected, it follows that $L_{m,F} = W^c_g(m)$.
\end{proof}

For the linear Anosov $A^{2N}$ the stable foliation is minimal. Let $m$ be a generic point of an ergodic component $\nu_{g,i}$ of the Lebesgue measure for $g$, suppose also that $m$ is a density point for the set $\Lambda_{g,i}$ defined at the  beginning of this section. By absolute continuity of the strong stable foliation almost every point inside $W^{ss}_g(q)$ is in the ergodic component of $m$, for $q\in \Lambda_{g,i}$. Using the minimality of the stable foliation of the linear Anosov and by the leaf conjugacy $W^s_{\tilde{g}}(\pi_g(m))$ is dense in $M_g$. 

Take $U$ a small open set in $M_g$. Since the center foliation is uniformly compact, $\hat{U} =\pi_g^{-1}(U)$ is a saturated open set such that any two center leaves in $\hat{U}$ are $C^2$-close to each other. By the previous claim $W^{ss}_g(m) \cap \hat{U} \neq \emptyset$.

Let $B(m,\varepsilon)$ be a small ball around $m$ such that $Leb(B(m,\varepsilon) \cap \Lambda_{g,i})$ has almost full measure inside $B(m,\varepsilon)$. By absolute continuity
\[
Leb(W^{ss}_g(B(m,\delta)\cap\Lambda_{g,i}) \cap\hat{U} \cap \Lambda_{g,i})>0.
\]
In particular $\nu_{g,i}(\Lambda_{g,i} \cap \hat{U})>0$. Since $m$ is a generic point for $\nu_{g,i}$, its future orbit visits $\hat{U}$ infinitely many times. This is true for any open set $U$ inside $M_g$, which concludes the proof of the proposition.
\end{proof}

Now let $N$ be large and $\mathcal{U}_N$ be small enough such that lemmas \ref{notfiberedbigmanifold}, \ref{transversalnotfibered} and proposition \ref{brinarg} hold. For $g\in \mathcal{U}_N$, if $g$ is not ergodic, we can follow the exact same steps as in the proof of ergodicity of $f$ and find a contradiction. We conclude that every $g\in \mathcal{U}_N$ is ergodic.

\section{The Bernoulli property}
\label{bernoullisection}
In this section we explain how to adapt the proof of ergodicity to obtain the Bernoulli property. Let $f= f_N$ for $N$ large enough. By theorem \ref{ergodiccomponentspesin}, since the Lebesgue measure is ergodic for $f$, there exists $k\in \N$ and probability measures $\nu_1, \cdots, \nu_k$, which are $f^k$-invariant, such that
\[
Leb = \displaystyle \frac{1}{k} \sum_{j=1}^k \nu_i,
\]
where each $(f^k,\nu_i)$ is Bernoulli. Suppose $k>1$. The measures $\{\nu_i\}_{i=1}^k$ form the ergodic decomposition of the Lebesgue measure for $f^k$. As we stated in remark \ref{propertiesgeometrical}, three properties imply the existence of transverse intersections between Pesin's manifolds of points in different ergodic components.

Observe that $f^{-k}(X^s) \subset X^s$, where we defined the set $X^s$ in item $1$ of remark \ref{propertiesgeometrical}. Similarly $f^k(X^u) \subset X^u$. Thus, item $1$ of remark \ref{propertiesgeometrical} is valid for $f^k$.

Once we have the curves obtained in item $1$ of remark \ref{propertiesgeometrical} and since a stable manifold for $f$ is a stable manifold for $f^k$, using the control on the holonomies given by lemma \ref{transversality} we obtain item $2$ of remark \ref{propertiesgeometrical}.

To obtain item $3$ of remark \ref{propertiesgeometrical} we need the following lemma.

\begin{lemma}
There is a set of full measure $D\subset \T^4$ such that for every $p\in D$ the $f^k$-orbit of $W^c(p)$ is dense among the center leaves.
\end{lemma}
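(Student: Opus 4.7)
The plan is to imitate the proof of Lemma \ref{densecenter} verbatim, with the only new ingredient being that we must verify ergodicity of the induced quotient dynamics for the iterate $f^k$ rather than for $f$ itself.

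First, I would recall that the center foliation $\mathcal{F}^c$ of $f$ is a smooth, uniformly compact foliation with leaves parameterized by $\pi_2 : \T^4 \to \T^2$, and that the map $\pi_2$ conjugates the induced dynamics of $f$ on the leaf space to the hyperbolic linear automorphism $A^{2N}$ on $\T^2$. Iterating, $\pi_2$ conjugates the induced dynamics of $f^k$ on the leaf space to $A^{2Nk}$. Thus the $f^k$-orbit of a center leaf $W^c(p)$ is dense among center leaves if and only if the $A^{2Nk}$-orbit of $\pi_2(p)$ is dense in $\T^2$.

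The key point is then that $A^{2Nk}$ is ergodic with respect to Lebesgue measure on $\T^2$. Indeed, $A \in SL(2,\Z)$ is hyperbolic, so its eigenvalues $\lambda, \lambda^{-1}$ satisfy $|\lambda| \neq 1$; in particular, no power of $\lambda$ is a root of unity, so no eigenvalue of $A^{2Nk}$ is a root of unity. By the standard Halmos criterion for ergodicity of toral automorphisms, $A^{2Nk}$ is ergodic (in fact Bernoulli) for Lebesgue measure. Consequently, by Birkhoff's theorem, there is a set $D_A \subset \T^2$ of full Lebesgue measure such that every point of $D_A$ has dense $A^{2Nk}$-orbit in $\T^2$.

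Finally, I would set $D = \pi_2^{-1}(D_A)$. Since Lebesgue measure on $\T^4$ disintegrates as the product of Lebesgue on the two $\T^2$ factors, $D$ has full Lebesgue measure in $\T^4$. For any $p \in D$, $\pi_2(p) \in D_A$ has dense $A^{2Nk}$-orbit, and via the conjugacy this exactly says that the $f^k$-orbit of $W^c(p)$ is dense among center leaves. No real obstacle arises here; the only substantive fact used beyond the proof of Lemma \ref{densecenter} is that nonzero powers of hyperbolic toral automorphisms remain ergodic, which is immediate from the eigenvalue criterion.
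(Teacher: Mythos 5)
Your proof is correct and follows essentially the same route as the paper: the paper simply notes that $A^{2N}$ is totally ergodic (so $A^{2Nk}$ is ergodic) and then repeats the argument of Lemma \ref{densecenter}, which is exactly what you do, with the eigenvalue/root-of-unity criterion spelled out as the justification for total ergodicity.
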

\begin{proof}
The linear Anosov $A^{2N}$ is totally ergodic, that is, for any $j\in \N$, $A^{2Nj}$ is ergodic. In particular $A^{2Nk}$ is ergodic. The proof is the analogous to the proof of lemma \ref{densecenter}.
\end{proof}

Following the same steps of the proof of ergodicity for $f$, which is just Hopf argument in the non-uniformly hyperbolic scenario, we conclude that $f^k$ is ergodic. This is a contradiction, since the ergodic decomposition of the Lebesgue measure is given by the measures $\{\nu_{i}\}_{i=1}^k$ and $k>1$. Thus $k=1$. In particular $f$ is Bernoulli.

For $g\in \mathcal{U}_N$ to prove that $g$ is Bernoulli one follows the same steps as in the proof  that $f$ is Bernoulli. Observe that the stable and unstable foliations of $A^{2Nj}$ are minimal, for any $j\in \N$. With this observation one easily proves a lemma analogous to lemma \ref{brinarg}.

\section{Proof of proposition \ref{estimateprop}}
\label{exponents}

To prove proposition \ref{estimateprop}, we follow and adapt the proof of theorem \ref{bcexm} given by Berger and Carrasco in \cite{bergercarrasco2014} with the necessary changes. For a $C^1$-curve $\gamma$ and a measurable set $A\subset \gamma$, write $Leb(A)$ the measure of $A$ with respect to the Lebesgue measure in $\gamma$ induced by the metric of $\T^4$. Also denote $f= f_N$. In this section we will refer to the strong unstable manifold by unstable manifold.

\subsection{The estimate for $f_N$} 
The goal of this section is to prove the estimate given by proposition \ref{estimateprop} for $f$.

Recall that we denoted $e^u=(e^u_1, e^u_2)\in\R^2$ an unit eigenvector of $A$ for the eigenvalue $ 1< \mu = \lambda^{-1}$, where $\lambda\in (0,1)$ is the eigenvalue for the contractive direction of $A$. Recall also that we defined the linear map $P_x: \R^2 \to \R^2$ given by $P_x(a,b) = (a,0)$.

\begin{lemma}[\cite{bergercarrasco2014}, Proposition $1$]
\label{proposition1bc}
There is a differentiable function $\alpha: \T^4 \to \R^2$ such that the unstable direction of $f$ is generated by the vector field $(\alpha(m), e^u)$, where 
$$
Df(m).(\alpha(m), e^u)= \mu^{2N}(\alpha(f(m)), e^u) \textrm{ and } \|\alpha(m) - \lambda^N P_x(e^u)\| \leq \lambda^{2N}.
$$
\end{lemma}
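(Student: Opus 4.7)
\medskip

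\noindent\textbf{Proof plan.} The plan is to derive a functional equation that $\alpha$ must satisfy, solve it by an exponentially convergent series (or a Banach contraction in $C^1$), and read the estimate off directly from the leading term.

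First I would write out the equation. Using the block form
\[
Df(m)=\begin{pmatrix} Ds_N(m) & P_x\circ A^N\\ 0 & A^{2N}\end{pmatrix}
\]
and $A^{2N}e^u=\mu^{2N}e^u$, $A^N e^u=\mu^N e^u$, a direct computation gives
\[
Df(m).(\alpha(m),e^u)=\bigl(Ds_N(m)\alpha(m)+\mu^N P_x(e^u),\,\mu^{2N}e^u\bigr).
\]
Demanding this equals $\mu^{2N}(\alpha(f(m)),e^u)$ is equivalent to the cohomological-type equation
\[
\alpha(f(m))=\mu^{-2N}Ds_N(m)\alpha(m)+\mu^{-N}P_x(e^u),
\]
or, after replacing $m$ by $f^{-1}(m)$,
\[
\alpha(m)=\mu^{-N}P_x(e^u)+\mu^{-2N}Ds_N(f^{-1}(m))\,\alpha(f^{-1}(m)). \tag{$\ast$}
\]

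Next I would construct $\alpha$ as the unique $C^1$ solution of $(\ast)$. Define the operator
\[
(T\alpha)(m):=\mu^{-N}P_x(e^u)+\mu^{-2N}Ds_N(f^{-1}(m))\,\alpha(f^{-1}(m))
\]
on $C^1(\T^4,\R^2)$. Using $\|Ds_N\|_\infty\le 2N$ and $\|D^2 s_N\|_\infty\le N$ (from \eqref{basicineqfibered}) together with polynomial-in-$N$ bounds on $Df^{-1}$ on $\T^4$, one checks that in both the $C^0$ and $C^1$ norms the linear part of $T$ has operator norm at most $C\,N^{O(1)}\mu^{-2N}$, which is strictly less than $1/2$ for $N$ large enough since $\mu>1$. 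Thus $T$ is a contraction on $C^1(\T^4,\R^2)$ and Banach's fixed point theorem provides a unique $C^1$ function $\alpha$ solving $(\ast)$. Equivalently, iterating $(\ast)$ yields the explicit series
\[
\alpha(m)=\sum_{k=0}^{\infty}\mu^{-(2k+1)N}\bigl[Ds_N(f^{-1}(m))\cdots Ds_N(f^{-k}(m))\bigr]P_x(e^u),
\]
which converges uniformly in $C^1$ by the same $2N\mu^{-2N}\ll 1$ estimate; either viewpoint works.

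Finally, the required bound is essentially automatic. From $(\ast)$,
\[
\alpha(m)-\mu^{-N}P_x(e^u)=\mu^{-2N}Ds_N(f^{-1}(m))\,\alpha(f^{-1}(m)),
\]
so using $\|Ds_N\|_\infty\le 2N$ and the a priori bound $\|\alpha\|_\infty\le 2\mu^{-N}$ (obtained from the fixed-point equation itself: $\|\alpha\|_\infty\le \mu^{-N}+2N\mu^{-2N}\|\alpha\|_\infty$), one gets
\[
\|\alpha(m)-\mu^{-N}P_x(e^u)\|\le 2N\mu^{-2N}\cdot 2\mu^{-N}=4N\mu^{-3N}\le \mu^{-2N}=\lambda^{2N}
\]
for $N$ large enough. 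The verification of the eigenvalue equation $Df(m).(\alpha(m),e^u)=\mu^{2N}(\alpha(f(m)),e^u)$ is then just $(\ast)$ read backwards, and by uniqueness this line field coincides with the unstable direction (which must have the form $(\cdot,e^u)$ up to scaling because the $E^{uu}$ distribution is close, for $N$ large, to the eigendirection of the diagonal block $A^{2N}$). The main technical point, which is minor, is ensuring the contraction estimate in $C^1$: one has to control the derivative of $Ds_N(f^{-1}(m))$ along $\T^4$, but since $\|D^2 s_N\|\le N$ and $\mu^{-2N}$ absorbs any polynomial factor in $N$, this goes through cleanly.
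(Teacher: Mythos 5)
There is no in-paper proof to compare against here: the paper imports this statement verbatim from \cite{bergercarrasco2014} (Proposition 1) without reproving it, so your proposal can only be judged on its own merits. The $C^0$ part of your argument is fine: the functional equation $(\ast)$ is derived correctly, the operator $T$ is a contraction in the sup norm (linear part of norm at most $2N\mu^{-2N}$), the a priori bound $\|\alpha\|_\infty\le 2\mu^{-N}$ and hence $\|\alpha-\lambda^N P_x(e^u)\|\le 4N\mu^{-3N}\le\lambda^{2N}$ follow, and the identification of the resulting invariant line field with $E^{uu}$ can be made rigorous by the standard domination argument (vectors in your field satisfy $\|Df^{-n}v\|\asymp\mu^{-2Nn}\|v\|$, which is strictly faster backward decay than anything in $E^{ss}\oplus E^c$ can achieve, forcing the field to be the strong unstable direction); your parenthetical justification of this last point is loose but repairable.

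The genuine gap is the differentiability of $\alpha$, which is part of the statement and is exactly the delicate point. Your contraction claim in $C^1$ rests on ``polynomial-in-$N$ bounds on $Df^{-1}$,'' which is false: $Df^{-1}$ contains the block $A^{-2N}$, so $\|Df^{-1}\|\ge\mu^{2N}$ (and $\|D(\pi_1\circ f^{-1})\|\sim 2N\mu^{N}$ through the coupling), both exponentially large in $N$. Consequently the linear part of $T$ acting on $C^1$ has norm of order $\lambda^{2N}\cdot\|Ds_N\|\cdot\|Df^{-1}\|\approx 2N>1$, not $N^{O(1)}\mu^{-2N}$, so $T$ is not a $C^1$ contraction; likewise the term-by-term derivative of your series has $k$-th term of size up to $\lambda^{N}(2N)^{k-1}N$ (the $\mu^{2Nk}$ growth of $\|Df^{-k}\|$ cancels the $\mu^{-2Nk}$ prefactor), so the differentiated series is not absolutely convergent and ``the same $2N\mu^{-2N}\ll1$ estimate'' does not apply. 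This is not a technicality: in the directions expanded by $f^{-1}$ (the strong stable direction of $f$, and the second-factor directions feeding into $x(f^{-k}m)$ through $A^{-N}$), the fixed-point/section-theorem machinery you invoke only yields H\"older continuity with exponent $1-\log(2N)/(2N\log\mu)<1$, since the relevant fiber-contraction-times-base-Lipschitz product is $2N\lambda^{2N}\cdot\mu^{2N}=2N>1$. Regularity of $\alpha$ beyond H\"older (e.g. differentiability along $u$-curves, which is what is actually exploited later, or the differentiability asserted in the statement) therefore requires a separate argument — one restricted to directions along which backward iteration does not expand, or the argument given in Berger--Carrasco — and cannot be obtained by the Banach fixed point argument in $C^1(\T^4,\R^2)$ as you propose.
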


\begin{definition}
\label{ucurve}
A $u$-curve is a $C^1$-curve $\gamma:[0,2\pi] \to M$ such that $\displaystyle\frac{d\gamma}{dt}(t) = \frac{(\alpha(\gamma(t)),e^u)}{\lambda^N\|P_x(e^u)\|} $, for every $t\in [0,2\pi]$.
\end{definition}

Observe that for a $u$-curve $\gamma$ 
\begin{equation}
\label{eqai}
\displaystyle\frac{df^k \circ \gamma}{dt}(t) = \frac{\mu^{2Nk}(\alpha(f^k(\gamma(t))),e^u)}{\lambda^N\|P_x(e^u)\|}, \textrm{ $\forall t\in [0,2\pi]$ and $\forall k\geq 0$.}
\end{equation}

The $u$-curves will play a fundamental role in the proof. The key property of a $u$-curve is that $\|\alpha(\gamma(t).(\lambda^N\|P_x(e^u)\|)^{-1}-(1,0)\| \leq \lambda^{2N}$. This will allow us to control the amount of time that a $u$-curve spend in a critical region, which is a region on $\T^4$ that only depends on the $x$ coordinate. 

Since we are interested in Lyapunov exponents along the center direction we will introduce certain types of vector fields along $u$-curves that will be useful in this task. After that we will be ready to give a criteria to obtain large positive Lyapunov exponents along the center direction for almost every point in $\T^4$.

\begin{definition}
\label{adaptedfield}
An adapted field $(\gamma,X)$ over a $u$-curve $\gamma$ is an unitary vector field $X$ such that
\begin{enumerate}
\item $X$ is tangent to the center direction;
\item $X$ is $(C_X,1/2)$-H\"older along $\gamma$, that is 
$$
\|X_m-X_{m^{\prime}}\| \leq C_X d_{\gamma}(m,m^{\prime})^{\frac{1}{2}},\textrm{ }\forall m, m' \in \gamma,
$$
where $C_X<20N^2\lambda^N$ and $d_{\gamma}$ is the distance measured along $\gamma$. 
\end{enumerate}
\end{definition}

Berger and Carrasco proved that for $N$ large enough and for every $(\gamma,X)$ adapted field 
$$\|X_m-X_{m^{\prime}}\| \leq \lambda^{N/3}, \textrm{ }\forall m,m' \in \gamma.$$

Fix an adapted field $X$ and denote by $X^k= \frac{(f^k)_*X}{\|(f^k)_*X\|}$, where 
\[
\left((f^k)_*X\right)_m = Df^k(f^{-k}(m)).X_{f^{-k}(m)}.
\]

\begin{lemma}[\cite{bergercarrasco2014}, Lemma $2$]
\label{bclemma2}
For $N$ large enough, for every adapted field $(\gamma,X)$, for every $k\geq 0$ and  every $1\leq j \leq [\mu^{2Nk}]$, the pair $(\gamma^k_j, X^k|_{\gamma^k_j})$ is an adapted field.
\end{lemma}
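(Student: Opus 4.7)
I would prove the case $k=1$ directly and then iterate on $k$. For $k=1$, fix an adapted field $(\gamma, X)$; I need to verify that each $\gamma^1_j$ is a $u$-curve, that $X^1|_{\gamma^1_j}$ is tangent to $E^c$, and that $X^1|_{\gamma^1_j}$ is $(C_{X^1}, 1/2)$-H\"older with $C_{X^1} < 20 N^2 \lambda^N$. By equation (\ref{eqai}), the tangent of $f\circ\gamma:[0,2\pi]\to \T^4$ is $\mu^{2N}(\alpha(f\gamma), e^u)/(\lambda^N\|P_x(e^u)\|)$. Splitting $[0,2\pi]$ into $[\mu^{2N}]$ equal subintervals and reparametrizing each by $s = \mu^{2N}(t - t_{j-1})$ cancels the $\mu^{2N}$ factor and yields exactly the defining relation of a $u$-curve on $[0,2\pi]$; note that the length of each preimage subarc of $\gamma$ is then bounded by $2\pi \lambda^N/\|P_x(e^u)\|$. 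Tangency of $X^1$ to $E^c$ is automatic since $E^c=\R^2\times\{0\}$ is $Df$-invariant and $X^1$ differs from $Df\cdot X$ only by a positive scalar.

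The heart of the proof is the H\"older estimate. Fix $m, m'\in \gamma^1_j$ and set $p=f^{-1}(m)$, $p'=f^{-1}(m')$, so $d_\gamma(p,p')\le 2\pi \lambda^N/\|P_x(e^u)\|$. Writing $X^1_m = Ds_N(\pi_1 p) X_p/\|Ds_N(\pi_1 p) X_p\|$, the elementary bound $\|v/\|v\|-w/\|w\|\|\le 2\|v-w\|/\min(\|v\|,\|w\|)$, together with $\|Ds_N\|\le 2N$, $\|D^2 s_N\|\le N$ from (\ref{basicineqfibered}) and the H\"older hypothesis $\|X_p-X_{p'}\|\le C_X\, d_\gamma(p,p')^{1/2}$, yields
\[
\|X^1_m - X^1_{m'}\| \leq \frac{2N\, d_\gamma(p,p') + 4N C_X\, d_\gamma(p,p')^{1/2}}{\kappa},
\]
where $\kappa = \min(\|Ds_N(\pi_1 p) X_p\|, \|Ds_N(\pi_1 p') X_{p'}\|)$. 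The crucial uniform lower bound $\kappa \geq (N+3)^{-1}$, independent of the direction of $X$, comes from $\det Ds_N = 1$ combined with $\|Ds_N\|\le N+3$. The stretching relation gives $d_{\gamma^1_j}(m,m')\asymp \mu^{2N} d_\gamma(p,p')$, so $d_{\gamma^1_j}^{1/2} \asymp \mu^N d_\gamma^{1/2}$; combining with $d_\gamma^{1/2}\le C'\lambda^{N/2}$ and $C_X < 20 N^2 \lambda^N$ gives
\[
\frac{\|X^1_m - X^1_{m'}\|}{d_{\gamma^1_j}(m,m')^{1/2}} = O(N^2 \lambda^{3N/2}) + O(N^4 \lambda^{2N}),
\]
which is strictly below $20 N^2 \lambda^N$ for $N$ large, proving $C_{X^1} < 20 N^2 \lambda^N$.

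For $k\ge 2$ I would argue by induction. Applying the $k=1$ case to each $(\gamma^{k-1}_j, X^{k-1}|_{\gamma^{k-1}_j})$ (adapted by the inductive hypothesis) decomposes its $f$-image into $[\mu^{2N}]$ $u$-curves, giving $[\mu^{2Nk}]$ adapted pieces in total. The main obstacle is the H\"older control at $k=1$: one must carefully balance the $O(\lambda^{N/2})$ smallness of $d_\gamma^{1/2}$ on the preimage subarc against the $\mu^N$ gain from the stretching, and the uniform lower bound $\kappa\gtrsim 1/N$ arising from the volume-preserving identity $\det Ds_N=1$ is essential, since otherwise an adapted direction close to the near-contracting direction of $Ds_N$ could break the estimate.
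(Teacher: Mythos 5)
Note first that the paper does not prove Lemma \ref{bclemma2}: it is imported verbatim from Berger--Carrasco (\cite{bergercarrasco2014}, Lemma 2), so there is no in-paper argument to compare against. Your proof is correct and follows the same route as the original: one-step propagation of the H\"older bound using $\|Ds_N\|\le 2N$, $\|D^2s_N\|\le N$, the co-norm bound $m(Ds_N)\ge\|Ds_N\|^{-1}$ coming from $\det Ds_N=1$ (this is indeed the crucial point, since it makes the denominator $\kappa$ uniform in the direction of $X$), and the $\mu^{2N}$ arc-length stretching, followed by induction on $k$; your margin $O(N^2\lambda^{3N/2})+O(N^4\lambda^{2N})$ against the allowed $20N^2\lambda^N$ is the right count. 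The only point to tidy is the induction bookkeeping: iterating the one-step subdivision does not literally reproduce the pieces $\gamma^k_j$, since $f^{-1}(\gamma^k_j)$ (of length about $2\pi\lambda^N/\|P_x(e^u)\|$) may straddle two consecutive pieces of $f^{k-1}\circ\gamma$ (each of length about $2\pi\mu^N/\|P_x(e^u)\|$) or meet the final partial piece $\gamma^{k-1}_{[\mu^{2N(k-1)}]+1}$; so either include the leftover piece in the inductive statement (the same one-step estimate covers it) or chain the H\"older bound across the at most one common endpoint, which costs only a factor $\sqrt{2}$ and is easily absorbed by your margin.
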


 Denote by $d\gamma$ the Lebesgue measure induced on $\gamma$ and by $|\gamma|$ the length of $\gamma$. Define
 \[
 I_n^{\gamma,X} \coloneqq  \displaystyle \frac{1}{|\gamma|}\int_{\gamma} \log \|Df^n.X\| d\gamma.
 \]
 
 Now we prove the following criteria to obtain positive Lyapunov exponents along the center direction.

\begin{proposition}
\label{exp1}
Suppose that there exists $C>0$ such that for every $u$-curve $\gamma$ there is an adapted vector field $X$ which satisfies for $n$ large enough 
$$
\frac{I^{\gamma,X}_n}{n}>C.
$$
Then Lebesgue almost every point in $\T^4$ has a Lyapunov exponent along the central direction which is larger than $(1-2\lambda^{2N}) C$. 
\end{proposition}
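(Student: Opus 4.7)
The strategy is to upgrade the hypothesized integrated lower bound on $u$-curves to a Lebesgue-pointwise lower bound on the top center Lyapunov exponent $\lambda^+_c(m)$, via reverse Fatou on each $u$-curve followed by a contradiction argument using absolute continuity of the unstable foliation and the tight geometric control $\|\alpha-\lambda^N P_x(e^u)\|\leq \lambda^{2N}$ of Lemma \ref{proposition1bc}.

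\textbf{Step 1 (integrated bound on every $u$-curve).} For any $u$-curve $\gamma$ with adapted field $X$ furnished by the hypothesis, (\ref{basicineqfibered}) gives $\|Df|_{E^c}\|\leq 2N$, so $\frac{1}{n}\log\|Df^n.X_m\|\leq \log(2N)$ uniformly in $m,n$. By Oseledets' theorem, $\limsup_n \frac{1}{n}\log\|Df^n.X_m\|\leq \lambda^+_c(m)$ for Lebesgue-a.e.\ $m\in\gamma$. Reverse Fatou applied to the assumption $I_n^{\gamma,X}/n>C$ yields
\[
\int_\gamma \lambda^+_c\,d\gamma \;\geq\; \limsup_n \int_\gamma \tfrac{1}{n}\log\|Df^n X\|\,d\gamma \;=\; \limsup_n |\gamma|\cdot\tfrac{I_n^{\gamma,X}}{n} \;\geq\; C|\gamma|.
\]
Since the hypothesis applies to every $u$-curve, the integrated bound $\int_{\tilde\gamma}\lambda^+_c\geq C|\tilde\gamma|$ holds on every $u$-curve $\tilde\gamma$.

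\textbf{Step 2 (pointwise bound by contradiction).} Set $B\coloneqq\{m:\lambda^+_c(m)<(1-2\lambda^{2N})C\}$, an $f$-invariant measurable set (by $f$-invariance of $\lambda^+_c$). Assume $Leb(B)>0$. The strong-unstable foliation is absolutely continuous and its leaves agree, up to reparametrization, with $u$-curves; hence some $u$-curve $\gamma_0$ admits a 1D density point $m_0\in B\cap\gamma_0$. For each $\delta>0$, choose a short subarc $\sigma_\delta\subset\gamma_0$ centred at $m_0$ with $Leb_{\sigma_\delta}(B^c)<\delta|\sigma_\delta|$. From Lemma \ref{proposition1bc} one has $\|(\alpha,e^u)\|\in[1-\lambda^{2N},1+\lambda^{2N}]$, so the unstable Jacobian along $u$-curves satisfies
\[
\frac{J^u_k(m)}{\mu^{2Nk}} \;=\; \frac{\|(\alpha(f^k m),e^u)\|}{\|(\alpha(m),e^u)\|} \;\in\; [1-2\lambda^{2N},\,1+2\lambda^{2N}].
\]
Combined with the $f$-invariance of $B$, this bounded distortion yields $Leb_{f^k(\sigma_\delta)}(B^c)\leq \frac{1+2\lambda^{2N}}{1-2\lambda^{2N}}\,\delta\,|f^k(\sigma_\delta)|$.

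\textbf{Step 3 (zoom out and contradict).} Choose $k$ large enough that $f^k(\sigma_\delta)$ contains at least one complete $u$-subcurve $\hat\gamma$ in the sense of Definition \ref{ucurve}, which is possible since the parametrization of $f^k$-images of $u$-curves decomposes into pieces of parameter length $2\pi$ by Lemma \ref{bclemma2}. On $\hat\gamma$ one still has $Leb_{\hat\gamma}(B^c)=O(\delta)|\hat\gamma|$. Step 1 applied to $\hat\gamma$ then gives
\[
C|\hat\gamma|\;\leq\;\int_{\hat\gamma}\lambda^+_c \;\leq\; (1-2\lambda^{2N})C\cdot|\hat\gamma\cap B|+\log(2N)\cdot|\hat\gamma\setminus B| \;\leq\; (1-2\lambda^{2N})C|\hat\gamma|+O(\delta)|\hat\gamma|,
\]
i.e.\ $2\lambda^{2N}C \leq O(\delta)$, which is a contradiction for $\delta>0$ chosen small enough. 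Hence $Leb(B)=0$, as desired.

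\textbf{Main obstacle.} The delicate step is turning the integrated hypothesis into a genuinely pointwise conclusion: concretely, one must show that high 1D density of the $f$-invariant bad set $B$ on a tiny subarc translates into high density on a full-length $u$-curve, so that Step 1 can be reapplied. This relies crucially on the essentially distortion-free nature of the unstable Jacobian along $u$-curves, itself a consequence of the sharp estimate $\|\alpha-\lambda^N P_x(e^u)\|\leq \lambda^{2N}$ of Lemma \ref{proposition1bc}. The very same estimate produces the loss factor $(1-2\lambda^{2N})$ appearing in the statement.
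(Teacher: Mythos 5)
Your proposal is correct and is essentially the paper's own argument: both proofs argue by contradiction from a density point of the bad set on an unstable leaf (via absolute continuity), exploit the almost exactly conformal expansion by $\mu^{2Nk}$ along $u$-curves coming from Lemma \ref{proposition1bc} (with the $1\pm 2\lambda^{2N}$ error), and compare the integrated hypothesis over a full $u$-curve with the pointwise bound on the bad set. The only difference is cosmetic: you push the density estimate forward from the small subarc onto a full $u$-curve, while the paper pulls the integral back from the full $u$-curve $f^k\circ\gamma_{r_k}$ to the small piece by a change of variables.
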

\begin{proof}
We will prove that for every $\rho>0$, for almost every point there is a Lyapunov exponent greater than $(1-2\lambda^{2N}-\rho)C$ in the center direction.
Suppose not, then there is a set with positive measure $B$ such that every point in this set does not have a Lyapunov exponent greater than $(1-2\lambda^{2N}-\rho) C$. Since the unstable foliation is absolutely continous there is an unstable manifold $L^u$ that intersects $B$ in a subset with positive Lebesgue measure inside $L^u$. Let $q\in L^u$ be a Lebesgue density point of $L^u\cap B$.

Let $r_k =2\pi \lambda^{2Nk}$ and let $\gamma_{r_k}:[-r_k,r_k] \to M$ to be a piece of $u$-curve such that $\gamma_{r_k}(0)= q$, since $q$ is a density point then
$$
\displaystyle \frac{Leb(\gamma_{r_k} \cap B)}{Leb(\gamma_{r_k})} \to 1.
$$

Take $\beta <\rho$ and let $k$ be large enough such that $Leb(\gamma_{r_k} \cap B^c) < \frac{\beta C}{\log2N} Leb(\gamma_{r_k})$. Observe that $f^k \circ \gamma_{r_k}$ is a $u$-curve, let $X_{r_k}$ be the vector field over $\gamma_{r_k}$, such that $(f^k \circ \gamma_{r_k}, (f^k)_*X_{r_k})$ satisfies the hypothesis of the lemma. Let 
\[
\chi(m) = \displaystyle \limsup_{n\to \infty} \frac{\log \|Df^n(m).X_{r_k}\|}{n},\]
thus for every $m\in B$, $\chi(m)<(1-2\lambda^{2N}-\rho) C$. From (\ref{eqai}) and lemma \ref{proposition1bc}, for $N$ large enough we obtain
\[
\displaystyle \frac{1}{\left\|\frac{d(f^k \circ \gamma_{r_k})}{dt}\right\|} \geq \frac{1-2\lambda^{2N}}{\mu^{2Nk}}.
\]

In particular,

\[\arraycolsep=1.2pt\def\arraystretch{1.5}
\begin{array}{rcl}
\displaystyle \int_{\gamma_{r_k}} \chi d\gamma_{r_k} &=& \displaystyle \int_{f^k \circ \gamma_{r_k}} \chi \circ f^{-k} \frac{1}{\left\|\frac{d(f^k \circ \gamma_{r_k})}{dt}\right\|} d(f^k \circ \gamma_{r_k})\\
&\geq&\displaystyle \frac{1- 2\lambda^{2N}}{\mu^{2Nk}} \int_{f^k\circ \gamma_{r_k}} \chi\circ f^{-k} d(f^k \circ \gamma_{r_k})\\
& = & \displaystyle\lambda^{2Nk}(1-2\lambda^{2N})\limsup_{n\to + \infty} \int_{f^k\circ \gamma_{r_k}} \frac{\log\|Df^n(m).(f^k)_*X_{r_k}\|}{n} d(f^k \circ \gamma_{r_k})\\
&\geq & \lambda^{2Nk}(1-2\lambda^{2N})|f^k \circ \gamma_{r_k}| C >(1-2\lambda^{2N})C|\gamma_{r_k}|. 
\end{array}
\]

On the other hand.

\[\arraycolsep=1.2pt\def\arraystretch{1.5}
\begin{array}{rcl}
\displaystyle \int_{\gamma_{r_k}} \chi d\gamma_{r_k} & = & \displaystyle \int_{\gamma_{r_k}\cap B} \chi d\gamma_{r_k}+ \int_{\gamma_{r_k}\cap B^c} \chi d\gamma_{r_k}\\
&\leq &(1-2\lambda^{2N}-\rho) C |\gamma_{r_k}| + \log2N.C.\beta(\log2N)^{-1} |\gamma_{r_k}|\\
&=& (1-2\lambda^{2N}-\rho +\beta)C|\gamma_{r_k}| <(1-2\lambda^{2N})C|\gamma_{r_k}|. 
\end{array}
\]

Which is a contradiction. Since it holds for every $\rho>0$, one concludes the proof of the proposition.
\end{proof}

We can represent the curve $f^k \circ \gamma$ as the concatenation $f^k \circ \gamma = \gamma_1^k \ast\cdots \ast \gamma_{[\mu^{2Nk}]}^k \ast \gamma_{[\mu^{2Nk}]+1}^k$, where $\gamma^k_i$ is a $u$-curve for every $1\leq i \leq [\mu^{2Nk}]$, $\gamma^k_{[\mu^{2Nk}]+1}$ is a piece of a $u$-curve, $[.]$ denotes the integer part of a number and $\ast$ denotes the concatenation between the curves. Berger and Carrasco proved the following formula, see section $3$ of \cite{bergercarrasco2014}.
\begin{lemma}
\label{formula1}
For every adapted field $(\gamma,X)$ and $n\in \N$, for each $k=0,\cdots, n-1$ there exists a number $\beta_k \in [-2 \lambda^{2N}, 2\lambda^{2N}]$ such that 
\[\arraycolsep=0.8pt\def\arraystretch{1.5}
\begin{array}{rcl}
I_n^{\gamma,X} &= & \displaystyle \frac{1}{|\gamma|}\int_{\gamma} \log \|Df^n.X\| d\gamma\\
&=& \displaystyle  \sum_{k=0}^{n-1}\frac{1+\beta_k}{\mu^{2Nk}|\gamma|} \left(\sum_{j=1}^{[\mu^{2Nk}]}\int_{\gamma^k_j} \log\|Df.X^k\| d\gamma_j^k + \int_{\gamma^k_{[\mu^{2Nk}]+1}} \log\|Df.X^k\| d\gamma_{[\mu^{2Nk}]+1} \right),
\end{array}
\]
where $\beta_k \in [-2\lambda^{2N},2\lambda^{2N}]$.
\end{lemma}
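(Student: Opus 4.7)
The plan is to combine the multiplicative cocycle property of the normalized iterates $X^k$ with a careful change-of-variables computation along $\gamma$, exploiting the defining property of $u$-curves in \eqref{eqai} together with the tight control on $\alpha$ from Lemma \ref{proposition1bc}.

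First I would establish a telescoping decomposition. From the definition $X^k_{f^k(m)}=Df^k(m).X_m/\|Df^k(m).X_m\|$ one has the cocycle identity $\|Df^{k+1}(m).X_m\|=\|Df^k(m).X_m\|\cdot\|Df(f^k(m)).X^k_{f^k(m)}\|$. Taking logarithms and telescoping yields
\[
\log\|Df^n(m).X_m\|=\sum_{k=0}^{n-1}\log\|Df(f^k(m)).X^k_{f^k(m)}\|.
\]
Integrating over $\gamma$, dividing by $|\gamma|$ and swapping sum and integral reduces the claim to analyzing each term
\[
L_k:=\frac{1}{|\gamma|}\int_\gamma\log\|Df(f^k(m)).X^k_{f^k(m)}\|\,d\gamma(m)
\]
separately. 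On each $L_k$ I would change variables $y=f^k(m)$. Since $f^k\circ\gamma$ is by definition the concatenation $\gamma_1^k\ast\cdots\ast\gamma_{[\mu^{2Nk}]}^k\ast\gamma_{[\mu^{2Nk}]+1}^k$, additivity of the line integral along this concatenation produces exactly the parenthesized summation on the right-hand side of the stated formula.

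The heart of the argument is the computation of the tangential Jacobian. Using \eqref{eqai} and the defining equation of $u$-curves from Definition \ref{ucurve},
\[
\mathcal{J}_k(t):=\frac{\|(f^k\circ\gamma)'(t)\|}{\|\gamma'(t)\|}=\mu^{2Nk}\cdot\frac{\|(\alpha(f^k(\gamma(t))),e^u)\|}{\|(\alpha(\gamma(t)),e^u)\|}.
\]
By Lemma \ref{proposition1bc} one has $\|\alpha(m)\|\leq 2\lambda^N$ for $N$ large, so $\|(\alpha(m),e^u)\|^2=1+\|\alpha(m)\|^2\in[1,1+4\lambda^{2N}]$, and therefore $\mathcal{J}_k(t)/\mu^{2Nk}\in[1-2\lambda^{2N},1+2\lambda^{2N}]$ for $N$ sufficiently large. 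Writing $\mathcal{J}_k(t)^{-1}=\mu^{-2Nk}(1+\eta_k(t))$ with $|\eta_k(t)|\leq 2\lambda^{2N}$ pointwise, the change-of-variables formula gives
\[
L_k=\frac{1}{\mu^{2Nk}|\gamma|}\int_{f^k\circ\gamma}\log\|Df.X^k\|\,(1+\eta_k)\,d(f^k\circ\gamma).
\]
Defining $\beta_k$ as the scalar that matches this expression with the stated right-hand side produces the desired identity.

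The main (and only) delicate step is the bound $|\beta_k|\leq 2\lambda^{2N}$. The uniform pointwise control $|\eta_k|\leq 2\lambda^{2N}$ factors directly out of the integral, transferring the bound from $\eta_k$ to $\beta_k$; the rest of the proof is bookkeeping. Everything else — the cocycle identity, the decomposition of $f^k\circ\gamma$ into $u$-pieces, and the computation of $\mathcal{J}_k$ — is an essentially formal consequence of the definitions and of Lemma \ref{proposition1bc}.
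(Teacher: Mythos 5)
Your route is the one the paper itself points to: Lemma \ref{formula1} is quoted from Berger--Carrasco without proof, and the remark following Lemma \ref{formula2} says it is obtained "just by using the change of variables formula multiple times". That is exactly what you do: the cocycle/telescoping identity for $X^k$, the change of variables along $\gamma$ under $f^k$, the additivity over the concatenation $f^k\circ\gamma=\gamma_1^k\ast\cdots\ast\gamma^k_{[\mu^{2Nk}]+1}$, and the tangential Jacobian $\mathcal{J}_k(t)=\mu^{2Nk}\,\|(\alpha(f^k(\gamma(t))),e^u)\|/\|(\alpha(\gamma(t)),e^u)\|$ controlled through Lemma \ref{proposition1bc}. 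The computation of $\mathcal{J}_k$ and the pointwise bound $|\eta_k(t)|\le 2\lambda^{2N}$ are correct (both $\|(\alpha(\cdot),e^u)\|^2$ lie in $[1,1+4\lambda^{2N}]$, so the ratio and its inverse lie in $[(1+4\lambda^{2N})^{-1/2},(1+4\lambda^{2N})^{1/2}]\subset[1-2\lambda^{2N},1+2\lambda^{2N}]$).

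The one step that does not go through as written is the last one: the claim that the pointwise bound on $\eta_k$ "factors directly out of the integral" to give a single constant $\beta_k$ with $|\beta_k|\le 2\lambda^{2N}$. The natural choice is the weighted mean $\beta_k=\bigl(\int_{f^k\circ\gamma}\varphi\,\eta_k\bigr)/\bigl(\int_{f^k\circ\gamma}\varphi\bigr)$ with $\varphi=\log\|Df.X^k\|$, and the inequality $|\beta_k|\le\sup|\eta_k|$ only follows when $\varphi$ has constant sign. Here $\varphi$ takes values in $[-\log 2N,\log 2N]$ and is genuinely negative on parts of the curve (inside the critical strip, or wherever $X^k$ is close to the contracted direction), so cancellations could in principle make $\int_{f^k\circ\gamma}\varphi$ small compared with $\int_{f^k\circ\gamma}\varphi\,\eta_k$, and the bound on $\eta_k$ does not automatically transfer to $\beta_k$. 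To close this you should either keep the error term pointwise (state the identity with $\beta_k$ a function of the point bounded by $2\lambda^{2N}$, i.e. leave the weight inside the integrals), or split $\varphi$ into its positive and negative parts before extracting constants, which gives the two-sided estimate $(1\pm 2\lambda^{2N})$ on each part; this weaker form is all that is used afterwards (e.g. in Lemma \ref{mencionado}), so the fix costs nothing, but as a proof of the literal statement with one scalar $\beta_k$ per $k$ the argument is incomplete at exactly the step you call delicate.
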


This formula will allow us to study the growth of $I_n^{\gamma,X}$ by studying the pieces $\int_{\gamma^k_j} \log\|Df.X^k\| d\gamma_j^k$. In order to analyze these pieces we will define the notion of ``good" and ``bad" pieces. The estimate on the growth of $I_n^{\gamma,X}$ will come from an induction on $n$ and a combinatorial argument, to estimate the number of ``good" and ``bad" pieces that appears in this formula. 

Fix $\tilde{\delta}>0$ small, the number $N$ will be chosen after in function of $\tilde{\delta}$. Let 
$$
E(\gamma,X) = I^{\gamma, X}_1=  \frac{1}{|\gamma|} \int_{\gamma} \log \|Df(m).X_m\| d\gamma.
$$

Recall that for $m=(x,y,z,w)\in \T^4$, we defined $\Omega(m)= N\cos(x) + 2$. Define $v_m= (1,\Omega(m))$ and $u_m = (\Omega(m), -1)$. They form an orthogonal basis of the center direction. Let $X$ be an unit vector field tangent to the center direction, thus using this basis we have
$$
X_m = \frac{\cos(\theta_X(m))}{\sqrt{1+\Omega(m)^2}}v_m + \frac{\sin(\theta_X(m))}{\sqrt{1+\Omega(m)^2}}u_m.
$$

Where $\theta_X(m)$ is the angle that $X_m$ makes with $v_m$. Using the basis $(v_m, u_m)$ the derivative can be written as

\[
Df(m).X_m = \left(\sin(\theta_X(m)). \sqrt{1+\Omega(m)^2}, \frac{\cos(\theta_X(m)) + \sin(\theta_X(m)). \Omega(m)}{\sqrt{1+(\Omega(m))^2}}\right),
\]

then 
\[
\|Df(m).X_m\| \geq\left|\sin(\theta_X(m))\right|. \sqrt{1+\Omega(m)^2}\geq  |\sin(\theta_X(m))|.|\Omega(m)|.
\]

If $N$ is large enough and if $|x-\pi/2| > 2.N^{-\tilde{\delta}}$ and $|x-3\pi/2| \geq 2.N^{-\tilde{\delta}}$ then $|\cos(x)|\geq N^{-\tilde{\delta}}$.

Define the critical strip as 
\[
Crit = \left\{(x,y,z,w) \in \Tor^4: |x-\pi/2|< 2.N^{-\tilde{\delta}} \textrm{ or } |x-3\pi/2| < 2.N^{-\tilde{\delta}}\right\},
\]
thus the length of the projection of the critical strip on the first coordinate is $l(Crit) < 8.N^{-\tilde{\delta}}$, which converges to zero as $N$ goes to infitiny.

\begin{lemma}
\label{lemmaanterior}
For $N$ large enough, if $m\notin Crit$ then $|\Omega(m)| \geq N^{1-2\tilde{\delta}}$ and $\|Df(m). X_m\| \geq N^{1-2\tilde{\delta}}.|\sin(\theta_X(m))|.$ 
\end{lemma}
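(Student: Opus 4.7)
The plan is to combine two facts that are already available in the paragraphs immediately preceding the statement: first, the observation (proved just before the definition of $Crit$) that $m \notin Crit$ forces $|\cos x| \geq N^{-\tilde\delta}$ once $N$ is large, and second, the inequality $\|Df(m).X_m\| \geq |\sin(\theta_X(m))|\,|\Omega(m)|$ derived from the expression of $Df(m).X_m$ in the orthogonal basis $(v_m,u_m)$. Nothing beyond a direct arithmetic estimate is required.

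First I would bound $|\Omega(m)|$ from below. Writing $\Omega(m) = N\cos x + 2$ and using the preceding lower bound $|\cos x| \geq N^{-\tilde\delta}$ for $m \notin Crit$, the triangle inequality gives
\[
|\Omega(m)| \;\geq\; N|\cos x| - 2 \;\geq\; N^{1-\tilde\delta} - 2.
\]
The next step is to absorb the additive constant $-2$ into the power of $N$: since $N^{1-\tilde\delta} - N^{1-2\tilde\delta} = N^{1-2\tilde\delta}(N^{\tilde\delta}-1) \to +\infty$ as $N\to+\infty$, for $N$ sufficiently large (depending only on $\tilde\delta$) we have $N^{1-\tilde\delta} - 2 \geq N^{1-2\tilde\delta}$, which yields the first claim
\[
|\Omega(m)| \;\geq\; N^{1-2\tilde\delta}.
\]

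For the second claim, I would simply plug this into the already established inequality $\|Df(m).X_m\| \geq |\sin(\theta_X(m))|\,|\Omega(m)|$ obtained from the coordinate expression of $Df(m).X_m$ in the basis $(v_m,u_m)$. This immediately gives
\[
\|Df(m).X_m\| \;\geq\; N^{1-2\tilde\delta}\,|\sin(\theta_X(m))|,
\]
as required.

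There is no genuine obstacle in this proof: everything reduces to the elementary fact that, outside a neighbourhood of the zeros of $\cos$, the value of $|\cos x|$ is quantitatively bounded below, and then the additive $+2$ in $\Omega$ is negligible compared to the $N|\cos x|$ term. The only care is in choosing $N$ large enough so that both the linearisation of $\cos$ around $\pi/2$ and $3\pi/2$ is valid on the complement of $Crit$ (which is what was used in the paragraph preceding the lemma to get $|\cos x|\geq N^{-\tilde\delta}$) and so that $N^{\tilde\delta}\geq 1 + 2N^{-(1-2\tilde\delta)}$, which holds for all sufficiently large $N$ whenever $\tilde\delta>0$.
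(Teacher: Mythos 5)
Your proof is correct and follows exactly the route the paper intends: the paper's own proof is the one-line remark that $m\notin Crit$ gives $|\cos x|\geq N^{-\tilde\delta}$, combined with the already displayed bound $\|Df(m).X_m\|\geq |\sin(\theta_X(m))|\,|\Omega(m)|$, and your write-up simply makes explicit the elementary estimate $|\Omega(m)|\geq N|\cos x|-2\geq N^{1-\tilde\delta}-2\geq N^{1-2\tilde\delta}$ for $N$ large. No discrepancy with the paper's argument.
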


The proof is straight forward with the fact that if $m\notin Crit$ then $|\cos(x)| \geq N^{-\tilde{\delta}}$.

\begin{definition}
Consider the cone $\Delta_{\tilde{\delta}} = \{(u,v) \in \R^2: N^{\tilde{\delta}}|u| \geq |v|\}$. If an adapted vector field $(\gamma, X)$ is tangent to this cone we say that it is a {\bf$\tilde{\delta}$-good} adapted vector field. Otherwise we say that it is {\bf $\tilde{\delta}$-bad}.
\end{definition}

\begin{lemma}
\label{esselemmaaqui}
For $N$ sufficiently large and for every $\tilde{\delta}$-good adapted vector field $(\gamma,X)$
\begin{eqnarray*}
|\sin(\theta_X(m))|> N^{-4\tilde{\delta}}& \forall m\notin Crit. 
\end{eqnarray*}
Furthermore, for a $\tilde{\delta}$-good adapted field $(\gamma,X)$, if $m\notin Crit$ then $\|Df(m).X_m\| \geq N^{1-6\tilde{\delta}}$ 
\end{lemma}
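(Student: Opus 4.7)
\medskip
\noindent\textbf{Proof plan.} The plan is to reduce everything to a pointwise trigonometric estimate on the unit vector $X_m$ expressed in the basis $(v_m,u_m)$ of $E^c_m$, exploiting the fact that being $\tilde\delta$-good forces the horizontal component of $X_m$ (in the standard basis) to be non-negligible, which in turn forces $|\sin\theta_X(m)|$ to be non-negligible whenever $|\Omega(m)|$ is large (which is precisely the content of $m\notin Crit$).

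First I would write $X_m=(a,b)\in E^c_m\simeq\R^2$ in standard coordinates. Using the definition of $\theta_X(m)$ relative to the basis $(v_m,u_m)$, a direct computation gives
$$
a=\frac{\cos\theta_X(m)+\Omega(m)\sin\theta_X(m)}{\sqrt{1+\Omega(m)^2}},\qquad b=\frac{\Omega(m)\cos\theta_X(m)-\sin\theta_X(m)}{\sqrt{1+\Omega(m)^2}},
$$
together with the relation $a^2+b^2=1$. The $\tilde\delta$-good hypothesis is exactly $|b|\le N^{\tilde\delta}|a|$, which combined with $a^2+b^2=1$ yields $a^2\ge 1/(1+N^{2\tilde\delta})$; hence for $N$ large enough $|a|\ge \frac{1}{\sqrt 2}N^{-\tilde\delta}$.

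Next, using the explicit formula for $a$ together with $m\notin Crit$ and lemma \ref{lemmaanterior} (which supplies $|\Omega(m)|\ge N^{1-2\tilde\delta}$), I would deduce
$$
|\cos\theta_X(m)+\Omega(m)\sin\theta_X(m)|\ \ge\ \tfrac{1}{\sqrt 2}N^{-\tilde\delta}\sqrt{1+\Omega(m)^2}\ \ge\ \tfrac{1}{\sqrt 2}N^{1-3\tilde\delta}.
$$
By the triangle inequality $|\Omega(m)|\,|\sin\theta_X(m)|\ge \tfrac{1}{\sqrt 2}N^{1-3\tilde\delta}-1$, and since $|\Omega(m)|\le N+2\le 2N$, this forces $|\sin\theta_X(m)|\ge (\tfrac{1}{\sqrt 2}N^{1-3\tilde\delta}-1)/(2N)>N^{-4\tilde\delta}$ for $N$ large enough (the extra factor $N^{\tilde\delta}$ absorbs the constant and the $-1$).

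Finally, for the second statement I would reuse the computation $Df(m).X_m=\bigl(\sin\theta_X(m)\sqrt{1+\Omega(m)^2},\,(\cos\theta_X(m)+\Omega(m)\sin\theta_X(m))/\sqrt{1+\Omega(m)^2}\bigr)$ already performed in the excerpt, which gives $\|Df(m).X_m\|\ge |\sin\theta_X(m)|\sqrt{1+\Omega(m)^2}\ge |\sin\theta_X(m)|\,|\Omega(m)|$. Plugging in $|\Omega(m)|\ge N^{1-2\tilde\delta}$ and the freshly obtained lower bound $|\sin\theta_X(m)|>N^{-4\tilde\delta}$ yields $\|Df(m).X_m\|>N^{1-6\tilde\delta}$. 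No serious obstacle is expected; the only point requiring attention is the book-keeping of the exponents of $N^{\tilde\delta}$, so that the combination of losses ($\tilde\delta$ from the cone condition, $2\tilde\delta$ from $|\Omega|$ being only $N^{1-2\tilde\delta}$, and an extra $\tilde\delta$ to absorb constants) fits inside the margins $4\tilde\delta$ and $6\tilde\delta$ announced in the statement.
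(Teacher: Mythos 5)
Your proposal is correct, and it uses the same ingredients as the paper (the cone condition defining $\tilde{\delta}$-good, the bound $|\Omega(m)|\geq N^{1-2\tilde{\delta}}$ off $Crit$ from lemma \ref{lemmaanterior}, and the already-established inequality $\|Df(m).X_m\|\geq |\sin(\theta_X(m))|\sqrt{1+\Omega(m)^2}$), but the derivation of the key lower bound on $|\sin(\theta_X(m))|$ is organized differently. The paper argues geometrically: since $v_m=(1,\Omega(m))$ lies outside the cone $\Delta_{\tilde{\delta}}$ when $m\notin Crit$, one has $|\sin(\theta_X(m))|\geq |\sin(\measuredangle(v_m,b_m))|$ where $b_m=(1,\pm N^{\tilde{\delta}})$ is a boundary vector of the cone (the sign chosen according to the sign of $\Omega(m)$), and then the angle $\measuredangle(v_m,b_m)$ is evaluated via the law of sines in the triangle $0,b_m,v_m$, giving $|\sin(\theta_X(m))|\geq \frac{|\Omega(m)|-N^{\tilde{\delta}}}{\sqrt{1+N^{2\tilde{\delta}}}\sqrt{1+\Omega(m)^2}}$. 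You instead work entirely in standard coordinates: the cone condition plus $a^2+b^2=1$ gives $|a|\geq \tfrac{1}{\sqrt{2}}N^{-\tilde{\delta}}$ for the horizontal component, and the identity $a\sqrt{1+\Omega(m)^2}=\cos\theta_X(m)+\Omega(m)\sin\theta_X(m)$ plus the triangle inequality then yields $|\sin(\theta_X(m))|\gtrsim N^{-3\tilde{\delta}}$, comfortably above $N^{-4\tilde{\delta}}$ — the same order as the paper's bound. Your version avoids the auxiliary boundary vector and the case split on the sign of $\Omega(m)$, at the cost of slightly more explicit exponent bookkeeping; the second assertion ($\|Df(m).X_m\|\geq N^{1-6\tilde{\delta}}$) is obtained in both arguments by the identical multiplication of the sine bound with $|\Omega(m)|\geq N^{1-2\tilde{\delta}}$.
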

\begin{proof}
Recall that $v_m = (1, \Omega(m))$ and suppose that $\Omega(m) >0$. Let $b_m = (1, N^{\tilde{\delta}})$ and consider the triangle formed by the points $0$, $b_m$ and $v_m$, see figure \ref{image4}. Denote by $\measuredangle(u,v)$ the angle between two vectors $u,v\in \R^2$. By the  law of sines
\begin{equation}
\label{prova1}
\displaystyle \frac{\sin ( \measuredangle (v_m, b_m))}{\|v_m - b_m\|} = \frac{ \sin( \measuredangle(v_m - b_m, b_m))}{\|v_m\|}.
\end{equation}

 For a good adapted field $(\gamma,X)$, it holds $| \sin(\theta_X(m))| \geq |\sin ( \measuredangle(v_m,b_m))|$. Recall that $m\notin Crit$, by lemma \ref{lemmaanterior} we have $N^{1-2\tilde{\delta}} \leq |\Omega(m)| \leq N$. Observe that 
\begin{equation}
\label{prova2}
\sin( \measuredangle(v_m - b_m, b_m)) = \frac{1}{\|b_m\|}.
\end{equation}
\begin{figure}
\centering
\includegraphics[scale=0.4]{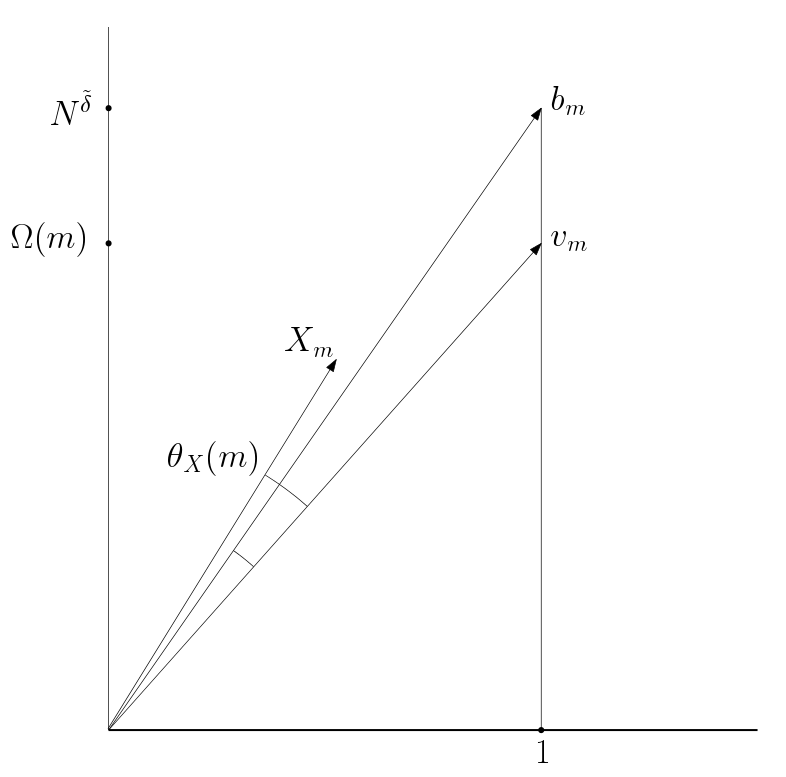}
\caption{The triangle formed by $0$, $b_m$ and $v_m$}
\label{image4}
\end{figure}
By (\ref{prova1}) and (\ref{prova2}), for $N$ large enough we obtain
\[
|\sin(\theta_X(m))| \geq \frac{|\Omega(m)|-N^{\tilde{\delta}}}{\sqrt{1+N^{2\tilde{\delta}}}.\sqrt{1+\Omega(m)^2}} \geq N^{-4\tilde{\delta}}.
\]
It follows from this inequality and lemma \ref{lemmaanterior}, that for a $\tilde{\delta}$-good adapted field $(\gamma,X)$, if $m\notin Crit$ then $\|Df(m).X_m\| \geq N^{1-6\tilde{\delta}}$. If $\Omega (m)<0$ we can obtain the same estimate taking $b_m = (1, -N^{\tilde{\delta}})$.
\end{proof}

\begin{proposition}
\label{sogood}
For $N$ sufficiently large if $(\gamma, X )$ is a $\tilde{\delta}$-good adapted vector field then $E(\gamma,X) \geq (1-7\tilde{\delta}) \log N$.

\end{proposition}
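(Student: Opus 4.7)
My plan is to split the integral defining $E(\gamma,X)$ according to whether the point $m$ lies inside the critical strip $Crit$ or outside it, and to use Lemma \ref{esselemmaaqui} on the good part while settling for a trivial bound on the bad part. Outside $Crit$, Lemma \ref{esselemmaaqui} gives the uniform estimate $\|Df(m).X_m\| \geq N^{1-6\tilde\delta}$, which is the main source of expansion. Inside $Crit$, I will only use $\|Df(m).X_m\| \geq (2N)^{-1}$, which follows from (\ref{basicineqfibered}) together with the $Df$-invariance of $E^c$ and the identity $Df|_{E^c} = Ds_N$. The whole proposition will then reduce to the geometric claim that $Leb(\gamma\cap Crit)/|\gamma| \leq 2N^{-\tilde\delta}$ for $N$ large.

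To establish this geometric estimate I will use that the $x$-coordinate along a $u$-curve moves with near-unit speed. Recalling from Definition \ref{ucurve} that $d\gamma/dt = (\alpha(\gamma(t)), e^u)/(\lambda^N\|P_x(e^u)\|)$ and using Lemma \ref{proposition1bc} together with $P_x(e^u) = (e^u_1, 0)$, one has
\[
\frac{dx}{dt} = \pm 1 + O(\lambda^N), \qquad \left\|\frac{d\gamma}{dt}\right\| = \frac{1+O(\lambda^{2N})}{\lambda^N \|P_x(e^u)\|}.
\]
Thus the speed is essentially constant along $\gamma$ while its $x$-projection winds once around $S^1$ at near-unit rate. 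Since $Crit$ depends only on the $x$-coordinate and its $x$-projection has measure at most $8N^{-\tilde\delta}$, the Lebesgue measure of $\{t\in[0,2\pi] : \gamma(t)\in Crit\}$ is at most $8N^{-\tilde\delta}(1+O(\lambda^N))$; multiplying by the nearly constant speed and dividing by $|\gamma| = 2\pi(1+O(\lambda^{2N}))/(\lambda^N\|P_x(e^u)\|)$ makes the factor $\lambda^N\|P_x(e^u)\|$ cancel and yields $Leb(\gamma\cap Crit)/|\gamma| \leq (4/\pi)N^{-\tilde\delta}(1+o(1)) \leq 2N^{-\tilde\delta}$ for $N$ large.

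Combining the two estimates I then obtain
\[
E(\gamma,X) \geq (1-6\tilde\delta)(\log N)(1-2N^{-\tilde\delta}) - 2N^{-\tilde\delta}\log(2N),
\]
and since the two error terms are together bounded by $6N^{-\tilde\delta}\log N$, which is smaller than $\tilde\delta\log N$ for $N$ large enough, this yields $E(\gamma,X) \geq (1-7\tilde\delta)\log N$ as desired. I do not anticipate any serious obstacle: the only nontrivial step is the geometric estimate on $u$-curves, which is really just book-keeping once one writes down the parametrization from Definition \ref{ucurve} and uses the approximation $\alpha \approx \lambda^N P_x(e^u)$ from Lemma \ref{proposition1bc}; the rest is a routine split-and-bound.
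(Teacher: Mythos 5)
Your proposal is correct and follows essentially the same route as the paper's proof: split $E(\gamma,X)$ over $Crit$ and its complement, apply Lemma \ref{esselemmaaqui} outside the critical strip, use the trivial bound $\|Df(m).X_m\|\geq (2N)^{-1}$ inside it, and control $Leb(\gamma\cap Crit)/|\gamma|$ via the near-unit $x$-speed of $u$-curves coming from Definition \ref{ucurve} and Lemma \ref{proposition1bc}. Your constant $2N^{-\tilde\delta}$ is in fact sharper than the paper's stated $10N^{-\tilde\delta}$, and the final bookkeeping closes the estimate the same way.
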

\begin{proof}
Recall that for a $u$-curve $\frac{d\gamma}{dt}(t) = (\alpha(\gamma(t)), e^u)$ and $|P_x(\alpha(\gamma(t))-(1,0)|\leq \lambda^{2N}$. In particular, using that $l(Crit) \leq 8N^{-\tilde{\delta}}$, for $N$ large enough the measure of $\gamma \cap Crit $ is smaller than $10N^{-\tilde{\delta}}|\gamma|$.

The previous lemma give us an estimate for points outside the critical strip. For points inside the critical strips we use that $\|Df|_{E^c}\| \geq (2N)^{-1}$. Thus for $N$ large enough we get

$$
\begin{array}{rcl}
|\gamma|E(\gamma,X) & = & \int_{\gamma \cap Crit} \log \|Df(m).X_m\|d\gamma + \int_{\gamma \cap Crit^c} \log\|Df(m).X_m\|d\gamma \\
&\geq & \left( 1-\frac{10}{N^{\tilde{\delta}}}\right).(1-6\tilde{\delta})\log N|\gamma| - \left(\frac{10}{N^{\tilde{\delta}}}\right).\log  2N|\gamma| \geq (1-7\tilde{\delta})\log N|\gamma|.
\end{array}
$$
\end{proof}

Recall that $f^k\circ \gamma = \gamma_1^k \ast\cdots \ast \gamma_{[\mu^{2Nk}]}^k \ast \gamma_{[\mu^{2Nk}]+1}^k$ and define

\begin{eqnarray}
\label{goodbad}
G_k=G_k(\gamma,X)&=&\left\{ 1\leq j \leq [\mu^{2Nk}]: \left(\gamma_j^k,\frac{f_*^kX}{\|f_*^kX\|}\right)\textrm{ is $\tilde{\delta}$-good.}\right\},\\
B_k=B_k(\gamma,X) &=& \left\{ 1\leq j \leq [\mu^{2Nk}]: \left(\gamma_j^k,\frac{f_*^kX}{\|f_*^kX\|}\right)\textrm{ is $\tilde{\delta}$-bad.}\right\}.
\end{eqnarray}

\begin{lemma}
\label{lemmabom}
For $N$ sufficiently large, if $(\gamma,X)$ is a $\tilde{\delta}$-good adapted field and $f^{-1}(\gamma^1_j) \cap Crit =\emptyset$, then $(\gamma^1_j, \frac{f_*X}{\|f_*X\|})$ is also $\tilde{\delta}$-good. 
\end{lemma}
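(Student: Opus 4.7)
The plan is to invoke Lemma \ref{bclemma2} for the adaptedness (Hölder) part, which is already granted there for any iterate of an adapted field restricted to a sub-$u$-curve $\gamma^1_j$. So what remains is strictly to verify the cone condition, namely that for every $m \in \gamma^1_j$, the vector $X^1_m = Df(f^{-1}(m)).X_{f^{-1}(m)} / \|Df(f^{-1}(m)).X_{f^{-1}(m)}\|$ lies in $\Delta_{\tilde\delta}$.

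Working in the standard coordinates of the center, $Df(p)|_{E^c_p}$ is the matrix $Ds_N(p) = \bigl( \begin{smallmatrix} \Omega(p) & -1 \\ 1 & 0 \end{smallmatrix}\bigr)$. Given any $m \in \gamma^1_j$, set $p = f^{-1}(m)$ and write $X_p = (u,v)$. Since $(\gamma,X)$ is $\tilde\delta$-good, $|v| \leq N^{\tilde\delta}|u|$. The image is
\[
Df(p).X_p = \bigl(\Omega(p)u - v,\; u\bigr).
\]
The cone condition for the new vector reduces to $N^{\tilde\delta}|\Omega(p)u - v| \geq |u|$. By hypothesis $p \notin Crit$, so $|\cos x_p| \geq N^{-\tilde\delta}$ and hence for $N$ large one obtains $|\Omega(p)| \geq N^{1-\tilde\delta} - 2 \geq N^{1-2\tilde\delta}$. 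Combined with $|v| \leq N^{\tilde\delta}|u|$ this yields, by the reverse triangle inequality,
\[
|\Omega(p)u - v| \;\geq\; |\Omega(p)|\,|u| - |v| \;\geq\; \bigl(N^{1-2\tilde\delta} - N^{\tilde\delta}\bigr)|u| \;\geq\; \tfrac{1}{2} N^{1-2\tilde\delta}|u|.
\]

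Multiplying by $N^{\tilde\delta}$ gives $N^{\tilde\delta}|\Omega(p)u - v| \geq \tfrac{1}{2}N^{1-\tilde\delta}|u| \geq |u|$ for $N$ large enough (say $N \geq 2^{1/(1-\tilde\delta)}$). Thus $Df(p).X_p \in \Delta_{\tilde\delta}$, and after normalization $X^1_m \in \Delta_{\tilde\delta}$ as required. Combined with Lemma \ref{bclemma2}, $(\gamma^1_j, X^1|_{\gamma^1_j})$ is therefore $\tilde\delta$-good.

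There is no serious obstacle here: the entire content of the lemma is that avoiding $Crit$ keeps $|\Omega|$ big enough that the standard map's shear dominates, so the horizontal cone $\Delta_{\tilde\delta}$ is preserved. The only delicate point is that the hypothesis $f^{-1}(\gamma^1_j) \cap Crit = \emptyset$ is used pointwise to uniformly control $|\Omega|$ from below along the entire preimage curve, which is exactly what makes the cone estimate uniform across $\gamma^1_j$.
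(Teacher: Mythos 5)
Your proposal is correct and follows essentially the same route as the paper: off the critical strip $|\Omega(p)|\geq N^{1-2\tilde\delta}$, the shear in $Ds_N$ sends a vector $(u,v)$ with $|v|\leq N^{\tilde\delta}|u|$ to $(\Omega(p)u-v,\,u)$ whose first coordinate dominates, so the cone $\Delta_{\tilde\delta}$ is preserved (the paper just normalizes to $u=1$). Your explicit appeal to Lemma \ref{bclemma2} for the adaptedness of $(\gamma^1_j, X^1)$ is a reasonable bookkeeping step that the paper leaves implicit.
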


\begin{proof}
Let $m\notin Crit$ and $v\in (-N^{\tilde{\delta}}, N^{\tilde{\delta}})$. It is verified
$$
Df(m).(1,v) = (\Omega(m) - v, 1).
$$
By lemma \ref{esselemmaaqui} 
$$
|\Omega(m)-v| \geq |\Omega(m)|-|v| \geq N^{1-2\tilde{\delta}} - N^{\tilde{\delta}},
$$
which is arbitrarily large as $N$ grows. This implies that the vector $(\Omega(m) - v, 1)$ is inside the cone $\Delta_{\tilde{\delta}}$, because it will be very close to the $x$ axis. 
\end{proof}

The next lemma is the same as lemma $6$ in \cite{bergercarrasco2014}. 

\begin{lemma}
\label{lemmaruim}
For $N$ sufficiently large, for every $\tilde{\delta}$-bad adapted vector field, there is a strip $S_X$ of length $\pi$ such that if $f^{-1}(\gamma^1_j)\subset S_X $ then $(\gamma^1_j, \frac{f_*X}{\|f_*X\|})$ is $\tilde{\delta}$-good.
\end{lemma}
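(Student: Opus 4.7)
The key observation is the explicit formula $Df(m).(a,b)=(\Omega(m)a-b,\,a)$ for the center action, with $\Omega(m)=N\cos x+2$, so $Df(m).X_m$ fails to lie in $\Delta_{\tilde\delta}$ precisely when the slope $b_m/a_m$ of $X$ resonates with $\Omega(m)$. My plan is to exploit the near-constancy of $X$ along $\gamma$ (encoded by the adapted field condition) to locate a strip in $x$ of length $\pi$ on which $\Omega$ and the slope of $X$ have incompatible signs. Writing $X_m=(a_m,b_m)$, the badness hypothesis yields some $m_0\in\gamma$ with $|b_{m_0}|>N^{\tilde\delta}|a_{m_0}|$, hence $|b_{m_0}|\geq 1/\sqrt 2$ for $N$ large. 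Combined with the H\"older bound $\|X_m-X_{m'}\|\leq\lambda^{N/3}$ established in \cite{bergercarrasco2014} for any adapted field, this gives $|b_m|\geq 1/2$ and $|a_m|\leq 2N^{-\tilde\delta}$ uniformly on $\gamma$.

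I would then split into two cases. If $|a_{m_0}|\leq 2\lambda^{N/3}$, the H\"older bound forces $|a_m|\leq 3\lambda^{N/3}$ for all $m\in\gamma$, whence $|\Omega(m)a_m-b_m|\geq |b_m|-N|a_m|\geq 1/4$ for $N$ large; this yields $N^{\tilde\delta}|\Omega(m)a_m-b_m|\geq N^{\tilde\delta}/4\geq|a_m|$ and hence $Df(m).X_m\in\Delta_{\tilde\delta}$ independently of $m$, so any strip $S_X$ of length $\pi$ works. Otherwise $|a_{m_0}|>2\lambda^{N/3}$, and the H\"older bound preserves the signs $\sigma_a=\mathrm{sign}(a_m)$ and $\sigma_b=\mathrm{sign}(b_m)$ throughout $\gamma$. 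I would take
\[
S_X=\begin{cases}\{(x,y,z,w)\in\T^4:\cos x\leq 0\}&\text{if }\sigma_a\sigma_b>0,\\ \{(x,y,z,w)\in\T^4:\cos x\geq 0\}&\text{if }\sigma_a\sigma_b<0,\end{cases}
\]
each of which has $x$-length $\pi$. In the first case $\Omega(m)\leq 2$ on $S_X$: if $\Omega\leq 0$ then $\Omega a_m$ and $b_m$ have opposite signs so $|\Omega a_m-b_m|\geq|b_m|\geq 1/2$, while if $0<\Omega\leq 2$ then $|\Omega a_m|\leq 4N^{-\tilde\delta}$ so $|\Omega a_m-b_m|\geq 1/2-4N^{-\tilde\delta}\geq 1/4$. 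In the second case $\Omega(m)\geq 2$ on $S_X$ and $\sigma_a\sigma_b<0$, so $\Omega a_m$ and $b_m$ have opposite signs and $|\Omega a_m-b_m|\geq|b_m|\geq 1/2$. Either way $|\Omega(m) a_m-b_m|\geq 1/4$ combined with $|a_m|\leq 2N^{-\tilde\delta}$ gives $N^{\tilde\delta}|\Omega a_m-b_m|\geq|a_m|$, so $Df(m).X_m\in\Delta_{\tilde\delta}$ for every $m\in f^{-1}(\gamma^1_j)\subset S_X$, proving that $(\gamma^1_j,\,f_*X/\|f_*X\|)$ is $\tilde\delta$-good.

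The main obstacle is the sign-preservation argument in the principal sub-case: one must extract from the tiny H\"older constant $C_X<20N^2\lambda^N$ that the coordinates $a_m$ and $b_m$ maintain constant signs along the entire $u$-curve $\gamma$. Once this is granted, the cone estimates themselves are elementary and follow the spirit of Lemma 6 in \cite{bergercarrasco2014}.
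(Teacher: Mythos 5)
Your proof is correct, and since the paper gives no argument for this lemma (it simply defers to Lemma 6 of Berger--Carrasco), your sign analysis of $\Omega(m)a_m-b_m$ on a half-period strip in $x$, combined with the near-constancy bound $\|X_m-X_{m'}\|\leq\lambda^{N/3}$ for adapted fields, is essentially that argument. The sign-preservation step you flag as the main obstacle is in fact immediate from the same bound: in your second case $|a_{m_0}|>2\lambda^{N/3}$ and $|b_m|\geq 1/2$ both exceed the total oscillation of $X$ along $\gamma$, so $a_m$ and $b_m$ cannot change sign.
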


Let $\eta_N = \frac{5}{\pi N^{\tilde{\delta}}}$. The following proposition is analoguous to proposition 4 in \cite{bergercarrasco2014}.

\begin{proposition}
For $N$ large enough, for every $\tilde{\delta}$-bad adapted field
$$
\begin{array}{rcl}
\# G_1 \geq \frac{1}{3} \mu^{2N} & \textrm{ and }& \#B_1 \leq \frac{2}{3}\mu^{2N}.
\end{array}
$$
For every $\tilde{\delta}$-good adapted field
$$
\begin{array}{rcl}
\# G_1 \geq \left(1-\eta_N \right) \mu^{2N} & \textrm{ and }& \#B_1 \leq \left(\eta_N\right)\mu^{2N}.
\end{array}
$$

\end{proposition}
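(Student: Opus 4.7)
The proof is a counting argument that combines Lemmas \ref{lemmabom} and \ref{lemmaruim} with the fact that the pre-image pieces $f^{-1}(\gamma^1_j)$ partition the $x$-projection of $\gamma$ into nearly uniform sub-arcs. To set this up, note that by Lemma \ref{proposition1bc} a $u$-curve $\gamma:[0,2\pi]\to\T^4$ has $x$-speed $\alpha_1(\gamma(t))/(\lambda^N\|P_x(e^u)\|)=\pm 1+O(\lambda^{2N})$, so its $x$-projection covers one full period of $S^1$; by equation (\ref{eqai}), the curve $f\circ\gamma$ has $x$-speed $\pm\mu^{2N}+O(\lambda^{2N})$ and hence wraps $[\mu^{2N}]$ times in the $x$-coordinate. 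Defining the partition $t_0=0<t_1<\cdots<t_{[\mu^{2N}]}\le 2\pi$ by unit $x$-increments of $f\circ\gamma$, one gets $t_j-t_{j-1}=(2\pi/\mu^{2N})(1+O(\lambda^{2N}))$ uniformly in $j$, and the pre-image pieces $f^{-1}(\gamma^1_j)=\gamma|_{[t_{j-1},t_j]}$ project to $x$-arcs of length $L=(2\pi/\mu^{2N})(1+O(\lambda^{2N}))$ that tile $S^1$ almost perfectly.

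For a $\tilde{\delta}$-good adapted field, Lemma \ref{lemmabom} implies that $(\gamma^1_j, f_*X/\|f_*X\|)$ is $\tilde{\delta}$-good whenever $f^{-1}(\gamma^1_j)\cap Crit=\emptyset$. Since $Crit$ is defined purely in terms of $x$ and its $x$-projection is a union of two arcs of total length $8N^{-\tilde{\delta}}$, the number of indices $j$ for which $f^{-1}(\gamma^1_j)$ meets $Crit$ is bounded by the number of consecutive sub-arcs of length $L$ touching either arc, namely at most $(8N^{-\tilde{\delta}})/L+2=(4/\pi)N^{-\tilde{\delta}}\mu^{2N}+2$. For $N$ large this is at most $(5/\pi)N^{-\tilde{\delta}}\mu^{2N}=\eta_N\mu^{2N}$, giving $\#B_1\le\eta_N\mu^{2N}$ and hence $\#G_1\ge(1-\eta_N)\mu^{2N}$.

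For a $\tilde{\delta}$-bad adapted field, Lemma \ref{lemmaruim} supplies a strip $S_X$ of $x$-length $\pi$ such that $f^{-1}(\gamma^1_j)\subset S_X$ implies $(\gamma^1_j, f_*X/\|f_*X\|)$ is $\tilde{\delta}$-good. The number of consecutive sub-arcs of length $L$ entirely contained in an arc of length $\pi$ is at least $(\pi-L)/L=\mu^{2N}/2-1\ge\mu^{2N}/3$ for $N$ large. Therefore $\#G_1\ge\mu^{2N}/3$ and $\#B_1\le[\mu^{2N}]-\mu^{2N}/3\le (2/3)\mu^{2N}$.

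The only subtle point is verifying the near-uniform tiling asserted in the first paragraph. The multiplicative correction $1+O(\lambda^{2N})$ in the sub-arc length $L$, together with the single remainder piece $\gamma^1_{[\mu^{2N}]+1}$ and the round-off between $\mu^{2N}$ and $[\mu^{2N}]$, are all dwarfed by the additive slack ($+2$ or $-1$) already built into the counting bounds above. Hence these error terms are absorbed once $N$ is chosen large enough (so that $\mu^{2N}\ge 2\pi N^{\tilde{\delta}}$ for the good case), and the final estimates follow as stated.
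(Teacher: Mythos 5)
Your argument is correct and follows essentially the same route as the paper's (brief) proof: the good case counts the at most $O(N^{-\tilde{\delta}}\mu^{2N})$ pieces whose pre-images can meet $Crit$ and applies Lemma \ref{lemmabom}, while the bad case counts the roughly $\mu^{2N}/2$ pieces whose pre-images fit inside the strip $S_X$ of Lemma \ref{lemmaruim}; you simply make the near-uniform tiling of the $x$-circle by the arcs $f^{-1}(\gamma^1_j)$ explicit. The only nitpicks are cosmetic (the boundary correction should be two pieces per critical arc, hence $+4$ rather than $+2$, and the $x$-speed error is $O(\lambda^N)$ rather than $O(\lambda^{2N})$), and both are absorbed by the slack you already build in.
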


\begin{proof}
Using lemma \ref{lemmaruim} there is a strip $S_X$ of length $\pi$ such that if $f^{-1}(\gamma^1_j) \subset S_X$, this represents almost half of the pieces $\gamma^1_j$, for $N$ large enough we conclude the first part of the proposition. For the second part we use lemma \ref{lemmabom} and the fact that $l(Crit) \leq 8N^{-\tilde{\delta}}$ and by a similar argument, for $N$ large, it holds the second part of the proposition.
\end{proof}

Now in general for any $k\in \N$,

$$
\begin{array}{rcl}
\#G_{k+1} &\geq& (1-\eta_N)\mu^{2N} \#G_k + \frac{1}{3}\mu^{2N}\#B_k\\
\#B_{k+1}& \leq & \eta_N \mu^{2N} \#G_k + \frac{2}{3}\mu^{2N}\#B_k
\end{array}
$$

\begin{lemma}
\label{goodandbad}
For any $K\geq 1$, if $N$ is large enough then for any $k\geq 0$ and any $\tilde{\delta}$-good adapted vector field $(\gamma, X)$, it is verified $\#G_k \geq K\#B_k$.
\end{lemma}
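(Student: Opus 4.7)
The plan is to argue by induction on $k$, using only the pair of recursive inequalities for $\#G_{k+1}$ and $\#B_{k+1}$ established immediately above the lemma. Given $K\geq 1$, I will first fix $N$ large enough (so that the preceding propositions apply, and) so that
\[
\eta_N \;=\; \frac{5}{\pi N^{\tilde{\delta}}} \;\leq\; \frac{1}{3K}.
\]

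For the base case $k=0$, observe that $[\mu^{2N\cdot 0}] = 1$, so the only relevant piece is $\gamma$ itself, which is $\tilde{\delta}$-good by hypothesis. Hence $\#G_0 = 1$ and $\#B_0 = 0$, and $\#G_0 \geq K\#B_0$ holds trivially.

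For the inductive step, assume $\#G_k \geq K\#B_k$. Subtracting $K$ times the upper bound on $\#B_{k+1}$ from the lower bound on $\#G_{k+1}$ gives
\begin{align*}
\#G_{k+1} - K\#B_{k+1} \;\geq\; \mu^{2N}\Bigl[\bigl(1 - (K+1)\eta_N\bigr)\#G_k \;-\; \tfrac{2K-1}{3}\#B_k\Bigr].
\end{align*}
Applying the inductive hypothesis $\#G_k \geq K\#B_k$, the bracketed quantity is bounded below by $\bigl[K(1 - (K+1)\eta_N) - \tfrac{2K-1}{3}\bigr]\#B_k$. A one-line calculation shows that this factor is non-negative iff $\eta_N \leq \frac{1}{3K}$, which is exactly our standing assumption on $N$. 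This closes the induction and yields the lemma.

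No step constitutes a genuine obstacle: all the substantive geometric input — the cone $\Delta_{\tilde{\delta}}$, the lower bound on $|\sin\theta_X|$ outside the critical strip, and the $(1-\eta_N)$ versus $1/3$ branching ratios for good and bad parents — has already been done in Lemmas \ref{lemmabom}, \ref{lemmaruim} and the preceding proposition. The present lemma is then a purely combinatorial consequence of those branching estimates, and the only care needed is to pick $N$ large enough that the ``loss rate'' $\eta_N$ of good fields is much smaller than the ``recovery rate'' $1/3$ from bad fields, by a margin depending on $K$.
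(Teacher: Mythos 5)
Your proof is correct and follows essentially the same route as the paper: induction on $k$ using the branching recursions for $\#G_{k+1}$ and $\#B_{k+1}$, with $N$ chosen so that $\eta_N$ is small compared to $1/K$ (the paper bounds the ratio $\#B_{k+1}/\#G_{k+1}$ by $\tfrac{3}{4K}$, whereas you subtract $K\#B_{k+1}$ directly — a purely cosmetic difference). The only point left implicit, which is immediate from $\eta_N\leq\frac{1}{3K}$ and $K\geq 1$, is that the coefficient $1-(K+1)\eta_N$ is positive, so substituting $\#G_k\geq K\#B_k$ into the bracket is legitimate.
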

\begin{proof}
Since $(\gamma,X)$ is $\tilde{\delta}$-good then $B_0=0$ and $\#G_0=1 > K.\#B_0$. By our previous remark if $N$ is large enough then it is also valid for $k=1$, let us suppose that it is valid for $k$ and prove it for $k+1$.
\[\arraycolsep=1.2pt\def\arraystretch{2.5}
\begin{array}{rclcl}
\displaystyle\frac{\#B_{k+1}}{\#G_{k+1}}&\leq&\displaystyle \frac{\eta_N \mu^{2N} \#G_k + \frac{2}{3}\mu^{2N}\#B_k }{(1-\eta_N)\mu^{2N} \#G_k + \frac{1}{3}\mu^{2N}\#B_k}& \leq&\displaystyle \frac{\eta_N \mu^{2N} \#G_k + \frac{2}{3}\mu^{2N}K^{-1}\#G_k}{(1-\eta_N)\mu^{2N} \#G_k}\\
& = &\displaystyle \frac{\eta_N + \frac{2}{3}K^{-1}}{1-\eta_N} \leq  \displaystyle\frac{\eta_N + \frac{2}{3}K^{-1}}{1-\eta_N}&<& \frac{3}{4K}.
\end{array}
\]

Where the last inequality holds for $N$ large. Thus $\#G_{k+1} > \frac{4K}{3}\#B_{k+1}>K\#B_{k+1}.$ 
\end{proof}

Now we can get the estimate on the Lyapunov exponent that we wanted. 

\begin{lemma}
\label{mencionado}
For $N$ large enough and for every $\tilde{\delta}$-good adapted vector field $(\gamma,X)$ and every $n\geq 1$ we have
$$
\frac{I^{\gamma,X}_n}{n} \geq (1-10\tilde{\delta})\log N.
$$
\end{lemma}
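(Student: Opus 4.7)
The strategy is to apply the formula from lemma \ref{formula1}, bounding each block $S_k$ (the bracketed sum appearing at level $k$) from below by separating its pieces into good and bad ones. By lemma \ref{bclemma2}, every pair $(\gamma^k_j, X^k|_{\gamma^k_j})$ with $1\leq j\leq[\mu^{2Nk}]$ is an adapted field; for $j\in G_k$ it is $\tilde{\delta}$-good and proposition \ref{sogood} yields the lower bound $\int_{\gamma^k_j}\log\|Df\cdot X^k\|\,d\gamma^k_j\geq(1-7\tilde{\delta})|\gamma^k_j|\log N$, while for $j\in B_k$ and for the remainder piece I will use only the trivial bound coming from $\|(Df|_{E^c})^{-1}\|\leq 2N$, namely $\int_{\gamma^k_j}\log\|Df\cdot X^k\|\,d\gamma^k_j\geq-|\gamma^k_j|\log(2N)$.

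Lemma \ref{proposition1bc} implies that every $u$-curve has length within a factor $1+O(\lambda^{2N})$ of $L:=|\gamma|$, and that the remainder piece $\gamma^k_{[\mu^{2Nk}]+1}$ has length at most $L(1+O(\lambda^{2N}))$. Fixing a large constant $K$ (to be chosen in terms of $\tilde{\delta}$) and taking $N$ large so that lemma \ref{goodandbad} applies, the two bounds above combine to give, for $k\geq 1$,
$$S_k\geq L[\mu^{2Nk}]\Bigl(\tfrac{K}{K+1}(1-7\tilde{\delta})\log N-\tfrac{1}{K+1}\log(2N)\Bigr)-L\log(2N)\bigl(1+O(\lambda^{2N})\bigr),$$
while for $k=0$ the remainder is empty, so $S_0\geq L(1-7\tilde{\delta})\log N$. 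Substituting back into the formula of lemma \ref{formula1}, using $1+\beta_k\geq 1-2\lambda^{2N}>0$ (applicable since each $S_k$ will be positive for $N$ large), and noting that $[\mu^{2Nk}]/\mu^{2Nk}\geq 1-\mu^{-2Nk}$ while $\sum_{k\geq 1}\mu^{-2Nk}$ is exponentially small, I obtain
$$I_n^{\gamma,X}\geq(1-2\lambda^{2N})\bigl[(1-7\tilde{\delta})\log N+(n-1)C_N\bigr]-O\bigl((\log N)\lambda^{2N}\bigr),$$
with $C_N:=\tfrac{K}{K+1}(1-7\tilde{\delta})\log N-\tfrac{1}{K+1}\log(2N)$, and the error bounded uniformly in $n$.

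Since $C_N\leq(1-7\tilde{\delta})\log N$, the $k=0$ term suffices to close the gap: $(1-7\tilde{\delta})\log N+(n-1)C_N\geq nC_N$, so $\tfrac{1}{n}I_n^{\gamma,X}\geq(1-2\lambda^{2N})C_N-o_N(1)$ for every $n\geq 1$. I then choose $K$ large enough that $\tfrac{2\log(2N)}{K+1}\leq\tilde{\delta}\log N$, giving $C_N\geq(1-8\tilde{\delta})\log N$, and take $N$ so large that the multiplicative factor $(1-2\lambda^{2N})$ together with the exponentially small additive errors cost at most $2\tilde{\delta}\log N$, yielding $\tfrac{1}{n}I_n^{\gamma,X}\geq(1-10\tilde{\delta})\log N$. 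The main obstacle is producing a bound that is uniform in $n\geq 1$, not merely in the limit $n\to\infty$; this is handled precisely by the observation $C_N\leq(1-7\tilde{\delta})\log N$, which allows the $k=0$ block to absorb the cost of small $n$ and lets the averaging with the remaining blocks preserve the bound.
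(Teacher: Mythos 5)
Your proof is correct and follows essentially the same route as the paper: the formula of lemma \ref{formula1}, proposition \ref{sogood} on the $\tilde{\delta}$-good pieces, the trivial bound $\log\|Df.X^k\|\geq-\log 2N$ on the bad pieces and the remainder, and the counting $\#G_k\geq K\#B_k$ from lemma \ref{goodandbad} with $K$ chosen of order $\tilde{\delta}^{-1}$. The only difference is that you make explicit the bookkeeping the paper leaves implicit (comparable $u$-curve lengths, the $[\mu^{2Nk}]/\mu^{2Nk}$ factor, and the empty remainder at $k=0$ giving uniformity for small $n$), which is a welcome but not essentially different refinement.
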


\begin{proof}
 Fix $K>0$ large enough such that $K^{-1} < \tilde{\delta}$. Let $(\gamma, X)$ be a $\tilde{\delta}$-good adapted vector field, by the previous lemma $\#G_k>\frac{1}{1+K^{-1}} \mu^{2Nk}$. Using the formula given by lemma \ref{formula1}, the estimate obtained for $\tilde{\delta}$-good adapted vector field in proposition \ref{sogood} and for every $\tilde{\delta}$-bad adapted vector field using that $\|Df|_{E^c}\| \geq (2N)^{-1}$, we conclude

\[\arraycolsep=1.2pt\def\arraystretch{2.3}
\begin{array}{rcl}
\frac{I_n^{\gamma,X}}{n} &\geq&\displaystyle \frac{1}{n} \sum_{k=0}^{n-1} \frac{(1-2\lambda^{2N})}{\mu^{2Nk}}\left(\#G_k.(1-7\tilde{\delta})\log N- (\#B_k + 1) \log 2N\right)\\
&\geq&\displaystyle  \frac{1}{n} \sum_{k=0}^{n-1}(1-2\lambda^{2N})\left(\frac{1}{1+K^{-1}}.(1-7\tilde{\delta})\log N- K^{-1}\log2- K^{-1}\log N + \frac{\log 2N}{\mu^{2Nk}} \right)\\
&\geq& (1-10\tilde{\delta})\log N
\end{array}
\]

For $N$ large enough.
\end{proof}

With this lemma we can prove the estimate of proposition \ref{estimateprop} for $f_N$.

\begin{corollary}
\label{estimatefibered}
For $\delta>0$, if $N$ is large enough then almost every point has a Lyapunov exponent on the center direction greater than $(1-\delta) \log N$ for $f_N$.
\end{corollary}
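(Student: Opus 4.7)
The plan is to combine Proposition \ref{exp1} with the key estimate Lemma \ref{mencionado}. The machinery developed in the previous subsection already gives a lower bound $(1-10\tilde{\delta})\log N$ for the ``ergodic-like'' average $I^{\gamma,X}_n/n$, provided the adapted field $(\gamma,X)$ is $\tilde{\delta}$-good. What remains is (i) to exhibit, on every $u$-curve, a single $\tilde{\delta}$-good adapted field, and (ii) to tune the parameter $\tilde{\delta}$ in function of the given $\delta$.

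First I would reduce the parameters. Given $\delta>0$, choose $\tilde{\delta}>0$ small enough so that $10\tilde{\delta}<\delta/2$, and then take $N$ large enough that $2\lambda^{2N}<\delta/(2\log N)$; this ensures
\[
(1-2\lambda^{2N})(1-10\tilde{\delta})\log N \;>\; (1-\delta)\log N.
\]
In addition, $N$ is taken large enough for Lemma \ref{mencionado} and Proposition \ref{exp1} to apply.

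Next, for any $u$-curve $\gamma$, I would exhibit a canonical $\tilde{\delta}$-good adapted field, namely the constant horizontal vector field $X_m\equiv (1,0)$ (identified with $(1,0,0,0)\in E^c_m=\R^2\times\{0\}$). This field is unitary and tangent to the center; it is $(0,1/2)$-H\"older, so trivially $C_X=0<20N^2\lambda^N$; and it lies in the cone $\Delta_{\tilde{\delta}}=\{(u,v):N^{\tilde{\delta}}|u|\geq|v|\}$ since its vertical component vanishes. Hence $(\gamma,X)$ is a $\tilde{\delta}$-good adapted field, and Lemma \ref{mencionado} gives
\[
\frac{I^{\gamma,X}_n}{n}\;\geq\;(1-10\tilde{\delta})\log N
\]
for every $n\geq 1$.

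Finally, applying Proposition \ref{exp1} with the constant $C=(1-10\tilde{\delta})\log N$, which by the previous paragraph is attained on every $u$-curve by the horizontal field, yields that Lebesgue almost every point has a center Lyapunov exponent strictly greater than
\[
(1-2\lambda^{2N})(1-10\tilde{\delta})\log N\;>\;(1-\delta)\log N,
\]
by the choice of $\tilde{\delta}$ and $N$. The only subtle point is really the uniform availability of a $\tilde{\delta}$-good field on every $u$-curve; the constant horizontal field furnishes this at no cost, so no further argument is needed.
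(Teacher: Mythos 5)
Your proposal is correct and follows essentially the same route as the paper: fix $\tilde{\delta}$ in terms of $\delta$, invoke Lemma \ref{mencionado} to get $I^{\gamma,X}_n/n\geq(1-10\tilde{\delta})\log N$, and conclude via Proposition \ref{exp1} after absorbing the factor $(1-2\lambda^{2N})$ by taking $N$ large. The only difference is that you make explicit the existence of a $\tilde{\delta}$-good adapted field on every $u$-curve (the constant horizontal field $(1,0)$, which is unitary, tangent to $E^c=\R^2\times\{0\}$, trivially H\"older and lies in $\Delta_{\tilde{\delta}}$), a point the paper leaves implicit.
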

\begin{proof}
Take $\tilde{\delta} = \frac{\delta}{30}$ and let $N$ be large enough such that the previous lemma holds. Thus we can take $C = (1- 10 \tilde{\delta}) \log N = (1-\frac{\delta}{3}) \log N$, where $C$ is the constant from proposition \ref{exp1}. Assume that $N$ is large enough such that $(1-2\lambda^{2N})(1-\frac{\delta}{3}) > ( 1-\delta)$. The result follows from proposition \ref{exp1}.
\end{proof}

\subsection{Robustness of the estimate}
In this section we prove proposition \ref{estimateprop}. For a $C^1$-curve $\gamma$ we will denote by $Leb_{\gamma}$ the Lebesgue measure induced by the Riemaniann metric in the curve. Recall that for each $N\in \N$ we denote by $\mathcal{U}_N \subset \mathrm{Diff}^2_{Leb}(\T^4)$ a $C^2$-neighborhood of $f_N$.

\begin{lemma}
\label{manyconsiderationsnotfibered}
For $\varepsilon_1>0$ small, if $N$ is large and $\mathcal{U}_N$ is small enough then for every $g\in U$ and for all unit vectors $v^s\in E^s_g$, $v^c\in E^c_g$ and $v^u\in E^u_g$, the following holds:

\begin{enumerate}
\item $e^{-\varepsilon_1} \lambda^{2N}  \leq  \|Dg(v^s)\|  \leq  e^{\varepsilon_1} \lambda^{2N}$;
\item $e^{-\varepsilon_1} \mu^{2N}  \leq  \|Dg(v^u)\| \leq  e^{\varepsilon_1} \mu^{2N}$;
\item $\frac{1}{2N}  \leq  \|Dg(v^c)\|  \leq  2N;$
\item $\|D^2g^{-1}\|\leq 2N$ and $\|D^2g\|\leq 2N$;
\item $E^c_g$ is $\frac{1}{2}$-H\"older.
\end{enumerate}

\end{lemma}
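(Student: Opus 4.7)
The plan is to obtain items $1$--$3$ from the continuity of the partially hyperbolic splitting under $C^1$-perturbations combined with the explicit form of $Df_N$ described in Section \ref{preliminary}; item $4$ from the $C^2$-smallness of $\mathcal{U}_N$ together with the estimate (\ref{basicineqfibered}); and item $5$ by invoking Theorem \ref{regularitycenter} with $\alpha = 1/2$, where the bunching inequalities are verified using items $1$--$3$ for $N$ large.

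\medskip

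First, I would record the exact values for the unperturbed map: for $f_N$, the bundles $E^{ss}_{f_N}$ and $E^{uu}_{f_N}$ correspond to the $\lambda^{2N}$-- and $\mu^{2N}$--eigenspaces of $A^{2N}$ (in the second $\T^2$-factor), so every unit vector in $E^{ss}_{f_N}$ is contracted exactly by $\lambda^{2N}$, and similarly for $E^{uu}_{f_N}$. On the center $E^c_{f_N}$ one has $Df_N|_{E^c} = Ds_N$, for which (\ref{basicineqfibered}) gives $(2N)^{-1} \leq \|Ds_N\| \leq 2N$ and $\|D^2 s_N\| \leq N$. Since $f_N$ is partially hyperbolic for $N$ large, Proposition \ref{coherence} and the general theory (see for example \cite{hps}) imply that the bundles $E^{ss}_g, E^c_g, E^{uu}_g$ vary continuously in the $C^0$-topology with $g$ in the $C^1$-topology. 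Combining this with the $C^1$-continuity of $Dg$, for any $\varepsilon_1 > 0$ we may shrink $\mathcal{U}_N$ so that for all $g \in \mathcal{U}_N$ and all unit vectors $v^s \in E^s_g$, $v^u \in E^u_g$, the norms $\|Dg\, v^s\|$ and $\|Dg\, v^u\|$ differ from the unperturbed values $\lambda^{2N}$ and $\mu^{2N}$ by a multiplicative factor in $[e^{-\varepsilon_1}, e^{\varepsilon_1}]$, giving items $1$ and $2$. For item $3$, the same continuity argument gives $\|Dg\, v^c\|$ arbitrarily close to $\|Df_N\, v^c_f\|$ for the corresponding unit vector $v^c_f \in E^c_{f_N}$, so the bounds $(2N)^{-1} \leq \|Dg\, v^c\| \leq 2N$ follow (after slightly weakening the constants $N/2$ and $2N$ in (\ref{basicineqfibered}) to absorb the perturbation).

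\medskip

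For item $4$, I note that the $C^2$-norm of $f_N$ satisfies $\|D^2 f_N\| \leq N$ by (\ref{basicineqfibered}); the same holds for $f_N^{-1}$ by the symmetry Lemma \ref{symmetrylemma}. Choosing $\mathcal{U}_N$ small enough in the $C^2$-topology, we can force $\|D^2 g\|$ and $\|D^2 g^{-1}\|$ to be at most $2N$, which is item $4$.

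\medskip

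For item $5$, I would apply Theorem \ref{regularitycenter} with $\alpha = 1/2$. The functions $\chi^{ss}_{\pm}$, $\chi^c_{\pm}$, $\chi^{uu}_{\pm}$ for $g \in \mathcal{U}_N$ are controlled by items $1$--$3$: up to multiplicative factors $e^{\pm \varepsilon_1}$ we have $\chi^{ss}_{\pm} \sim \lambda^{2N}$, $\chi^{uu}_{\pm} \sim \mu^{2N}$, and $\chi^c_{\pm} \in [(2N)^{-1}, 2N]$. The first bunching inequality $\chi^s_+(m) < \chi^c_-(m)(\chi^s_-(m))^{1/2}$ becomes (up to the factor $e^{3\varepsilon_1}$) the requirement $\lambda^{2N} < (2N)^{-1} \lambda^{N}$, i.e.\ $\lambda^N < (2N)^{-1}$, which holds for $N$ large since $\lambda < 1$. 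Similarly, $\chi^c_+(m)(\chi^u_+(m))^{1/2} < \chi^u_-(m)$ reduces to $2N \cdot \mu^N < \mu^{2N}$, i.e.\ $2N < \mu^N$, which again holds for $N$ large. Hence Theorem \ref{regularitycenter} applies and yields that $E^c_g$ is $1/2$--Hölder.

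\medskip

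I do not expect genuine obstacles: the only mildly delicate step is choosing the neighborhood $\mathcal{U}_N$ small enough so that all of items $1$--$5$ hold simultaneously, and making sure that the perturbation losses $e^{\pm \varepsilon_1}$ in the bunching inequalities are absorbed by the room available between $\lambda^N$ and $(2N)^{-1}$ (and between $2N$ and $\mu^N$) when $N$ is large.
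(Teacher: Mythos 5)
Your proposal is correct and follows essentially the same route as the paper, which disposes of items 1--4 by $C^2$-continuity in one line and proves item 5 exactly as you do, via Theorem \ref{regularitycenter} with $\alpha=\tfrac{1}{2}$ and the inequality $e^{\varepsilon_1}\lambda^{2N}<(2N)^{-1}e^{-\varepsilon_1/2}\lambda^{N}$ (together with its analogue on the unstable side). The only small imprecision is your claim that $E^{ss}_{f_N}$ and $E^{uu}_{f_N}$ are exactly the $A^{2N}$-eigenspaces in the second factor with exact rates $\lambda^{2N},\mu^{2N}$: by Lemma \ref{proposition1bc} the unstable direction is spanned by $(\alpha(m),e^u)$ with $\|\alpha(m)\|=O(\lambda^{N})$, so the true rates only agree with $\lambda^{2N},\mu^{2N}$ up to a factor $e^{\pm O(\lambda^{2N})}$ --- an error easily absorbed into $e^{\pm\varepsilon_1}$ for $N$ large, so your argument stands.
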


\begin{proof}
The only statement that does not follow directly from $C^2$-continuity for $N$ large enough is $(5)$. Observe that 
\[
e^{\varepsilon_1} \lambda^{2N} < (2N)^{-1} e^{-\frac{\varepsilon_1}{2}} \lambda^N.
\] 
Hence, by theorem \ref{regularitycenter} it follows that $E^c_g$ is $\frac{1}{2}$-H\"older.
\end{proof}

\begin{definition}
A $u$-curve for $g$ is a $C^1$-curve $\gamma=(\gamma_x,\gamma_y,\gamma_z,\gamma_w):[0,2\pi] \to M$ tangent to $E^u_g$ and such that $\left|\frac{d \gamma_x}{dt}(t)\right| = 1$, $\forall t\in [0,2\pi]$. For every $k\geq 0$ there exists an integer $N_k = N_k(\gamma)$ such that the curve $g^k\circ \gamma$ can be writen as 
$$
g^k \circ \gamma = \gamma_1^k \ast\cdots \ast \gamma_{[\mu^{2Nk}]}^k \ast \gamma_{N_k+1}^k
$$
where $\gamma_j^k$ for $j=1,\cdots, N_k$, are $u$-curves and $\gamma_{N_k+1}^k$ is a segment of $u$-curve. 

\end{definition}

Observe that this definition of an $u$-curve is different from the one given in definition \ref{ucurve}. The advantage of definition \ref{ucurve} is that during the calculations we do not have to deal with bounded distortion estimates. Since for the general case it is natural to appear bounded distortion estimates, see lemma \ref{boundeddist}, we just normalize the curve on the $x$-direction in the previous definition. 

\begin{lemma}[\cite{bergercarrasco2014}, Corollary $5$]
\label{length}
For $\varepsilon_2>0$ small, if $N$ is large and $\mathcal{U}_N$ is small enough then for every $g\in \mathcal{U}_N$ and unit vector $v^u\in E^u_{g,m}$, it holds that
\[
|P_x(D\pi_1.v^u)| \in [(\lambda^N(\|P_x(e^u) - 3\lambda^N\|), (\lambda^N(\|P_x(e^u) + 3\lambda^N\|)].
\]
In particular, for any two $u$-curves $(\gamma, \gamma^{\prime})$ satisfy:
\[
e^{-\varepsilon_2} l(\gamma) \leq l(\gamma^{\prime} ) \leq e^{\varepsilon_2}l(\gamma).
\]

\end{lemma}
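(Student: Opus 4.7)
The plan has three steps: first establish the pointwise bound on $|P_x(D\pi_1 v^u)|$ for the model $f_N$, then extend it to $g\in\mathcal U_N$ by continuous dependence of the strong unstable distribution, and finally deduce the length comparison from the normalization defining a $u$-curve.

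For the model case $g=f_N$, Lemma~\ref{proposition1bc} furnishes a smooth generator of $E^u_{f_N}$ of the form $(\alpha(m),e^u)$ with $\|\alpha(m)-\lambda^N P_x(e^u)\|\le \lambda^{2N}$. Writing $e^u=(e^u_1,e^u_2)$ so that $P_x(e^u)=(e^u_1,0)$ and $\|P_x(e^u)\|=|e^u_1|$, and $\alpha=(\alpha_1,\alpha_2)$, the bound gives $|\alpha_1(m)-\lambda^N e^u_1|\le \lambda^{2N}$, hence $\bigl||\alpha_1(m)|-\lambda^N\|P_x(e^u)\|\bigr|\le\lambda^{2N}$ for $N$ large. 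The unit unstable vector is $v^u(m)=(\alpha(m),e^u)/\sqrt{1+\|\alpha(m)\|^2}$; since $\|\alpha(m)\|=O(\lambda^N)$, the denominator equals $1+O(\lambda^{2N})$, and a direct computation yields
\[
|P_x(D\pi_1\cdot v^u(m))| = \frac{|\alpha_1(m)|}{\sqrt{1+\|\alpha(m)\|^2}} \in \bigl[\lambda^N(\|P_x(e^u)\|-2\lambda^N),\ \lambda^N(\|P_x(e^u)\|+2\lambda^N)\bigr].
\]

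For $g\in\mathcal U_N$ one uses continuous dependence of the strong unstable distribution under $C^1$ perturbation. Quantitatively, the graph-transform construction of $E^u_g$ relative to $E^u_{f_N}$, exploiting the huge expansion ratio $\mu^{2N}$, shows that by shrinking $\mathcal U_N$ in function of $N$ one may guarantee $\|v^u_g(m)-v^u_{f_N}(m)\|\le \lambda^{2N}$ uniformly in $m$. Since $P_x\circ D\pi_1$ has operator norm $\le 1$, the estimate above degrades by at most an additive $\lambda^{2N}$, producing the interval $[\lambda^N(\|P_x(e^u)\|-3\lambda^N),\lambda^N(\|P_x(e^u)\|+3\lambda^N)]$ of the lemma. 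The main obstacle of the proof is exactly this quantitative robustness step: one needs proximity of $E^u_g$ to $E^u_{f_N}$ of order $\lambda^{2N}$, not merely of order $d_{C^2}(g,f_N)$, which forces $\mathcal U_N$ to depend on $N$ and relies on the enormous spectral gap to convert $C^2$-proximity into exponentially small proximity of the invariant distributions.

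The length comparison is then immediate. For a $u$-curve $\gamma$, set $T(t)=d\gamma/dt$ and $v^u(t)=T(t)/\|T(t)\|$; the parametrization $|d\gamma_x/dt|=1$ reads $\|T(t)\|\cdot|P_x(D\pi_1\cdot v^u(t))|=1$, whence
\[
l(\gamma)=\int_0^{2\pi}\|T(t)\|\,dt = \int_0^{2\pi}\frac{dt}{|P_x(D\pi_1\cdot v^u(\gamma(t)))|}.
\]
Applying the pointwise bound to both $\gamma$ and $\gamma'$ yields
\[
\frac{l(\gamma')}{l(\gamma)} \le \frac{\|P_x(e^u)\|+3\lambda^N}{\|P_x(e^u)\|-3\lambda^N} = 1 + \frac{6\lambda^N}{\|P_x(e^u)\|-3\lambda^N},
\]
which tends to $1$ as $N\to\infty$, so the ratio is $\le e^{\varepsilon_2}$ for $N$ large, giving the desired two-sided comparison.
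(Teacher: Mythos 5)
Your argument is correct and proves the lemma as stated. Be aware, however, that the paper offers no proof of this statement at all: it is quoted from Berger--Carrasco (\cite{bergercarrasco2014}, Corollary 5), where the perturbative control of $E^u_g$ is obtained by explicit invariant cone-field/graph-transform estimates, with constants tied to the expansion $\mu^{2N}$ and uniform $C^2$-bounds on the perturbation, rather than by the soft continuity you invoke. Your route --- the exact generator of Lemma \ref{proposition1bc} for $f_N$, giving the pointwise bound with margin $2\lambda^{2N}$, followed by uniform continuity of $g\mapsto E^u_g$ and shrinking $\mathcal{U}_N$ as a function of $N$ to force $\lambda^{2N}$-proximity of the unit unstable vectors --- is legitimate precisely because the statement (and the whole paper) allows $\mathcal{U}_N$ to depend on $N$; what it buys is brevity, what it gives up is any explicit, quantified size for $\mathcal{U}_N$, which the original cone argument does provide and which one would need for a neighborhood not shrinking with $N$ at an uncontrolled rate. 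Your final step, rewriting the normalization $|d\gamma_x/dt|=1$ as $l(\gamma)=\int_0^{2\pi}|P_x(D\pi_1\cdot v^u(\gamma(t)))|^{-1}\,dt$ and comparing the two integrands, is the same computation that yields the corollary in \cite{bergercarrasco2014}. Two small points worth making explicit: the comparison of ratios requires $\|P_x(e^u)\|>0$, which holds because the unstable eigendirection of a hyperbolic element of $SL(2,\mathbb{Z})$ has irrational slope and hence is not vertical, so the denominator $\|P_x(e^u)\|-3\lambda^N$ is bounded away from zero for $N$ large; and the closeness of $v^u_g$ to $v^u_{f_N}$ should be read projectively (choose compatible orientations), which is harmless since only $|P_x(D\pi_1\cdot v^u)|$ enters.
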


Define similarly as in definition \ref{adaptedfield} an adapted field $(\gamma, X)$. Also define the unstable jacobian of $g^k$ as
\[
J^{uu}_{g^k}(m) = | \det Dg^k(m)|_{E^{uu}_g}|, \textrm{ $\forall m \in \T^4$.}
\]
By item $2$ of lemma \ref{manyconsiderationsnotfibered}, for $g\in \mathcal{U}_N$ and for every $m\in \T^4$
\[
e^{-\varepsilon_1}\lambda^{2N} \leq J^{uu}_{g^{-1}}(m) \leq e^{\varepsilon_1}\lambda^{2N}.
\]

The proof of the next lemma is classical and can be found in \cite{bergercarrasco2014}, lemma $8$.
\begin{lemma}[Bounded distortion]
\label{boundeddist}
For $\varepsilon_3>0$ small, if $N$ is large and $\mathcal{U}_N$ is small enough, for every $g\in \mathcal{U}_N$ and any $u$-curve $\gamma$ for $g$, for every $k\geq 0$, it holds
\[
\forall m, m^{\prime} \in \gamma, \textrm{  } e^{-\varepsilon_3} \leq \frac{J^{uu}_{g^{-k}}(m)}{J^{uu}_{g^{-k}}(m^{\prime})}\leq e^{\varepsilon_3}.
\] 
\end{lemma}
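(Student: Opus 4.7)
The plan is the classical bounded distortion argument, adapted to the fact that $u$-curves have uniformly bounded (not arbitrarily small) length, so all the control must come from the exponential contraction along the backward orbit. Set $\phi_g:=\log J^{uu}_{g^{-1}}$. By the cocycle identity $J^{uu}_{g^{-k}}(m)=\prod_{j=0}^{k-1}J^{uu}_{g^{-1}}(g^{-j}(m))$, taking logarithms reduces the claim to the bound
\[
\sum_{j=0}^{k-1}\bigl|\phi_g(g^{-j}(m))-\phi_g(g^{-j}(m'))\bigr|\le \varepsilon_3.
\]

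First I would show that $\phi_g$ is $\alpha$-H\"older for some fixed $\alpha\in(0,1)$, with H\"older constant $K_g$ bounded uniformly in $g\in\mathcal{U}_N$. The exponent comes from the H\"older regularity of the one-dimensional bundle $E^{uu}_g$: for $N$ large the strong bunching $\|Dg|_{E^{cs}_g}\|^\alpha\ll m(Dg|_{E^{uu}_g})$ holds for every $\alpha<1$, which standardly gives $E^{uu}_g$ $\alpha$-H\"older uniformly on $\mathcal{U}_N$; combined with the $C^2$-control on $g$ (item 4 of lemma \ref{manyconsiderationsnotfibered}) this yields $\phi_g$ $\alpha$-H\"older. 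Next I would control distances under backward iteration: by item 2 of lemma \ref{manyconsiderationsnotfibered}, $g^{-1}$ contracts arclength along unstable curves by at least $e^{\varepsilon_1}\lambda^{2N}$, and $l(\gamma)$ is bounded by a uniform constant $L$ by lemma \ref{length}, so $d(g^{-j}(m),g^{-j}(m'))\le L(e^{\varepsilon_1}\lambda^{2N})^j$.

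Combining these ingredients gives the geometric bound
\[
\sum_{j=0}^{k-1}\bigl|\phi_g(g^{-j}(m))-\phi_g(g^{-j}(m'))\bigr|\le K_g L^\alpha\sum_{j=0}^{\infty}(e^{\varepsilon_1}\lambda^{2N})^{j\alpha}=\frac{K_g L^\alpha}{1-(e^{\varepsilon_1}\lambda^{2N})^\alpha}.
\]
The crucial point is that this quantity can be made arbitrarily small for $N$ large: for $g=f_N$, the estimate $\alpha(m)=\lambda^N P_x(e^u)+O(\lambda^{2N})$ from lemma \ref{proposition1bc}, combined with the graph-transform bound on $D\alpha$, shows that $J^{uu}_{f_N^{-1}}$ differs from the constant $\lambda^{2N}$ by a multiplicative factor $1+O(\lambda^{2N})$, so that $K_{f_N}=O(\lambda^{2N})$. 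The $C^2$-continuity then transfers the smallness of $K_{f_N}$ to $K_g$ for $g\in\mathcal{U}_N$ with $\mathcal{U}_N$ small. Choosing $N$ large and $\mathcal{U}_N$ small makes the right-hand side smaller than $\varepsilon_3$.

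The main technical obstacle is verifying that $K_g$ genuinely shrinks with $N$, and not merely stays bounded; otherwise the argument would only produce some fixed $\varepsilon_3$ depending on $N$. The reduction to quantitative regularity of $\alpha$ is where the specific structure of $f_N$ — its near-skew-product form with a nearly constant unstable bundle — is used essentially.
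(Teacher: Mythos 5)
The paper does not prove this lemma at all: it is quoted from \cite{bergercarrasco2014} (their Lemma $8$), so your proposal has to stand on its own, and as written it has a genuine gap. Your skeleton (cocycle sum for $\log J^{uu}_{g^{-k}}$, H\"older regularity of the unstable Jacobian, backward contraction, geometric series) is the classical one, but the mechanism you use to make the total smaller than an arbitrary $\varepsilon_3$ does not work. First, a $u$-curve does \emph{not} have uniformly bounded length: by lemma \ref{length} a unit vector in $E^{uu}_g$ has $x$-component of size $\approx \lambda^N\|P_x(e^u)\|$, so a curve with $|d\gamma_x/dt|=1$ on $[0,2\pi]$ has length of order $\mu^N$; lemma \ref{length} only says two $u$-curves have comparable lengths. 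Hence your $j=0$ term is of size $K_g\,\mu^{N\alpha}$ (or $K_g\,\mathrm{diam}(\T^4)^\alpha$ if you use ambient distance), and everything hinges on $K_g$ being extremely small. Second, and more seriously, the step ``$C^2$-continuity transfers the smallness of $K_{f_N}$ to $K_g$'' is unjustified: $C^2$-closeness of $g$ to $f_N$ gives uniform $C^0$-closeness of $E^{uu}_g$ to $E^{uu}_{f_N}$, but H\"older (or Lipschitz) seminorms of invariant bundles are not continuous in the map; for a perturbation $E^{uu}_g$ is in general only H\"older with a constant dictated by the hyperbolicity ratios and $\|g\|_{C^2}$, and there is no reason it should be $O(\lambda^{2N})$ just because $E^{uu}_{f_N}$ is smooth with tiny oscillation. (Even for $f_N$ itself, your bound $K_{f_N}=O(\lambda^{2N})$ uses a derivative bound on $\alpha$ that lemma \ref{proposition1bc} does not provide.)

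The classical argument avoids this entirely. The smallness of the zeroth term comes from a $C^0$ quantity that \emph{does} pass to the perturbation: by item $2$ of lemma \ref{manyconsiderationsnotfibered}, $J^{uu}_{g^{-1}}$ takes values in $[e^{-\varepsilon_1}\lambda^{2N},e^{\varepsilon_1}\lambda^{2N}]$, so $|\log J^{uu}_{g^{-1}}(m)-\log J^{uu}_{g^{-1}}(m')|\le 2\varepsilon_1$ for \emph{any} pair of points, with $\varepsilon_1$ as small as you like once $N$ is large and $\mathcal{U}_N$ small. For the terms $j\ge 1$ one only needs a \emph{uniform} (not small) H\"older constant for $\log J^{uu}_{g^{-1}}$: after one backward step the arclength $\mu^N$ of the $u$-curve has been contracted by $\approx\lambda^{2N}$, so $d(g^{-j}(m),g^{-j}(m'))\lesssim \lambda^{N(2j-1)}$, and the tail sums to $O(\lambda^{N\alpha})$, which is absorbed for $N$ large. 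Choosing $\varepsilon_1<\varepsilon_3/4$ and then $N$, $\mathcal{U}_N$ accordingly gives the statement. If you replace your ``small H\"older constant by continuity'' step with this oscillation-plus-uniform-H\"older splitting, your proof becomes essentially the cited one.
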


This lemma implies that for $g\in \mathcal{U}_N$ and for any $u$-curve $\gamma$ for $g$, if $A\subset \gamma$ is any measurable set, for every $k\geq 0$, it holds
\[
e^{-\varepsilon_3} \frac{Leb(A)}{Leb(\gamma)} \geq \frac{Leb(g^{-k}(A))}{Leb(g^{-k}(\gamma))} \leq e^{\varepsilon_3} \frac{Leb(A)}{Leb(\gamma)}.
\]

Let $(\gamma,X)$ be an adapted field, define
\[
I_n^{\gamma,X} = \frac{1}{|\gamma|} \int_{\gamma} \log \|Dg^n.X\| d\gamma.
\]
For the fibered case, proposition \ref{exp1} gives us precise estimates for the Lyapunov exponent along the center direction. In the general case we have the following proposition. 
\begin{proposition}
\label{estimaterob}
Suppose that there exists $C>0$ with the following property: for every $u$-curve $\gamma$ there exists an adapted vector field $(\gamma,X)$ for $g$ and for all $n> 0$ large enough
\[
\frac{I_n^{\gamma,X}}{n} >C.
\]
Then the map $g$ has a positive exponent in the center direction greater than $e^{-2\varepsilon_3}C$ for Leb-almost every point.
\end{proposition}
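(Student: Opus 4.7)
My plan is to mirror the proof of Proposition~\ref{exp1}, replacing the explicit fibered growth estimates for $f_N$ along $u$-curves by Lemma~\ref{boundeddist}. I argue by contradiction: fix $\rho>0$ and suppose the set $B$ of points whose maximal center Lyapunov exponent is at most $(1-\rho)e^{-2\varepsilon_3}C$ has positive Lebesgue measure. Because $g\in\mathcal{U}_N$ is $C^2$ and partially hyperbolic, its strong unstable foliation is absolutely continuous, so some strong unstable leaf $L^u$ meets $B$ in a set of positive one-dimensional Lebesgue measure; pick a density point $q\in B\cap L^u$ of $B\cap L^u$ in $L^u$.

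For each $k\geq 1$, choose a $u$-curve $\tilde\gamma_k$ for $g$ through $g^k(q)$ and set $\gamma_k:=g^{-k}(\tilde\gamma_k)\subset L^u$; by Lemma~\ref{length} its length tends to $0$, so the density property yields $\mathrm{Leb}(\gamma_k\cap B^c)/|\gamma_k|\to 0$. Let $X_k$ be an adapted field on $\tilde\gamma_k$ satisfying $I_n^{\tilde\gamma_k,X_k}/n>C$ for all large $n$, and define a unit vector field $Y_k$ on $\gamma_k$ by pulling $X_k$ back via $Dg^{-k}$ and normalizing. Writing
$$\chi(m):=\limsup_{n\to\infty}\tfrac{1}{n}\log\|Dg^n(m)Y_{k,m}\|,$$
and using $Y_{k,m}\in E^c_g$ together with $\max\{\|Dg|_{E^c}\|,\|Dg^{-1}|_{E^c}\|\}\leq 2N$, we have $|\chi|\leq\log(2N)$ pointwise and $\chi(m)\leq(1-\rho)e^{-2\varepsilon_3}C$ for $m\in B$. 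Splitting the integral by $B$ and $B^c$ produces the upper estimate
$$\int_{\gamma_k}\chi\,d\gamma_k\;\leq\;\bigl[(1-\rho)e^{-2\varepsilon_3}C+o(1)\bigr]|\gamma_k|\qquad(k\to\infty).$$

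For the matching lower estimate, the identity $(\chi\circ g^{-k})(p)=\limsup_n\tfrac{1}{n}\log\|Dg^n(p)X_{k,p}\|$ on $\tilde\gamma_k$ (the scalar normalization and the shift by $k$ iterates drop out in the $\limsup$) combined with reverse Fatou — applicable since $\tfrac{1}{n}\log\|Dg^nX_k\|\leq\log(2N)$ — and the hypothesis yields $\int_{\tilde\gamma_k}\chi\circ g^{-k}\,d\tilde\gamma_k\geq C|\tilde\gamma_k|$. Changing variables via $g^{-k}:\tilde\gamma_k\to\gamma_k$ with Jacobian $J^{uu}_{g^{-k}}$, and rewriting $\chi=(\chi+\log(2N))-\log(2N)$ to keep the integrand non-negative, a first application of Lemma~\ref{boundeddist} bounds $J^{uu}_{g^{-k}}$ below by $e^{-\varepsilon_3}|\gamma_k|/|\tilde\gamma_k|$; a second use of bounded distortion on the length ratio absorbs the $\log(2N)$ offset and delivers $\int_{\gamma_k}\chi\,d\gamma_k\geq[e^{-2\varepsilon_3}C-o(1)]|\gamma_k|$. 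Comparing the two estimates and letting $k\to\infty$ forces $e^{-2\varepsilon_3}C\leq(1-\rho)e^{-2\varepsilon_3}C$, a contradiction. The principal technical obstacle will be the careful bookkeeping of the two bounded-distortion factors in the change-of-variables step, ensuring that the subtraction used to make the integrand non-negative does not degrade the constant below the claimed value $e^{-2\varepsilon_3}$.
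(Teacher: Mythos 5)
Your architecture is the one the paper uses: argue by contradiction, use absolute continuity of the unstable foliation to find a density point of $B$ on an unstable leaf, pull back $u$-curves $\tilde\gamma_k$ through $g^k(q)$ to $\gamma_k=g^{-k}(\tilde\gamma_k)$, change variables with $J^{uu}_{g^{-k}}$, invoke bounded distortion (Lemma \ref{boundeddist}) and reverse Fatou, and split the integral over $B$ and $B^c$. The gap is in your final lower bound. The offset trick $\chi=(\chi+\log 2N)-\log 2N$ gives, after pulling out $\min_{\tilde\gamma_k}J^{uu}_{g^{-k}}\geq e^{-\varepsilon_3}|\gamma_k|/|\tilde\gamma_k|$ and using $\int_{\tilde\gamma_k}\log 2N\cdot J^{uu}_{g^{-k}}\,d\tilde\gamma_k=\log 2N\,|\gamma_k|$ exactly,
\[
\int_{\gamma_k}\chi\,d\gamma_k\;\geq\;\bigl[e^{-\varepsilon_3}\bigl(C+\log 2N\bigr)-\log 2N\bigr]\,|\gamma_k|\;=\;\bigl[e^{-\varepsilon_3}C-(1-e^{-\varepsilon_3})\log 2N\bigr]\,|\gamma_k|,
\]
and no ``second use of bounded distortion on the length ratio'' can improve this, because the distortion factor $e^{-\varepsilon_3}$ necessarily multiplies the whole block $C+\log 2N$. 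The loss $(1-e^{-\varepsilon_3})\log 2N$ is a fixed constant ($\varepsilon_3$ and $N$ are chosen before $k\to\infty$), not an $o(1)$, and it is covered by the slack $e^{-\varepsilon_3}C-e^{-2\varepsilon_3}C=(1-e^{-\varepsilon_3})e^{-\varepsilon_3}C$ only when $e^{-\varepsilon_3}C\geq\log 2N$. The proposition is stated for arbitrary $C>0$, and in the intended application $C=(1-14\tilde\delta)\log N<\log 2N$, so your lower bound sits strictly below the claimed $e^{-2\varepsilon_3}C$; consequently, for small $\rho$ your two estimates are compatible and no contradiction arises. What your argument actually proves is an a.e. center exponent at least $e^{-\varepsilon_3}C-(1-e^{-\varepsilon_3})\log 2N$ (which might still be salvageable downstream by re-tuning $\tilde\delta$, but it is not the stated conclusion).

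The paper's bookkeeping differs at precisely this point: it introduces no offset. It bounds $J^{uu}_{g^{-k}}(m^k)\geq e^{-\varepsilon_3}\,Leb(\gamma_k)/Leb(g^k(\gamma_k))$ and pulls this factor out of $\int_{g^k(\gamma_k)}\chi_k\circ g^{-k}$, keeping the full $e^{-\varepsilon_3}C\,Leb(\gamma_k)$ as the lower bound, and it spends the entire remaining margin between $e^{-\varepsilon_3}C$ and $e^{-2\varepsilon_3}C$ on the $B^c$ term, by choosing $k$ so that $Leb(\gamma_k\cap B^c)/Leb(\gamma_k)<\tfrac{e^{-2\varepsilon_3}(e^{\varepsilon_3}-1)C}{2\log 2N}$. (Note that pulling the infimum of the Jacobian out of the integral implicitly treats the integrand $\chi_k\circ g^{-k}$ as non-negative; if you insist on accounting for a possible negative part of the pointwise $\limsup$, you must control the measure of the set where it is negative rather than offset by $\log 2N$ over the whole curve --- the global offset is exactly what destroys the constant.) To repair your write-up along the paper's lines: drop the offset, pull out the Jacobian bound as above, and put all of the error budget on $Leb(\gamma_k\cap B^c)$. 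A cosmetic point: the shrinking $l(\gamma_k)\to 0$ comes from the contraction $\|Dg^{-1}|_{E^u_g}\|\leq e^{\varepsilon_1}\lambda^{2N}$ (Lemma \ref{manyconsiderationsnotfibered}), not from Lemma \ref{length}, which only compares lengths of different $u$-curves.
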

 
\begin{proof}
The new ingredient in the proof is the bounded distortion estimates. Suppose not, then there exists a measurable set $B$ with positive measure such that every point in $B$ has exponents in the center direction strictly smaller than $e^{-2\varepsilon_3}C$. By the absolute continuity of the unstable foliation, there is an unstable manifold $\gamma$ that intersects $B$ on a set of positive measure, for the Lebesgue measure of $\gamma$. Let $b\in \gamma\cap B$ be a density point and take $\gamma_k = g^{-k} \circ \beta_k$, where $\beta_k$ is a $u$-curve with $\beta_k(0) = g^k(b)$. We have that $l(\gamma_k)\to 0$ and by bounded distortion, lemma \ref{boundeddist} 
\[
\frac{Leb(\gamma_k \cap B )}{Leb(\gamma_k)} \longrightarrow 1.
\]

Take $k$ large enough such that
\[
\frac{Leb(\gamma_k \cap B^c )}{Leb(\gamma_k)}< \frac{e^{-2\varepsilon_3} (e^{\varepsilon_3} - 1) C}{2\log 2N}.
\]
Using bounded distortion again, for any $m^k \in g^k(\gamma_k)$
\[
J^{uu}_{g^{-k}}(m^k) \geq \frac{Leb(\gamma_k)}{Leb(g^k(\gamma_k))} e^{-\varepsilon_3}.
\]

Define $\chi_k(m) = \displaystyle \limsup_{n\to +\infty} \frac{1}{n} \log \|Dg^n(g^k(m)) . X_{g^k(m)}\|$, where $X$ is the vector field such that $(\beta_k,X)$ verifies the hypothesis of the lemma.

\[\arraycolsep=1.2pt\def\arraystretch{2}
\begin{array}{rcl}
\displaystyle \int_{\gamma_k} \chi_k d\gamma_k & = & \displaystyle \int_{g^k(\gamma_k)} \chi_k \circ g^{-k} J^{uu}_{g^{-k}} d(g^k(\gamma_k))\\
& \geq & \displaystyle e^{-\varepsilon_3} \frac{Leb(\gamma_k)}{Leb(g^k(\gamma_k))} \int_{g^k(\gamma_k)} \chi_k \circ g^{-k} d(g^k(\gamma_k)) \geq e^{-\varepsilon_3} C Leb(\gamma_k).
\end{array}
\]

On the other hand,

\[\arraycolsep=1.2pt\def\arraystretch{2}
\begin{array}{rcl}
\displaystyle \int_{\gamma_k} \chi_k d\gamma_k & = & \displaystyle \int_{\gamma_k \cap B} \chi_k d\gamma_k + \int_{\gamma_k \cap B^c} \chi_k d\gamma_k\\
 & \leq & \displaystyle e^{-2\varepsilon_3} C Leb(\gamma_k) + \frac{\log 2N e^{-2\varepsilon_3}(e^{\varepsilon_3}-1)C Leb(\gamma_k)}{2 \log 2N}\\
 &< & e^{-\varepsilon_3} C Leb(\gamma_k).
\end{array}
\]
Which is a contradiction.
\end{proof}

Denote by 
\[
E(\gamma,X) = \frac{1}{|\gamma|} \displaystyle \int_{\gamma} \log \|Dg(m).X_m\| d\gamma(m),
\]
where $(\gamma,X)$ is an adapted field. For $X$ a vector field on $\gamma$ define 
\[
\widetilde{X}(m) = \frac{\pi_1(X(m))}{\|\pi_1(X(m))\|}.
\]
\begin{definition}
An adapted field $(\gamma,X)$ is $\tilde{\delta}$-good if for every $m\in \gamma $, $\widetilde{X}(m) \in \Delta_{\tilde{\delta}}$.
\end{definition}

If $\mathcal{U}_N$ is small enough then the center leaves are very close to the horizontal tori, very similar to the proof of proposition \ref{sogood} we obtain:

\begin{proposition}
\label{sogood2}
For $N$ large and $\mathcal{U}_N$ small enough, for all $g\in \mathcal{U}_N$ and $(\gamma, X)$ an $\tilde{\delta}$-good adapted field for $g$, it is verified that $E(\gamma, X) \geq (1-8\tilde{\delta}) \log N.$
\end{proposition}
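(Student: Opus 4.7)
The plan is to mimic the proof of Proposition~\ref{sogood} for $f_N$, replacing each step by its perturbative analogue. There are three things that need to be redone: bounding the proportion of $\gamma$ that enters $Crit$, estimating $\|Dg(m).X_m\|$ outside $Crit$ for a $\tilde\delta$-good adapted field, and controlling the loss coming from points inside $Crit$.

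First, the proportion of $\gamma$ inside $Crit$. By the definition of $u$-curve for $g$, the parameter $t$ satisfies $|d\gamma_x/dt|=1$, so the Lebesgue measure of $\{t\in[0,2\pi] : \gamma(t)\in Crit\}$ is at most $l(Crit)\leq 8N^{-\tilde\delta}$. By Lemma~\ref{length}, the speed $\|d\gamma/dt\|$ varies by a factor at most $e^{\varepsilon_2}$ along $\gamma$, so
\[
\frac{Leb_\gamma(\gamma\cap Crit)}{|\gamma|} \leq e^{2\varepsilon_2}\cdot\frac{8N^{-\tilde\delta}}{2\pi} \leq 10\,N^{-\tilde\delta},
\]
for $N$ large and $\mathcal U_N$ small enough.

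Second, the pointwise estimate outside $Crit$. I will compare $Dg(m).X_m$ to $Df_N(m).(\widetilde X(m),0)$. Since $\mathcal U_N$ can be chosen so that $E^c_g$ is $C^0$-close to $E^c_{f_N}=\R^2\times\{0\}$ (Proposition~\ref{coherence} plus Lemma~\ref{manyconsiderationsnotfibered}), the unit vector $X_m\in E^c_{g,m}$ differs from $(\widetilde X(m),0)\in\R^2\times\{0\}$ by $O(\varepsilon_1)$, and $Dg(m)$ differs from $Df_N(m)$ by $O(\varepsilon_1)$ in $C^0$. For $m\notin Crit$, the $\tilde\delta$-goodness condition $\widetilde X(m)\in\Delta_{\tilde\delta}$ lets me apply Lemma~\ref{esselemmaaqui} to the horizontal vector $\widetilde X(m)$: it gives $\|Df_N(m).(\widetilde X(m),0)\|\geq N^{1-6\tilde\delta}$. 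Writing
\[
Dg(m).X_m = Df_N(m).(\widetilde X(m),0) + \bigl(Dg(m)-Df_N(m)\bigr).X_m + Df_N(m).\bigl(X_m-(\widetilde X(m),0)\bigr),
\]
the two correction terms are bounded by $O(\varepsilon_1\cdot N)=o(N^{1-6\tilde\delta})$ for $\mathcal U_N$ small enough relative to $N$, so
\[
\|Dg(m).X_m\|\geq N^{1-7\tilde\delta},\qquad \forall m\in\gamma\setminus Crit.
\]

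Third, the bound inside $Crit$. By item~(3) of Lemma~\ref{manyconsiderationsnotfibered}, $\|Dg(m).X_m\|\geq (2N)^{-1}$ for every $m$, so $\log\|Dg(m).X_m\|\geq -\log(2N)$ on $\gamma\cap Crit$. Combining everything,
\[
|\gamma|E(\gamma,X) \geq (1-10N^{-\tilde\delta})(1-7\tilde\delta)\log N\cdot|\gamma| - 10N^{-\tilde\delta}\log(2N)\cdot|\gamma| \geq (1-8\tilde\delta)\log N\cdot|\gamma|
\]
for $N$ large enough, which is the desired estimate.

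The only real obstacle is the second step: one needs the perturbation $g-f_N$ to be small compared to the size of the derivative $N^{1-6\tilde\delta}$ of $f_N$ on a good direction. Since the size of $\mathcal U_N$ is allowed to depend on $N$, this is fine: we first fix $\tilde\delta$ and $N$ large so that Lemma~\ref{esselemmaaqui} produces the bound $N^{1-6\tilde\delta}$, and then shrink $\mathcal U_N$ to make the $C^1$-error in $Dg$ and the tilt of $E^c_g$ away from $\R^2\times\{0\}$ much smaller than $N^{-6\tilde\delta}$, which is possible by Proposition~\ref{coherence} and Lemma~\ref{manyconsiderationsnotfibered}.
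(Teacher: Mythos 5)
Your proposal is correct and is essentially the argument the paper has in mind: the paper proves Proposition \ref{sogood2} only by the remark that the center leaves of $g$ are close to the horizontal tori and that one repeats the proof of Proposition \ref{sogood}, which is exactly what you carry out (measure of $\gamma\cap Crit$ via the $x$-normalization of $u$-curves and Lemma \ref{length}, the pointwise bound outside $Crit$ from Lemma \ref{esselemmaaqui} applied to $\widetilde{X}(m)\in\Delta_{\tilde{\delta}}$, and the worst-case bound $(2N)^{-1}$ inside $Crit$). One quantitative slip: the correction term $Df_N(m).\bigl(X_m-(\widetilde{X}(m),0)\bigr)$ is not $O(\varepsilon_1\cdot N)$, since $\|Df_N\|$ on the full tangent space is of order $\mu^{2N}$, so requiring the tilt of $E^c_g$ to be merely much smaller than $N^{-6\tilde{\delta}}$ is not enough as stated; this is harmless because $\mathcal{U}_N$ is allowed to depend on $N$ (so one may demand the tilt be small compared with $\mu^{-2N}N^{1-6\tilde{\delta}}$), or one can avoid the issue altogether by bounding $\|Dg(m).X_m\|$ from below by $\|D\pi_1(Dg(m).X_m)\|$ and comparing only with $Ds_N(m).\widetilde{X}(m)$, which never sees the large vertical expansion.
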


Recall that for $k\geq 0$ and a $u$-curve $\gamma$ the number $N_k= N_k(\gamma)$ was the maximum number of $u$ curves that subdivide $g^k \circ \gamma$. For an adapted field $(\gamma,X)$ define $Y^k = \frac{g^k_*X}{\|g^k_*X\|}$. The following lemma is the analogous to lemma \ref{bclemma2}.

\begin{lemma}[\cite{bergercarrasco2014}, Lemma $9$]
For $N$ large and $\mathcal{U}_N$ small enough, let $g\in \mathcal{U}_N$ and $(\gamma,X)$ be an adapted field for $g$. For $k\geq 0$, every possible pair $(\gamma_j^k,Y^k|_{\gamma_j^k})$, with $1\leq j \leq N_k(\gamma)$ is an adapted field.
\end{lemma}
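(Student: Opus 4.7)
My plan is to verify the two defining properties of an adapted field for each pair $(\gamma_j^k, Y^k|_{\gamma_j^k})$. The tangency to $E^c_g$ is immediate: since $X$ is tangent to $E^c_g$, $E^c_g$ is $Dg$-invariant, and the normalization preserves direction, $Y^k$ is tangent to $E^c_g$ as well. The content of the lemma is therefore the $(C_{Y^k},\tfrac12)$-H\"older estimate along $\gamma_j^k$ with $C_{Y^k}<20N^2\lambda^N$, and I would follow Berger and Carrasco's proof of lemma \ref{bclemma2} adapted to $g\in\mathcal{U}_N$ using the ingredients of lemma \ref{manyconsiderationsnotfibered}.

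The first step will be a geometric distance bound. Fix $m,m'\in\gamma_j^k$ and let $q=g^{-k}(m),\,q'=g^{-k}(m')\in\gamma$. Since $\gamma_j^k$ is a $u$-curve and $g^{-1}$ contracts $E^u_g$ at rate close to $\lambda^{2N}$ (item 2 of lemma \ref{manyconsiderationsnotfibered}), together with the bounded distortion estimate of lemma \ref{boundeddist}, I expect a uniform constant $K>0$, independent of $k$, such that $d_\gamma(q,q')\leq K\lambda^{2Nk}d_{\gamma_j^k}(m,m')$.

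The second step is to track how the direction of $X$ evolves under the center cocycle. The induced action of a single step of $Dg$ on a fiber of the projective center bundle $\mathbb{P}E^c_g$ has Lipschitz constant at most $\|Dg|_{E^c}\|/m(Dg|_{E^c})\leq (2N)^2$, so iterating $k$ times multiplies angular distances by at most $(2N)^{2k}$. Writing
\[
Y^k(m)-Y^k(m')=\frac{Dg^k(q)X(q)}{\|Dg^k(q)X(q)\|}-\frac{Dg^k(q')X(q')}{\|Dg^k(q')X(q')\|},
\]
I would split this difference into three contributions: (a) the H\"older variation of $X$ at $q,q'$, scaled by the projective cocycle; (b) the $\tfrac12$-H\"older variation of the bundle $E^c_g$ itself with basepoint (item 5 of lemma \ref{manyconsiderationsnotfibered}); and (c) the variation of $Dg^k$ along the short center segment joining $q$ to $q'$, controlled via $\|D^2g\|\leq 2N$ (item 4). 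Each contribution is bounded by a factor of order $(2N)^{2k}d_\gamma(q,q')^{1/2}\leq K^{1/2}(4N^2\lambda^N)^k d_{\gamma_j^k}(m,m')^{1/2}$ times an initial H\"older constant (namely $C_X$ for (a), the H\"older norm of $E^c_g$ for (b), and an expression involving $\|D^2g\|$ for (c)).

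The main obstacle will be keeping $C_{Y^k}<20N^2\lambda^N$ uniformly in $k$, and this is where the partial hyperbolicity gap is essential. For $N$ large, $4N^2\lambda^N\ll 1$, so $(4N^2\lambda^N)^k$ decays geometrically, and contribution (a) gives $C_X K^{1/2}(4N^2\lambda^N)^k\leq C_X$ (after absorbing $K^{1/2}$ by enlarging $N$), which is $<20N^2\lambda^N$ by hypothesis. Contributions (b) and (c) produce geometric series $\sum_j(4N^2\lambda^N)^j$ with bounded sum; by shrinking $\mathcal{U}_N$ (to make the H\"older norm of $E^c_g$ comparable to $20N^2\lambda^N$, which is reasonable since $E^c_{f_N}$ is smooth) and enlarging $N$, these contributions can be absorbed into the $20N^2\lambda^N$ bound. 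Combining the three estimates yields $C_{Y^k}<20N^2\lambda^N$ for every $k\geq 0$, completing the verification.
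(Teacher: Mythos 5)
The paper itself offers no proof of this statement: it is quoted from Berger--Carrasco and simply cited as \cite{bergercarrasco2014}, Lemma $9$, so your attempt has to be measured against their argument. Your overall architecture does match it: tangency to $E^c_g$ is immediate; the base distance contracts backwards along $u$-curves at rate roughly $\lambda^{2Nk}$ (with the bounded distortion of lemma \ref{boundeddist}), the projectivized center cocycle has Lipschitz constant at most $(2N)^{2k}$, and the resulting factor $(4N^2\lambda^N)^k$ is what lets the H\"older constant of $X$ propagate while the error terms coming from $\|D^2g\|\leq 2N$ enter through a convergent geometric series. (A minor slip: $q$ and $q'$ are joined along the $u$-curve $\gamma$, not along a ``center segment''; the mechanism you invoke is nevertheless the right one.)

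The genuine gap is your contribution (b). You need the $\tfrac{1}{2}$-H\"older constant of the bundle $E^c_g$ to be of order $20N^2\lambda^N$, i.e.\ exponentially small in $N$, and you justify this only by ``shrinking $\mathcal{U}_N$, which is reasonable since $E^c_{f_N}$ is smooth''. That does not follow from anything available: item 5 of lemma \ref{manyconsiderationsnotfibered} (proved via theorem \ref{regularitycenter}) gives H\"older continuity with no control on the constant, and smallness of the constant is not a soft consequence of $C^2$-closeness, because $\|D^2g\|\approx\|D^2f_N\|\approx N$ is large; the constant must be extracted from a quantitative cone-field/graph-transform estimate exploiting the specific rates $\lambda^{2N}$, $(2N)^{\pm1}$, $\mu^{2N}$. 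Nor can this step be bypassed: if for some pair $m,m'\in\gamma_j^k$ the smallest principal angle between the planes $E^c_g(m)$ and $E^c_g(m')$ exceeded (roughly) $20N^2\lambda^N d_{\gamma_j^k}(m,m')^{1/2}$, then no unit field tangent to $E^c_g$ along $\gamma_j^k$ could satisfy the adapted-field inequality, so a quantitative H\"older bound on the bundle itself is logically necessary for the lemma to hold. In Berger--Carrasco's paper this bound is exactly the content of a separate quantitative lemma established before their Lemma $9$, and it is the one genuinely nontrivial ingredient that your sketch asserts rather than proves; the remaining bookkeeping in your proposal is sound.
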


Similar to lemma \ref{formula1}, Berger and Carrasco proved the following formula, see section $6$ of \cite{bergercarrasco2014}.
\begin{lemma}
\label{formula2}
For every adapted field $(\gamma,X)$ and any $n\in \N$
\[
I_n^{\gamma,X} = \displaystyle \sum_{k=0}^{n-1} \left( R_k+ \sum_{j=0}^{N_k} \frac{1}{|\gamma|} \int_{\gamma_j^k} \log \| Dg (m).Y^k_m\| J^{uu}_{g^{-k}} d \gamma_j^k \right),
\]

where $R_k=\frac{1}{|\gamma|} \int_{\gamma_{N_k+1}^k} \log \| Dg (m).Y^k_m\| J^{uu}_{g^{-k}} d \gamma_{N_k+1}^k$. 
\end{lemma}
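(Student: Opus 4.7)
The plan is to prove Lemma \ref{formula2} by combining the chain rule, a telescoping decomposition, a one-dimensional change of variables along the unstable curve, and the subdivision of $g^k\circ\gamma$ into $u$-curves.

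First, I will apply the chain rule to rewrite the derivative. For each $m\in\gamma$ and each $k\ge 0$, $Y^k_{g^k(m)}=Dg^k(m).X_m/\|Dg^k(m).X_m\|$, so
\[
\frac{\|Dg^{k+1}(m).X_m\|}{\|Dg^k(m).X_m\|}=\|Dg(g^k(m)).Y^k_{g^k(m)}\|.
\]
Telescoping this product from $k=0$ to $k=n-1$ and taking logarithms gives
\[
\log\|Dg^n(m).X_m\|=\sum_{k=0}^{n-1}\log\|Dg(g^k(m)).Y^k_{g^k(m)}\|.
\]
Integrating over $\gamma$ and swapping the (finite) sum with the integral yields
\[
I_n^{\gamma,X}=\sum_{k=0}^{n-1}\frac{1}{|\gamma|}\int_{\gamma}\log\|Dg(g^k(m)).Y^k_{g^k(m)}\|\,d\gamma(m).
\]

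Next I will perform a change of variables pushing each integral forward to $g^k\circ\gamma$. Parametrize $\gamma$ by arc length and write $\tilde m=g^k(m)$. Since $\gamma$ is tangent to $E^u_g$ (which is one-dimensional), the arc-length elements are related by
\[
d(g^k\gamma)(\tilde m)=\|Dg^k(m)|_{E^u_{g,m}}\|\,d\gamma(m)=J^{uu}_{g^k}(m)\,d\gamma(m),
\]
so that $d\gamma(m)=J^{uu}_{g^{-k}}(\tilde m)\,d(g^k\gamma)(\tilde m)$. Substituting this into the previous expression gives
\[
I_n^{\gamma,X}=\sum_{k=0}^{n-1}\frac{1}{|\gamma|}\int_{g^k\gamma}\log\|Dg(\tilde m).Y^k_{\tilde m}\|\,J^{uu}_{g^{-k}}(\tilde m)\,d(g^k\gamma)(\tilde m).
\]

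Finally I will use the subdivision $g^k\circ\gamma=\gamma_1^k\ast\cdots\ast\gamma_{N_k}^k\ast\gamma_{N_k+1}^k$, where $\gamma_j^k$ ($1\le j\le N_k$) are $u$-curves for $g$ and $\gamma_{N_k+1}^k$ is the leftover segment. Splitting each integral along this concatenation yields
\[
I_n^{\gamma,X}=\sum_{k=0}^{n-1}\left(R_k+\sum_{j=1}^{N_k}\frac{1}{|\gamma|}\int_{\gamma_j^k}\log\|Dg(m).Y^k_m\|\,J^{uu}_{g^{-k}}(m)\,d\gamma_j^k(m)\right),
\]
with $R_k$ as defined in the statement. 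This is exactly the claimed formula (modulo the obvious indexing shift).

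There is no real obstacle in this argument; it is essentially the chain rule plus a one-dimensional change of variables along an unstable curve. The only point requiring a bit of care is the identification $d(g^k\gamma)=J^{uu}_{g^k}d\gamma$, which is valid precisely because $\gamma$ is tangent to the one-dimensional subbundle $E^u_g$ and the unstable Jacobian on such a bundle coincides with $\|Dg^k|_{E^u_g}\|$; this is guaranteed by the definition of a $u$-curve and by lemma \ref{manyconsiderationsnotfibered}. The bounded distortion estimate (Lemma \ref{boundeddist}) is not needed for the formula itself but will be the tool used in the next step, together with this formula, to extract the lower bound on $I_n^{\gamma,X}/n$.
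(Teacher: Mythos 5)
Your proof is correct and follows essentially the same route as the paper, which simply invokes the repeated change-of-variables computation from Berger--Carrasco (section 6 of their paper): chain rule and telescoping, pushing each term forward along $g^k\circ\gamma$ with $d\gamma = J^{uu}_{g^{-k}}\,d(g^k\gamma)$ using that $E^{uu}_g$ is one-dimensional, and then splitting over the $u$-curve subdivision. Your remark about the indexing shift ($j$ running from $1$ to $N_k$ rather than from $0$) correctly identifies a harmless typo in the statement.
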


We remark that this formula and the formula obtained in lemma \ref{formula1} are obtained in the same way, just by using the change of variables formula multiple times. The difference in this one is that we keep the unstable jacobian in the formula. 
As a consequence of this formula we obtain

\begin{eqnarray}
\label{aiaiai}
I_n^{\gamma,X} \geq \displaystyle \sum_{k=0}^{n-1} \left( R_k+ \sum_{j=0}^{N_k} \min_{\gamma_j^k} ( J^{uu}_{g^{-k}}. E( \gamma_j^k, Y^k)) \right).
\end{eqnarray}

Observe that
\[
\displaystyle |R_k| \leq \frac{(e^{-\varepsilon_1} \mu)^{2Nk} \log 2N }{\lambda^N(1-2\lambda^N) \|P_x(e^u)\|} \xrightarrow{k\rightarrow + \infty} 0. 
\]
Hence
\[
\displaystyle \frac{1}{n} \sum_{k=0}^{n-1} |R_k| \longrightarrow 0.
\]

For $(\gamma,X)$ an adapted field we define similarly as in the previous section the sets $G_k = G_k(\gamma, X)$ and $B_k= B_k(\gamma,X)$. The key lemma is the next one which is analogous to lemma \ref{goodandbad}.
\begin{lemma}
\label{lemmabolado}
For $K\geq 1$, for $N$ large and $\mathcal{U}_N$ small enough, for every $g\in \mathcal{U}_N$ and every $(\gamma,X)$ a $\tilde{\delta}$-good adapted field it holds that
\[
\displaystyle \sum_{j\in G_k} \min_{\gamma_j^k} J^{uu}_{g^{-k}} \geq K \sum_{j\in B_k} \max_{\gamma_j^k} J^{uu}_{g^{-k}}.
\]

\end{lemma}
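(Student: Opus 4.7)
The plan is to prove the lemma by induction on $k$, adapting the argument of lemma \ref{goodandbad} from the fibered case to the weighted setting by incorporating the unstable jacobian via bounded distortion. The base case $k=0$ is trivial because $(\gamma,X)$ is $\tilde{\delta}$-good, so $G_0 = \{1\}$, $B_0 = \varnothing$, and $J^{uu}_{g^0}\equiv 1$, making the right-hand side zero.

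For the inductive step, I would first establish the robust analogues of lemmas \ref{lemmabom} and \ref{lemmaruim}: for each parent $\gamma_j^k$ with children $\gamma_{j,1}^{k+1},\ldots,\gamma_{j,M_j}^{k+1}$ where $M_j$ is of order $\mu^{2N}$, if $\gamma_j^k$ is $\tilde{\delta}$-good then at most a fraction $\eta_N$ of its children are $\tilde{\delta}$-bad, while if $\gamma_j^k$ is $\tilde{\delta}$-bad then at least a fraction $1/3$ of its children are $\tilde{\delta}$-good. These statements follow for $g\in\mathcal{U}_N$ by $C^2$-continuity of $Dg$ and by the fact that the critical strip is defined purely in terms of the $x$-coordinate and has width $\lesssim N^{-\tilde{\delta}}$, while the $u$-curves for $g$ are normalized in the $x$-direction.

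Next I would control the jacobian across one iterate. By the chain rule, $J^{uu}_{g^{-(k+1)}}(m) = J^{uu}_{g^{-k}}(g^{-1}(m))\cdot J^{uu}_{g^{-1}}(m)$; combining item (2) of lemma \ref{manyconsiderationsnotfibered} (which gives $J^{uu}_{g^{-1}}\in[e^{-\varepsilon_1}\lambda^{2N},e^{\varepsilon_1}\lambda^{2N}]$) with bounded distortion (lemma \ref{boundeddist}) applied along the parent $\gamma_j^k$, one obtains for every child $\gamma_{j,i}^{k+1}$:
\[
e^{-\varepsilon_1-\varepsilon_3}\lambda^{2N}\min_{\gamma_j^k}J^{uu}_{g^{-k}}\ \leq\ \min_{\gamma_{j,i}^{k+1}}J^{uu}_{g^{-(k+1)}}\ \leq\ \max_{\gamma_{j,i}^{k+1}}J^{uu}_{g^{-(k+1)}}\ \leq\ e^{\varepsilon_1+\varepsilon_3}\lambda^{2N}\max_{\gamma_j^k}J^{uu}_{g^{-k}}.
\]
Summing over the appropriate children, the combinatorial factor $\mu^{2N}$ cancels exactly against $\lambda^{2N}$ since $\mu=\lambda^{-1}$, producing
\[
\sum_{i\in G_{k+1}}\min J^{uu}_{g^{-(k+1)}}\ \geq\ e^{-\varepsilon_1-\varepsilon_3}\Bigl[(1-\eta_N)\sum_{j\in G_k}\min J^{uu}_{g^{-k}}+\tfrac{1}{3}\sum_{j\in B_k}\min J^{uu}_{g^{-k}}\Bigr],
\]
together with a symmetric upper bound for $\sum_{i\in B_{k+1}}\max J^{uu}_{g^{-(k+1)}}$ featuring $\eta_N$ and $2/3$.

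Finally, I would form the ratio, drop the non-negative lower term on the denominator side, and use $\max J^{uu}_{g^{-k}}\leq e^{\varepsilon_3}\min J^{uu}_{g^{-k}}$ within each $u$-curve together with the induction hypothesis $\sum_{B_k}\max\leq K^{-1}\sum_{G_k}\min$, to obtain
\[
\frac{\sum_{B_{k+1}}\max J^{uu}_{g^{-(k+1)}}}{\sum_{G_{k+1}}\min J^{uu}_{g^{-(k+1)}}}\ \leq\ e^{C(\varepsilon_1+\varepsilon_3)}\left[\frac{\eta_N e^{\varepsilon_3}}{1-\eta_N}+\frac{2/3}{(1-\eta_N)K}\right]
\]
for an absolute constant $C$. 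Since $\eta_N\to 0$ as $N\to\infty$ and $\varepsilon_1,\varepsilon_3$ can be made arbitrarily small by shrinking $\mathcal{U}_N$, this ratio is strictly less than $1/K$ for $N$ large and $\mathcal{U}_N$ small enough, closing the induction. The main obstacle is ensuring that the distortion losses do not accumulate with $k$: this is precisely why the argument relies on the uniform-in-$k$ bounded distortion of lemma \ref{boundeddist} rather than a step-by-step distortion estimate, so the errors compress into a single harmless constant factor $e^{C(\varepsilon_1+\varepsilon_3)}$ that does not depend on $k$.
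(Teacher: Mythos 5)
Your proposal follows essentially the same route as the paper, which itself simply runs Berger--Carrasco's induction with the new constants: base case $k=0$ trivial because $B_0=\varnothing$, one-step good/bad transition (the paper already packages your ``robust analogues of lemmas \ref{lemmabom} and \ref{lemmaruim}'' as lemma \ref{lemma12}), jacobian-weighted counting of children with bounded distortion, and a ratio estimate $<1/K$ for $N$ large and $\mathcal{U}_N$ small. Two bookkeeping points in your write-up are not quite right as stated, though both are repairable with the tools you already cite. First, the cancellation of the combinatorial factor against $\lambda^{2N}$ is not exact: the number of complete $u$-curves into which $g(\gamma_j^k)$ subdivides is only comparable to $\mu^{2N}$ up to the multiplicative error controlled by lemma \ref{length}; this is why the paper's recursive inequalities carry the factor $e^{-(\varepsilon_2+\varepsilon_3)}$ (with $\varepsilon_2$ coming from the length comparison), whereas your error constant records only $\varepsilon_1,\varepsilon_3$.

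Second, and more substantively, your recursion ignores the incomplete pieces $\gamma^k_{N_k+1}$ (and the straddling at junctions of images of consecutive parents). These segments belong to neither $G_k$ nor $B_k$, so the induction hypothesis says nothing about them, yet their images do spawn complete level-$(k+1)$ $u$-curves which may all be $\tilde{\delta}$-bad; as written, your upper bound for $\sum_{j\in B_{k+1}}\max_{\gamma_j^{k+1}} J^{uu}_{g^{-(k+1)}}$ omits this contribution and the induction does not literally close. The paper absorbs it into the additive term $\lambda^{\frac{N}{2}}e^{\varepsilon_3}$ in the second displayed inequality of its proof: by item $2$ of lemma \ref{manyconsiderationsnotfibered} the total jacobian weight of everything descended from such a segment is of order $\lambda^{2Nk}$ up to $e^{\varepsilon_1 k}$, hence uniformly bounded by $\lambda^{N/2}$, and this exponentially small additive error is then divided by the (order one, by lemma \ref{someestimate} type bounds) good sum in the ratio, so it does not disturb the conclusion. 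With that term added, and $\varepsilon_1,\varepsilon_2,\varepsilon_3$ small and $N$ large, your argument coincides with the paper's.
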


The proof uses the next lemma, which is analogous to lemmas \ref{lemmabom} and \ref{lemmaruim}.
\begin{lemma}
\label{lemma12}
For $N$ large and $U$ small enough, for every $g\in \mathcal{U}_N$, every adapted field $(\gamma, X)$
\begin{enumerate}
\item If $(\gamma,X)$ is a $\tilde{\delta}$-good adapted field and if $j$ is so that $g^{-1} \gamma_j^1$ does not intersect the strip $Crit$, then the field $(\gamma_j^1, \frac{g_*X}{\|g_*X\|})$ is $\tilde{\delta}$-good.
\item If $(\gamma, X)$ is $\tilde{\delta}$-bad, there exists a strip $S$ of length $\pi$ such that for every $j$ satisfying $g^{-1} \gamma_j^1\subset S$, the field $(\gamma_1^j, \frac{g_*X}{\|g_*X\|})$ is $\tilde{\delta}$-good.
\end{enumerate}
\end{lemma}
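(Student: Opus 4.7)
The plan is to treat $g$ as a perturbation of $f=f_N$ in the spirit of Lemmas \ref{lemmabom} and \ref{lemmaruim}. By shrinking $\mathcal{U}_N$ (depending on $N$) we may assume $\|Dg-Df\|_{C^0}\le\delta_1$ and, using that $E^c_g$ is close to the horizontal bundle $\R^2\times\{0\}$, that every unit $X\in E^c_{g,p}$ satisfies $\|\pi_2(X)\|\le\epsilon_1\|\pi_1(X)\|$, with $\epsilon_1,\delta_1$ small enough that $\epsilon_1\mu^N+\delta_1<N^{-10}$. The basic identity behind both items is the decomposition
\[
\pi_1\bigl(Dg(p).X\bigr)=Ds_N(p)\pi_1(X)+P_x\circ A^N(\pi_2(X))+\pi_1\bigl((Dg-Df)(p).X\bigr),
\]
whose last two summands together have norm at most $(\epsilon_1\mu^N+2\delta_1)\|\pi_1(X)\|$, hence are negligible compared to $Ds_N(p)\pi_1(X)$ provided the latter has norm of order $\|\pi_1(X)\|$ or larger.

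For item~(1), assume $(\gamma,X)$ is $\tilde\delta$-good and $p=g^{-1}(m)\notin Crit$. Writing $\pi_1(X(p))=a(1,v)$ with $|v|\le N^{\tilde\delta}$, Lemma \ref{lemmaanterior} yields $|\Omega(p)|\ge N^{1-2\tilde\delta}$, so $Ds_N(p)(1,v)=(\Omega(p)-v,1)$ has norm at least $|a|N^{1-2\tilde\delta}/2$ and slope at most $2/N^{1-2\tilde\delta}$. Adding the error terms perturbs the slope by $O(N^{-9})$, keeping it well below $N^{\tilde\delta}$, so $\pi_1(Dg(p).X)/\|\pi_1(Dg(p).X)\|$ lies in $\Delta_{\tilde\delta}$, as required.

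For item~(2), the H\"older control $C_X<20 N^2\lambda^N$ together with the fact that a $u$-curve has length bounded by a power of $N$ forces the normalized direction $\pi_1(X)/\|\pi_1(X)\|$ to be essentially constant along $\gamma$, say equal to $(1,\eta)/\sqrt{1+\eta^2}$ for some $|\eta|>N^{\tilde\delta}$ up to a vanishing error (the purely vertical case is the trivial limit $|\eta|\to\infty$). Set
\[
S=\{(x,y,z,w)\in\T^4:\operatorname{sgn}(\cos x)=-\operatorname{sgn}(\eta)\},
\]
a strip of length $\pi$. On $S$, $\Omega(p)-\eta$ has constant sign with $|\Omega(p)-\eta|\ge|\eta|-2\ge N^{\tilde\delta}/2$, so $Ds_N(p)(1,\eta)=(\Omega(p)-\eta,1)$ has norm of order $|c|N^{\tilde\delta}$ and slope at most $2/N^{\tilde\delta}\ll N^{\tilde\delta}$. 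The perturbation estimate above then places the new projected direction well inside $\Delta_{\tilde\delta}$.

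The main obstacle is the error term $P_x\circ A^N(\pi_2(X))$: its potential size $\mu^N\|\pi_2(X)\|$ grows exponentially in $N$, which forces $\mathcal{U}_N$ to be extremely small (we need $\epsilon_1\ll\mu^{-N}$). Such an $N$-dependent shrinking of $\mathcal{U}_N$ is, however, permitted by the statement of Proposition \ref{estimateprop}, so nothing in the overall scheme is compromised.
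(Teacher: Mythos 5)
Your proof is essentially viable and follows the same basic route as the argument the paper delegates to Berger--Carrasco's Lemma 12 (the analogue of Lemmas \ref{lemmabom} and \ref{lemmaruim}): compare $\pi_1(Dg(p)X)$ with $Ds_N(p)\pi_1(X)$, use the estimate of Lemmas \ref{lemmaanterior}/\ref{esselemmaaqui} outside $Crit$ for item (1), and for item (2) use the near-constancy of an adapted field along a $u$-curve to choose the strip according to the sign of the (almost constant) slope of $\widetilde X$; your cone computations ($|\Omega(p)-v|\ge N^{1-2\tilde\delta}-N^{\tilde\delta}$ off $Crit$, $|\Omega(p)-\eta|\ge|\eta|-2$ on the strip where $\cos x$ and $\eta$ have opposite signs, and $|\epsilon\Omega(p)-1|\ge 1/2$ in the near-vertical regime) are correct. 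Where you differ is in how the dangerous term $P_x\circ A^N(\pi_2(X))$ is killed: you simply shrink $\mathcal{U}_N$ until the angle between $E^c_g$ and the horizontal is below $\mu^{-N}N^{-10}$. This is admissible, since $\mathcal{U}_N$ is allowed to depend on $N$ and the partially hyperbolic splitting varies continuously with $g$ for fixed $N$; but it is a heavier hammer than needed. By domination, the component of $E^c_g\subset E^{cs}_g$ along $E^{uu}$ is automatically of order $\mu^{-2N}$ times the perturbation size, and the $e^s$-fiber component is harmless because $P_x\circ A^N$ contracts it by $\lambda^N$; this yields the same negligibility with a neighborhood that is only polynomially small in $N$, which is closer in spirit to the cited estimates (compare Lemma \ref{length} and Lemma \ref{manyconsiderations}).

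One concrete error to fix in item (2): a $u$-curve does \emph{not} have length bounded by a power of $N$. By Lemma \ref{length} its tangent vector has norm comparable to $(\lambda^N\|P_x(e^u)\|)^{-1}$, so its length is of order $\mu^N$. The near-constancy you invoke is still true, but for a different reason: the H\"older constant $C_X<20N^2\lambda^N$ beats $(\mathrm{length})^{1/2}\sim\mu^{N/2}$, giving an oscillation of order $N^2\lambda^{N/2}$ (this is exactly the bound $\|X_m-X_{m'}\|\le\lambda^{N/3}$ recorded after Definition \ref{adaptedfield}); together with $\|\pi_1(X)\|\ge 1/2$, which your closeness assumption on $E^c_g$ provides, this gives the near-constancy of $\widetilde X$ that your strip argument requires. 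With that correction the proof is acceptable.
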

The proof of this lemma is similar to the proof of  lemma $12$ in \cite{bergercarrasco2014} and uses the estimate obtained in lemma \ref{esselemmaaqui}. 

\begin{proof}[ Proof of lemma \ref{lemmabolado}.]

We follow exactly Berger-Carrasco's proof with the constants we chose and taking $\eta_N = \frac{5}{\pi N^{\delta}}$. The proof goes by induction, it is valid for $k=0$ and suppose it is true for $k$. Using lemmas \ref{length} and \ref{lemma12}, following exactly the same proof of Berger and Carrasco, we obtain
\[
\displaystyle \sum_{j\in G_{k+1}} \min_{\gamma_j^k} J^{uu}_{g^{-k-1}} \geq e^{-(\varepsilon_2 + \varepsilon_3)}(1-\eta_N) \sum_{j\in G_k} \min_{\gamma_j^k} J^{uu}_{g^{-k}}.
\]

It is also obtained
\[
\displaystyle \sum_{j\in B_{k+1}} \max_{\gamma_j^k} J^{uu}_{g^{-k-1}} \leq \left(e^{\varepsilon_2 +2 \varepsilon_3} \eta_N + \frac{2.2}{3.K} e^{\varepsilon_3}\right).\left(\sum_{j\in G_k} \min_{\gamma_j^k} J^{uu}_{g^{-k}} \right) + \lambda^{\frac{N}{2}} e^{\varepsilon_3}.
\]

Thus
\[
\displaystyle \frac{\sum_{j\in B_{k+1}} \max_{\gamma_j^k} J^{uu}_{g^{-k-1}}}{\sum_{j\in G_{k+1}} \min_{\gamma_j^k} J^{uu}_{g^{-k-1}}} \leq \frac{e^{\varepsilon_2 +2 \varepsilon_3} \eta_N + \frac{2.2}{3.K} e^{\varepsilon_3}}{e^{-(\varepsilon_2 + \varepsilon_3)}(1-\eta_N)} + \frac{\lambda^{\frac{N}{2}}}{e^{-(\varepsilon_2 + \varepsilon_3)}(1-\eta_N)}< \frac{1}{K},
\]
since we fixed $\varepsilon_2$ and $\varepsilon_3$ very small, for $N$ large enough we obtain the last inequality.
\end{proof}

From now on we fix $K> (\tilde{\delta})^{-1}$ and assume that $N$ is large and $\mathcal{U}_N$ is small enough such that lemma \ref{lemmabolado} holds.

\begin{lemma}
\label{someestimate}
For $N$ large and $\mathcal{U}_N$ small enough, for every $g\in \mathcal{U}_N$, every adapted field $(\gamma, X)$ and $k\geq 0$, it holds
\[
e^{-(\varepsilon_2 + \varepsilon_3)} \leq \displaystyle \sum_{j\in G_k} \min_{\gamma_j^k} J^{uu}_{g^{-k}} + \sum_{j\in B_k} \max_{\gamma_j^k} J^{uu}_{g^{-k}} \leq e^{2(\varepsilon_2 + \varepsilon_3)}.
\]
\end{lemma}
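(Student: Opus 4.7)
The plan is to sandwich the quantity of interest between $\sum_{j=1}^{N_k}\min_{\gamma_j^k}J^{uu}_{g^{-k}}$ and $\sum_{j=1}^{N_k}\max_{\gamma_j^k}J^{uu}_{g^{-k}}$, and then to estimate both of these sums up to a factor of $e^{\pm O(\varepsilon_2+\varepsilon_3)}$ by combining the change of variables formula with the length control of Lemma \ref{length} and the bounded distortion of Lemma \ref{boundeddist}. The sandwich itself is immediate: since $G_k$ and $B_k$ partition $\{1,\ldots,N_k\}$ and $\min\le\max$ on each piece,
\[
\sum_{j=1}^{N_k}\min_{\gamma_j^k}J^{uu}_{g^{-k}}\;\le\;\sum_{j\in G_k}\min_{\gamma_j^k}J^{uu}_{g^{-k}}+\sum_{j\in B_k}\max_{\gamma_j^k}J^{uu}_{g^{-k}}\;\le\;\sum_{j=1}^{N_k}\max_{\gamma_j^k}J^{uu}_{g^{-k}},
\]
so it is enough to control the two outer quantities.

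For each full $u$-curve $\gamma_j^k$ with $j\le N_k$, the change of variables formula gives $\min_{\gamma_j^k}J^{uu}_{g^{-k}}\cdot|\gamma_j^k|\le\int_{\gamma_j^k}J^{uu}_{g^{-k}}\,d\gamma_j^k=Leb(g^{-k}(\gamma_j^k))\le\max_{\gamma_j^k}J^{uu}_{g^{-k}}\cdot|\gamma_j^k|$; Lemma \ref{length} forces $|\gamma_j^k|\in[e^{-\varepsilon_2}|\gamma|,e^{\varepsilon_2}|\gamma|]$; and Lemma \ref{boundeddist} forces $\max_{\gamma_j^k}J^{uu}_{g^{-k}}\le e^{\varepsilon_3}\min_{\gamma_j^k}J^{uu}_{g^{-k}}$. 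Combining these three facts on a single piece yields
\[
e^{-(\varepsilon_2+\varepsilon_3)}\frac{Leb(g^{-k}(\gamma_j^k))}{|\gamma|}\;\le\;\min_{\gamma_j^k}J^{uu}_{g^{-k}}\;\le\;\max_{\gamma_j^k}J^{uu}_{g^{-k}}\;\le\;e^{\varepsilon_2+\varepsilon_3}\frac{Leb(g^{-k}(\gamma_j^k))}{|\gamma|}.
\]
Summing over $j$ and using the partition identity $\sum_{j=1}^{N_k+1}Leb(g^{-k}(\gamma_j^k))=|\gamma|$ gives the upper bound $\sum_{j=1}^{N_k}\max_{\gamma_j^k}J^{uu}_{g^{-k}}\le e^{\varepsilon_2+\varepsilon_3}\le e^{2(\varepsilon_2+\varepsilon_3)}$, and the preliminary lower bound $\sum_{j=1}^{N_k}\min_{\gamma_j^k}J^{uu}_{g^{-k}}\ge e^{-(\varepsilon_2+\varepsilon_3)}(1-\alpha_N)$, where $\alpha_N:=Leb(g^{-k}(\gamma_{N_k+1}^k))/|\gamma|$ accounts for the leftover segment.

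The remaining step, which is also the main obstacle, is to show that $\alpha_N$ is negligible, so that the lower bound closes at $e^{-(\varepsilon_2+\varepsilon_3)}$. Since $\gamma_{N_k+1}^k$ is contained in a $u$-curve we have $|\gamma_{N_k+1}^k|\le e^{\varepsilon_2}|\gamma|$, and $\|Dg^{-1}|_{E^u_g}\|\le e^{\varepsilon_1}\lambda^{2N}$ from Lemma \ref{manyconsiderationsnotfibered} gives $\alpha_N\le e^{k\varepsilon_1}\lambda^{2Nk}e^{\varepsilon_2}$, which is uniformly tiny in $k\ge 1$ for $N$ large (and the case $k=0$ is trivial, with $\alpha_N=0$ and both sums equal to $1$). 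By choosing $N$ larger and $\mathcal{U}_N$ smaller so that Lemmas \ref{length} and \ref{boundeddist} hold with slightly smaller constants than the nominal $\varepsilon_2,\varepsilon_3$, the correction $(1-\alpha_N)$ is absorbed into the resulting slack, completing the lower bound. The only real subtlety is this piece of bookkeeping around the incomplete last segment; once it is handled, the proof reduces to one application each of change of variables and bounded distortion.
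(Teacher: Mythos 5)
Your proof is correct and follows essentially the same route as the paper's: both start from the change of variables identity $1=\frac{1}{|\gamma|}\sum_{j}\int_{\gamma_j^k}J^{uu}_{g^{-k}}\,d\gamma_j^k$, use Lemma \ref{length} for comparability of $u$-curve lengths, Lemma \ref{boundeddist} for distortion on each piece, and the bound $J^{uu}_{g^{-k}}\le (e^{\varepsilon_1}\lambda^{2N})^k$ to make the incomplete last segment negligible (with $k=0$ trivial). Your bookkeeping is in fact slightly tidier: on the upper side you simply discard the nonnegative leftover term (the paper absorbs it instead, which is why its upper constant is $e^{2(\varepsilon_2+\varepsilon_3)}$), and on the lower side your explicit re-choice of slightly smaller constants in Lemmas \ref{length} and \ref{boundeddist} makes precise the absorption that the paper leaves implicit in ``similarly one obtains the other inequality.''
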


\begin{proof}
Of course the lemma is true for $k=0$. Following the same steps as the proof of lemma $11$ in \cite{bergercarrasco2014}, one obtains

\[\arraycolsep=1.2pt\def\arraystretch{2}
\begin{array}{lcl}
1 & = & \displaystyle \frac{1}{|\gamma|} \int_{\gamma} d\gamma = \frac{1}{|\gamma|}\sum_{j=1}^{N_k+1} \int_{\gamma^k_j}J^{uu}_{g^{-k}}d\gamma_j^k\\
& \geq & \displaystyle \sum_{j\in G_k} \frac{|\gamma_j^k|}{|\gamma|} \min_{\gamma_j^k} J^{uu}_{g^{-k}} + e^{-\varepsilon_3} \sum_{j\in B_k}\frac{|\gamma_j^k|}{|\gamma|} \max_{\gamma_j^k} J^{uu}_{g^{-k}}- \int_{\gamma_{N_k+1}^k} \max_{\gamma_{N_k+1}^k} J^{uu}_{g^{-k}} d\gamma_{N_k+1}^k \\
& \Rightarrow & \displaystyle 1 \geq e^{-(\varepsilon_2 + \varepsilon_3)} \left(\left(\sum_{j\in G_k}  \min_{\gamma_j^k} J^{uu}_{g^{-k}}\right) +  \left(\sum_{j\in B_k} \max_{\gamma_j^k} J^{uu}_{g^{-k}}\right)- \frac{(e^{-\varepsilon_1}. \mu)^{-2Nk}}{\lambda^N(1-2\lambda^N) \|P_x(e^u)\|}\right).
\end{array}
\]

For $N$ large enough 
\[
1+\frac{e^{-(\varepsilon_2 + \varepsilon_3)}(e^{-\varepsilon_1}. \mu)^{-2Nk}}{\lambda^N(1-2\lambda^N) \|P_x(e^u)\|}< e^{\varepsilon_2 + \varepsilon_3}.
\]
Hence
\[
\left(\sum_{j\in G_k}  \min_{\gamma_j^k} J^{uu}_{g^{-k}}\right) +  \left(\sum_{j\in B_k} \max_{\gamma_j^k} J^{uu}_{g^{-k}}\right)\leq e^{2(\varepsilon_2 + \varepsilon_3)}.
\]
Similarly one obtains the other inequality.
\end{proof}

We remark that this lemma for the fibered case is immediate, since in this case $\#G_k + \#B_k =[\mu^{2Nk}]$ and by the way we parametrize $u$-curves for the fibered case, $J^{uu}_{f^{-k}} = \mu^{-2Nk}$. Since the calculations for the fibered case are more direct, the application of this lemma is hidden inside the proof of lemma \ref{mencionado}. For the general case we use this lemma to obtain inequality (\ref{finalequality}) below. This is done in the following way. By lemmas \ref{lemmabolado} and \ref{someestimate},

\[
e^{-2(\varepsilon_2 + \varepsilon_3)} \leq ( 1 + K^{-1} )\displaystyle \sum_{j\in G_k} \min_{\gamma_j^k} J^{uu}_{g^{-k}},
\]
which implies that 
\begin{equation}
\label{finalequality}
\frac{e^{-2(\varepsilon_2 + \varepsilon_3)}}{1+K^{-1}}  \leq \sum_{j\in G_k} \min_{\gamma_j^k} J^{uu}_{g^{-k}}.
\end{equation}

\begin{proposition}
\label{agoravai}
For $N$ large and $\mathcal{U}_N$ small enough, for every $g\in \mathcal{U}_N$, any $\tilde{\delta}$-good adapted field $(\gamma,X)$ and every $k\geq 0$, it holds
\[
\displaystyle \sum_{j=0}^{N_k} \min_{\gamma_j^k} (J^{uu}_{g^{-k}}.E(\gamma_j^k,Y^k)) \geq (1-12\tilde{\delta}) \log N.
\]
\end{proposition}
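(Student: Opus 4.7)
The plan is to split the sum over $j$ into contributions from $G_k$ and $B_k$, bound each contribution separately, and then combine using Lemma \ref{lemmabolado} together with the lower bound \eqref{finalequality} on $\sum_{j\in G_k}\min J^{uu}_{g^{-k}}$. The proof should be largely bookkeeping: the substantive content is already encoded in Proposition \ref{sogood2}, Lemma \ref{lemmabolado}, and the trivial co-norm bound $\|Dg|_{E^c}\|\geq (2N)^{-1}$.

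First I would treat good pieces. For $j\in G_k$, the pair $(\gamma_j^k, Y^k|_{\gamma_j^k})$ is a $\tilde{\delta}$-good adapted field, so Proposition \ref{sogood2} gives $E(\gamma_j^k,Y^k)\geq (1-8\tilde{\delta})\log N>0$. Since $E(\gamma_j^k,Y^k)$ is a constant with respect to the min over $\gamma_j^k$, positivity implies
\[
\min_{\gamma_j^k}\bigl(J^{uu}_{g^{-k}}\cdot E(\gamma_j^k,Y^k)\bigr)=E(\gamma_j^k,Y^k)\cdot\min_{\gamma_j^k}J^{uu}_{g^{-k}}\geq (1-8\tilde{\delta})\log N\cdot\min_{\gamma_j^k}J^{uu}_{g^{-k}}.
\]
For bad pieces the value of $E(\gamma_j^k,Y^k)$ may be negative, so I would use the crude bound coming from item (3) of Lemma \ref{manyconsiderationsnotfibered}: for any adapted field one has $E(\gamma_j^k,Y^k)\geq -\log(2N)$. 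Regardless of the sign, this gives
\[
\min_{\gamma_j^k}\bigl(J^{uu}_{g^{-k}}\cdot E(\gamma_j^k,Y^k)\bigr)\geq -\log(2N)\cdot\max_{\gamma_j^k}J^{uu}_{g^{-k}}.
\]

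Next I would combine the two estimates. Summing and invoking Lemma \ref{lemmabolado} (applied with the fixed constant $K>\tilde{\delta}^{-1}$),
\[
\sum_{j=0}^{N_k}\min_{\gamma_j^k}\bigl(J^{uu}_{g^{-k}}\cdot E\bigr)\geq\Bigl[(1-8\tilde{\delta})\log N-K^{-1}\log(2N)\Bigr]\sum_{j\in G_k}\min_{\gamma_j^k}J^{uu}_{g^{-k}}.
\]
For $N$ sufficiently large, $\log(2N)\leq (1+\tilde{\delta})\log N$ and $K^{-1}<\tilde{\delta}$, so the bracket is at least $(1-10\tilde{\delta})\log N$. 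Now I would apply the inequality \eqref{finalequality},
\[
\sum_{j\in G_k}\min_{\gamma_j^k}J^{uu}_{g^{-k}}\geq\frac{e^{-2(\varepsilon_2+\varepsilon_3)}}{1+K^{-1}},
\]
and choose $\varepsilon_2,\varepsilon_3$ small enough (shrinking $\mathcal{U}_N$) and $K$ large enough so that the prefactor $e^{-2(\varepsilon_2+\varepsilon_3)}/(1+K^{-1})$ absorbs the loss from $(1-10\tilde{\delta})$ to $(1-12\tilde{\delta})$, yielding
\[
\sum_{j=0}^{N_k}\min_{\gamma_j^k}\bigl(J^{uu}_{g^{-k}}\cdot E(\gamma_j^k,Y^k)\bigr)\geq (1-12\tilde{\delta})\log N.
\]

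The main point requiring care is the case $k=0$, where $G_k$ consists of the single piece $\gamma$ itself (which is $\tilde{\delta}$-good by assumption) and $B_k=\emptyset$; here the bound follows directly from Proposition \ref{sogood2} with $J^{uu}\equiv 1$. For $k\geq 1$ the only slightly delicate step is that on bad pieces the sign of $E$ forces us to pair $E$ with $\max J^{uu}_{g^{-k}}$ rather than $\min J^{uu}_{g^{-k}}$, which is exactly the asymmetry handled by Lemma \ref{lemmabolado}. With the choices of $K$, $\varepsilon_2$, $\varepsilon_3$, and $N$ already fixed in the preceding lemmas, no additional tuning is needed.
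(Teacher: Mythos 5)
Your proof is correct and follows essentially the same route as the paper's: split the sum into good and bad pieces, apply Proposition \ref{sogood2} on the good ones and the crude bound $E\geq-\log 2N$ on the bad ones, then combine via Lemma \ref{lemmabolado} and inequality (\ref{finalequality}) to absorb the losses into $(1-12\tilde{\delta})\log N$. Your explicit pairing of the bad pieces with $\max_{\gamma_j^k}J^{uu}_{g^{-k}}$ (forced by the possible negativity of $E$) is precisely the asymmetry that Lemma \ref{lemmabolado} is designed to handle, so nothing is missing.
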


\begin{proof}
We have
\[
\displaystyle \sum_{j=0}^{N_k} \min_{\gamma_j^k} (J^{uu}_{g^{-k}}.E(\gamma_j^k,Y^k)) = \sum_{j\in G_k} \min_{\gamma_j^k} (J^{uu}_{g^{-k}}.E(\gamma_j^k,Y^k))+ \sum_{j\in B_k} \min_{\gamma_j^k} (J^{uu}_{g^{-k}}.E(\gamma_j^k,Y^k)) 
\]

By lemmas \ref{lemmabolado} and \ref{someestimate} and proposition \ref{sogood2} we obtain
\[\arraycolsep=1.2pt\def\arraystretch{2.2}
\begin{array}{rcl}
\displaystyle \sum_{j=0}^{N_k} \min_{\gamma_j^k} (J^{uu}_{g^{-k}}.E(\gamma_j^k,Y^k)) & \geq & (1-8\tilde{\delta})\log N\displaystyle \sum_{j\in G_k} \min_{\gamma_j^k} J^{uu}_{g^{-k}} - \log 2N \sum_{j\in B_k} \min_{\gamma_j^k} J^{uu}_{g^{-k}}\\
& \geq & \displaystyle \left((1-8\tilde{\delta}) - \frac{\log 2N}{K} \right) \sum_{j\in G_k} \min_{\gamma_j^k} J^{uu}_{g^{-k}}\\
& \geq & \displaystyle \frac{e^{-2(\varepsilon_2 + \varepsilon_3)}(1-10\tilde{\delta}) \log N}{1+K^{-1}}> (1-12\tilde{\delta})\log N.
\end{array}
\]
\end{proof}

\begin{proof}[Proof of Proposition \ref{estimateprop}.]

Take $\tilde{\delta} = \frac{\delta}{15}$. By proposition \ref{agoravai}, for $N$ large and $\mathcal{U}_N$ small enough, for $g\in \mathcal{U}_N$ and any $\tilde{\delta}$-good adapted field $(\gamma, X)$, for $g$, it holds that
 
\[
\displaystyle \sum_{j=0}^{N_k} \min_{\gamma_j^k}( J^{uu}_{g^{-k}}.E(\gamma_j^k, Y^k)) \geq (1-12\tilde{\delta}) \log N.
\]

Using inequality (\ref{aiaiai}), for $n$ large enough  
\[
\frac{I^{\gamma,X}_n}{n} \geq (1-14\tilde{\delta})\log N.
\]
Since we could have chosen $\varepsilon_3>0$ small enough such that $e^{-\varepsilon_3} (1-14\tilde{\delta}) \geq (1-15\tilde{\delta})$ by proposition \ref{estimaterob}, almost every point has a Lyapunov exponent for $g$ in the center direction larger than 
\[
(1-15\tilde{\delta}) \log N = (1-\delta) \log N.
\]
All we did is also valid for $g^{-1}$, if $\mathcal{U}_N$ is small enough, thus almost every point has a negative Lyapunov exponent in the center direction smaller than $-(1-\delta) \log N$.
\end{proof}

\address

\end{document}